\newcommand{\taille}{}
\newtheorem{defeng}{Definition}[section]
\newtheorem{theorem}[defeng]{Theorem}
\newtheorem{lemma}[defeng]{Lemma}
\newtheorem{lemmaS}{Lemma}[subsection]
\newtheorem{conjecture}[defeng]{Conjecture}
\theoremstyle{break}\theorembodyfont{\rmfamily} }
\theoremstyle{break}\theorembodyfont{\rmfamily} }
\newcommand{\bp}{}
\newcommand{\tp}{\!-\!}
\newcommand{\ep}{}
\newcounter{claim}
\newenvironment{proof}[1][]%
 {\noindent {\setcounter{claim}{0}\sc proof ---
   }{#1}{}}{\hfill$\Box$\vspace{2ex}} 
\newenvironment{claim}[1][]%
{\refstepcounter{claim}\vspace{1ex}\noindent{(\it\arabic{claim}){#1}{}}\it}{\vspace{1ex}}
\newenvironment{proofclaim}[1][]%
	{\noindent {}{#1}{}}{ This proves~(\arabic{claim}).\vspace{1ex}}
\newcommand{\even}{balanced}
\newcommand{\Even}{Balanced}
\newcommand{\an}{a}
\newcommand{\An}{A}
\newcommand{\ESPD}{BSP}
\title{Decomposing Berge graphs and detecting \even\  skew partitions}
\date{April 15, 2006\\Revised July 14, 2007} 
\author{Nicolas Trotignon\thanks{Universit\'e Paris I
    Centre d'\'Economie de la Sorbonne, 106--112 boulevard de
    l'H\^opital, 75647 Paris cedex 13, France\newline
    nicolas.trotignon@univ-paris1.fr\newline This work has been
    partially supported by ADONET network, a Marie Curie training
    network of the European Community.}}
\begin{document}

\maketitle

\section*{Abstract}
{
A hole in a graph is an induced cycle on at least four vertices. A
graph is Berge if it has no odd hole and if its complement has no odd
hole. In 2002, Chudnovsky, Robertson, Seymour and Thomas proved a
decomposition theorem for Berge graphs saying that every Berge graph
either is in a well understood basic class, or has some kind of
decomposition. Then, Chudnovsky proved stronger theorems. One of them
restricts the allowed decompositions to 2-joins and \even\  skew
partitions.

We prove that the problem of deciding whether a graph has
\an\ \even\ skew partition is NP-hard. We give an $O(n^9)$-time
algorithm for the same problem restricted to Berge graphs. Our
algorithm is not constructive: it only certifies whether a graph has
\an\ \even\ skew partition or not. It relies on a new decomposition
theorem for Berge graphs that is more precise than the previously
known theorems. Our theorem also implies that every Berge graph can be
decomposed in a first step by using only \even\ skew partitions, and
in a second step by using only 2-joins. Our proof of this new theorem
uses at an essential step one of the theorems of Chudnovsky.}

\noindent AMS Mathematics Subject Classification: 05C17, 05C75

\noindent Key words: perfect graph, Berge graph, 2-join, \even\  skew
partition, decomposition, detection, recognition.

\taille

\section*{Outline of the article}

Section~\ref{intro} surveys the decomposition theorems for Berge
graphs. Section~\ref{algomotiv} motivates and sketches the algorithm
that detects balanced skew partitions in Berge
graphs. Section~\ref{decth} gives the new definitions necessary to
state properly our new decomposition Theorem~\ref{th.th}, states it,
sketches its proof and explains why it is a generalization of the
previously known decomposition theorems for Berge
graphs. Section~\ref{lemmas} gives some useful technical lemmas and
studies how 2-joins and \even\ skew partitions can overlap in a Berge
graph. Section~\ref{proof} gives the proof of Theorems~\ref{th.th}.
Its corollary~\ref{th.case} is proved in
Section~\ref{s:proofcase}. Section~\ref{algos} describes the
algorithms in detail. Section~\ref{NPC} proves that the detection of
\even\ skew partitions is NP-hard for general
graphs. In Section~\ref{conclu}, two conjectures are given.

\section{Decomposing Berge graphs: a survey}
\label{intro}

In this paper graphs are simple and finite. A \emph{hole} in a graph
is an induced cycle of length at least~4. An \emph{antihole} is the
complement of a hole. A graph is said to be Berge if it has no odd
hole and no odd antihole. A graph $G$ is said to be \emph{perfect} if
for every induced subgraph $G'$ of $G$, the chromatic number of $G'$
is equal to the maximum size of a clique of $G'$. In 1961,
Berge~\cite{berge:61} conjectured that every Berge graph is
perfect. This was known as the \emph{Strong Perfect Graph Conjecture},
was the object of much research and was finally proved by Chudnovsky,
Robertson, Seymour and Thomas in
2002~\cite{chudnovsky.r.s.t:spgt}. Actually, they proved a stronger
result: a decomposition theorem, conjectured by Conforti, Cornu\'ejols
and Vu\v skovi\'c~\cite{conforti.c.v:square}, stating that every Berge
graph is either in a well understood basic class of perfect graphs, or
has a structural fault that cannot occur in a minimum counter-example
to Strong Perfect Graph Conjecture. Before stating this decomposition
theorem, we need some definitions.

We call \emph{path} any connected graph with at least one vertex of
degree~1 and no vertex of degree greater than~2. A path has at most
two vertices of degree~1, which are the \emph{ends} of the path. If
$a, b$ are the ends of a path $P$ we say that $P$ is \emph{from $a$
  to~$b$}. The other vertices are the \emph{interior} vertices of the
path. We denote by $v_1 \tp \cdots \tp v_n$ the path whose edge set is
$\{v_1v_2, \dots, v_{n-1}v_n\}$.  When $P$ is a path, we say that $P$
is \emph{a path of $G$} if $P$ is an induced subgraph of $G$. If $P$
is a path and if $a, b$ are two vertices of $P$ then we denote by $a
\tp P \tp b$ the only induced subgraph of $P$ that is path from $a$ to
$b$.  The \emph{length} of a path is the number of its edges. An
\emph{antipath} is the complement of a path.  Let $G$ be a graph and
let $A$ and $B$ be two subsets of $V(G)$. A path of $G$ is said to be
\emph{outgoing from $A$ to $B$} if it has an end in $A$, an end in
$B$, length at least~2, and no interior vertex in $A\cup B$.

If $X, Y \subset V(G)$ are disjoint, we say that $X$ is
\emph{complete} to $Y$ if every vertex in $X$ is adjacent to every
vertex in $Y$. We also say that $(X, Y)$ is a \emph{complete pair}. We
say that $X$ is \emph{anticomplete} to $Y$ if there are no edges
between $X$ and $Y$. We also say that $(X, Y)$ is an
\emph{anticomplete pair}. We say that a graph $G$ is anticonnected if
its complement $\overline{G}$ is connected.

A \emph{cutset} in a graph $G$ is a set $C\subset V(G)$ such that
$G\setminus C$ is disconnected ($G\setminus C$ means $G[V(G) \setminus
  C]$).

Skew partitions were first introduced by
Chv\'atal~\cite{chvatal:starcutset}. A \emph{skew partition} of a
graph $G = (V,E)$ is a partition of $V$ into two sets $A$ and $B$ such
that $A$ induces a graph that is not connected, and $B$ induces a
graph that is not anticonnected. When $A_1, A_2, B_1, B_2$ are
non-empty sets such that $(A_1, A_2)$ partitions $A$, $(A_1, A_2)$ is
an anticomplete pair, $(B_1, B_2)$ partitions $B$, and ($B_1, B_2$) is
a complete pair, we say that $(A_1, A_2, B_1, B_2)$ is a \emph{split}
of the skew partition $(A, B)$. \An\ \emph{\even\ skew partition}
(first defined in~\cite{chudnovsky.r.s.t:spgt}) is a skew partition
$(A, B)$ with the additional property that every induced path of
length at least~2 with ends in $B$, interior in $A$ has even length,
and every antipath of length at least~2 with ends in $A$, interior in
$B$ has even length. If $(A, B)$ is a skew partition, we say that $B$
is a \emph{skew cutset}. If $(A, B)$ is \even\ we say that the skew
cutset $B$ is \emph{\even}. Note that Chudnovsky et
al.~\cite{chudnovsky.r.s.t:spgt} proved that no minimum
counter-example to the strong perfect graph conjecture has
\an\ \even\ skew partition.

Call \emph{double split graph} (first defined
in~\cite{chudnovsky.r.s.t:spgt}) any graph $G$ that may be constructed
as follows.  Let $m,n \geq 2$ be integers. Let $A = \{a_1, \dots,
a_m\}$, $B= \{b_1, \dots, b_m\}$, $C= \{c_1, \dots, c_n\}$, $D= \{d_1,
\dots, d_n\}$ be four disjoint sets. Let $G$ have vertex set $A\cup B
\cup C \cup D$ and edges in such a way that:

\begin{itemize}
\item 
  $a_i$ is adjacent to $b_i$ for $1 \leq i \leq m$.  There are no
  edges between $\{a_i, b_i\}$ and $\{a_{i'}, b_{i'}\}$ for $1\leq i <
  i' \leq m$;
\item 
  $c_j$ is non-adjacent to $d_j$ for $1 \leq j \leq n$. There are all
  four edges between $\{c_j, d_j\}$ and $\{c_{j'}, b_{j'}\}$ for
  $1\leq j < j' \leq n$;
\item
  there are exactly two edges between $\{a_i, b_i\}$ and $\{c_j,
  d_j\}$ for $1\leq i \leq m$, $1 \leq j \leq n$ and these two
  edges are disjoint.
\end{itemize}

Note that $C\cup D$ is a non-\even\ skew cutset of $G$ and that
$\overline{G}$ is a double split graph. Note that in a double split
graph, vertices in $A \cup B$ all have degree $n+1$ and vertices in
$C\cup D$ all have degree $2n + m - 2$. Since $n \geq 2, m \geq 2$
implies $2n -2 + m > 1 + n$, it is clear that given a double split
graph the partition $(A\cup B, C \cup D)$ is unique. Hence, we call
\emph{matching edges} the edges that have an end in $A$ and an end in
$B$.

A graph is said to be \emph{basic} if one of $G, \overline{G}$ is
either a bipartite graph, the line-graph of a bipartite graph or a
double split graph.

The 2-join was first defined by Cornu\'ejols and
Cunningham~\cite{cornuejols.cunningham:2join}.  A partition $(X_1,
X_2)$ of the vertex set is a \emph{2-join} when there exist disjoint
non-empty $A_i, B_i \subseteq X_i$ ($i=1, 2$) satisfying:

\begin{itemize} 
\item
  every vertex of $A_1$ is adjacent to every vertex of $A_2$ and every
  vertex of $B_1$ is adjacent to every vertex of $B_2$;
\item
  there are no other edges between $X_1$ and $X_2$.
\end{itemize}

The sets $X_1, X_2$ are the two \emph{sides} of the 2-join.  When sets
$A_i$'s $B_i$'s are like in the definition we say that $(X_1, X_2,
A_1, B_1, A_2, B_2)$ is a \emph{split} of $(X_1, X_2)$.  Implicitly,
for $i= 1, 2$, we will denote by $C_i$ the set $X_i \setminus (A_i
\cup B_i)$.

A 2-join $(X_1, X_2)$ in a graph $G$ is said to be \emph{connected}
when for $i= 1, 2$, every component of $G[X_i]$ meets both $A_i$ and
$B_i$.  A 2-join $(X_1, X_2)$ is said to be \emph{substantial} when
for $i= 1, 2$, $|X_i| \geq 3$ and $X_i$ is not a path of length~2 with
an end in $A_i$, an end in $B_i$ and its unique interior vertex in
$C_i$.  A 2-join $(X_1, X_2)$ in a graph $G$ is said to be
\emph{proper} when it is connected and substantial.

A 2-join is said to be a \emph{path 2-join} if it has a split $(X_1,
X_2, A_1, B_1, A_2, B_2)$ such that $G[X_1]$ is a path with an end in
$A_1$, an end in $B_1$ and interior in $C_1$. Implicitly we will then
denote by $a_1$ the unique vertex in $A_1$ and by $b_1$ the unique
vertex in $B_1$. We say that $X_1$ is the \emph{path-side} of the
2-join. Note that when $G$ is not a hole then only one of $X_1, X_2$
is a path side of $(X_1, X_2)$. A \emph{non-path 2-join} is a 2-join
that is not a path 2-join.

The homogeneous pair was first defined by Chv\'atal and
Sbihi~\cite{chvatal.sbihi:bullfree}. The definition that we give here
is a slight variation used in~\cite{chudnovsky.r.s.t:spgt}. A
\emph{homogeneous pair} is a partition of $V(G)$ into six non-empty
sets $(A, B, C, D, E, F)$ such that:

\begin{itemize} 
\item 
  every vertex in $A$ has a neighbor in $B$ and a non-neighbor in $B$,
  and vice versa; 
\item the pairs $(C,A)$, $(A,F)$, $(F,B)$, $(B,D)$ are complete; 
\item the pairs $(D,A)$, $(A,E)$, $(E,B)$, $(B,C)$ are anticomplete. 
\end{itemize}

A graph $G$ is path-cobipartite if it is a Berge graph obtained by
subdividing an edge between the two cliques that partitions a
cobipartite graph.  More precisely, a graph is
\emph{path-cobipartite} if its vertex set can be partitioned into
three sets $A, B, P$ where $A$ and $B$ are non-empty cliques and $P$
consist of vertices of degree~2, each of which belongs to the interior
of a unique path of odd length with one end $a$ in $A$, the other one
$b$ in $B$. Moreover, $a$ has neighbors only in $A \cup P$ and $b$ has
neighbors only in $B \cup P$. Note that a path-cobipartite graph such
that $P$ is empty is the complement of bipartite graph. Note that our
path-cobipartite graphs are simply the complement of the
\emph{path-bipartite} graphs defined by Chudnovsky
in~\cite{chudnovsky:these}. For convenience, we prefer to think about
them in the complement as we do.

 A \emph{double star} in a graph is a subset $D$ of the
vertices such that there is an edge $ab$ in $G[D]$ satisfying: $D
\subset N(a) \cup N(b)$.

Now we can state the known decomposition theorems of Berge graphs. The
first decomposition theorem for Berge graph ever proved is the
following:

\begin{theorem}[Conforti, Cornu\'ejols and Vu\v skovi\'c, 
    2001, \cite{conforti.c.v:dstrarcut}]
  \label{th.ccv}
  Every graph with no odd hole is either basic or has a proper 2-join
  or has a double star cutset.
\end{theorem}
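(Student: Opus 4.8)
The plan is to assume that $G$ has no odd hole, is not basic, and has no double star cutset, and then to produce a proper 2-join of $G$. Since $G$ is not basic it is in particular not bipartite and not the line graph of a bipartite graph, so it must contain one of the classical obstructions to membership in those classes, and the proof splits according to which obstruction shows up. First, if $G$ contains a \emph{three-path configuration} — two triangles, or two vertices, or one triangle and one vertex, joined by three internally disjoint paths (a \emph{prism}, a \emph{theta}, or a \emph{pyramid}) — we are in Case~1. If not, but $G$ contains a \emph{wheel}, that is, a hole $H$ together with a vertex $h\notin V(H)$ (the \emph{hub}) having at least three neighbours on $H$, we are in Case~2. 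Otherwise we are in Case~3. In Cases~1 and~2 the point is that the absence of a double star cutset forces the local obstruction to propagate into a global 2-join; in Case~3 the point is that $G$ is basic after all, a contradiction.

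\emph{Case~1.} Take a three-path configuration $\Sigma$ with its three connecting paths as short as possible. The heart of the argument is a local analysis of how the rest of $G$ attaches to $\Sigma$: for each $u\in V(G)\setminus V(\Sigma)$ one determines the admissible sets $N(u)\cap V(\Sigma)$. Rerouting a path of $\Sigma$ through $u$ and invoking the no-odd-hole hypothesis forces this set to be small and confined either to a single connecting path of $\Sigma$ or to the neighbourhood of one of the two ends; any other configuration creates an odd hole or a strictly smaller three-path configuration, contradicting the choice of $\Sigma$. With the attachments classified, one partitions $V(G)$ according to which end of $\Sigma$ a vertex belongs to. If this is not already a 2-join, the obstruction is localized at one of the two ends, and then a double star cutset centred at an edge incident to that end separates the two halves — contradiction. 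So the partition is a 2-join, and the minimality of the connecting paths is exactly what is needed to check that it is connected and substantial, hence proper.

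\emph{Case~2.} Take a wheel $(H,h)$ with $H$ as short as possible and then with as few spokes as possible. The spokes cut $H$ into \emph{sectors}, the maximal subpaths of $H$ between consecutive neighbours of $h$; each sector together with its two bounding spokes is a hole (or a triangle) through $h$, so the no-odd-hole hypothesis fixes the parity of every sector, and combining these parities around $H$ — while using that there is no three-path configuration, which eliminates all but a few rigid wheel types — leaves $(H,h)$ so constrained that one of two things must hold. Either a double star cutset centred at the edge $hu$, for a suitable neighbour $u$ of $h$, separates the graph, contradicting the hypothesis; or the graph is rigid enough that two sectors of $H$ together with their surrounding neighbourhoods split $V(G)$ into the two sides of a 2-join, which one checks, as before, to be proper.

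\emph{Case~3.} If $G$ contains neither a three-path configuration nor a wheel, one shows that $G$ or $\overline{G}$ is bipartite, the line graph of a bipartite graph, or a double split graph, so $G$ is basic, contrary to assumption; this is the step where the definition of \emph{basic} enters, via Beineke's forbidden-subgraph characterization of line graphs combined with the no-odd-hole condition to rule out the non-bipartite, non-line-graph alternatives. The step I expect to be the main obstacle is Case~2: one must control every admissible wheel type and, for each, carry out the delicate bookkeeping that upgrades ``$N(h)$ is a cutset'' to an honest \emph{double star} cutset, while ensuring that whenever no such cutset exists the 2-join extracted is genuinely proper — with the sets $A_i,B_i$ nonempty and the connectedness and substantiality conditions met rather than degenerate — which is exactly where the extremal choices and repeated use of the no-odd-hole hypothesis must be combined with care.
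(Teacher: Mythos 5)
The first thing to note is that the paper does not prove this statement at all: Theorem~\ref{th.ccv} is quoted from Conforti, Cornu\'ejols and Vu\v skovi\'c \cite{conforti.c.v:dstrarcut} as part of a survey of known decomposition theorems for Berge graphs, and it is never reproved (nor even used as an ingredient) in the body of the paper. So there is no in-paper proof to measure your attempt against; the only fair comparison is with the original fifty-odd page article, whose overall architecture --- an extremal three-path configuration, then an extremal wheel, then a ``clean'' final case --- your outline does echo.

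That said, what you have written is an outline of a proof strategy, not a proof, and the gap is essentially the entire content of the theorem. Every sentence that matters is asserted rather than argued. In Case~1, ``the absence of a double star cutset forces the local obstruction to propagate into a global 2-join'' and ``the minimality of the connecting paths is exactly what is needed to check that it is connected and substantial'' are precisely the claims that take dozens of pages of attachment analysis to establish; you give no classification of the possible sets $N(u)\cap V(\Sigma)$, no definition of the candidate partition $(X_1,X_2)$, and no argument for why failure of the 2-join conditions yields a \emph{double star} cutset rather than some other structure. In Case~2 the same problem recurs in sharper form: parity bookkeeping around the sectors of a wheel does not by itself produce either a double star cutset or a 2-join, and the reduction to ``a few rigid wheel types'' is a theorem in its own right, not a remark. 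Case~3 is also not a one-liner: showing that a wheel-free, three-path-configuration-free, odd-hole-free, non-basic graph cannot exist is another substantial structural argument, and two details there are off --- the relevant characterization is the Harary--Holzmann one for line graphs of \emph{bipartite} graphs (Theorem~\ref{th.lgbg} of this paper), not Beineke's general forbidden-subgraph list, and since only odd holes (not odd antiholes) are excluded by hypothesis, any appeal to the structure of $\overline{G}$ needs separate justification. In short, the skeleton is the right one, but none of the load-bearing bones are present.
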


It could be thought that this theorem is useless to prove the Strong
Perfect Graph Theorem since there are minimal imperfect graphs that
have double star cutsets: the odd antiholes of length at
least~7. However, by the Strong Perfect Graph Theorem, we know that
the following fact is true: for any minimal non-perfect graph $G$, one
of $G, \overline{G}$ has no double star cutset. A direct proof of this
--- of which we have no idea --- would yield together with
Theorem~\ref{th.ccv} a new proof of the Strong Perfect Graph Theorem.

The following theorem was first conjectured in a slightly different
form by Conforti, Cornu\'ejols and Vu\v skovi\'c, who proved it in the
particular case of square-free graphs~\cite{conforti.c.v:square}.  A
corollary of it is the Strong Perfect Graph Theorem.

\begin{theorem}[Chudnovsky, Robertson, Seymour and Thomas, 2002, \cite{chudnovsky.r.s.t:spgt}]
  \label{th.0}
  Let $G$ be a Berge graph. Then either $G$ is basic or $G$ has a
  homogeneous pair, or $G$ has \an\  \even\  skew partition or one of $G,
  \overline{G}$ has a proper 2-join.
\end{theorem}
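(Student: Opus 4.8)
The plan is to argue by contradiction; the full argument is long, so I describe only its architecture. Suppose the theorem fails and let $G$ be a counterexample with $|V(G)|$ minimum, so $G$ is Berge, $G$ is not basic, $G$ has no homogeneous pair, $G$ has no \even\ skew partition, and neither $G$ nor $\overline{G}$ has a proper 2-join. Each of these hypotheses is preserved under complementation --- \an\ \even\ skew partition of $G$ is one of $\overline{G}$ with the roles of $A$ and $B$ exchanged, a homogeneous pair of $G$ is one of $\overline{G}$, and ``$G$ or $\overline{G}$ has a proper 2-join'' is symmetric --- so I would pass freely to $\overline{G}$, and by minimality every Berge graph on fewer vertices satisfies the conclusion. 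First I would extract a package of \emph{local parity constraints}. For instance $G$ contains no pyramid: if $a$ were an apex with base triangle $b_1b_2b_3$ and paths $P_1,P_2,P_3$, then for each $i\neq j$ the cycle formed by the path $P_i$, the edge $b_ib_j$ and the path $P_j$ would be a hole of length $\ell(P_i)+\ell(P_j)+1$; since $G$ is Berge this length is even, so $\ell(P_i)\not\equiv\ell(P_j)\pmod 2$ for all three pairs, which is absurd. In the same way, in every prism and every theta of $G$ the three constituent paths have the same parity, and the ``wheels'' of $G$ --- a hole together with a vertex having at least three neighbours on it --- are tightly constrained. These facts are the engine of everything that follows.

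In broad outline, the argument then splits according to a single configuration: whether one of $G,\overline{G}$ contains, as an induced subgraph, the line graph $L$ of a bipartite subdivision of $K_4$ --- that is, up to the line-graph operation, four triangles joined in the pattern of $K_4$ by internally disjoint paths of appropriate parities. Case~A: $G$ contains such an $L$. Building on $L$, I would isolate a maximal ``line-graph-structured'' portion of $G$ containing it --- a union of strips glued along a skeleton hypergraph on the ends of the strips --- and argue that either this skeleton is rich enough that $G$ is itself the line graph of a bipartite graph, contradicting that $G$ is not basic, or else it is degenerate, in which case the no-pyramid and constrained-prism facts pin down how the vertices of $G$ outside $L$ attach to $L$ and one reads off a proper 2-join of $G$ or of $\overline{G}$, \an\ \even\ skew partition, or a homogeneous pair. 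Case~B: neither $G$ nor $\overline{G}$ contains such an $L$. Then $G$ is far more rigid, and I would ask whether $G$ (or $\overline{G}$) contains a prism, a theta, a long hole, or an even or odd wheel of a prescribed type. If none of the relevant configurations is present, a comparatively short argument shows that one of $G,\overline{G}$ is bipartite, the line graph of a bipartite graph, or a double split graph --- i.e. $G$ is basic, a contradiction. If some such configuration is present, one again analyses the neighbourhoods of every vertex outside it; the parity constraints then force enough structure to produce a double star cutset centred on a well-chosen vertex or edge, and then --- the delicate step --- one \emph{upgrades} this cutset to \an\ \even\ skew partition, or else locates a homogeneous pair or a 2-join.

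I expect two steps to be the main obstacles, with a third theme pervading the proof. The first obstacle is the strip analysis of Case~A: keeping track of every way the vertices of $G$ outside $L$ can attach to $L$, together with the skeleton hypergraph recording the global assembly, is long and unforgiving. The second obstacle is the bookkeeping in Case~B, where one must simultaneously manage a growing family of holes, prisms, thetas and wheels, their pairwise interactions, and the neighbourhoods of all outside vertices, in order to force either a basic graph or a skew cutset. The pervasive third theme --- and the reason the homogeneous-pair and \even-skew-partition outcomes occur at all --- is the upgrading of double star cutsets to \emph{\even} skew cutsets: one must check that every induced path of length at least~2 with its ends in the skew cutset and its interior outside it, and every antipath of length at least~2 with its ends outside the skew cutset and its interior in it, has even length, and it is here that ``$\overline{G}$ has no odd hole'' is used in full. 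This is also why one cannot simply invoke Theorem~\ref{th.ccv}: that theorem already hands back a double star cutset in this situation, but --- as the text notes, with odd antiholes as an example --- a double star cutset by itself is not a permitted outcome, and turning it into a genuine \even\ skew cutset is essentially the whole difficulty.
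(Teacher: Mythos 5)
This statement is not proved in the paper at all: Theorem~\ref{th.0} is imported verbatim from Chudnovsky, Robertson, Seymour and Thomas \cite{chudnovsky.r.s.t:spgt}, and the present paper uses it (and its refinement Theorem~\ref{th.1}) as a black box. So there is no in-paper proof to compare yours against; the only meaningful comparison is with the roughly 150-page argument in \cite{chudnovsky.r.s.t:spgt} itself. Measured against that, your sketch does identify the genuine architecture --- the parity facts about pyramids, prisms, thetas and wheels; the dichotomy on whether $G$ or $\overline{G}$ contains the line graph of a bipartite subdivision of $K_4$; the strip/skeleton analysis in the affirmative case; and the wheel and long-hole analysis in the negative case, ending in either a basic graph or a decomposition. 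That is an accurate table of contents.

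But it is only a table of contents. Every step that carries mathematical weight is named as an ``obstacle'' and then deferred: the classification of attachments to the line-graph structure, the interaction of the various configurations in Case~B, and above all the passage from a double star cutset to a \emph{\even} skew cutset --- which, as you yourself observe via the odd antiholes, cannot be a routine upgrade since double star cutsets exist in graphs with no \even\ skew partition. Nothing in the sketch indicates how any of these is actually resolved, so the proposal cannot be accepted as a proof. Two smaller points: the minimal-counterexample framing is inert --- the conclusion for smaller Berge graphs is never invoked anywhere in your outline, and the actual proof in \cite{chudnovsky.r.s.t:spgt} is a direct structural analysis rather than an induction on $|V(G)|$; and for plain star cutsets the upgrade to \an\ \even\ skew partition is in fact short (it is Lemma~\ref{l.starcutset} of this very paper), so the difficulty you flag is specifically with \emph{double} star cutsets, where the correct resolution is not to upgrade the cutset but to find a different decomposition altogether. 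If your goal was to justify the paper's use of Theorem~\ref{th.0}, the appropriate move is simply to cite \cite{chudnovsky.r.s.t:spgt}, as the paper does.
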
 

 The two theorems that we state now are due to Chudnovsky who proved
 them from scratch, that is without assuming Theorem~\ref{th.0}. Her
 proof uses the notion of \emph{trigraph}.  The first theorem shows
 that homogeneous pairs are not necessary to decompose Berge
 graphs. Thus it is a result stronger than Theorem~\ref{th.0}. The
 second one shows that path 2-joins are not necessary to decompose
 Berge graphs, but at the expense of extending \even\ skew partitions
 to general skew partitions and introducing a new basic class. Note
 that a third theorem can be obtained by viewing the second one in the
 complement of $G$.

\begin{theorem}[Chudnovsky, 2003, \cite{chudnovsky:trigraphs,chudnovsky:these}]
\label{th.1}
  Let $G$ be a Berge graph. Then either $G$ is basic, or one of $G,
  \overline{G}$ has a proper 2-join or $G$ has \an\  \even\  skew partition.
\end{theorem}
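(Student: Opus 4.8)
The plan is to derive Theorem~\ref{th.1} from Theorem~\ref{th.0}. Comparing the two statements, the only conclusion of Theorem~\ref{th.0} that is not already among the conclusions of Theorem~\ref{th.1} is ``$G$ has a homogeneous pair''. So it suffices to prove: if a Berge graph $G$ has a homogeneous pair, then $G$ is basic, or $G$ has a balanced skew partition, or one of $G,\overline G$ has a proper 2-join. This is not circular, since Theorem~\ref{th.0} predates and does not rely on Theorem~\ref{th.1}.

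Assume then that $G$ is Berge, is not basic, has no proper 2-join in $G$ or in $\overline G$, and has a homogeneous pair; the goal is a balanced skew partition. Among all homogeneous pairs of $G$ I would pick $(A,B,C,D,E,F)$ with $|A\cup B|$ minimum, and subject to that with $|C\cup D\cup E\cup F|$ as large as possible. Put $H=A\cup B$. The elementary facts I would record are: every vertex of $V\setminus H$ lies in exactly one of the attachment classes $C,D,E,F$; $E$ has no neighbour in $H$, so $C\cup D\cup F$ separates $H$ from $E$; dually, in $\overline G$ the homogeneous pair becomes $(A,B,D,C,F,E)$ (with the roles of $C,D$ and of $E,F$ interchanged), and there $C\cup D\cup E$ separates $H$ from $F$. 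Moreover $A\cup B\cup F$ is never anticonnected (since $F$ is complete to $A\cup B$, both nonempty), and dually $A\cup B\cup E$ is always disconnected.

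The heart of the proof is to turn one of these observations into a \emph{balanced} skew partition. Two natural candidates are the pair $(C\cup D\cup E,\ A\cup B\cup F)$ — for which one must still show that $G[C\cup D\cup E]$ is disconnected and that the parity conditions hold — and, dually, a skew partition whose cutset is a minimal subset of $C\cup D\cup F$, which one must show is not anticonnected. Neither disconnectedness nor balancedness is automatic, and this is where I would use the minimality of the homogeneous pair together with the hypothesis that $G$ is Berge: minimality severely restricts $G[H]$ (for instance, when $|A|=|B|=2$ the four vertices of $H$ induce one of $P_4$, $C_4$ or $\overline{C_4}$, and a no-odd-hole/no-odd-antihole analysis of the configurations inside $H\cup C\cup D\cup F$ should cut the general case down to a short list of shapes for $G[H]$), and for each admissible shape of $G[H]$ one exhibits the split explicitly: the complete side being $F$ together with a ``clique part'' of $G[H]$, the anticomplete side a ``stable part'' of $G[H]$ together with a piece of $C\cup D\cup E$. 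One then checks the balanced conditions, which via the homogeneous attachments reduce to parities of paths and antipaths through $H$ and are settled by the absence of odd holes and odd antiholes in $G$. Whenever no such split works, the same configuration should instead yield a proper 2-join in $G$ or $\overline G$, or force $G$ to be a double split graph; and the whole discussion is repeated in $\overline G$ for the complementary cases.

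I expect the main obstacle to be the \emph{balanced} condition rather than the mere existence of a skew cutset: a homogeneous pair almost trivially produces skew cutsets (e.g.\ $A\cup B\cup F$), but such a cutset is typically not balanced — already a $P_4$ with ends $a\in A$, $b\in B$ and interior vertices in $C$ and $D$ violates it — so the cutset must be matched carefully to the structure of $G[H]$, and the parity bookkeeping is where the real work lies. A secondary worry is that a minimal homogeneous pair need not be small enough for the case analysis to close; if that route stalls, one falls back on Chudnovsky's original argument, which bypasses Theorem~\ref{th.0} entirely and proves the statement from scratch in the language of trigraphs, contracting homogeneous pairs as part of the induction.
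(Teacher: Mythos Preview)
The paper does not prove Theorem~\ref{th.1}; it is quoted as a result of Chudnovsky (with references to \cite{chudnovsky:trigraphs,chudnovsky:these}) and then used as a black box in the proof of Theorem~\ref{th.th}. The paper even notes explicitly that Chudnovsky ``proved them from scratch, that is without assuming Theorem~\ref{th.0}'', using trigraphs. So there is no proof in the paper to compare against; the relevant comparison is with Chudnovsky's original argument.

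Your proposal has a genuine gap. You correctly isolate the residual task---show that a Berge graph with a homogeneous pair but none of the other outcomes has a balanced skew partition---but the argument you offer for this is a sketch resting on several load-bearing ``should'' statements. The central difficulty is the case analysis on $G[A\cup B]$: minimality of the homogeneous pair does \emph{not} bound $|A|$ or $|B|$ (the definition only forces each vertex of $A$ to have both a neighbour and a non-neighbour in $B$, and conversely), so there is no reason to expect a finite list of shapes. You acknowledge this yourself (``a secondary worry is that a minimal homogeneous pair need not be small enough for the case analysis to close''). This is precisely why Chudnovsky's proof introduces trigraphs: by contracting a homogeneous pair into a single pair of semi-adjacent vertices one can rerun the entire structural analysis on objects in which homogeneous pairs have already been absorbed, and the balanced skew partition then emerges from the full decomposition machinery rather than from an ad hoc extraction out of a given homogeneous pair. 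Your fallback sentence essentially concedes that the actual proof is Chudnovsky's.

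Even in the small cases you mention ($|A|=|B|=2$), no concrete balanced skew cutset is produced: you observe that the natural candidate $A\cup B\cup F$ is typically not balanced (your $P_4$ example through $C$ and $D$), but ``one exhibits the split explicitly'' is left as a promise. The disconnectedness of $G[C\cup D\cup E]$ is also asserted as something ``one must still show'' without any mechanism for showing it. As written, the proposal is an outline of a strategy whose hard step is exactly the theorem being proved.
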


\begin{theorem}[Chudnovsky, 2003, \cite{chudnovsky:these}]
\label{th.2}
  Let $G$ be a Berge graph. Then either $G$ is basic, or one of $G,
  \overline{G}$ is path-bipartite, or $G$ has a proper non-path
  2-join, or $\overline{G}$ has a proper 2-join, or $G$ has a
  homogeneous pair or $G$ has a skew partition.
\end{theorem}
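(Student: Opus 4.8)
The plan is to deduce Theorem~\ref{th.2} from Theorem~\ref{th.0}, reusing its trigraph combinatorics rather than redoing it. Take a counterexample $G$ with $|V(G)|$ minimum. Then $G$ is not basic, neither $G$ nor $\overline{G}$ is path-bipartite, $G$ has no homogeneous pair, no skew partition and no proper non-path 2-join, and $\overline{G}$ has no proper 2-join. Apply Theorem~\ref{th.0} to $G$: \an\ \even\ skew partition is in particular a skew partition, so the only surviving outcome is that one of $G, \overline{G}$ has a proper 2-join; since $\overline{G}$ has none, $G$ does, and it must be a path 2-join, with a split $(X_1, X_2, A_1, B_1, A_2, B_2)$ in which $G[X_1]$ is a path from $a_1$ to $b_1$ of length $\ell_1 \geq 3$ (lengths $1$ and $2$ are excluded since the 2-join is substantial).

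The idea is then to replace the path side by a short marker and invoke minimality. Let $G_2$ be obtained from $G[X_2]$ by adding a new induced path $R$ with one end complete to $A_2$, the other complete to $B_2$, interior anticomplete to $X_2$, and of length $2$ if $\ell_1$ is even and $3$ if $\ell_1$ is odd. By the standard fact that in a Berge graph the $A_1$-$B_1$ paths inside $X_1$ and the $A_2$-$B_2$ paths inside $X_2$ all have the same parity, $G_2$ is Berge; and $|V(G_2)| < |V(G)|$, so one of the outcomes of Theorem~\ref{th.2} holds for $G_2$. The core of the argument is to show that each such outcome forces on $G$ one of the configurations it was assumed to avoid. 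A proper 2-join of $G_2$ whose two sides both avoid or both contain the interior of $R$ is a proper 2-join of $G$ of the same path/non-path type, which contradicts minimality directly or, together with $(X_1,X_2)$, exhibits a longer path structure that one peels off before continuing with a smaller counterexample. A skew partition or homogeneous pair of $G_2$ avoiding the interior of $R$ lifts verbatim to $G$; one that meets it is handled by substituting $G[X_1]$ back for the two or three marker vertices and checking that the (possibly rerouted) cutset becomes an $\{a_1,b_1\}$-type skew cutset, or a homogeneous pair, of $G$. After finitely many peeling steps the residual block is basic, and one checks --- using that $\overline{G}$ has no proper 2-join, $G$ has no skew partition, and the degree conditions in the definition of a double split graph --- that the block is cobipartite, so that grafting the peeled paths (of the prescribed parities) onto its cross edges yields a path-cobipartite graph. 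Hence $G$ is path-cobipartite and $\overline{G}$ is path-bipartite, a contradiction.

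The step I expect to be the main obstacle is this lifting of skew partitions and homogeneous pairs across the marker: a skew cutset of $G_2$ using an interior vertex of $R$ need not remain a cutset once $R$ is stretched back into $G[X_1]$, so one must show it can always be rerouted onto the endpoints of $R$ and then transferred to $a_1, b_1$; this is also where the marker length is forced, since the parity conditions implicit in the definitions of path-cobipartite graphs and of \even\ skew partitions must be preserved, and it explains why the homogeneous pair outcome cannot be dropped from Theorem~\ref{th.2} even though Theorem~\ref{th.1} does not list it, a star cutset of the block reappearing as a homogeneous pair of $G$. A lighter but fiddly secondary obstacle is the global assembly: choosing, among all path 2-joins of $G$, one whose non-path side $X_2$ is inclusion-minimal guarantees that $G_2$ carries no path 2-join other than the one given by $R$, so the peeling terminates at once and the pieces combine into the single $A$, $B$, $P$ partition the definition of a path-cobipartite graph requires. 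Chudnovsky's own proof sidesteps all of this by arguing directly with trigraphs and never composing two decompositions; the route above is the one suggested by already having Theorem~\ref{th.0}.
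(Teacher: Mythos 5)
First, a point of reference: the paper contains no proof of Theorem~\ref{th.2}. It is quoted from Chudnovsky's thesis, where it is proved from scratch with trigraphs, not derived from Theorem~\ref{th.0}; so what you propose is not a variant of an argument in the paper but an attempt at a derivation the author explicitly considered and reports being unable to complete (Section~\ref{why}). The gap sits exactly where you flag it: lifting a skew partition of the block $G_2$ back to $G$. Contracting the path side of a path 2-join can \emph{create} skew partitions with no counterpart in $G$ --- this is the phenomenon of cutting 2-joins, illustrated in Fig.~\ref{figP3loose}. Concretely, for a cutting 2-join of type~1 the contracted graph has the skew cutset $\{a_1,b_1\}\cup A_2$, but in $G$ that set, while a cutset (it isolates the interior of $X_1$), is anticonnected, because $a_1$ and $b_1$ are non-adjacent in $G$ and $b_1$ is anticomplete to $A_2$, so in $\overline G$ the vertex $b_1$ sees everything; and enlarging it by interior vertices of $X_1$ only makes it more anticonnected, since those vertices are anticomplete to $A_2$ in $G$. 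So the cutset of $G_2$ cannot in general be ``rerouted onto the endpoints of $R$ and transferred to $a_1,b_1$'', and when the 2-join is cutting you must instead prove directly that $G$ satisfies one of the remaining outcomes --- which is where all the work lies and which your sketch does not do.

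The paper's own Theorem~\ref{th.th} carries out a program of the shape you describe, but only for \emph{balanced} skew partitions: the parity conditions in the definition of cutting 2-joins of type~2 are precisely what allow the auxiliary graphs $G'$ of Figs.~\ref{fig:cut1} and~\ref{fig:cut2} to be proved Berge, and the author states in Section~\ref{why} that dropping those conditions --- which a general-skew-partition version would require --- makes the construction collapse. Even in the balanced setting the proof needs Theorem~\ref{th.1} rather than~\ref{th.0}, a minimality measure counting flat paths rather than vertices (the auxiliary graphs are not vertex-smaller, so your $|V(G)|$-induction would not apply to them), and a careful classification of what contraction can produce (path-cobipartite graphs, path-double split graphs, homogeneous 2-joins), each of which occupies a substantial part of Section~\ref{proof}. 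Your ``basic block'', ``peeling'' and ``global assembly'' steps gesture at these issues but resolve none of them; in particular the claim that an inclusion-minimal $X_2$ forces the residual block to be cobipartite is unsupported (the block can be a line graph of a bipartite graph or a double split graph, cases needing separate treatment). As written, the proposal is a plan whose central step is known to fail in the stated form, not a proof.
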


\section{Algorithmic results and motivation}
\label{algomotiv}

Our main result is Theorem~\ref{th.th}, a new decomposition for Berge
graphs that is a generalization of Theorems~\ref{th.0},~\ref{th.1}
and~\ref{th.2}.  Note that our proof of Theorem~\ref{th.th} is not a
new proof of the previously known decomposition theorems for Berge
graphs, since it uses Theorem~\ref{th.1} at an essential step.  We
also give algorithmic applications. De Figueiredo, Klein, Kohayakawa
and Reed devised an algorithm that given a graph $G$ computes in
polynomial time a skew partition if $G$ has
one~\cite{figuereido.k.k.r:sp}. See also a recent work by Kennedy and
Reed~\cite{kennedyreed:skew}.  But the problem of detecting
\emph{\even} skew partitions has not been studied so far. Let us call
\ESPD\ the decision problem whose input is a graph and whose answer is
YES if the graph has \an\ \even\ skew partition and NO
otherwise. Using a construction due to
Bienstock~\cite{bienstock:evenpair}, we prove in Section~\ref{NPC}
that \ESPD\ is NP-hard (we are not able to prove that \ESPD\ is in NP
or in coNP). Using Theorem~\ref{th.th} we give an $O(n^9)$-time
algorithm for \ESPD\ restricted to Berge graphs.

In 2002, Chudnovsky, Cornu\'ejols, Liu, Seymour and Vu\v
skovi\'c~\cite{chudnovsky.c.l.s.v:reco} gave an algorithm that
recognizes Berge graphs in time $O(n^9)$.  This algorithm may be used
to prove that, when restricted to Berge graphs, \ESPD\ is in NP.
Indeed, \an\  \even\  skew partition is a good certificate for \ESPD:
given a Berge graph and a partition $(A,B)$ of its vertices, one can
easily check that $(A,B)$ is a skew partition; to check that it is
\even, it suffices to add a vertex adjacent to every vertex of $B$, to
no vertex of $A$, and to check that this new graph is still Berge.

Proving that \ESPD\ is actually in P by a decomposition theorem uses a
classical idea, used for instance in~\cite{conforti.c.k.v:eh2} to
check whether a given graph has or not an even hole. First, solve
\ESPD\ for each class of basic graph. This is done in
Section~\ref{algos} in time $O(n^5)$. Note that bipartite graphs are
the most difficult to handle efficiently. For them, we use an
algorithm due to Reed~\cite{reed:skewhist}. For a graph $G$ such that
one of $G, \overline{G}$ has a 2-join, we try to break $G$ into
smaller blocks in such a way that $G$ has \an\ \even\ skew partition
if and only if one of the blocks has one, allowing us to run
recursively the algorithm. And when a graph is not basic and has no
2-join, we simply answer ``the graph has \an\ \even\ skew partition'',
which is the correct answer because of the Decomposition
Theorem~\ref{th.1}. This blind use of decomposition is not safe from
criticism, but this will be discussed later.

Unfortunately, with the usual notions of 2-join and blocks, this
approach fails to solve \ESPD.  Building the blocks of a 2-join
preserves existing \even\ skew partitions, but some 2-joins can create
\even\ skew partitions when building the blocks carelessly. In the
graph represented in Fig.~\ref{figP3loose} on the left, we have to
simplify somehow the left part of the obvious 2-join to build one of
the blocks.  The most reasonable way to do so seems to be replacing
$X_1$ by a path of length~1.  But this creates a skew cutset: the
black vertices on the right. Of course, this graph is bipartite but
one can find more complicated examples based on the same template, and
another template exists.  These bad 2-joins will be described in more
details in Section~\ref{decth} and called~\emph{cutting 2-joins}. All
of them are path 2-joins.

\begin{figure}[h]
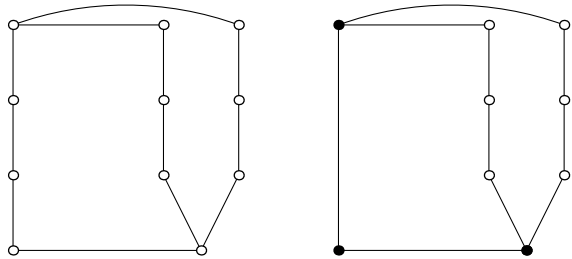
  
  \center
  \includegraphics{evenskew.13}\hspace{3em}\includegraphics{evenskew.14}
  \caption{Contracting a path creates a skew cutset\label{figP3loose}}
\end{figure}

Theorem~\ref{th.th} shows that cutting 2-joins are not necessary to
decompose Berge graphs.  A more general statement is proved, that
makes use of a new basic class and of a new kind of decomposition that
are quite long to describe. But  an interesting corollary 
can be stated with no new notions. By \emph{contracting a path $P$}
that is the side of a proper path 2-join of a graph we mean delete the
interior vertices of $P$, and link the ends of $P$ with a path of
length~1 or~2 according to the original parity of the length of $P$.

\begin{theorem}
  \label{th.case}
  Let $G$ be a Berge graph. Then either:
  \begin{itemize}
    \item $G$ is basic;
    \item one of $G, \overline{G}$ has a non-path proper 2-join;
    \item $G$ has no \even\ skew partition and exactly one of $G,
      \overline{G}$ (say $G$) has a proper path 2-join. Moreover, for
      every proper path 2-join of $G$, the graph obtained by
      contracting its path-side has no \even\ skew partition;
    \item $G$ has \an\  \even\  skew partition.
  \end{itemize}
\end{theorem}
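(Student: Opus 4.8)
The plan is to derive Theorem~\ref{th.case} as a corollary of the (as yet unstated) main decomposition Theorem~\ref{th.th}, together with the previously known Theorem~\ref{th.1} of Chudnovsky. The key observation is that Theorem~\ref{th.case} is a case analysis on a Berge graph $G$ that has no even skew partition: by Theorem~\ref{th.1}, such a $G$ is either basic or one of $G, \overline{G}$ has a proper 2-join. So the whole content of the statement is to understand the situation when $G$ is not basic, has no even skew partition, and one of $G, \overline{G}$ has a proper 2-join; and to show that in that case $G$ (or $\overline G$) has a \emph{non-path} proper 2-join, \emph{or} the delicate third bullet holds. The main decomposition Theorem~\ref{th.th} is precisely designed so that cutting 2-joins (which are all path 2-joins) are not needed; the corollary then amounts to translating that refinement into the language of path-contraction.

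First I would dispose of the easy bullets. If $G$ is basic we are in the first bullet and done. So assume $G$ is not basic. If $G$ has an even skew partition we are in the last bullet. So assume in addition that $G$ has no even skew partition. Then I invoke Theorem~\ref{th.1}: one of $G, \overline G$ has a proper 2-join. If that proper 2-join can be taken to be a non-path 2-join (in $G$ or in $\overline G$), we land in the second bullet. The remaining case is: $G$ is not basic, has no even skew partition, and every proper 2-join of $G$ and of $\overline G$ is a path 2-join. I would then show (using the remark in the excerpt that when $G$ is not a hole only one side of a 2-join is a path-side, and a parity/structure argument showing $G$ and $\overline G$ cannot simultaneously have path 2-joins unless $G$ is basic, e.g. a hole or small graph which is excluded) that exactly one of $G, \overline G$ — say $G$ — has a proper 2-join, and it is a path 2-join. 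This gives the first sentence of the third bullet except for the ``no even skew partition'' clause, which we have assumed.

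The heart of the matter, and the step I expect to be the main obstacle, is the ``Moreover'' clause: for \emph{every} proper path 2-join of $G$, contracting its path-side $X_1$ (replacing the interior of the path by a path of length $1$ or $2$ according to parity) yields a graph $G'$ with no even skew partition. The length-parity rule in the definition of contraction is exactly what is needed to preserve the Berge property and the parities of paths/antipaths through the 2-join. The natural strategy is the contrapositive: suppose $G'$ has an even skew partition $(A',B')$; I would pull this back to an even skew partition of $G$, contradicting our hypothesis. The technical content — presumably supplied by Theorem~\ref{th.th} or by the lemmas of Section~\ref{lemmas} on how 2-joins and even skew partitions overlap — is to control how the skew cutset $B'$ of $G'$ interacts with the short path that replaced $X_1$: one must argue that $B'$ can be ``un-contracted'' to a skew cutset $B$ of $G$ with the same evenness, handling the case where $B'$ meets the contracted path by spreading the cutset along the original longer path, using the parity preservation to keep all relevant paths and antipaths of even length. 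Once this pull-back is established, the contrapositive closes the argument and we are in the third bullet.

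Finally I would note that this corollary is proved separately in Section~\ref{s:proofcase} precisely because it needs the full strength of Theorem~\ref{th.th} (not merely Theorem~\ref{th.1}): Theorem~\ref{th.1} tells us a proper 2-join exists, but only the finer Theorem~\ref{th.th}, which eliminates cutting 2-joins, guarantees that contracting any path-side does not accidentally manufacture an even skew partition. So the logical skeleton is short, but the ``Moreover'' clause is where essentially all the work lives, and it is inherited from the main theorem rather than reproven from scratch.
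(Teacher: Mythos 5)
Your skeleton for the easy bullets is fine (and your observation that $G$ and $\overline G$ cannot both have proper path 2-joins matches the paper's degree argument: an interior vertex of the path-side of a 2-join of $\overline G$ would have degree $n-3$ in $G$, which is impossible in a graph with a proper path 2-join). But there is a genuine gap in your treatment of the ``Moreover'' clause, and it is exactly where you locate ``essentially all the work''. Your proposed strategy --- take an \even\ skew partition of the contracted graph $G'$ and pull it back to one of $G$ --- is precisely the step that fails in general: cutting 2-joins are \emph{defined} as those path 2-joins whose contraction can create \an\ \even\ skew partition that does not lift back to $G$. Lemma~\ref{l.skew2join} gives the equivalence ``$G$ has \an\ \even\ skew partition iff a block does'' only for \emph{non-cutting} 2-joins, and the statement you must prove quantifies over \emph{every} proper path 2-join of $G$. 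Starting from Theorem~\ref{th.1}, as your skeleton does, nothing prevents the path 2-join you are handed from being cutting, and then no pull-back exists.

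What actually closes the gap in the paper is not that Theorem~\ref{th.th} ``eliminates cutting 2-joins'' in the abstract, but that it pins $G$ (when $G$ is non-basic, has no \even\ skew partition, and no non-path proper 2-join) into one of three concrete structures: a homogeneous 2-join, a path-cobipartite graph, or a path-double split graph. The proof of Theorem~\ref{th.case} then argues case by case: for a homogeneous 2-join, every flat path of length at least~3 runs from $C$ to $D$ and is non-cutting by the definition of homogeneous 2-joins; for a path-cobipartite graph, the unique proper path 2-join is shown not to be cutting of type~1 (since $A$, $B$ are cliques) nor of type~2 (a type~2 configuration would force a star cutset, contradicting the absence of \even\ skew partitions); and for a path-double split graph, one does \emph{not} show the 2-join is non-cutting at all --- instead one observes that the contraction yields another path-double split graph and invokes Lemma~\ref{l.dsg}, which says such a graph has a unique skew partition and it is not \even. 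None of this case analysis is ``inherited'' automatically from the main theorem; it is the content of Section~\ref{s:proofcase}, and your proposal does not supply it.
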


The algorithm for detecting \even\ skew partitions is now easy to
sketch. Since the \even\ skew partition is a self-complementary
notion, we may switch from the graph to its complement as often as
needed. First check whether the input graph is basic, and if so look
directly for \an\ \even\ skew partition. Else, try to decompose along
non-path 2-joins (they preserve the existence of \even\ skew
partitions).  If there are none of them, try to decompose along path
2-joins (possibly, this creates \even\ skew partitions but does not
destroy them). At the end of this process, one of the leaves of the
decomposition tree has \an\ \even\ skew partition if and only if the
root has one. Note that \an\ \even\ skew partition in a leaf may have
been created by the contraction of a cutting 2-join since such 2-joins
do exist (we are not able to recognize all of them, it seems to be a
difficult task). But Theorem~\ref{th.case} shows that when such a bad
contraction occurs, the graph has anyway \an\ \even\ skew cutset
somewhere. The proof of correctness and complexity analysis are given
in Section~\ref{algos}.

Theorem~\ref{th.case} gives a structural description of Berge graphs
that have no \even\ skew partitions: these graphs can be decomposed
along 2-joins till reaching basic graphs.  This could be used to solve
algorithmic problems for the class of Berge graphs with no \even\ skew
partitions (together with the Berge graphs recognition
algorithm~\cite{chudnovsky.c.l.s.v:reco}, our work solves the
recognition in $O(n^9)$).  This class has an unusual feature
in the field of perfect graphs: it is not closed under taking induced
subgraphs. Theorem~\ref{th.case} also gives a structural information on
every Berge graph: it can be decomposed in a first step by using only
\even\ skew partitions, and in a second step by using only 2-joins,
possibly in the complement.

Let us come back to the weak point of our recognition algorithm, which
is when it answers ``the graph has \an\ \even\ skew-partition'' using
blindly some decomposition theorem. This weakness is the reason why we
are not able to find explicitly \an\ \even\ skew partition when there
is one.  However, our result suggests that an explicit algorithm might
exist. The proof of Theorem~\ref{th.0} or Theorem~\ref{th.1} might
contain its main steps. However, we would like to point out that if
someone manage to read algorithmically the proof of Theorem~\ref{th.0}
or of Theorem~\ref{th.1}, (s)he will probably end up with an algorithm
that given a graph, either finds an odd hole/antihole, or certifies
that the graph is basic, or finds some decomposition. If the
decomposition found is not \an\ \even\ skew partition, the algorithm
will probably not certify that there is no \even\ skew partition in
the graph, and thus \ESPD\ will not be solved entirely. To solve it,
one will have to think about the detection of \even\ skew partitions
in basic graphs, and in graphs having a 2-join: this is what we are
doing here.  Thus an effective algorithm might have to use much of the
present work.

This paper answers in some respect questions asked by several authors,
for instance the problem of how 2-joins and \even\ skew partitions
interact in Berge graphs.  See~\cite{problemperfect} where a section
is devoted to open problems about skew partitions. One of them is the
fast detection of general skew partitions in Berge graphs. This has
been solved for basic graphs by Reed~\cite{reed:skewhist}, so a
decomposition based approach might work. Moreover, at first glance,
general skew partitions seem easier than \even\ skew partitions: in
general graphs the first ones are
polynomial~\cite{figuereido.k.k.r:sp} to detect while the second ones
are NP-hard.  However, in Section~\ref{why} we explain why our work
does not improve the general skew partition detection in Berge graphs,
why we are not able to prove Theorem~\ref{th.case} with ``skew
partition'' instead of ``\even\ skew partition''. Rather than a
failure, we consider this as a further indication that \even\ skew
partition is the relevant decomposition for Berge graphs.

\section{The decomposition theorem and a sketch of its proof}
\label{decth}

As stated in Section~\ref{algomotiv}, our main problem for the
detection of \even\ skew partitions is the possibility of path 2-joins
in Berge graphs. One could hope that these 2-joins are actually not
necessary to decompose Berge graphs. Theorem~\ref{th.2} indicates that
such a hope is realistic, but this theorem allows non-\even\ skew
partitions, so it is useless for our purpose. What we would like is to
prove something like Theorem~\ref{th.1} with ``non-path 2-join''
instead of ``2-join''. Let us call this statement our
\emph{conjecture}. A simple idea to prove the conjecture would be to
consider a minimum counter-example $G$, that is: a Berge graph,
non-basic, with no \even\ skew partition, and no non-path 2-join. Such
a graph must have a path 2-join by Theorem~\ref{th.1} (possibly after
taking the complement). Here is why we need Theorem~\ref{th.1} in our
proof.  The idea is now to use this path 2-join to build a smaller
graph $G'$ that is also a counter-example, and this is a contradiction
which proves the conjecture.

So, given $G$ with its path 2-join, how can we build a smaller graph
that will have ``almost'' the same structure as $G$ ?  Obviously,
this can be done by contracting the path-side of the 2-join. Let us
call $G_c$ the graph that we obtain. It has to be proved that $G_c$ is
still a counter-example to the conjecture. But we know that this can
be false. Indeed, if the path 2-join of $G$ is cutting,
\an\ \even\ skew partition can be created in $G_c$, so $G_c$ is not a
counter-example. We need now to be more specific and to define cutting
2-joins.

\begin{figure}[ht]
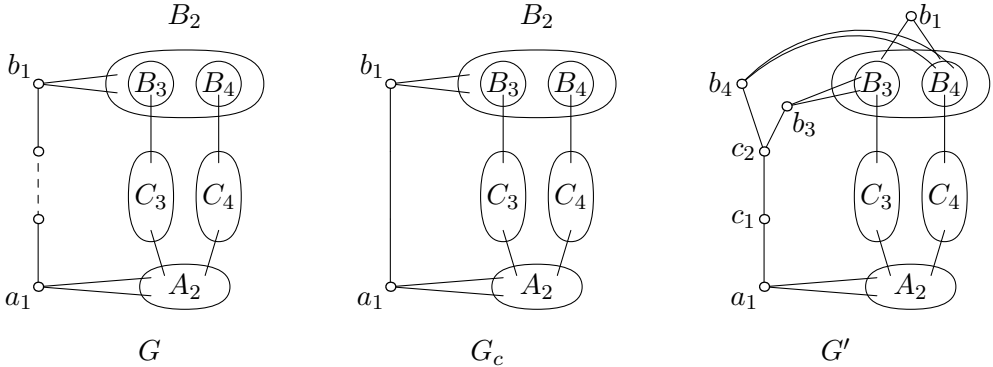

  \begin{center}
    \begin{tabular}{ccc}
      \includegraphics{evenskew.15}\rule{1em}{0ex}&
      \rule{1em}{0ex}\includegraphics{evenskew.19}\rule{1em}{0ex}&
      \rule{1em}{0ex}\includegraphics{evenskew.16}\\
      \rule{0em}{3ex}$G$&\rule{0em}{3ex}$G_c$&\rule{0em}{3ex}$G'$
    \end{tabular}
  \end{center}
  \caption{A graph $G$ with a cutting 2-join of type~1 and the
    associated graph $G'$\label{fig:cut1}}
\end{figure}

A 2-join is said to be \emph{cutting of type~1} if it has a split
$(X_1,$ $X_2,$ $A_1,$ $B_1,$ $A_2,$ $B_2)$ such that:

  \begin{enumerate}
  \item
    $(X_1, X_2)$ is a path 2-join with path-side $X_1$; 

  \item
    $G[X_2 \setminus A_2]$ is disconnected.
  \end{enumerate}

In Fig.~\ref{fig:cut1} the structure of a graph $G$ with a cutting
2-join of type~1 is represented. Obviously, after contracting the
path-side into an edge $a_1b_1$, we obtain a graph $G_c$ with a
potentially-\even\ skew cutset $\{a_1, b_1\} \cup A_2$ that separates
$C_3\cup B_3$ from $C_4 \cup B_4$. So, how can we find a graph smaller
than $G$ that is still a counter-example to the conjecture ? Our idea
is to build the graph $G'$, also represented in Fig.~\ref{fig:cut1}. A
formal definition of $G'$ is given in
Subsection~\ref{mainproofCase1}. If we count vertices, $G'$ is not
``smaller'' than $G$, but in fact, by ``minimum counter-example'' we
mean counter-example with a minimum number of path 2-joins. We can
prove that $G'$ is smaller in this sense (this is not trivial because
we have to prove that path 2-joins cannot be created in $G'$, but
clearly, one path 2-join is destroyed in $G'$). We can also prove that
$G'$ is a counter-example which gives the desired contradiction. This
is the first case of the main proof, described in
Subsection~\ref{mainproofCase1}.

\begin{figure}
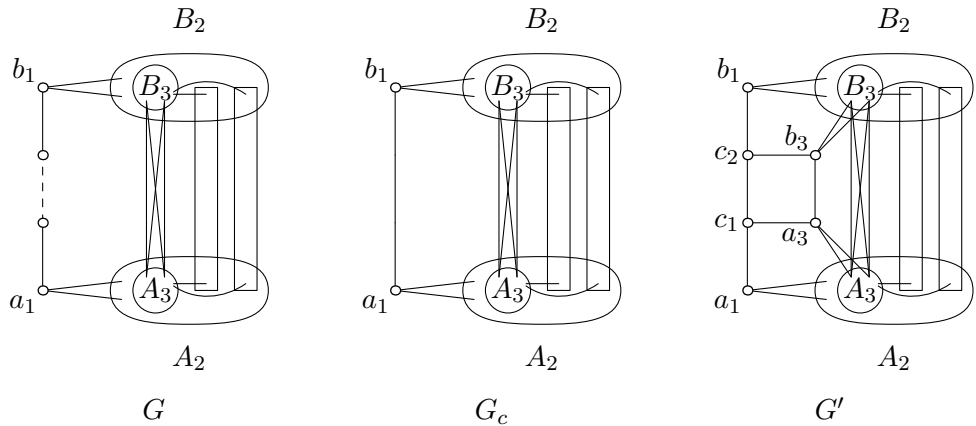

  \begin{center}
    \begin{tabular}{ccc}
      \includegraphics{evenskew.17}\rule{1em}{0ex}&
      \rule{1em}{0ex}\includegraphics{evenskew.20}\rule{1em}{0ex}&
      \rule{1em}{0ex}\includegraphics{evenskew.18}\\
      \rule{0em}{3ex}$G$&\rule{0em}{3ex}$G_c$&\rule{0em}{3ex}$G'$
    \end{tabular}
  \end{center}
  \caption{A graph $G$ with a cutting 2-join of type~2 and the
    associated graph $G'$\label{fig:cut2}}
\end{figure}

Unfortunately, there is another kind of path 2-join that can create
\even\ skew partitions when contracting the path-side.  A 2-join is
said to be \emph{cutting of type~2} if it has a split $(X_1,$ $X_2,$
$A_1,$ $B_1,$ $A_2,$ $B_2)$ such that there exist sets $A_3$, $B_3$
satisfying:

  \begin{enumerate}
  \item \label{cond.first}
    $(X_1, X_2)$ is a path 2-join with path-side $X_1$; 
  \item
    $A_3 \neq \emptyset$, $B_3 \neq \emptyset$, $A_3 \subset A_2$,
    $B_3 \subset B_2$;
  \item
    $A_3$ is complete to $B_3$;
  \item
    every outgoing path from $B_3\cup \{a_1\}$ to $B_3 \cup\{a_1\}$
    (resp. from $A_3\cup \{b_1\}$ to $A_3 \cup\{b_1\}$) has even
    length;
  \item \label{cond.penul} every antipath of length at least~2 with
    its ends outside  $B_3\cup \{a_1\}$ (resp. $A_3 \cup\{b_1\}$)
    and its interior in $B_3 \cup\{a_1\}$ (resp. $A_3\cup \{b_1\}$)
    has even length;
  \item \label{cond.disconnect}
    $G \setminus (X_1 \cup A_3 \cup B_3)$ is disconnected.
  \end{enumerate}

In Fig.~\ref{fig:cut2}, the structure of a graph $G$ with a cutting
2-join of type~2 is represented. After contracting the path-side into an
edge $a_1b_1$, we obtain a graph $G_c$ with \an\ \even\ skew cutset $\{a_1,
b_1\} \cup A_3 \cup B_3$. It is ``skew'' because $a_1\cup B_3$ is
complete to $b_1 \cup A_3$, and it is \even\ by the parity constraints
in the definition.  How can we find a graph smaller than $G$ that is
still a counter-example to the conjecture ?  Again, we find a graph
$G'$, also represented in Fig.~\ref{fig:cut2} and described formally in
Subsection~\ref{mainproofCase2}.  Again, we prove that $G'$ is a
smaller counter-example, a contradiction. This is the second case of
the main proof, described in Subsection~\ref{mainproofCase2}.

\label{why}

As mentioned in Section~\ref{algomotiv} we are not able to prove
something like Theorem~\ref{th.case} with ``skew partition'' instead
of ``\even\ skew partition''. Following our frame, we would have to
give up the conditions on the parity of paths in the definition of
cutting 2-joins of type~2. But then we would not be able to prove that
$G'$ is Berge, making the whole proof collapse. Also we would like to
explain a little twist in our proof. In fact Case~2 is not ``the
2-join is cutting of type 2'', but something slightly more general:
``the 2-join is such that there are sets $A_3$, $B_3$ satisfying the
items~\ref{cond.first}--\ref{cond.penul} of the definition of cutting
2-joins of type~2''. Indeed, in Case~2, we do not need to use the last
item. And this has to be done, since in Case~3, at some place where we
need a contradiction, we find a 2-join that is almost of type~2, that
satisfies items~\ref{cond.first}--\ref{cond.penul}, and not the last
one.

A 2-join is said to be \emph{cutting} if it is either cutting of
type~1 or cutting of type~2. So, in our main proof we can get rid of
cutting 2-joins as explained above.  In
Subsection~\ref{pathskewoverlap} we study how a 2-join and
\an\ \even\ skew partition can overlap in a Berge graph. The main
result of this subsection if Lemma~\ref{l.skew2join}. It says that
when contracting the path side of a non-cutting 2-join, no \even\ skew
partition is created. So if we come back to our main proof, we can at
last build $G'$ ``naturally'', that is by contracting the path-side of
the 2-join in $G$. This is the third case of the main proof, described in
Subsection~\ref{mainproofCase3}. Transforming $G$ into $G'$ will not
create a \even\ skew partition by Lemma~\ref{l.skew2join}. We need to
prove also that no 2-join is created. This might happen but then, an
analysis of the adjacencies in $G$ shows that $G$ has a 2-join that is
almost cutting of type~2 (``almost'' because the last item of the
definition of cutting 2-joins of type~2 does not hold). This is a
contradiction since we are in Case~3. But the contraction may create
other nasty things.

For instance suppose that $G$ is obtained by subdividing an edge of
the complement of a bipartite graph. Then, contracting the path-side
of the path 2-join of $G$ yields the complement of a bipartite
graph. This is why we have to view path-cobipartite graphs as basic in
our main theorem. Note that Chudnovsky also has to consider these
graphs as basic in her Theorem~\ref{th.2}.
 
Suppose now that $G$ is obtained from a double split graph $H$ by
subdividing matching edges of $H$ into paths of odd length. Such a
graph has a path 2-join whose contraction may yield a basic graph,
namely a double split graph. Let us define this more precisely.

We call \emph{flat path of a graph $H$} any path whose interior
vertices all have degree~2 in $H$ and whose ends have no common
neighbors outside  the path.  A \emph{path-double split graph} is
any graph $H$ that may be constructed as follows.  Let $m,n \geq 2$ be
integers. Let $A = \{a_1, \dots, a_m\}$, $B= \{b_1, \dots, b_m\}$, $C=
\{c_1, \dots, c_n\}$, $D= \{d_1, \dots, d_n\}$ be four disjoint
sets. Let $E$ be another possibly empty set disjoint from $A$, $B$,
$C$, $D$. Let $H$ have vertex set $A\cup B \cup C \cup D \cup E$ and
edges in such a way that:

\begin{itemize}
\item for every  vertex $v$ in $E$, $v$ has degree~2 and there exists
  $i \in \{1, \dots m\}$ such that $v$ lies on a
  path of odd length from $a_i$ to $b_i$; 
\item 
  for $1 \leq i \leq m$, there is a unique path of odd length
  (possibly~1) between $a_i$ and $b_i$ whose interior is in $E$.
  There are no edges between $\{a_i, b_i\}$ and $\{a_{i'}, b_{i'}\}$
  for $1\leq i < i' \leq m$;
\item 
  $c_j$ is non-adjacent to $d_j$ for $1 \leq j \leq n$. There are all
  four edges between $\{c_j, d_j\}$ and $\{c_{j'}, b_{j'}\}$ for
  $1\leq j < j' \leq n$;
\item
  there are exactly two edges between $\{a_i, b_i\}$ and $\{c_j,
  d_j\}$ for $1\leq i \leq m$, $1 \leq j \leq n$ and these two
  edges are disjoint.
\end{itemize}

Let us come back to our main proof. Adding path-cobipartite graphs and
path-double split graphs as basic graphs in our conjecture is not
enough. Because we need to prove that when contracting a path 2-join,
no 2-join in the complement is created, and that the counter-example
is not transformed into the complement of the line-graph of a
bipartite graph. And, unfortunately, both things may happen. But a
careful analysis of these phenomenons, done in the third case of the
main proof, Subsection~\ref{mainproofCase3}, shows that such graphs
have a special structure that we must add to our conjecture: a
\emph{homogeneous 2-join} is a partition of $V(G)$ into six non-empty
sets $(A,$ $B,$ $C,$ $D,$ $E,$ $F)$ such that:

\begin{itemize} 
\item 
  $(A, B, C, D, E, F)$ is a homogeneous pair;
\item 
  every vertex in $E$ has degree~2 and belongs to a flat path of odd
  length with an end in $C$, an end in $D$ and whose interior is in
  $E$;
 \item 
   every flat path outgoing from $C$ to $D$ and whose interior is in
   $E$ is the path-side of a non-cutting proper 2-join of $G$.
\end{itemize}

\noindent Now, we have defined all the new basic classes and
decompositions that we need.  Our main result is the following:

\begin{theorem}
  \label{th.th}
  Let $G$ be a Berge graph. Then either $G$ is basic, or one of $G,
  \overline{G}$ is a path-cobipartite graph,  or one of $G,
  \overline{G}$ is a path-double split graph, or one of $G,
  \overline{G}$ has a homogeneous 2-join, or one of $G, \overline{G}$
  has a non-path proper 2-join, or $G$ has \an\  \even\  skew partition.
\end{theorem}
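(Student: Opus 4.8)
The plan is to argue by contradiction, following exactly the frame sketched in Section~\ref{decth}. Suppose Theorem~\ref{th.th} is false, and among all counter-examples choose a Berge graph $G$ that is a counter-example minimizing the number of proper path 2-joins of $G$ (or of $\overline{G}$); call this quantity the \emph{weight} of~$G$. Thus $G$ is Berge, neither $G$ nor $\overline{G}$ is basic or path-cobipartite or a path-double split graph, neither has a homogeneous 2-join or a non-path proper 2-join, and $G$ has no \even\ skew partition. By Theorem~\ref{th.1}, one of $G, \overline{G}$ has a proper 2-join; since a non-path proper 2-join is excluded, we may assume (replacing $G$ by $\overline{G}$ if needed) that $G$ has a proper \emph{path} 2-join, with split $(X_1, X_2, A_1, B_1, A_2, B_2)$ and path-side $X_1$. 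The goal in each case is to produce from $G$ a Berge graph $G'$ of strictly smaller weight that is still a counter-example — contradiction.

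The proof then splits into three cases according to how "cutting" this path 2-join is, and the cases are handled in the three subsections referenced as \ref{mainproofCase1}, \ref{mainproofCase2}, \ref{mainproofCase3}. Case~1 is when the 2-join is cutting of type~1, i.e.\ $G[X_2\setminus A_2]$ is disconnected (symmetrically one could be in the situation with $B_2$); here one builds the graph $G'$ pictured in Fig.~\ref{fig:cut1}, roughly by duplicating the path-side along the disconnection of $X_2$ so that the troublesome skew cutset is no longer a cutset. Case~2 is when there exist $A_3, B_3$ satisfying items~\ref{cond.first}--\ref{cond.penul} (and possibly~\ref{cond.disconnect}) of the definition of cutting 2-joins of type~2; here one builds the $G'$ of Fig.~\ref{fig:cut2}. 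Case~3 is the remaining ("non-cutting enough") case, where one builds $G'$ simply by contracting the path-side $X_1$. In every case one must verify four things: (a) $G'$ is Berge — this is where the parity conditions in the definition of cutting 2-joins, and the hypothesis that $G$ is Berge, are essential, and it is the reason the theorem cannot be proved with ordinary skew partitions; (b) $G'$ has strictly smaller weight than $G$, i.e.\ one path 2-join is genuinely destroyed and no new path 2-join is created in $G'$ or $\overline{G'}$ — proving no new path 2-join appears requires a careful analysis of adjacencies and is exactly where, in Case~3, one is forced to exhibit a 2-join of $G$ satisfying items~\ref{cond.first}--\ref{cond.penul}, contradicting that we are no longer in Case~2; (c) $G'$ has no \even\ skew partition — in Case~3 this is precisely Lemma~\ref{l.skew2join}, which says contracting the path-side of a non-cutting 2-join creates no \even\ skew partition, and in Cases~1 and~2 it follows from a direct analysis of how skew partitions of $G'$ pull back to $G$; and (d) $G'$ is not basic, not path-cobipartite, not path-double split, and has no homogeneous 2-join and no non-path proper 2-join — the possibility that $G'$ lands in one of the extra basic classes or acquires a homogeneous 2-join is exactly what forces us to have introduced path-cobipartite graphs, path-double split graphs, and homogeneous 2-joins into the statement in the first place, so here one shows that if $G'$ were, say, path-cobipartite then $G$ itself was already path-cobipartite or had a homogeneous 2-join.

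The main obstacle is step~(b) together with step~(d): controlling the global structure of $G'$ after the surgery. It is easy to destroy one path 2-join, but one must rule out that the contraction/duplication secretly creates another path 2-join (possibly in the complement), turns the graph into the complement of a line-graph of a bipartite graph, or produces a homogeneous 2-join — and each of these "nasty things" must instead be traced back to structure already present in~$G$, which then either contradicts minimality of the weight or contradicts the hypothesis that $G$ is not in one of the listed classes. This book-keeping, carried out case by case in Subsections~\ref{mainproofCase1}--\ref{mainproofCase3}, together with the Berge-ness check in step~(a) using the parity hypotheses on cutting 2-joins, is the technical heart of the argument; the use of Theorem~\ref{th.1} to get a path 2-join in the first place is the one place an external deep result is invoked.
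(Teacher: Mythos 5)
Your proposal is correct and follows essentially the same route as the paper: a minimal counter-example, Theorem~\ref{th.1} to extract a proper path 2-join, and the same three-case surgery with the same verification obligations (Bergeness, no \even\ skew partition, no new decompositions or basic structure, strict decrease of the measure). The only point where you diverge from the paper's actual implementation is the induction measure: the paper does not count proper path 2-joins but uses $f(G)+f(\overline{G})$, the number of maximal flat paths of length at least~3 summed over the graph \emph{and} its complement — the symmetrization is needed because the argument complements $G$ freely, and counting maximal flat paths (rather than 2-joins) is what makes the decrease in Cases~1--3 cleanly checkable.
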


Of course, in the proof sketched above, the graph $G$ is a
counter-example to Theorem~\ref{th.th}, not to the original
conjecture: ``Theorem~\ref{th.1} where path 2-joins are not
allowed''. So we need to be careful that our construction of graphs
$G'$ in cases~1, 2, 3 does not create a homogeneous 2-join and does not
yield a path-double split graph or a path-cobipartite graph. This
might have happened, and we would then have had to classify the
exceptions by defining new basic classes and decompositions, and this
would have lead us to a perhaps endless process. Luckily this process
ends up after just one step.

\begin{figure}[p]
  \begin{center}
    \includegraphics{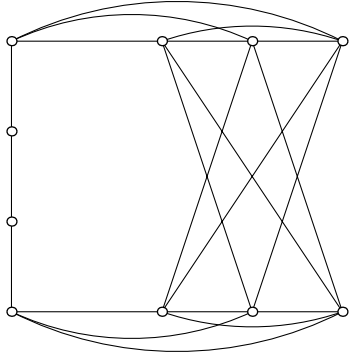}
    \caption{A path-cobipartite graph\label{fig:contrex1}}
  \end{center}
\end{figure}

\begin{figure}[p]
  \begin{center}
    \includegraphics{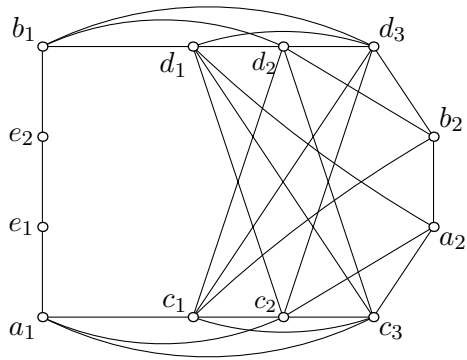}
    \caption{A path-double split graph\label{fig:contrex2}}
  \end{center}
\end{figure}

\begin{figure}[p]
  \begin{center}
    \includegraphics{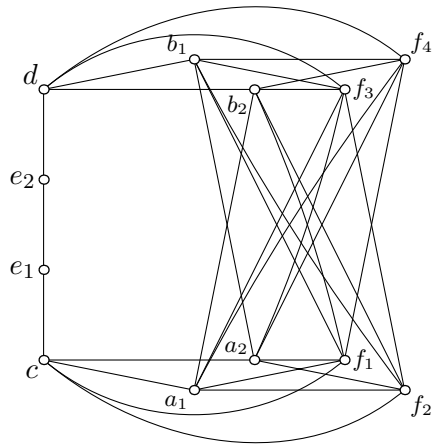}
    \caption{A graph that has a homogeneous 2-join $(\{a_1, a_2\},$
      $\{b_1, b_2\},$ $\{c\}, \{d\},$ $\{e_1, e_2\},$ $\{f_1, f_2,$ $f_3,
      f_4\})$\label{fig:contrex3}}
  \end{center}
\end{figure}

Theorem~\ref{th.th} generalizes Theorems~\ref{th.0},~\ref{th.1}
and~\ref{th.2}: path-cobipartite graphs may be seen either as graphs
having a proper path 2-join (Theorems~\ref{th.0} and~\ref{th.1}) or as
a new basic class (Theorem~\ref{th.2}). Path-double split graphs may
be seen as graphs having a proper path 2-join (Theorems~\ref{th.0}
and~\ref{th.1}) or as graphs having a non-\even\ skew partition
(Theorem~\ref{th.2}). And graphs having a homogeneous 2-join may be
seen as graphs having a homogeneous pair (Theorems~\ref{th.2} and
perhaps~\ref{th.0}) or as graphs having a proper path 2-join
(Theorems~\ref{th.1} and perhaps~\ref{th.0}). Formally all these
remarks are not always true: it may happen in special cases that
path-cobipartite graphs and path-double split graphs have no proper
2-join because the ``proper'' condition fails. But such graphs are
established in Lemma~\ref{l.thimplies} to be basic or to have
\an\ \even\ skew partition.

Note also that our new basic classes and decomposition yield
counter-examples to reckless extensions of Theorems~\ref{th.1}
and~\ref{th.2}. This needs a careful checking not worth doing here,
but let us mention it. The three graphs represented in
Fig.~\ref{fig:contrex1}, \ref{fig:contrex2}, \ref{fig:contrex3} are
counter-examples to our original conjecture, that is the extension of
Theorem~\ref{th.1} where path 2-joins are not allowed. Path-double
split graphs yield counter-examples to Theorem~\ref{th.2} with
``\even\ skew partition'' instead of ``skew partition'' (see
Fig.~\ref{fig:contrex2}). Graphs with a homogeneous 2-join yield
counter-examples to Theorem~\ref{th.2} where homogeneous pairs are not
allowed (see Fig.~\ref{fig:contrex3}). This shows that
Theorems~\ref{th.1},~\ref{th.2} are in a sense best possible, and that
to improve them, we need to do what we have done: add more basic
classes and decomposition. The three graphs represented in
Fig.~\ref{fig:contrex1}, \ref{fig:contrex2}, \ref{fig:contrex3} also
show that path cobipartite graphs, path-double split graphs and
homogeneous 2-join must somehow appear in our theorem, that is also in
a sense best possible.

This work suggests an algorithm for \ESPD\ with no reference to a new
decomposition theorem. Indeed, the graph $G'$ represented in
Fig.~\ref{fig:cut1} (resp. in Fig.~\ref{fig:cut2}) is a good candidate
to serve as a block of a cutting 2-joins of type~1 (resp. of
type~2). The fact that $G'$ is bigger than $G$ is not really a
problem, since the number of path 2-joins in a graph may be an
ingredient of a good notion of size. So, an algorithm might try to
deal with path 2-joins by constructing the appropriate block when the
2-join is recognized to be cutting. In fact this was our original idea
but it fails: we are not able to recognize cutting 2-joins of type~2.
To do this, we would have to guess somehow the sets $A_3, B_3$. But
this seems to be exactly the problem of detecting \even\ skew
partitions, so we are sent back to our original question. Perhaps an
astute recursive call to the algorithm would finally bypass this
difficulty, at the possible expense of a worse running time. Anyway,
we prefer to proceed as we have done, since a new decomposition for
Berge graphs is valuable in itself.

\section{Lemmas}
\label{lemmas}

The following is a useful characterization of line-graphs of bipartite
graphs:

\begin{theorem}[Harary and Holzmann \cite{harary.holzmann:lgbip}]
  \label{th.lgbg}
  $G$ is the line-graph of a bipartite graph if and only if $G$
  contains no odd hole, no claw and no diamond as induced subgraphs. 
\end{theorem}

\begin{figure}[h]
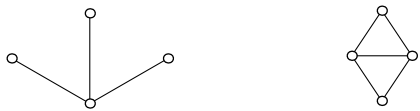

  \center
  \includegraphics{evenskew.6} \hspace{2cm}
  \includegraphics{evenskew.3}
  \caption{A claw  and a diamond\label{fig.cd}}
\end{figure}

The following fact is clear and useful:

\begin{lemma}
  \label{espcomp}
  If $(A, B)$ is \an\  \even\  skew partition of a graph $G$ then $(B, A)$
  is \an\  \even\  skew partition of $\overline{G}$. In particular, a graph
  $G$ has \an\  \even\  skew partition if and only if $\overline{G}$ has \an\ 
  \even\  skew partition.
\end{lemma}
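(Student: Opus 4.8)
The plan is to unwind the definitions and exploit two elementary facts about complementation. For any $S\subseteq V(G)$ one has $\overline{G}[S]=\overline{G[S]}$, so $G[S]$ is disconnected if and only if $\overline{G}[S]$ is not anticonnected, and $G[S]$ is not anticonnected if and only if $\overline{G}[S]$ is disconnected. Moreover, an induced path of $G$ on a vertex set $S$ is exactly an antipath of $\overline{G}$ on $S$, and conversely, and in both directions the length is preserved (recall the convention that the length of an antipath is the length of the path it complements).

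First I would check that $(B,A)$ is a skew partition of $\overline{G}$: since $(A,B)$ is a skew partition of $G$, the set $A$ induces a disconnected graph in $G$ and $B$ induces a non-anticonnected graph in $G$; by the facts above, $A$ induces a non-anticonnected graph in $\overline{G}$ while $B$ induces a disconnected graph in $\overline{G}$, which is precisely what is required for $(B,A)$ to be a skew partition of $\overline{G}$. Then I would verify the two parity conditions. Every induced path of $\overline{G}$ of length at least~$2$ with ends in $A$ and interior in $B$ is an antipath of $G$ of length at least~$2$ with ends in $A$ and interior in $B$, hence has even length because $(A,B)$ is \even\ in $G$; symmetrically, every antipath of $\overline{G}$ of length at least~$2$ with ends in $B$ and interior in $A$ is an induced path of $G$ with the same ends and interior, hence also has even length. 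These are exactly the conditions for $(B,A)$ to be \an\ \even\ skew partition of $\overline{G}$.

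Finally, the ``in particular'' clause is immediate: if $G$ has \an\ \even\ skew partition then so does $\overline{G}$ by what precedes, and the converse follows by applying the same statement to $\overline{G}$ and using $\overline{\overline{G}}=G$. There is no real obstacle in this proof; the only points needing a little care are keeping straight the convention for the length of an antipath, so that parities are genuinely preserved under complementation, and remembering that the two sides of a skew partition exchange their roles (``disconnected'' versus ``not anticonnected'') when one passes to the complement.
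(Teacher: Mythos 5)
Your proof is correct and is exactly the definitional verification the paper has in mind: the paper states this lemma without proof, calling it "clear," and your unwinding of the complementation facts (paths/antipaths swap, disconnected/not-anticonnected swap, lengths preserved) is the intended argument. In particular you correctly track that the two parity conditions in the definition of a \even\ skew partition exchange roles under complementation.
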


A \emph{star} in a graph is a set of vertices $B$ such that there is a
vertex $x$ in $B$, called a \emph{center} of the star, seeing every
vertex of $B \setminus x$. Note that a star cutset of size at
least~2 is a skew cutset.

\begin{lemma}
  \label{l.starcutset}
  Let $G$ be a Berge graph of size at least~4, with at least one edge
  and that is not the complement of $C_4$. If $G$ has a star cutset
  then $G$ has \an\ \even\ skew partition.
\end{lemma}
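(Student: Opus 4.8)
Proof proposal for Lemma 1.6 (the star cutset lemma).

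The plan is to reduce the whole lemma to one clean sub-case. I would first prove:

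\emph{Sub-claim.} If $G$ is Berge, $v$ is a non-isolated vertex, and $G\setminus N[v]$ is disconnected, then $(V(G)\setminus N[v],\,N[v])$ is \an\ \even\ skew partition.

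Indeed, it is a skew partition: $V(G)\setminus N[v]$ is disconnected by hypothesis, and $N[v]$ is not anticonnected because $v$ is complete to $N(v)\neq\emptyset$, so $v$ is isolated in $\overline{G[N[v]]}$. For \even ness I would use the criterion recalled in Section~\ref{algomotiv}: $(A,B)$ is \even\ iff the graph $G^+$ obtained by adding a vertex $y$ complete to $B$ and anticomplete to $A$ is still Berge. Since $G$ is Berge, an odd hole or odd antihole of $G^+$ must pass through $y$. An odd hole through $y$ is $y\tp p\tp P\tp q\tp y$ with $p,q\in N[v]$ and $P$ an induced path of length $\geq 3$ with interior in $V(G)\setminus N[v]$; both $p$ and $q$ must differ from $v$ (else $v$ would be adjacent to a non-consecutive vertex of the hole), so $p,q\in N(v)$, and then $v\tp p\tp P\tp q\tp v$ is an induced odd cycle of $G$ of length $\geq 5$, impossible. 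The antihole case is the same argument read in $\overline{G}$ (also Berge): it produces an induced odd cycle of $\overline{G}$. This disposes of all the parity bookkeeping once and for all.

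With the Sub-claim in hand, the rest is a hunt for a suitable $v$. If $G$ or $\overline{G}$ is disconnected I would settle the lemma directly: a disconnected Berge graph with at least one edge and at least four vertices that is not $\overline{C_4}$ has \an\ \even\ skew partition (take $B$ to be an edge lying in one component and chosen so that the rest is still disconnected --- this is exactly where $2K_2=\overline{C_4}$ must be excluded), and by Lemma~\ref{espcomp} we may then assume both $G$ and $\overline{G}$ are connected; in particular there is no universal vertex, so $N[v]\neq V(G)$ for all $v$. Next let $B$ be the star cutset, with center $x$. If $|B|=1$ then $x$ is a cut vertex, and apart from a short list of degenerate configurations (handled by hand) one can replace $B$ by a star cutset of size $\geq 2$; so assume $|B|\geq 2$. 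If $N[x]$ is a cutset we are done by the Sub-claim. Otherwise $V(G)\setminus N[x]$ is connected, hence contained in a single component $D_1$ of $G\setminus B$, and every other component $D_2,\dots,D_k$ of $G\setminus B$ lies in $N(x)$; fixing such a $D_2$, the set $N(D_2)$ of its neighbours is contained in $B$ and contains $x$, so $N(D_2)$ is again a star cutset with center $x$ and after passing to it every vertex of $B$ has a neighbour in $D_2$. The remaining work is to use this configuration, together with the absence of odd holes, either to locate a vertex $v$ (typically inside some $D_i$, using that its neighbourhood is confined to $D_i\cup B$) with $G\setminus N[v]$ disconnected, or to exhibit \an\ \even\ skew cutset explicitly in the small residual cases (of which $P_4$ is the prototype: $B$ an edge $\{x,b\}$ with a private neighbour of $x$ missing $b$ and a private neighbour of $b$ missing $x$).

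I expect this final step --- analysing the situation when no closed neighbourhood separates $G$ --- to be the main obstacle. The point is that the naive skew cutset $B$ may genuinely fail the parity conditions (an odd path with ends in $B$ and interior in one component of $G\setminus B$, or an odd antipath with ends outside $B$ and interior inside $B$), and the only robust way I see to kill such a path is to pass to a set of the form $N[v]$, which is not always available. Controlling the parities therefore forces one to track, via the no-odd-hole hypothesis, exactly how $x$ (and more generally the vertices of $B$) attach to the components of $G\setminus B$; it is this combinatorial bookkeeping, rather than the construction of the \even\ skew partition itself (which in the end is always a closed neighbourhood or a very small ad hoc set), that carries the weight of the argument.
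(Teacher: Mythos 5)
Your Sub-claim is correct, and its proof (substituting $v$ for the added apex vertex $y$ in any odd hole or odd antihole of $G^{+}$) is sound. But the lemma has not been reduced to the Sub-claim, and the step you yourself flag as ``the main obstacle'' is a genuine gap rather than bookkeeping: when the star cutset $B$ with center $x$ is a proper subset of $N[x]$ and $N[x]$ is not itself a cutset (your case where $V(G)\setminus N[x]$ sits inside a single component $D_1$), no closed neighbourhood need separate the graph at all, so your method produces no candidate $v$, and the ``small residual cases'' are in fact the whole problem. Your diagnosis that the only robust way to kill a bad path is to pass to a set of the form $N[v]$ is precisely where the plan goes wrong.

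The paper closes this in one line by taking $B$ to be a star cutset of \emph{maximum size}. Maximality forces every component of $G\setminus B$ of size at least~2 to contain no neighbour of the center $x$ (otherwise such a neighbour could be added to $B$, keeping it a star cutset). Hence for any path of odd length at least~3 with ends in $B$ and interior in a component of $G\setminus B$, that interior has at least two vertices, lies in a ``big'' component, and is therefore anticomplete to $x$; adding $x$ yields an odd hole. A bad antipath plus $x$ likewise yields an odd antihole. So $B$ itself is the balanced skew cutset: the apex argument of your Sub-claim works for any star cutset whose center is anticomplete to the interiors of the offending paths, and maximality delivers that for free without requiring $B=N[x]$. (The hypothesis excluding $\overline{C_4}$ is consumed in the residual case $|B|=1$, where the paper shows a maximum star cutset of size~1 forces $G=\overline{C_4}$.) To complete your proof you would need to import this maximality argument; as written, everything after ``the remaining work is to use this configuration'' is the actual content of the lemma and is missing.
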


\begin{proof}
  Let $B$ be a star cutset of $G$. Let us suppose $|B|$ being maximum
  with that property.  Let $A_1, A_2$ be such that $A_1, A_2, B$ are
  pairwise disjoint, there are no edges between $A_1, A_2$, and $A_1
  \cup A_2 \cup B = V(G)$.

  Suppose first that $B$ has size~1. Thus up to  symmetry $|A_1| \geq
  2$ since $G$ has at least~4 vertices. There is no edge between $B$
  and $A_1$ for otherwise such an edge would be a cutset which contradicts
  $|B|$ being maximum. There is no edge in $A_2$ since such an edge
  would be a cutset of $G$.  If there is no edge in $A_1$, any edge of
  $G$ is a cutset of $G$.  So, there is an edge $e$ in $A_1$. So,
  $|A_1|=2$ and $B$ is complete to $A_2$ for otherwise, $e$ would be a
  cutset of $G$. So, $|A_2| =1$ for otherwise, any edge between $B$
  and $A_2$ would be a cutset edge of $G$. Now, we observe that $G$ is the
  complement of $C_4$.

  If $B$ has size at least~2 then $B$ is a skew cutset of $G$.  Let
  $x$ be a center of $B$. By maximality of $B$, every component of $G
  \setminus B$ has either size~1 or contains no neighbor of $x$. Thus,
  if $P$ is a path that makes the skew cutset $B$ non-\even, then $P
  \cup x$ induces an odd hole of $G$. If $Q$ is an antipath that makes
  the skew cutset $B$ non-\even, then $Q \cup x$ induces an odd
  antihole of $G$.
\end{proof}

The following lemma is useful to establish formally that
Theorem~\ref{th.th} really implies Theorems~\ref{th.0},~\ref{th.1}
and~\ref{th.2}. But we also need it at several places in the next
section.

\begin{lemma}
  \label{l.thimplies}
  Let $G$ be a Berge graph. Then: 
  
  \begin{itemize}
  \item
    If $G$ has a flat path $P$ of length at least~3 then either $G$ is
    bipartite, or $G$ has \an\  \even\  skew partition or $P$ is the
    path-side of a proper path 2-join of $G$.
  \item
    If $G$ is a path-cobipartite graph, a path-double split graph or
    has a homogeneous 2-join, then either $G$ has a proper 2-join or
    $G$ has \an\ \even\ skew partition or $G$ is a bipartite graph, or
    the complement of a bipartite graph, or a double split graph.
  \end{itemize}
\end{lemma}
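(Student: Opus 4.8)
The plan is to handle the two bullet items separately, the first being a genuine lemma about flat paths and the second being essentially a case check over the three new classes, in each case reducing to the first bullet or to Lemma~\ref{l.starcutset}.

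For the first bullet, let $P = p_0 \tp p_1 \tp \cdots \tp p_k$ be a flat path of length $k \geq 3$, so $p_1, \dots, p_{k-1}$ have degree~2 in $G$ and $p_0, p_k$ have no common neighbour outside $P$. Set $X_1 = \{p_1, \dots, p_{k-1}\}$ and $X_2 = V(G) \setminus X_1$, with $A_1 = \{p_1\}$, $B_1 = \{p_{k-1}\}$, $A_2 = N(p_0) \cap X_2$ (which equals $N(p_0)\setminus\{p_1\}$) and $B_2 = N(p_k)\cap X_2$. The degree-2 condition makes $(X_1,X_2)$ a split of a 2-join: the only edges between $X_1$ and $X_2$ are $p_0p_1$ and $p_{k-1}p_k$, and $A_2, B_2$ are disjoint because $p_0,p_k$ have no common neighbour outside $P$. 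What can go wrong is exactly that this 2-join fails to be \emph{proper}. If it is not connected, then some component of $G[X_2]$ misses $A_2$ or $B_2$; since $X_1$ is a path from $A_1$ to $B_1$, $G[X_1]$ is connected and meets both $A_1$ and $B_1$, so the defect is on side~$2$. But a component of $G[X_2]$ missing, say, $B_2$ together with $\{p_0\}$ (or the structure around $P$) yields a star cutset centred at $p_1$ or $p_{k-1}$, or a still smaller cutset; invoking Lemma~\ref{l.starcutset} (after disposing of the degenerate cases $G = \overline{C_4}$, $G$ with no edge, $|V(G)|<4$, which force $G$ bipartite) gives an \even\ skew partition. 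If the 2-join is connected but not substantial, then some $X_i$ has fewer than~$3$ vertices or is a path of length~$2$ of the forbidden shape; since $|X_1| = k-1 \geq 2$ and $X_1$ already contains $p_1 \in A_1$ and $p_{k-1}\in B_1$, the constraint $k\geq 3$ pushes $|X_1|\geq 2$, and one checks $X_1$ is never the forbidden length-2 path because its interior vertices have degree~2 in $G$; so the defect is that $|X_2| \leq 2$ or $X_2$ is that bad path, and in each of these tiny cases $G$ is easily seen to be bipartite (it is a short even cycle or a subdivided triangle-like object, but Bergeness kills the odd possibilities). Thus the only surviving outcome is that $(X_1,X_2)$ is a proper path 2-join with path-side $X_1$, i.e.\ $P$ is its path-side. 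I expect the main obstacle here to be the careful bookkeeping of the degenerate sub-cases where ``proper'' fails, making sure each one is either a short bipartite graph or produces a (star, hence \even\ skew) cutset.

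For the second bullet, suppose first $G$ is path-cobipartite with partition $(A,B,P)$. If $P = \emptyset$ then $G = \overline{H}$ for $H$ bipartite and we are done. Otherwise $P$ contains the interior of some odd path from $a\in A$ to $b\in B$; if that path has length~$1$ it contributes nothing, and if any such path has length~$\geq 3$ it is a flat path of $G$ (its interior vertices have degree~2, and its ends $a,b$ have no common neighbour since $a$'s neighbours lie in $A\cup P$ and $b$'s in $B\cup P$ and the two cliques $A,B$ are disjoint with only $P$-connections), so the first bullet applies and gives either bipartiteness, an \even\ skew partition, or a proper path 2-join. The only remaining case is that every such path has length exactly~$3$ and $P\neq\emptyset$; but then $G$ still has a flat path of length~$3$, so the first bullet already covered it — so in fact the ``length~$1$ everywhere'' case, $G=\overline{H}$, is the sole genuine exception. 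The path-double split case is analogous: a path-double split graph $H$ has, for each~$i$, a unique odd path from $a_i$ to $b_i$ with interior in $E$; if some such path has length~$\geq 3$ it is a flat path (interior degree~2 by construction, and $a_i,b_i$ have no common neighbour outside the path by the ``exactly two disjoint edges'' and the absence of edges between distinct $\{a_i,b_i\}$ pairs), so the first bullet gives the conclusion, noting that ``$G$ bipartite'' cannot occur because a double split graph is not bipartite so we land on the proper path 2-join or \even\ skew partition alternative; and if every such path has length~$1$ then $E=\emptyset$ and $H$ is literally a double split graph. Finally, if $G$ has a homogeneous 2-join $(A,B,C,D,E,F)$, then by definition (third condition) every flat path outgoing from $C$ to $D$ with interior in $E$ is the path-side of a non-cutting proper 2-join of $G$, so $G$ has a proper 2-join and we are done immediately — unless $E=\emptyset$, in which case $(A,B,C,D,E,F)$ is just a homogeneous pair with one part empty, contradicting the requirement that all six sets of a homogeneous pair be non-empty, so this sub-case is vacuous.

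Assembling: in every branch we have produced one of ``proper 2-join'', ``\even\ skew partition'', ``bipartite'', ``complement of bipartite'', ``double split graph'', which is exactly the stated disjunction. The real content is the first bullet; the second is a routine verification that the three new classes each either reduce to a flat path of length~$\geq 3$ or collapse to one of the three classical basic classes. The step I expect to be most delicate is confirming, in the first bullet, that when the 2-join coming from a flat path fails to be proper the resulting small cutset is genuinely a star cutset eligible for Lemma~\ref{l.starcutset} (in particular of size at least~$2$, on a graph that is not $\overline{C_4}$), rather than one of the excluded degenerate configurations.
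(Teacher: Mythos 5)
Your overall strategy is the same as the paper's: turn the flat path into a path 2-join, show that failure of properness yields either a bipartite graph (when $X_2$ is a short path) or a star cutset (when some component of $G[X_2]$ misses $A_2$ or $B_2$) and hence, via Lemma~\ref{l.starcutset}, \an\ \even\ skew partition; then reduce the second bullet to the first by exhibiting a flat path of length at least~3 in each of the three new classes (your direct appeal to the third condition in the definition of homogeneous 2-joins is a harmless shortcut relative to the paper, which routes that case through the first item as well).

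There is, however, a concrete error in your setup of the 2-join that makes the first bullet fail as written. You take the path-side to be $X_1=\{p_1,\dots,p_{k-1}\}$, the \emph{interior} of $P$, while simultaneously defining $A_2=N(p_0)\cap X_2$ and $B_2=N(p_k)\cap X_2$ as if the path-side were all of $P$; with $A_1=\{p_1\}$ the set $A_2$ would have to be $\{p_0\}$, so what you wrote down is not a split of any 2-join. Worse, even after repairing the split, with $X_1$ equal to the interior the 2-join is never substantial when $P$ has length~$3$ (then $|X_1|=2<3$), and when $P$ has length~$4$ the set $X_1$ is exactly the forbidden length-2 path from $A_1$ to $B_1$ with interior in $C_1$; your remark that ``$X_1$ is never the forbidden length-2 path because its interior vertices have degree~2'' is a non sequitur, since that configuration is defined by the shape of $G[X_1]$ relative to $A_1,B_1,C_1$, not by degrees in $G$. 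These short flat paths are the main case in every application of the lemma, and the conclusion you need is literally that $P$ itself --- not its interior --- is the path-side. The fix is the paper's choice: $X_1=V(P)$, $A_1=\{p_0\}$, $B_1=\{p_k\}$, $A_2=N(p_0)\setminus V(P)$, $B_2=N(p_k)\setminus V(P)$ (disjoint by flatness). Then $|X_1|\ge 4$, side~1 is automatically connected and substantial, and the only ways properness can fail are the two you (and the paper) analyse: a component $C$ of $G[X_2]$ missing, say, $B_2$, whose only attachment outside $C$ is then $p_0$, giving the star cutset $\{p_0\}\cup(A_2\setminus C)$; or $X_2$ inducing a path of length~1 or~2 from $A_2$ to $B_2$, in which case $G$ is an even hole and hence bipartite.
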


\begin{proof}
  Let us prove the first item. Let $P$ be a flat path of $G$ of length
  at least~3. So $(P, V(G)\setminus P)$ is a path 2-join of $G$. Let
  $(P, X_2, \{a_1\}, \{b_1\}, A_2, B_2)$ be a split of this 2-join. If
  $(P, X_2)$ is not proper, then either there is a component of $X_2$
  that does not meet any of $A_2$, $B_2$, or $X_2$ induces a path of
  length~1 or~2. In the last case, $G$ is bipartite, and in the first
  one, we may assume that there is a component $C$ of $X_2$ that does
  not meet $B_2$.  But then, $\{a_1\} \cup (A_2 \setminus C)$ is a
  star cutset of $G$ that separates $C$ from $B_2$, and so by
  Lemma~\ref{l.starcutset}, $G$ has \an\  \even\  skew partition.

  The second item follows easily: if $G$ is a path-cobipartite graph,
  then we may assume that $G$ is not the complement of a bipartite
  graph.  If $G$ is a path-double split graph then we may assume that
  $G$ is not a double split graph. In both cases, $G$ has a flat path
  of length at least~3.  If $G$ has a homogeneous 2-join then it also
  has a flat path of length at least~3. In every case, the conclusion
  follows from the first item.
\end{proof}

The following is well known for double split graphs (mentioned
in~\cite{chudnovsky.r.s.t:spgt}):

\begin{lemma} \label{l.dsg}
  A path-double split graph $G$ has exactly one skew partition and
  this skew partition is not \even. 
\end{lemma}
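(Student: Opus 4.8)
The plan is to exhibit one skew partition of $G$, check that it is not \even, and then prove it is the only one. Throughout, keep the notation $A=\{a_1,\dots,a_m\}$, $B=\{b_1,\dots,b_m\}$, $C=\{c_1,\dots,c_n\}$, $D=\{d_1,\dots,d_n\}$, $E$ of the definition of a path-double split graph, and for each $i$ let $P_i$ be the odd path of $G$ from $a_i$ to $b_i$ with interior in $E$. Two facts will be used repeatedly: every interior vertex of every $P_i$ has degree~$2$, with both neighbours on its own $P_i$; and for each $i$ the sets $N(a_i)\cap(C\cup D)$ and $N(b_i)\cap(C\cup D)$ are complementary transversals of the pairs $\{c_j,d_j\}$, so $a_i$ sees exactly one vertex of each pair and $b_i$ the other. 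The skew partition we propose is $(A\cup B\cup E,\ C\cup D)$: $G[A\cup B\cup E]$ is the disjoint union of the $m\ge2$ paths $P_i$, so it is disconnected; and $\overline{G[C\cup D]}$ is a perfect matching on the $n\ge2$ non-edges $c_jd_j$, so $G[C\cup D]$ is not anticonnected. To see it is not \even, fix any $i$, let $c$ be the neighbour of $a_i$ in $\{c_1,d_1\}$ and $d$ the neighbour of $b_i$ in $\{c_1,d_1\}$; then $P_i$ together with $c$ attached to $a_i$ and $d$ attached to $b_i$ is an induced path (the checks reduce to $c,d$ being non-adjacent, $c\notin N(b_i)$, $d\notin N(a_i)$, and $c,d$ seeing no interior vertex of $P_i$), its ends are in $C\cup D$, its interior in $A\cup B\cup E$, and its length is that of $P_i$ plus~$2$, hence odd and $\ge3$: this violates the \even\ condition.

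For uniqueness, let $(S,T)$ be any skew partition of $G$ with split $(S_1,S_2,T_1,T_2)$; the goal is $S=A\cup B\cup E$ and $T=C\cup D$, after which the first paragraph concludes the proof. The first step is to show $E\subseteq S$. If some $e\in E$ lay in $T$, say in $T_1$, then $T_2\subseteq N(e)$, so $T_2$ consists of at most the two neighbours of $e$ on the path $P_i$ through $e$; a short case analysis on which of them are in $T_2$, and on whether they are interior vertices of $P_i$ or equal $a_i$ or $b_i$, shows that $T_1\cup T_2$ always stays inside a small neighbourhood of that path --- at most three consecutive vertices of $P_i$, or $\{a_i\}$ together with part of $N(a_i)$, or the $b_i$-analogue. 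In each case $G\setminus T$ is still connected: $G$ with the interior of $P_i$ deleted is connected, $C\cup D$ minus a transversal of the pairs is connected, and the surviving end(s) of $P_i$ reach the rest of $G$. This contradicts $T$ being a cutset, so $E\subseteq S$.

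The second step, which is the heart of the argument, is $A\cup B\subseteq S$. Suppose $a_i\in T$ (the case $b_i\in T$ being symmetric), say $a_i\in T_1$. Since $T_2\subseteq N(a_i)$ and the neighbour of $a_i$ on $P_i$ is in $E\subseteq S$ whenever $P_i$ has length $\ge2$, we get $T_2\subseteq C\cup D$ --- except in the degenerate case $P_i$ of length~$1$ with $b_i\in T_2$, where $T_1\subseteq N(b_i)$ forces $T\subseteq\{a_i,b_i\}\cup C\cup D$ and a direct check shows $T$ is then either not a cutset or not skew, the non-edge $c_jd_j$ with $c_j\in N(a_i)$, $d_j\in N(b_i)$ obstructing completeness of $(T_1,T_2)$. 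So assume $\emptyset\ne T_2\subseteq C\cup D$; fix $c\in T_2$ and let $d$ be its partner. One then checks, in turn: $d\in S$ (from $d\notin N(c)$ and $d\notin N(a_i)$); $b_i\in S$ (else $b_i\in T_1$ and $T_2\subseteq N(a_i)\cap N(b_i)\cap(C\cup D)=\emptyset$); and $b_i$ is adjacent to $d$ (complementary transversals), so $b_i$ and $d$ lie in one component $S_1$ of $G[S]$. Hence $S_2$ is anticomplete to $d$, and as the only vertex of $C\cup D$ non-adjacent to $d$ apart from $d$ is $c\in T$, we get $S_2\cap(C\cup D)=\emptyset$. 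Now take any $v\in S_2$: using $E\subseteq S$ and the degree-$2$ structure, $v$ (whether it is an interior vertex of some $P_{i'}$ or is $a_{i'}$ or $b_{i'}$) pulls the whole interior of $P_{i'}$ into $S_2$ and then forces each of $a_{i'},b_{i'}$ into $S_2$ or $T_1$; but $S_2$ contains at most one of $a_{i'},b_{i'}$ (since $d$ is adjacent to exactly one of them), and the remaining options collide with ``$T_1$ complete to $T_2\ni c$'' via the fact that $a_{i'}$ and $b_{i'}$ see opposite members of $\{c,d\}$ while $d\in S_1$ is anticomplete to $S_2$. This contradiction gives $A\cup B\cup E\subseteq S$, hence $T\subseteq C\cup D$. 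For the reverse inclusion, if some $c\in C\cup D$ lay in $S$, then since $c$ has a neighbour on every $P_i$, the connected set $(A\cup B\cup E)\cup\{c\}$ lies in one component of $G[S]$, so the other side of the split is contained in $\{d\}$, where $d$ is the partner of $c$ (the only vertex of $C\cup D$ other than $c$ non-adjacent to $c$); but $d$ too has a neighbour on every $P_i$, so $\{d\}$ cannot be anticomplete to $A\cup B$ --- contradiction. Therefore $C\cup D\subseteq T$, so $(S,T)=(A\cup B\cup E,\ C\cup D)$, the skew partition is unique, and by the first paragraph it is not \even.

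I expect the real difficulty to lie in the second step: ruling out $a_i\in T$ requires combining the transversal structure of $N(a_i),N(b_i)$ on the pairs $\{c_j,d_j\}$, the non-edges $c_jd_j$, the inclusion $E\subseteq S$, and the anticompleteness of distinct matching-edge pairs, to show that the candidate skew cutset either does not disconnect $G$, or does not split as a complete pair, or would force some $N(a_{i'})\cap(C\cup D)$ to miss both members of a pair. One must also handle the flat paths $P_i$ of length~$1$ (which carry no vertex of $E$, so that $G$ may in fact be a double split graph) by a separate, easier argument at each stage.
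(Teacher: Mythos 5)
Your proof is correct, but your uniqueness argument takes a genuinely different route from the paper's. The paper's proof rests on one global observation: every vertex of $A\cup B\cup E$ has a non-neighbour in every anticomponent of $C\cup D$ (the anticomponents being the non-adjacent pairs $\{c_j,d_j\}$), so every set \emph{strictly} containing $C\cup D$ is anticonnected; hence any other skew cutset $X$ must miss some vertex, say $c_1$, and a short case analysis on a vertex $y$ separated from $c_1$ (either $y=d_1$, or $y$ lies on a path from some $a_i$ to $b_i$) pins $X$ down to one of a few explicit sets, each of which is then shown to be anticonnected or not a cutset. You instead fix a split $(S_1,S_2,T_1,T_2)$ and exploit the completeness of $(T_1,T_2)$ locally, showing class by class that $E\subseteq S$, then $A\cup B\subseteq S$, then $C\cup D\subseteq T$. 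Your route is longer and more case-heavy, and two of its steps (the ``short case analysis'' giving $E\subseteq S$ and the ``direct check'' in the length-one subcase) are only sketched, though I verified they go through using the degree-$2$ structure of $E$, the complementary-transversal structure of $N(a_i)$ and $N(b_i)$ on the pairs, and the non-edges $c_jd_j$. What your approach buys is a self-contained argument that makes explicit where each clause of the definition is used; what the paper's anticonnectivity trick buys is brevity, since it collapses most of your casework into the single remark that no skew cutset can contain all of $C\cup D$ together with anything else. A further small difference: the paper dismisses as obvious the fact that $(A\cup B\cup E,\,C\cup D)$ is not \even, whereas you exhibit the odd induced path $c\tp a_i\tp\cdots\tp b_i\tp d$ explicitly, which is a worthwhile addition.
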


\begin{proof}
  Let $V(G)$ be partitioned into sets $A, B, C, D, E$ like in the
  definition of path-double split graphs. Obviously, $(A \cup B \cup
  E, C \cup D)$ is a non-\even\ skew partition of $G$. Every vertex of
  $A \cup B \cup E$ has a non-neighbor in every anticomponent of $C
  \cup D$. Hence, every subset of $V(G)$ strictly containing $C\cup
  D$ is anticonnected. So, if $X \neq C \cup D$ is a skew cutset of
  $G$, we may assume that $X$ does not contain $c_1$. So, $c_1$ is in
  a component of $G \setminus X$, and there is a vertex $y$ of $G$
  that is in another component. Up to  symmetry, we have two cases to
  consider:

  First case: $y = d_1$. Hence, every vertex of $C\cup D \setminus
  \{c_1, d_1\}$ must be in $X$. Every vertex in $A \cup B \cup E$ has
  a non-neighbor in every anticomponent of $C\cup D \setminus \{c_1,
  d_1\}$. So, since $X$ is not anticonnected, we have $X = C\cup D
  \setminus \{c_1, d_1\}$. This contradicts $G\setminus X$ being
  disconnected.

  Second case: $y$ is on a path $P$ from $a_1$ to $b_1$ whose interior
  is in $E$. Since $P$ has a vertex adjacent to $c_1$, at least one
  vertex of $P$ must be in $X$. If this vertex $u$ is in $E$ then we
  may assume up to  symmetry $b_1\in X$ since $u$ and $c_1$ must have
  a common neighbor in $X$ because $X$ is not anticonnected.  Else we
  may also assume $b_1 \in X$. Note that $a_1 \notin X$, because
  either $a_1$ and $b_1$ are not adjacent, and then cannot be both in
  $X$ because they have no common neighbor; or $a_1$ and $b_1$ are
  adjacent and then $y = a_1$ is the only possibility left for
  $y$. Hence, $X$ is a skew cutset that separates $a_1$ from
  $c_1$. Now, for every $2 \leq j \leq n$, one of $c_j$, $d_j$ is a
  common neighbor of $a_1, c_1$. Hence, up to  symmetry, we may
  assume $\{c_2, \dots, c_n\} \subset X$. Every vertex of $V(G)
  \setminus \{b_1, c_2, \dots, c_n\}$ has a non-neighbor in the unique
  anticomponent of $\{b_1, c_2, \dots, c_n\}$. Hence, $X = \{b_1, c_2,
  \dots, c_n\}$. So, $X$ is anticonnected. This contradicts $X$ being
  a skew cutset.
\end{proof}

\subsection{Paths and antipaths overlapping 2-joins}

Here, we state easy facts about parity of paths and antipaths
overlapping 2-joins. We need them to prove that when we build blocks of
a 2-join, the property of being balanced is preserved for every skew
cutset.  Some of the lemmas below are well known but they need to be
stated and proved clearly, especially because most of them are needed
for possibly non-proper 2-joins.

\begin{lemma}\label{l.2jAiBi}
  Let $G$ be a Berge graph with a connected 2-join $(X_1, X_2)$. Then
  all the paths with an end $A_1$, an end in $B_1$, no interior vertex
  in $A_1\cup B_1$, and all the paths with an end $A_2$, an end in $B_2$,
  no interior vertex in $A_2\cup B_2$ have the same parity.
\end{lemma}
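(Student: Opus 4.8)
The plan is to combine the two families of paths into closed walks, and then into holes or near-holes, so that the Berge hypothesis forces parity constraints. First I would fix a path $P_1$ with one end $a_1 \in A_1$, one end $b_1 \in B_1$, and no interior vertex in $A_1 \cup B_1$; the hypothesis that the 2-join is connected guarantees such a path exists (inside a single component of $G[X_1]$ meeting both $A_1$ and $B_1$), and similarly a path $P_2$ with ends $a_2 \in A_2$, $b_2 \in B_2$, no interior in $A_2 \cup B_2$. Since $A_1$ is complete to $A_2$ and $B_1$ is complete to $B_2$, and there are no other edges between $X_1$ and $X_2$, the edges $a_1a_2$ and $b_1b_2$ exist, and the interiors of $P_1$ and $P_2$ are anticomplete to each other. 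Hence $P_1 \cup P_2 \cup \{a_1a_2, b_1b_2\}$ is an induced cycle of $G$ — a hole (it has length at least $4$ since each $P_i$ has length at least $1$; if some $P_i$ has length $1$ one should check the short cases, but the cycle is still induced). Because $G$ is Berge this hole is even, so $\mathrm{length}(P_1) + \mathrm{length}(P_2) + 2$ is even, i.e. $\mathrm{length}(P_1) \equiv \mathrm{length}(P_2) \pmod 2$.

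Next I would run this argument for a second pair: given any other such path $P_1'$ on side $1$ and any such path $P_2'$ on side $2$, the same construction gives $\mathrm{length}(P_1') \equiv \mathrm{length}(P_2') \pmod 2$. Taking $P_2' = P_2$ yields $\mathrm{length}(P_1) \equiv \mathrm{length}(P_1')$, so all side-$1$ paths of this type have the same parity; symmetrically for side $2$; and the first displayed congruence ties the two parities together. This gives the full statement: all the paths described (on either side) have the same parity.

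The one place that needs care — and which I expect to be the main obstacle — is making sure $P_1 \cup P_2 \cup \{a_1a_2,b_1b_2\}$ really is an induced cycle rather than something degenerate or not induced. The potential pitfalls are: (i) a path $P_i$ could have length $0$, but that is impossible here since its ends lie in the disjoint sets $A_i$ and $B_i$; (ii) if $a_1 = b_1$ this cannot occur for the same reason; (iii) one must check there are no chords — chords within $P_1$ or within $P_2$ are excluded because each $P_i$ is an induced path, chords between the interiors of $P_1$ and $P_2$ are excluded because those interiors avoid $A_i \cup B_i$ and hence (by the 2-join definition) have no neighbors on the other side, and the only possible chords incident to the four ``corner'' vertices $a_1,b_1,a_2,b_2$ are $a_1b_2$, $a_2b_1$, $a_1b_1$, $a_2b_2$, each of which is ruled out either by $P_i$ being induced or by the ``no other edges'' clause of the 2-join. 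Once these verifications are in place the Berge hypothesis does the rest, and no use of the substantial or proper conditions is needed — only connectedness — which matches the remark in the paragraph preceding the lemma that these facts are wanted for possibly non-proper 2-joins.
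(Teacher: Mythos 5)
Your construction and parity bookkeeping coincide with the paper's for paths that stay on their own side of the 2-join, but there is a genuine gap: the class of paths in the statement is larger than the class your argument covers. A path $P$ with one end in $A_1$, one end in $B_1$ and no interior vertex in $A_1\cup B_1$ need not be contained in $X_1$: only $A_1\cup B_1$ is forbidden for its interior, so $P$ may have the form $a_1 \tp a_2 \tp \cdots \tp b_2 \tp b_1$ with $a_2\in A_2$, $b_2\in B_2$ and its whole interior in $X_2$. (Since $C_1$ is anticomplete to $X_2$ and the interior avoids $A_1\cup B_1$, the interior is in fact either entirely in $C_1$ or entirely in $X_2$.) For such a crossing path your cycle $P\cup P_2'\cup\{a_1a_2,b_1b_2\}$ is not induced and may not even be a cycle: the interior of $P$ lives in $X_2$, so it can intersect or be adjacent to $P_2'$, and your chord-elimination step ("interiors avoid $A_i\cup B_i$ and hence have no neighbors on the other side") only applies when the interior of $P_i$ lies in $C_i$. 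So either you are tacitly quantifying only over paths inside $G[X_i]$, in which case you prove a strictly weaker statement than the lemma, or the key induced-cycle claim fails for some of the paths you quantify over.

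The paper closes exactly this case with one extra observation. Fix $P_1\subseteq G[X_1]$ and $P_2\subseteq G[X_2]$ with interiors in $C_1$, $C_2$ as you do. For a general $P$ from $A_1$ to $B_1$ with no interior vertex in $A_1\cup B_1$, let $P^*$ be its interior; then one of $P\cup P_2$, $P^*\cup P_1$ induces a hole. If $P$ stays in $X_1$, the first option is your argument. If $P$ crosses over, induced-ness of $P$ forces its interior to contain exactly one vertex of $A_2$ and one of $B_2$ (a second vertex of $A_2$ would be a chord to the end in $A_1$), so $P^*$ is a path from $A_2$ to $B_2$ with no interior vertex in $A_2\cup B_2$ lying in $X_2$; gluing it to $P_1$ gives a hole of length $|P^*|+|P_1|+2=|P|+|P_1|$, whence $|P|\equiv|P_1|\pmod 2$. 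The rest of your write-up --- existence of $P_1,P_2$ from connectedness, the chord checking in the internal case, and the remark that only connectedness (not properness) is needed --- matches the paper.
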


\begin{proof}
  Note that since $(X_1, X_2)$ is connected there actually exists in
  $G[X_1]$ a path $P_1$ with an end in $A_1$, an end in $B_1$ and
  interior in $C_1$.  There exists a similar path $P_2$ in $G[X_2]$
  from $A_2$ to $B_2$. The paths $P_1, P_2$ have the same parity
  because $P_1 \cup P_2$ induces a hole. Let $P$ be a path from $A_1$
  to $B_1$ with no interior vertex in $A_1 \cup B_1$ (the proof is the
  same for a path from $A_2$ to $B_2$). Let $P^*$ be the interior
  of $P$. Then one of $P\cup P_2$, $P^* \cup P_1$ induces a
  hole. Hence, $P, P_1, P_2$ have the same parity.
\end{proof}

\begin{lemma}\label{l.2jAiAi}
  Let $G$ be a Berge graph with a 2-join $(X_1, X_2)$.  Let $i$ be in
  $\{1, 2\}$. Then every outgoing path from $A_i$ to $A_i$ (resp. from
  $B_i$ to $B_i$) has even length. Every antipath of length at least~2
  whose interior is in $A_i$ (resp. $B_i$) and whose ends are outside
  $A_i$ (resp. $B_i$) has even length.
\end{lemma}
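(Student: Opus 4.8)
The plan is to prove the four assertions by establishing the two concerning $A_i$; the ones about $B_i$ then follow at once by exchanging the names $A$ and $B$, since $(X_1,X_2,B_1,A_1,B_2,A_2)$ is again a split of the same 2-join. The facts I will use repeatedly are that $A_1,B_1,A_2,B_2$ are non-empty, that the only edges of $G$ between $X_1$ and $X_2$ join $A_1$ to $A_2$ or $B_1$ to $B_2$, and hence that every vertex of $X_{3-i}$ is complete or anticomplete to $A_i$ (and likewise to $B_i$).

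For the antipath assertion, let $Q$ be an antipath of length at least $2$ with interior in $A_i$ and ends outside $A_i$, and write its vertices in order as $q_0,q_1,\dots,q_{k+1}$, so that $q_1,\dots,q_k\in A_i$, $q_0,q_{k+1}\notin A_i$, and $q_i$ is adjacent to $q_j$ in $G$ exactly when $|i-j|\geq 2$. If $k=1$ the length is $2$, so assume $k\geq 2$. Then in $G$ the vertex $q_0$ is adjacent to $q_2$ and non-adjacent to $q_1$, two vertices of $A_i$; so $q_0$ is neither complete nor anticomplete to $A_i$ and therefore lies in $X_i$, and symmetrically $q_{k+1}\in X_i$. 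Choosing $a\in A_{3-i}$, the vertex $a$ is adjacent in $G$ to each of $q_1,\dots,q_k$ but non-adjacent to $q_0$ and $q_{k+1}$ (these being in $X_i\setminus A_i$), so in $\overline G$ the set $\{q_0,\dots,q_{k+1},a\}$ induces the cycle $q_0q_1\cdots q_{k+1}aq_0$; thus $G$ has an antihole of length $k+3\geq 5$, so $k+3$ is even since $G$ has no odd antihole, and the length $k+1$ of $Q$ is even.

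The outgoing-path assertion is the substantial one, and the step I expect to be the main obstacle is controlling how an induced path from $A_i$ to $A_i$ can pass between $X_i$ and $X_{3-i}$. Let $P=p_0p_1\cdots p_\ell$ with $p_0,p_\ell\in A_i$, $p_1,\dots,p_{\ell-1}\notin A_i$ and $\ell\geq 2$; assume $\ell\geq 3$. First I would prove that $V(P)\cap X_{3-i}$ is empty or a single vertex of $B_{3-i}$. Any edge of $P$ joining $X_i$ to $X_{3-i}$ is an $A$-edge or a $B$-edge of the 2-join (i.e.\ an edge from $A_1$ to $A_2$, resp.\ from $B_1$ to $B_2$), and it cannot be an $A$-edge: that would either put an interior vertex of $P$ in $A_i$, or force $p_1\in A_{3-i}$, whence $p_1$ would be adjacent to $p_\ell\in A_i$, a chord of $P$ impossible when $\ell\geq 3$. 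So all crossings of $P$ are $B$-edges; and since a vertex of $B_{3-i}$ on $P$ is complete to $B_i$, inducedness of $P$ bounds $|V(P)\cap B_i|$ by $2$. Pursuing this (the point where $P$ leaves an excursion into $X_{3-i}$ is again a $B$-edge, and only two $B_i$-vertices are available on $P$) one finds that there is at most one such excursion and it reduces to a single vertex $p_j\in B_{3-i}$ with $p_{j-1},p_{j+1}\in B_i$, and moreover $2\leq j\leq\ell-2$ in that case.

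To conclude, I would close $P$ up with a vertex of $A_{3-i}$. If $V(P)\cap X_{3-i}=\emptyset$, or if $V(P)\cap X_{3-i}=\{p_j\}$ and some $a\in A_{3-i}$ is non-adjacent to $p_j$, then for a suitable $a\in A_{3-i}$ the vertex $a$ has exactly the two neighbours $p_0,p_\ell$ on $P$, so $\{a\}\cup V(P)$ induces a hole of length $\ell+2$, which has even length since $G$ has no odd hole; hence $\ell$ is even. In the remaining case $V(P)\cap X_{3-i}=\{p_j\}$ and every vertex of $A_{3-i}$ is adjacent to $p_j$; fixing $a\in A_{3-i}$, its neighbours on $P$ are exactly $p_0,p_j,p_\ell$, so $a$ splits $P$ at $p_j$ into two holes $ap_0\cdots p_ja$ and $ap_j\cdots p_\ell a$, of lengths $j+2$ and $(\ell-j)+2$ (using $2\leq j\leq\ell-2$, and that $p_0,p_\ell\in A_i$ are non-adjacent to $p_j\in B_{3-i}$); both are even since $G$ has no odd hole, so $\ell=j+(\ell-j)$ is even. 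In all cases the length of $P$ is even, and, as explained at the outset, the statements for $B_i$ follow by symmetry.
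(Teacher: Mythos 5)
Your proposal is correct and follows essentially the same route as the paper: for the antipath you complete it with a vertex of $A_{3-i}$ to an antihole, and for the path you show it can leave $X_i$ only through a single $B_{3-i}$-vertex and then close it up with a vertex $a\in A_{3-i}$, splitting into one or two holes according to whether $a$ sees that vertex (the paper phrases this last step as ``an odd cycle whose only chord is $a_2b_2$,'' which is the same argument). The level of detail in your structural claim about excursions into $X_{3-i}$ is, if anything, slightly more careful than the paper's.
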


\begin{proof}
  Note that we do not suppose $(X_1, X_2)$ being connected, so
  Lemma~\ref{l.2jAiBi} does not apply.  Let $P$ be an outgoing path
  from $A_1$ to $A_1$ (the other cases are similar).  If $P$ has a
  vertex in $A_2$, then $P$ has length~2.  Else, $P$ must lie entirely
  in $X_1$ except possibly for one vertex in $B_2$.  If $P$ lies
  entirely in $X_1$, then $P \cup \{a_2\}$ where $a_2$ is any vertex
  in $ A_2$ induces a hole, so $P$ has even length.  If $P$ has a
  vertex $b_2 \in B_2$, then we must have $P = \bp a \tp \cdots \tp b
  \tp b_2 \tp b' \tp \cdots \tp a'$ where $a \tp P \tp b $ and $b'\tp
  P \tp a'$ are paths with an end in $A_1$, an end in $B_1$ and
  interior in $C_1$.  Suppose that $P$ has odd length. Let $a_2$ be a
  vertex of $A_2$. Then $V(P) \cup \{a_2\}$ induces an odd cycle of
  $G$ whose only chord is $a_2 b_2$. So one of $V(a \tp P \tp b_2)
  \cup \{a_2\}$, $V(a' \tp P \tp b_2) \cup \{a_2\}$ induces an odd
  hole of $G$, a contradiction.

  Let $Q$ be an antipath of length at least~2 whose interior is in
  $A_1$ and whose ends are outside $A_1$ (the other cases are
  similar). If $Q$ has length at least~3, then the ends of $Q$ must
  have a neighbor in $A_1$ and a non-neighbor in $A_1$. Hence
  these ends are in $X_1$. Thus, $Q \cup \{a\}$, where $a$ is any
  vertex of $A_2$ is an antihole of $G$. Thus, $Q$ has even length.
\end{proof}


\begin{lemma}
  \label{PgivePX2}
  Let $G$ be a graph with a 2-join $(X_1, X_2)$.  Let $P$ be a path of
  $G$ whose end-vertices are in $X_2$.  Then either:
  \begin{enumerate}
  \item
    There are vertices $a\in A_1$, $b \in B_1$ such that $V(P)
    \subseteq X_2 \cup \{ a, b \}$. Moreover, if $a, b$ are both in
    $V(P)$, then they are non-adjacent.
  \item
    $P = c \tp \cdots \tp a_2 \tp a \tp \cdots \tp b \tp b_2 \tp
    \cdots \tp c'$ where: $a \in A_1$, $b \in B_1$, $a_2 \in A_2$,
    $b_2 \in B_2$. Moreover $V(c \tp P \tp a_2) \subset X_2$, $V(b_2
    \tp P \tp c') \subset X_2$, $V(a \tp P \tp b) \subset X_1$.
  \end{enumerate}
\end{lemma}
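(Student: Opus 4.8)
The plan is to analyse how the induced path $P$ crosses the cut of the 2-join. First recall that the only edges of $G$ between $X_1$ and $X_2$ are the $A_1A_2$-edges and the $B_1B_2$-edges; call them $A$-\emph{type} and $B$-\emph{type}. If $V(P)\cap X_1=\emptyset$ we are in outcome~1 immediately, taking for $a,b$ any vertices of the (non-empty) sets $A_1,B_1$, which do not lie on $P$; so assume $V(P)\cap X_1\neq\emptyset$. Since both ends of $P$ are in $X_2$, we may write $P$ as an alternating sequence of maximal runs $R_0,Q_1,R_1,\dots,Q_k,R_k$ with each $R_i\subseteq X_2$ non-empty, each $Q_j\subseteq X_1$ non-empty, and $k\geq 1$. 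Every $Q_j$ is flanked on $P$ by exactly two crossing edges (to the last vertex of $R_{j-1}$ and the first vertex of $R_j$), and these are all the crossing edges of $P$; so $P$ has exactly $2k$ crossing edges.

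The second step is a counting bound. Put $\mathcal{A}_i=A_i\cap V(P)$ and $\mathcal{B}_i=B_i\cap V(P)$. Since $A_1$ is complete to $A_2$ and $P$ is induced, the $A$-type edges of $P$ are exactly the $|\mathcal{A}_1|\cdot|\mathcal{A}_2|$ pairs in $\mathcal{A}_1\times\mathcal{A}_2$; a complete bipartite graph with both sides non-empty is an induced subgraph of a path only if it is $K_{1,1}$ or $K_{1,2}$, so $|\mathcal{A}_1|\cdot|\mathcal{A}_2|\leq 2$, and likewise $|\mathcal{B}_1|\cdot|\mathcal{B}_2|\leq 2$. Hence $2k=|\mathcal{A}_1||\mathcal{A}_2|+|\mathcal{B}_1||\mathcal{B}_2|\leq 4$, so $k\in\{1,2\}$. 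I would also record a short sub-claim: if the two crossing edges flanking some $Q_j$ have the same type (both $A$ or both $B$), then $Q_j$ is a single vertex; otherwise the $X_2$-endpoint of one flanking edge would be adjacent in $G$ to, but at distance at least~2 on $P$ from, the opposite end of $Q_j$, contradicting that $P$ is induced.

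The third step is the case check. If $k=1$: when the two flanking edges of $Q_1$ have the same type, the sub-claim makes $Q_1$ a single vertex of $A_1$ (or $B_1$), so $V(P)\cap X_1$ is that single vertex and we are in outcome~1; when they have different types, $Q_1$ is a path inside $X_1$ from $A_1$ to $B_1$, flanked by an $A_2$-vertex and a $B_2$-vertex, then by $R_0,R_1\subseteq X_2$, which read in the appropriate direction is exactly $P=c\tp\cdots\tp a_2\tp a\tp\cdots\tp b\tp b_2\tp\cdots\tp c'$ as in outcome~2. If $k=2$: the bound forces $|\mathcal{A}_1||\mathcal{A}_2|=|\mathcal{B}_1||\mathcal{B}_2|=2$, so among the four crossing edges either $Q_1$ carries both $A$-type edges and $Q_2$ both $B$-type edges (or vice versa) — then by the sub-claim $Q_1,Q_2$ are single vertices, one in $A_1$ and one in $B_1$, non-adjacent since $P$ is induced, so again outcome~1 — or each of $Q_1,Q_2$ carries one $A$-type and one $B$-type edge; I rule this last case out by noting it forces $|\mathcal{A}_2|=|\mathcal{B}_2|=1$ with that single $A_2$-vertex and single $B_2$-vertex both forced into the middle run $R_1$, contradicting that one flanking edge of $Q_1$ reaches back into $R_0$.

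The main obstacle is the $k=2$ analysis: one must verify that the only surviving configuration yields outcome~1, and eliminating the "one edge of each type per $Q_j$" configuration requires keeping careful track of which runs the few $X_2$-vertices of $P$ can occupy. Everything else is bookkeeping about the alternation structure $R_0,Q_1,R_1,\dots,Q_k,R_k$ and the elementary fact that an induced path contains no $K_{2,2}$ and no $K_{1,3}$.
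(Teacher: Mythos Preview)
Your proof is correct, and the run decomposition $R_0,Q_1,\dots,Q_k,R_k$ together with the edge-count $2k=|\mathcal A_1||\mathcal A_2|+|\mathcal B_1||\mathcal B_2|\le 4$ is a clean way to bound $k\le 2$. The sub-claim about same-type flanking edges forcing $|Q_j|=1$ is right, and your elimination of the $(1,1)/(1,1)$ configuration in the $k=2$ case is valid: once $|\mathcal A_2|=|\mathcal B_2|=1$, the unique $A_2$-vertex $\alpha$ is the $X_2$-end of both $A$-type edges, hence adjacent on $P$ to an end of $Q_1$ and an end of $Q_2$, so $R_1=\{\alpha\}$; the same argument gives $R_1=\{\beta\}$, contradicting $A_2\cap B_2=\emptyset$. (Your phrasing ``one flanking edge of $Q_1$ reaches back into $R_0$'' points to the same contradiction but could be spelled out a touch more.)

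The paper's proof is shorter and more direct: it simply walks $P$ from one end, lets $a$ be the first $X_1$-vertex (say $a\in A_1$), observes that $a$ is the \emph{only} $A_1$-vertex on $P$ (any other would give a chord through the preceding $A_2$-vertex), and then splits on whether $a$'s two $P$-neighbours both lie in $X_2$ or not. This avoids the run decomposition, the global edge count, and the $k=2$ case analysis entirely. Your approach buys a more structural picture of how $P$ crosses the cut and would generalise better if one needed finer control over the crossing pattern; the paper's approach buys brevity, disposing of the lemma in four sentences.
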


\begin{proof}
  If $P$ has no vertex in $X_1$, then for any $a\in A_1, b\in B_1$,
  the first outcome holds.  Else let $c, c'$ be the end-vertices of
  $P$.  Starting from $c$, we may assume that the first vertex of $P$
  in $X_1$ is $a \in A_1$. Note that $a$ is the only vertex of $P$ in
  $A_1$.  If $a$ has its two neighbors on $P$ in $X_2$, then $P$ has
  no other vertex in $X_1$, except possibly a single vertex $b \in
  B_1$ and the first outcome holds.  If $a$ has only one neighbor on
  $P$ in $X_2$, then let $a_2$ be this neighbor. Note that $P$ must
  have a single vertex $b$ in $B_1$. Let $b_2$ be the neighbor of $b$
  in $X_2$ along $P$. Vertices $a_2, a, b, b_2$ show that the second
  outcome holds.
\end{proof}

\begin{lemma}
  \label{PgivePA1}
  Let $G$ be a Berge graph with a 2-join $(X_1, X_2)$.  Let $P$ be a
  path of $G$ whose end-vertices are in $A_1 \cup X_2$ (resp.  $B_1
  \cup X_2$) and whose interior vertices are not in $A_1$
  (resp. $B_1$). Then either:
  \begin{enumerate}
  \item
    $P$ has even length.
  \item
    There are vertices $a\in A_1$, $b \in B_1$ such that $V(P)
    \subseteq X_2 \cup \{ a, b \}$. Moreover, if $a, b$ are both in
    $V(P)$, then they are non-adjacent.
  \item
    $P = a \tp \cdots \tp b \tp b_2 \tp \cdots \tp c$ where: $a \in
    A_1$, $b \in B_1$, $b_2 \in B_2$, $c \in X_2$.

    Moreover $V(a \tp P \tp b) \subset X_1$ and $V(b_2 \tp P \tp c)
    \subset X_2$.

    (resp. $P = b \tp \cdots \tp a \tp a_2 \tp \cdots \tp c$ where: $b
    \in B_1$, $a \in A_1$, $a_2 \in A_2$, $c \in X_2$.

    Moreover $V(b \tp P \tp a) \subset X_1$ and $V(a_2 \tp P \tp c)
    \subset X_2$.)
  \end{enumerate}
\end{lemma}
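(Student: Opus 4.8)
The plan is to reduce this statement to Lemma~\ref{PgivePX2} by a short case analysis on how the path $P$ meets $A_1$ (I treat the first version; the parenthetical version is symmetric, swapping the roles of $A_i$ and $B_i$). Since the end-vertices of $P$ are in $A_1\cup X_2$ and the interior vertices avoid $A_1$, the path $P$ has at most two vertices in $A_1$, and these can only be its ends. So there are three cases: $P$ has no vertex in $A_1$; $P$ has exactly one vertex in $A_1$, necessarily an end; or both ends of $P$ are in $A_1$.

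First, if $P$ has no vertex in $A_1$, then both end-vertices of $P$ are in $X_2$, so Lemma~\ref{PgivePX2} applies directly and yields either outcome~2 or outcome~3 of the present lemma (the second outcome of Lemma~\ref{PgivePX2} is precisely a path of the form $c\tp\cdots\tp a_2\tp a\tp\cdots\tp b\tp b_2\tp\cdots\tp c'$ with the stated containments; the initial segment $c\tp P\tp a_2$ lies in $X_2$, and one checks this is exactly outcome~3 after noting that if $a$ is not used we again land in outcome~2). Second, if both ends of $P$ lie in $A_1$, then $P$ is an outgoing path from $A_1$ to $A_1$ (it has length at least~2 since its ends are non-adjacent, both being in $A_1$ with no edges inside\ldots actually $A_1$ may contain edges, but then $P$ would have length~1 and no interior, which is the degenerate path; for length at least~2 the ends are distinct and we invoke the relevant case), and Lemma~\ref{l.2jAiAi} tells us $P$ has even length, giving outcome~1.

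The main case, and the one requiring care, is when exactly one end of $P$, say $a$, is in $A_1$ and the other end $c$ is in $X_2$. Here I would look at $P'=a\tp P\tp c$ and consider the vertex $a'$ of $P$ adjacent to $a$ along $P$. If $a'\in X_2$, then $a'\in A_2$ (since $a\in A_1$ is complete to $A_2$ and anticomplete to $X_2\setminus A_2$), and then $P\setminus a$ is a path with ends in $X_2$ to which Lemma~\ref{PgivePX2} applies; reattaching $a$ and tracking which outcome of Lemma~\ref{PgivePX2} occurred yields outcome~3 (when the far end uses a $B_2$-vertex to exit $X_1$) or outcome~2 (when $P\setminus a$ stays in $X_2\cup\{b\}$). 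If instead $a'\in X_1$, then continue along $P$ from $a$ inside $X_1$ until the first vertex that leaves $X_1$; since $a$ is the unique vertex of $P$ in $A_1$, this exit must go through a vertex $b\in B_1$ into a vertex $b_2\in B_2$, and the remainder $b_2\tp P\tp c$ has both ends in $X_2$ with no vertex of $A_1$, so by Lemma~\ref{PgivePX2} (first outcome, since it cannot re-enter $A_1$ and a second $A_1$-$B_1$ crossing is impossible as $a,b$ are already used) it stays in $X_2$, giving exactly the description in outcome~3. The only subtlety is checking that the ``$a,b$ non-adjacent'' clause in outcome~2 is inherited correctly from Lemma~\ref{PgivePX2}, and that the degenerate short paths ($P$ of length $0$ or $1$) trivially fall under outcome~2; these are routine.

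I expect the bookkeeping in this last case — matching each outcome of Lemma~\ref{PgivePX2} to the right outcome here and confirming the interior-containment assertions $V(a\tp P\tp b)\subset X_1$ and $V(b_2\tp P\tp c)\subset X_2$ — to be the only place where one must be genuinely attentive; everything else follows mechanically from the 2-join adjacency rules and Lemmas~\ref{l.2jAiAi} and~\ref{PgivePX2}.
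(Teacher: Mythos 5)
Your proposal follows essentially the same route as the paper's proof: a case analysis on how many end-vertices of $P$ lie in $A_1$, Lemma~\ref{l.2jAiAi} for the two-ends case, and reduction to Lemma~\ref{PgivePX2} for the rest; in substance it is correct. The one place your bookkeeping goes astray is where you allow the second outcome of Lemma~\ref{PgivePX2} when applying it to a path containing no vertex of $A_1$ (to $P$ itself when $P$ avoids $A_1$, and to $a'\tp P\tp c$ when $a'\in A_2$): that outcome forces an $A_1$-vertex onto the path, so it is simply impossible in those sub-cases --- which is exactly what the paper observes --- and your proposed identification of it with Outcome~3 would not work anyway, since Outcome~3 requires the path to begin at a vertex of $A_1$ and traverse $X_1$ immediately, whereas the second outcome of Lemma~\ref{PgivePX2} places the $A_1$-vertex in the interior after an initial segment in $X_2$. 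Because the branch is empty, this does not invalidate your argument; the correct observation is the impossibility, not the re-labelling. Two further loose ends are shared with the paper and so are not held against you: the assertion $V(b_2\tp P\tp c)\subset X_2$ in Outcome~3 needs the extra remark that a second $B_1$-vertex on $P$ would be adjacent to $b_2\in B_2$ and contradict $P$ being induced (Outcome~1 of Lemma~\ref{PgivePX2} by itself still permits one $B_1$-vertex); and the length-one path with both ends in $A_1$, which you noticed and left hanging, is glossed over by the paper as well.
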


\begin{proof}
  Note that we do not suppose $(X_1, X_2)$ being proper.  Suppose that
  the end-vertices of $P$ are in $A_1 \cup X_2$ (the case when the
  end-vertices of $P$ are all in $B_1 \cup X_2$ is similar).

  If $P$ has its two end-vertices in $A_1$, then by
  Lemma~\ref{l.2jAiAi}, $P$ has even length and Output~1 of the lemma
  holds.

  If $P$ has exactly one end-vertex in $A_1$, let $a$ be this
  vertex. Let $c \in X_2$ be the other end-vertex of $P$.  Let $a'$ be
  the neighbor of $a$ along $P$.  If $a'$ is in $A_2$, then we may
  apply Lemma~\ref{PgivePX2} to $a' \tp P \tp c$: Outcome~2 is
  impossible and Outcome~1 yields Outcome~2 of the lemma we are
  proving now since $P$ has exactly one vertex in $A_1$. If $a'$ is
  not in $A_2$, then let $b$ be the last vertex of $X_1$ along $P$ and
  $b_2$ the first vertex of $X_2$ along $P$.  Outcome~3 of the lemma
  holds.
  
  If $P$ has no end-vertex in $A_1$ then Lemma~\ref{PgivePX2} applies
  to $P$.  The second outcome is impossible. The first outcome implies
  that there is a vertex $b \in B_1$ such that $V(P) \subseteq X_2
  \cup \{ b \}$ since no interior vertex of $P$ is in $A_1$. So,
  Outcome~2 of the lemma we are proving now holds.
\end{proof}


\begin{lemma}
  \label{antiPgivePX2}
  Let $G$ be a graph with a 2-join $(X_1, X_2)$.  Let $Q$ be an
  antipath of $G$ of length at least~4 whose interior vertices are all
  in $X_2$. Then there is a vertex $a$ in $A_1 \cup B_1$ such that
  $V(Q) \subseteq X_2 \cup \{a\}$.
\end{lemma}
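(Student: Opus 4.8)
The plan is to argue directly about the vertices of the antipath, using that the only edges of $G$ between $X_1$ and $X_2$ are those of the complete pairs $(A_1,A_2)$ and $(B_1,B_2)$. Write $Q$ in antipath order as $q_1 \tp q_2 \tp \cdots \tp q_k$; in $G$ this means $q_iq_{i+1}\notin E(G)$ for each $i$ and $q_iq_j\in E(G)$ whenever $|i-j|\ge 2$. Since $Q$ has length at least~4 we have $k\ge 5$, and by hypothesis the interior vertices $q_2,\dots,q_{k-1}$ all lie in $X_2$. So the whole lemma reduces to controlling the two ends $q_1$ and $q_k$: the goal is to show that at most one of them lies in $X_1$, and that such an end, if it exists, lies in $A_1\cup B_1$.

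First I would dispose of the trivial case where $q_1,q_k\in X_2$: then $V(Q)\subseteq X_2$ and any vertex of $A_1$ (which is non-empty) works as $a$. Otherwise, after possibly reversing the antipath, assume $q_1\in X_1$. Since $k\ge 5$, the vertex $q_3$ is an interior vertex, hence lies in $X_2$, and $q_1q_3\in E(G)$; by the edge structure of the 2-join this forces $q_1\in A_1\cup B_1$, and up to exchanging $(A_1,A_2)$ with $(B_1,B_2)$ we may assume $q_1\in A_1$, so that $q_3\in A_2$.

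The heart of the proof is to show $q_k\in X_2$. Suppose instead $q_k\in X_1$. Since $|3-k|\ge 2$ and $|2-k|\ge 2$ (both from $k\ge 5$), $q_k$ is adjacent to $q_3\in A_2$ and to $q_2\in X_2$. Adjacency to $q_3\in A_2$ forces $q_k\in A_1$, and then adjacency of $q_k\in A_1$ to $q_2\in X_2$ forces $q_2\in A_2$. But $A_1$ is complete to $A_2$, so $q_1q_2\in E(G)$, contradicting that $q_1$ and $q_2$ are consecutive on the antipath. Hence $q_k\in X_2$, and therefore $V(Q)\subseteq X_2\cup\{q_1\}$ with $q_1\in A_1\cup B_1$, so $a=q_1$ satisfies the conclusion.

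I do not expect a genuine obstacle here; the only care needed is the bookkeeping of the two symmetries (the $A/B$ swap in the 2-join, and the reversal of the antipath orientation) and checking that all the index inequalities used are guaranteed by the hypothesis that $Q$ has length at least~4, i.e.\ $k\ge 5$. Note in particular that neither the statement nor the argument uses that $G$ is Berge, nor that the 2-join is proper or connected.
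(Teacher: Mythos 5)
Your proof is correct and takes essentially the same route as the paper's: both arguments reduce to showing that the two ends of an antipath of length at least~4 cannot both lie in $X_1$, using that such ends have a common interior neighbour in $X_2$ (your $q_3$) yet distinct neighbourhoods in $X_2$ (witnessed by your $q_2$), which is incompatible with the $A_1$/$B_1$ structure of the 2-join. Your index bookkeeping ($k\ge 5$, so $q_3$ is interior and $|2-k|,|3-k|\ge 2$) is accurate, and the observation that neither Bergeness nor properness of the 2-join is needed matches the paper's hypotheses.
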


\begin{proof}
  Let $c, c'$ be the end-vertices of $Q$. Note that $N(c) \cap N(c')
  \cap X_2$ have to be non-empty and that $N(c) \cap X_2$ must be
  different from $N(c') \cap X_2$, because $c, c'$ are the end-vertices
  of an antipath of length at least~4.  No pair of vertices in $X_1$
  satisfies these two properties, so at most one of $c,c'$ is in $V(Q)
  \cap X_1$.  If none of $c, c'$ are in $X_1$, then let $a$ be any
  vertex in $A_1$, else let $a$ be the unique vertex in $X_1$ among
  $c, c'$.  Since $c, c'$ must have a neighbor in $X_2$, $a\in A_1
  \cup B_1$ and clearly $V(Q) \subseteq X_2 \cup \{a\}$.
\end{proof}

\begin{lemma}
  \label{antiPgivePA1}
  Let $G$ be a Berge graph with a 2-join $(X_1, X_2)$.  Let $Q$ be an
  antipath of $G$ of length at least~5 whose interior vertices are all
  in $A_1 \cup X_2$ (resp.  $B_1 \cup X_2$) and whose end-vertices are
  not in $A_1$ (resp. $B_1$). Then either:
  \begin{enumerate}
  \item
    $Q$ has even length.
  \item
    There is a vertex $a \in A_1 \cup B_1$ such that $V(Q) \subseteq
    X_2\cup\{a\}$.
  \end{enumerate}
\end{lemma}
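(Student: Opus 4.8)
This is the antipath analogue of Lemma~\ref{PgivePA1}, and I would prove it along the same lines, using Lemma~\ref{antiPgivePX2} in the role that Lemma~\ref{PgivePX2} plays there, and the antipath half of Lemma~\ref{l.2jAiAi} in the role of its path half. Up to the stated symmetry, assume the interior vertices of $Q$ lie in $A_1\cup X_2$ and the ends $c,c'$ of $Q$ are not in $A_1$; write $\overline Q$ for the induced path $\overline{G}[V(Q)]$.

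First the easy reductions. If $V(Q)\cap A_1=\emptyset$, then every interior vertex of $Q$ is in $X_2$, so, $Q$ having length at least $5\ge 4$, Lemma~\ref{antiPgivePX2} yields a vertex $a\in A_1\cup B_1$ with $V(Q)\subseteq X_2\cup\{a\}$, i.e.\ Outcome~2. So assume some $v\in V(Q)\cap A_1$; since the ends of $Q$ are not in $A_1$, $v$ is an interior vertex. If $V(Q)\cap X_1=\{v\}$, then $V(Q)\subseteq X_2\cup\{v\}$ with $v\in A_1$, again Outcome~2. It remains to show that $Q$ has even length when $|V(Q)\cap X_1|\ge 2$. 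If in addition $V(Q)\subseteq X_1$, then the interior of $Q$ lies in $A_1$ and its ends lie in $X_1\setminus A_1=B_1\cup C_1$, so the antipath half of Lemma~\ref{l.2jAiAi} applies and $Q$ is even.

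The crux is then to rule out the ``mixed'' situation $|V(Q)\cap X_1|\ge 2$, $V(Q)\cap A_1\neq\emptyset$, $V(Q)\cap X_2\neq\emptyset$. Here I would argue entirely inside $\overline Q$, using that an induced path is connected, has maximum degree $2$ (no $K_{1,3}$), contains no $C_4$, and has its two ends of degree $1$, together with the adjacencies across the 2-join read in $\overline G$: $C_1$ is complete to $X_2$, $B_1$ is complete to $A_2\cup C_2$, $C_2$ is complete to $X_1$, $B_2$ is complete to $A_1\cup C_1$, and $A_1$ is anticomplete to $A_2$ (these are the complements of the edges and non-edges between $X_1$ and $X_2$). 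A vertex of $Q$ in $C_1$ or $B_1$ is not in $A_1\cup X_2$, hence is an end of $\overline Q$ with a single neighbour there; with the completeness statements this forces $|V(Q)\cap X_2|\le 1$ if $Q$ meets $C_1$, and $|V(Q)\cap(A_2\cup C_2)|\le 1$ if $Q$ meets $B_1$. Dually, a vertex of $Q$ in $C_2$ (resp.\ $B_2$) is complete in $\overline G$ to $V(Q)\cap X_1$ (resp.\ to $V(Q)\cap(A_1\cup C_1)$), so $|V(Q)\cap X_1|\le 2$ (resp.\ $|V(Q)\cap(A_1\cup C_1)|\le 2$), with $2$ replaced by $1$ when that vertex is an end of $\overline Q$. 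A short case split on $|V(Q)\cap A_1|\ (\ge 1)$ and on where the at most two ends of $Q$ lying in $X_1$ sit then shows that, combined with $|V(Q)|\ge 6$, these bounds always contradict $\overline Q$ being a connected induced path on at least six vertices --- typically because $V(Q)\cap X_2$ gets squeezed into $A_2$, whose vertices have no $\overline G$-neighbour in $V(Q)\cap A_1$ and hence cut $\overline Q$ in two; or because $\overline Q$ is forced down to at most five vertices; or because a $C_4$ appears. Thus the mixed case cannot occur, and the surviving cases give exactly Outcomes~1 and~2.

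I expect this last paragraph to be the main obstacle: although each individual deduction is elementary, keeping track of which of $A_1,B_1,C_1,A_2,B_2,C_2$ the vertices of $Q$ fall into, and how the ends of the induced path $\overline Q$ can be placed, is the (somewhat heavier) analogue of the split analysis in the proof of Lemma~\ref{PgivePA1} --- antipaths being less transparent than paths. Bergeness itself enters only indirectly, through the appeal to Lemma~\ref{l.2jAiAi}.
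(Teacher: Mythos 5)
Your reductions are correct and coincide with the paper's: the case $V(Q)\cap A_1=\emptyset$ is handed to Lemma~\ref{antiPgivePX2}, the case $V(Q)\subseteq X_1$ (interior in $A_1$, ends in $B_1\cup C_1$) to Lemma~\ref{l.2jAiAi}, and $|V(Q)\cap X_1|=1$ is Outcome~2 outright. The entire content of the lemma is therefore the ``mixed'' configuration, and there your proposal only asserts that ``a short case split'' closes it. The assertion is true, and the facts you list do suffice --- splitting on how many of the at most two ends lie in $B_1\cup C_1$ and on $|V(Q)\cap A_1|$, one finds every branch collapses: either a vertex of $C_1$ or $B_1$ (an end, of degree~1 in $\overline{G}[V(Q)]$, complete in $\overline{G}$ to $X_2$ resp.\ $A_2\cup C_2$) caps $|V(Q)\cap X_2|$ so severely that $|V(Q)|\le 5$; or a vertex of $B_2\cup C_2$ sees too many vertices of $V(Q)\cap X_1$ in $\overline{G}$, or two of them see the same pair of $A_1$-vertices and create a $C_4$; or $V(Q)\cap X_2$ is squeezed into $A_2$, whose vertices have all their $\overline{G}$-neighbours among the at most two ends, so that these three or fewer vertices form an entire component of the path and $|V(Q)|\le 3$. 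But as written this is a claim, not a proof, and it is exactly the step a referee would ask you to write out.

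For comparison, the paper never passes to the complement and instead splits on $|V(Q)\cap A_1|$. If $Q$ has two vertices $a\neq a'$ in $A_1$, take them extreme in the antipath order, with outer neighbours $c,c'\notin A_1$ along $Q$; since $c$ misses $a$ while seeing $a'$, $c$ lies in $X_1\setminus A_1$, hence $c$ (and likewise $c'$) is an end of $Q$. Every interior vertex then sees one of $c,c'$, so an interior vertex $b\in X_2$ would lie in $B_2$ and miss both $a,a'$, forcing $Q=\overline{c\tp a\tp b\tp a'\tp c'}$ of length~4, against length at least~5; so either all interior vertices are in $A_1$ (Lemma~\ref{l.2jAiAi} applies) or we are done. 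If $Q$ has exactly one vertex $a\in A_1$, a three-line argument shows no end can lie in $X_1$, giving Outcome~2 directly. This is appreciably shorter than the complement bookkeeping; either way, the mixed case needs to be executed rather than gestured at.
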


\begin{proof}
  We suppose that the interior vertices of $Q$ are all in $A_1 \cup
  X_2$.  The case when the interior vertices of $Q$ are all in $B_1
  \cup X_2$ is similar.
  
  If $Q$ has at least~2 vertices in $A_1$, then let $a\neq a'$ be two
  of these vertices. Since the end-vertices of $Q$ are not in $A_1$,
  $a, a'$ may be chosen in such a way that there are vertices $c, c'
  \notin A_1$ such that $\overline{c \tp a \tp \overline{Q} \tp a' \tp
    c'}$ is an antipath of ${G}$.  Since $c$ must miss $a$ while
  seeing $a'$, $c$ must be in $X_1 \setminus A_1$, and so is $c'$. But
  the interior vertices of $Q$ cannot be in $X_1 \setminus A_1$, so
  $c, c'$ are in fact the end-vertices of $Q$. Also, every interior
  vertex of $Q$ must be adjacent to at least one of $c, c'$.  If all
  the interior vertices of $Q$ are in $A_1$ then by
  Lemma~\ref{l.2jAiAi}, $Q$ has even length. Else, $Q$ must have at
  least one interior vertex $b \in X_2$. Since $b$ must see at least
  one of $c, c'$ we have $b\in B_2$, so $b$ misses both $a, a'$. Hence
  $\overline{a \tp b \tp a'}$ is an induced subgraph of $Q$ and $b$
  must see both $c, c'$, so $c, c' \in B_1$. Now we observe that $Q =
  \overline{c \tp a \tp b \tp a' \tp c'}$, which contradicts $Q$
  having a length of at least~5.

  If $Q$ has exactly one vertex $a$ in $A_1$ then by assumption, $a$
  is an interior vertex of $Q$.  Let $c, c'$ be the ends of
  $Q$. Suppose $c \in X_1$. Since $Q$ has length at least~5, $c$ must
  have a neighbor in the interior $Q$ that is different from $a$, hence
  $c \in B_1$. Since $Q$ has length at least~5, $a$ and $c$ must have
  a common neighbor, that must be $c'$ since it must be in
  $X_1$. Hence $c'\in X_1$, which implies $c' \in B_1$ since $c'$ must have
  a neighbor in $X_2$.  Now the non-neighbor of $c'$ along $Q$ is not
  $a$, so it must be a vertex of $X_2$ while seeing $c$ and missing
  $c'$, a contradiction. We proved $c \in X_2$, and similarly $c' \in
  X_2$. Hence $V(Q) \subset X_2 \cup \{a\}$.

  If $Q$ has no vertex in $A_1$ then Lemma~\ref{antiPgivePX2} applies:
  there is a vertex $a \in A_1 \cup B_1$ such that $V(Q) \subseteq
  X_2\cup\{a\}$.
\end{proof}

\subsection{\Even\ skew partitions overlapping 2-joins}
\label{pathskewoverlap}

\label{sec.defpiece}
Let $G$ be a Berge graph and $(X_1, X_2, A_1, B_1, A_2, B_2)$ be a
split of a proper 2-join of $G$. The \emph{blocks} of $G$ with respect
to $(X_1, X_2)$ are the two graphs $G_1, G_2$ that we describe now. We
obtain $G_1$ by replacing $X_2$ by a flat path $P_2$ from a vertex
$a_2$ complete to $A_1$, to a vertex $b_2$ complete to $B_1$.  This
path has the same parity than a path from $A_1$ to $B_1$ whose
interior is in $C_1$. There is such a path since $(X_1, X_2)$ is
proper and all such paths have the same parity by Lemma~\ref{l.2jAiBi}.
The length of $P$ is decided as follow: if $(X_1, X_2)$ is a path
2-join with path-side $X_2$ then $P$ has length~1 or~2, else it has
length~3 or~4.  The block $G_2$ is obtained similarly by replacing
$X_1$ by a flat path. The following lemma shows that blocks are
relevant for inductive proofs and recursive algorithms.

\begin{lemma}
  \label{l.blockberge}
  Let $G$ be a Berge graph and $(X_1, X_2)$ be a proper 2-join of
  $G$. Then the blocks $G_1, G_2$ of $G$ with respect to $(X_1, X_2)$
  are both Berge graphs.
\end{lemma}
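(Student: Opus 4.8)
The plan is to argue by symmetry: it suffices to show that $G_1$, the block obtained by replacing $X_2$ with a flat path $P_2$ from $a_2$ (complete to $A_1$) to $b_2$ (complete to $B_1$), is Berge. Suppose not; then $G_1$ contains an odd hole or an odd antihole $H$. Since $G[X_1]$ is an induced subgraph of both $G$ and $G_1$, and $P_2$ together with the attachment rules makes $(X_1, V(P_2))$ a $2$-join of $G_1$ with the same $A_1, B_1$, the hole or antihole $H$ must use at least one vertex of the interior of $P_2$ (otherwise $H$ lives in $G[X_1] \subseteq G$, contradicting that $G$ is Berge, unless $H$ uses only $X_1$ together with $a_2$ or $b_2$ — but $a_2$ behaves exactly like a vertex of $A_2$ and $b_2$ like a vertex of $B_2$, so such an $H$ would survive in $G$ as well).

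First I would handle the case where $H$ is an odd hole. If $H$ uses an interior vertex of $P_2$, then since interior vertices of $P_2$ have degree~$2$, $H$ must traverse $P_2$ as a whole sub-path, so $H$ meets $V(P_2)$ in a sub-path of $P_2$ with ends among $\{a_2, b_2\}$. If $H \cap V(P_2)$ is all of $P_2$, then $H$ decomposes as $(a_2 \tp P_2 \tp b_2) \cup R$ where $R$ is a path of $G[X_1]$ from a neighbor of $a_2$ (hence a vertex of $A_1$) to a neighbor of $b_2$ (hence a vertex of $B_1$), with no interior vertex in $A_1 \cup B_1$. By Lemma~\ref{l.2jAiBi} all such paths $R$ in $X_1$ have the same parity as a path from $A_2$ to $B_2$ in $G[X_2]$ with interior in $C_2$, and $P_2$ was built with exactly that parity; hence $|P_2|$ and $|R|$ have the same parity, so $|H| = |P_2| + |R|$ is even, a contradiction. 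If $H$ meets $P_2$ in a proper sub-path, its ends are in $\{a_2, b_2\}$; the interesting subcase is when this sub-path is a single vertex or just $\{a_2, b_2\}$ with $a_2b_2$ an edge (only possible if $|P_2| = 1$) — in all these situations $H$ uses at most the two vertices $a_2, b_2$ of $V(P_2)$, and replacing them by a suitable short path through $X_2$ (a single vertex of $A_2$, a single vertex of $B_2$, or an $A_2$–$B_2$ edge, available since $(X_1, X_2)$ is a proper, hence connected, $2$-join) yields an odd hole of $G$, a contradiction.

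Then I would treat the odd antihole $H$, which is the genuinely fiddly part. Since $\overline{H}$ is an odd hole in $\overline{G_1}$ and interior vertices of $P_2$ have degree~$2$ in $G_1$, they have degree $|V(G_1)| - 3$ in $\overline{G_1}$, so at most two of them can lie on $\overline{H}$ (an induced cycle in $\overline{G_1}$ of length $\geq 5$ cannot contain three vertices of such high degree without a chord). Thus $H$ uses at most two interior vertices of $P_2$, and I would push the interior of $P_2$ down to a constant size: an antihole of length $\geq 5$ meeting $V(P_2)$ in a bounded set can be analysed using Lemmas~\ref{antiPgivePX2} and~\ref{antiPgivePA1} applied to $G_1$ with the $2$-join $(X_1, V(P_2))$. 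These lemmas force either that $V(H) \subseteq X_1 \cup \{v\}$ for a single $v \in \{a_2, b_2\}$ — in which case $H$ survives in $G$ because $a_2$ (resp. $b_2$) is complete to $A_1$ (resp. $B_1$) and anticomplete to the rest of $X_1$, exactly like a vertex of $A_2$ (resp. $B_2$) in $G$ — or that $H$ has even length, the desired contradiction. The main obstacle is exactly this antihole bookkeeping: making sure that the constantly many vertices of $V(H) \cap V(P_2)$ can be faithfully simulated inside $X_2$ in $G$ (using that $(X_1,X_2)$ is proper, so $C_2$ contains a path of the right parity, and that $A_2, B_2$ are non-empty), so that any short antipath through $P_2$ lifts to an antipath through $X_2$ of the same parity.
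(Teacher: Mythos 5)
Your hole analysis is essentially the paper's own argument and is sound: a hole of $G_1$ either traverses all of $P_2$ (forced by the degree-$2$ interior vertices) and then decomposes into $P_2$ plus a path of $G[X_1]$ from $A_1$ to $B_1$ whose parity matches that of $P_2$ by Lemma~\ref{l.2jAiBi} and the definition of blocks, or it meets $V(P_2)$ only in $\{a_2,b_2\}$ and lifts back to $G$. One small inaccuracy: when $H$ uses the flat edge $a_2b_2$, an edge between $A_2$ and $B_2$ need not exist in $G$ --- the $2$-join being connected only guarantees an odd path from $A_2$ to $B_2$ with interior in $C_2$, possibly of length $3$ or more. This costs nothing, since the same parity argument you use in the full-$P_2$ case (two edge-disjoint $A_1$--$B_1$ / $a_2$--$b_2$ paths of equal parity) disposes of it; but ``replace by an $A_2$--$B_2$ edge'' is not literally available.

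The genuine gap is the antihole case, which you flag but do not close, and the route you sketch would not close it. First, your intermediate claim ``at most two interior vertices of $P_2$ lie on $H$'' is weaker than what is needed and its justification is shaky (the interior vertices of a path of length~$4$ are not pairwise adjacent in $\overline{G_1}$, so they form no triangle there). The correct and much simpler observation is that in an antihole of length $\ell$ every vertex has exactly $\ell-3$ neighbours among the antihole's vertices, so for $\ell\geq 7$ no vertex of degree~$2$ in $G_1$ can lie on $H$ at all; the case $\ell=5$ is already covered by your hole analysis since $\overline{C_5}=C_5$. Second, you never exclude the case that both $a_2$ and $b_2$ lie on $H$, and without that you cannot reach ``$V(H)\subseteq X_1\cup\{v\}$ for a single $v$''. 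Lemmas~\ref{antiPgivePX2} and~\ref{antiPgivePA1} will not supply this: they are statements about antipaths with prescribed interiors, designed for the skew-cutset analysis, and do not apply directly to an antihole. The missing step is that any two vertices of an antihole of length at least~$7$ have a common neighbour inside the antihole, whereas the only candidates for a common neighbour of $a_2$ and $b_2$ in $G_1$ are interior vertices of the flat path $P_2$ (or nothing at all when $P_2$ is long or is a flat edge), and those cannot be on $H$ by the degree count above. With these two facts the antihole case is two lines, as in the paper: $H\subseteq X_1\cup\{v\}$ for one $v\in\{a_2,b_2\}$, hence $H$ may be viewed as an antihole of $G$ and is even.
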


\begin{proof}
  Note that $(V(G) \setminus X_2, X_2)$ is a connected 2-join of $G_2$
  (possibly non-substantial).  Let $a_1, b_1$ be the vertices of
  $G_2\setminus X_2$ respectively complete to $A_2, B_2$.  

  Let $H$ be a hole of $G_2$. Either $V(H) \subset V(X_2) \cup \{u\}$
  where $u \in \{a_1, b_1\}$ or $H$ is edge-wise partitioned into two
  paths from $a_1$ to $b_1$. In either cases, $H$ is even because it
  may be viewed as a hole of $G$ or by Lemma~\ref{l.2jAiBi} applied to
  $G$ and by definition of blocks. 

  Let $H'$ be an antihole of $G_2$ of length at least~7.  No vertex of
  $G_2\setminus (X_2 \cup \{a_1, b_1\})$ can be in $H'$ since these
  vertices all have degree~2. Also, $a_1, b_1$ cannot be both in $H'$
  because in $H'$, any pair of vertices has a common neighbor. Thus
  $H'$ may be viewed as an antihole of $G$ and is even.
\end{proof}

It is convenient to consider a degenerated kind of 2-join that implies
the existence of \an\  \even\  skew partition.  A 2-join $(X_1, X_2)$ is
said to be \emph{degenerate} if either:

\begin{itemize}
\item
  there exists $i \in \{1, 2\}$ and a vertex $v$ in $A_i$
  (resp. $B_i$) that has no neighbor in $X_i \setminus A_i$ (resp. in
  $X_i \setminus B_i$);
\item
  one of $A_1 \cup A_2$, $B_1 \cup B_2$ is a skew cutset of $G$;
\item 
  the 2-join $(X_1, X_2)$ is not connected (ie, there exists $i \in
  \{1, 2\}$ and a component of $X_i$ that does not meet both $A_i,
  B_i$);
\item
  there exists $i \in \{1, 2\}$ and a vertex in $A_i$ that is complete
  to $B_i$ or a vertex in $B_i$ that is complete to $A_i$;
\item
  there exists $i \in \{1, 2\}$ and a vertex in $C_i$ that is complete
  to $A_i \cup B_i$.
\end{itemize}

\begin{lemma}
  \label{l.degenerate}
  Let $G$ be a Berge graph and $(X_1, X_2)$ be a degenerate
  substantial 2-join of~$G$.  Then $G$ has \an\ \even\ skew
  partition. Moreover, if $(X_1, X_2)$ is proper then and at least one
  of the blocks $G_1, G_2$ of $G$ has \an\ \even\ skew partition.
\end{lemma}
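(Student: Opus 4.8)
The plan is to go through each of the five cases in the definition of \emph{degenerate} and, for each, produce an \even\ skew partition directly, using the technical lemmas just proved (Lemmas~\ref{l.starcutset},~\ref{l.2jAiAi}) to handle parity. The unifying observation is that in all cases except possibly the third, $G$ has a star cutset or a small skew cutset whose parity is controlled, and Lemma~\ref{l.starcutset} (or an argument like its proof) finishes. So first I would dispose of the degenerate cases one by one.

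\medskip

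\noindent For the first case, suppose $v\in A_1$ has no neighbor in $X_1\setminus A_1$. Then $v$'s only neighbors lie in $A_2$, so $A_2$ (which is non-empty; note $|A_2|\geq 1$) is a cutset separating $v$ from $B_1$ (such a vertex exists, and is non-empty, since $(X_1,X_2)$ is substantial, $X_1\neq\emptyset$ has a vertex of $B_1$). Actually $\{v\}\cup A_2$ together with the rest: more precisely $A_1\cup A_2 \setminus \{v\}$ may be empty; the clean statement is that $A_2$ is a star cutset if $|A_2|\geq 2$ with any center, or if $|A_2|=1$ then $A_2$ is a cutset of size~1 and one argues as in the size-1 part of the proof of Lemma~\ref{l.starcutset}. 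Then Lemma~\ref{l.starcutset} gives \an\ \even\ skew partition (after checking $G$ is not $\overline{C_4}$ and has an edge, which holds because $(X_1,X_2)$ is substantial so $|V(G)|\geq 6$). For the second case, one of $A_1\cup A_2$, $B_1\cup B_2$ is a skew cutset; by Lemma~\ref{l.2jAiAi}, every outgoing path from $A_i$ to $A_i$ has even length and every antipath with interior in $A_i$ and ends outside has even length, and combining the two sides one checks every path of length $\geq 2$ with ends in $A_1\cup A_2$ and interior outside has even length (the only mixed case being a path through $B_1\cup B_2$, which decomposes via Lemma~\ref{PgivePA1} or directly into two outgoing $A_i$-to-$A_i$ pieces). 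Similarly for antipaths. Hence the skew cutset $A_1\cup A_2$ is \even. For the third case, $(X_1,X_2)$ is not connected: some component $C$ of $X_1$ misses $B_1$ (say). Then $A_1\setminus C$ together with $A_2$: the set $(A_1\setminus C)\cup A_2$ is a cutset separating $C$ from $B_1$ — wait, we need $A_1\setminus C$ to still attach — the clean version is that $\{$vertices of $A_1$ not in $C\}\cup A_2$, or when $C$ contains all of $A_1$, just $A_2$, separates $C$ from the rest; and this is a star cutset with center any vertex of $A_2$ if $C$ meets $A_1$ only (since $A_2$ is complete to $A_1$), so again Lemma~\ref{l.starcutset} applies. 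For the fourth case, if $v\in A_i$ is complete to $B_i$, then $v$ together with... the set $N(v)$: actually $\{v\}\cup A_{3-i}\cup B_{3-i}$ is a star cutset with center $v$? No — $v$ is complete to $A_{3-i}$ and to $B_i\supseteq$ nothing across. The cleanest: $\{v\}\cup N(v)\cap(X_i)$ hmm. Let me instead say: if $v\in A_i$ is complete to $B_i$, then $\{v\}\cup A_{3-i}$ is a star cutset with center $v$ (since $v$ is complete to $A_{3-i}$) separating $B_i$'s side; one checks it disconnects $G$ because any path from $B_i$ out of $X_i$ must pass through $B_{3-i}$, and... this needs $C_i$ handled. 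I would write this carefully but the mechanism is the same: produce a star cutset and invoke Lemma~\ref{l.starcutset}. The fifth case, $v\in C_i$ complete to $A_i\cup B_i$: then $\{v\}\cup A_i\cup B_i$... $v$ is complete to $A_i\cup B_i$ so $\{v\}\cup N(v)$ restricted appropriately is a star cutset with center $v$ separating the two sides of the 2-join — any $X_{3-i}$-to-$(C_i\setminus v)$ path needs $A_i\cup B_i$ — and Lemma~\ref{l.starcutset} finishes.

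\medskip

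\noindent Finally, for the ``moreover'' part: assume $(X_1,X_2)$ is proper, so the blocks $G_1,G_2$ are defined and are Berge by Lemma~\ref{l.blockberge}. The point is that the degeneracy property is inherited by at least one block. If the degeneracy is witnessed on side $i$ (cases 1, 4, 5, or the non-connectedness case with the offending component in $X_i$), then the block $G_{3-i}$ — the one that keeps $X_i$ intact and replaces $X_{3-i}$ by a flat path $P$ with ends $a_{3-i},b_{3-i}$ — still contains the witnessing vertex and still has the relevant 2-join $(V(G_{3-i})\setminus X_i, X_i)$ with the same $A_i,B_i$, hence is still degenerate (the flat path side clearly meets both $A_{3-i}$-analog and $B_{3-i}$-analog, so it does not reintroduce the disconnection), and this 2-join is substantial since $P$ has length $\geq 1$; so $G_{3-i}$ has \an\ \even\ skew partition by the first part of this very lemma. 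For the second case (one of $A_1\cup A_2$, $B_1\cup B_2$ a skew cutset), one checks that the corresponding set in one of the blocks — say $\{a_1\}\cup A_2$ in $G_1$, or $A_1\cup\{a_2\}$ in $G_2$ — is still a skew cutset: the complete/anticomplete structure is preserved, disconnection is preserved because the flat path $P$ routes through its $A$-end and $B$-end which lie in the analog of $A\cup A$; and it is \even\ by Lemma~\ref{l.2jAiAi} applied to the block. The main obstacle here is the bookkeeping in the ``moreover'' part: verifying that contracting one side into a short flat path genuinely preserves whichever degeneracy clause held, and in particular that no degeneracy clause is \emph{destroyed} by the contraction in \emph{both} blocks simultaneously. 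The key facts making this go through are that $P$ is flat with ends playing the roles of $A$ and $B$ (so the 2-join stays connected on that side and $P$'s interior vertices have degree~2, hence cannot be complete to anything), and that $P$ has length $\geq 1$ (so the new 2-join in the block is still substantial), both immediate from the definition of blocks.
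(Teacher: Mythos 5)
Your overall strategy (case analysis on the five degeneracy clauses, producing star cutsets or skew cutsets and invoking Lemmas~\ref{l.starcutset} and~\ref{l.2jAiAi}) matches the paper's, and your treatment of the second clause ($A_1\cup A_2$ a skew cutset) is essentially the paper's argument. But several of the individual cases as you state them do not go through. In the first clause you assert that $A_2$ ``is a star cutset with any center'': a star cutset must contain a vertex adjacent to all the others, and $A_2$ need not induce a star (it can be a stable set); moreover $v$ may have neighbors in $A_1\setminus\{v\}$, so $A_2$ alone need not even be a cutset. The paper's fix is to pass to the complement: $N(v)\subseteq (A_1\setminus\{v\})\cup A_2$, so $\overline{G}$ has a star cutset \emph{centered on $v$} (when $|A_1|>1$; the case $A_1=\{v\}$ uses $\{b\}\cup B_2$), and Lemmas~\ref{l.starcutset} and~\ref{espcomp} finish. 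The same ``any vertex of $A_2$ is a center'' error recurs in your third clause. In the fourth clause your candidate $\{v\}\cup A_{3-i}$ does not separate $G$ (the two sides remain joined through $B_i$--$B_{3-i}$ edges), and you concede that $C_i$ is unhandled; the paper uses $(\{a\}\cup N(a))\setminus\{a'\}$ and a reduction to $|A_1|=|B_1|=1$. In the fifth clause you miss exactly the hard subcase $C_i=\{v\}$, where $\{v\}\cup A_i\cup B_i=X_i$ and nothing is separated; there the paper needs a parity argument via Lemma~\ref{l.2jAiBi} ending in a contradiction with substantiality, not a star cutset.

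The ``moreover'' part also has a genuine gap. You propose to show that a block inherits a degenerate \emph{substantial} 2-join and then recurse through the first part of the lemma, claiming the block's 2-join ``is still substantial since $P$ has length $\geq 1$.'' That is false: if the contracted side was the path-side, the block replaces it by a flat path of length~1 or~2, and a side with 2 vertices, or a path of length~2 from $A$ to $B$ with interior in $C$, is excluded by the definition of substantial. So the recursion is unavailable precisely in the path 2-join case. The paper avoids this by exhibiting the \even\ skew cutset of the block directly inside each case (e.g.\ $A_1\cup\{a_2\}$ where $a_2$ represents $A_2$ in $G_1$, or the persisting star cutsets), rather than by re-applying the lemma to the block.
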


\begin{proof}
  Let us look at the possible reasons why $(X_1, X_2)$ is
  degenerate. The following five paragraphs correspond to the five
  items of the definition of degenerate 2-joins.

  If there is a vertex $v$ in $A_1$ that has no neighbor in $X_1
  \setminus A_1$ then suppose first $|A_1|>1$.  So $(A_1 \setminus
  \{v\}) \cup A_2 $ is a skew cutset separating $v$ from the rest of
  the graph. Hence, in $\overline{G}$ there is a star cutset of center
  $v$, and by Lemmas~\ref{l.starcutset} and~\ref{espcomp}, $G$ has
  \an\ \even\ skew partition. Hence we may assume $A_1 = \{v\}$. Since
  $(X_1, X_2)$ is substantial, $|X_1| \geq 3$. Thus, for any $b\in
  B_1$, $\{b\} \cup B_2$ is a star cutset that separates $v$ from $X_1
  \setminus \{b, v\}$ and $G$ has \an\ \even\ skew partition by
  Lemma~\ref{l.starcutset}.  By the same way, the block $G_1$ has
  \an\ \even\ skew partition. The cases with $A_2, B_1, B_2$ are
  similar.

  If $A_1\cup A_2$ is a skew cutset of $G$ then let us check that this
  skew cutset is \even\ (the case when $B_1 \cup B_2$ is a skew cutset
  is similar). Since $A_1$ is complete to $A_2$, any outgoing path
  from $A_1 \cup A_2$ to $A_1 \cup A_2$ is either outgoing from $A_1$
  to $A_1$ or outgoing from $A_2$ to $A_2$. Thus, such a path has even
  length by Lemma~\ref{l.2jAiAi}.  If there is an antipath $Q$ of
  length at least~5 with its interior in $A_1 \cup A_2$ and its ends
  in the rest of the graph, then it must lie entirely in $X_1$ or
  $X_2$, say $X_1$ up to symmetry. Thus, such an antipath has even
  length by Lemma~\ref{l.2jAiAi}. By the same way $A_1 \cup \{a_2\}$,
  where $a_2$ is the vertex of $G_1$ that represents $A_2$, is
  \an\ \even\ skew cutset of $G_1$.

  If $(X_1, X_2)$ is not connected, then let for instance $Y$ be a
  component of $X_1$ that does not meet $B_1$. If $Y \cap C_1 \neq
  \emptyset$ then $A_1 \cup A_2$ is a skew cutset of $G$ that
  separates $Y \cap C_1$ from $B_1$. So, by the preceding paragraph,
  $G$ and $G_1$ have \an\ \even\ skew partition and we may assume that
  $Y \subset A_1$. Hence, every vertex in $Y$ has no neighbor in $X_1
  \setminus A_1$. So, by the penultimate paragraph, $G$ and $G_1$ have
  \an\ \even\ skew partition.

  If there is a vertex $a \in A_1$ that is complete to $B_1$ (the
  other cases are symmetric) then suppose first $|A_1| > 1$.  Consider
  $a'\neq a$ in $A_1$. Hence $(\{a\} \cup N(a)) \setminus a'$ is a
  star cutset of $G$ separating $a'$ from $B_2$.  So, by
  Lemma~\ref{l.starcutset}, we may assume $A_1 = \{a\}$.  If $|B_1| >
  1$, consider $b\neq b'$ in $B_1$.  Hence, $(\{b\} \cup N(b))
  \setminus b'$ is a star cutset of $G$ separating $b'$ from $A_2$. So
  we may assume $B_1 = \{b\}$.  Since $(X_1, X_2)$ is substantial,
  $|X_1|\geq 3$, and there is a vertex $c$ in $V(G) \setminus (A_1
  \cup B_1)$. Now, $\{a,b\}$ is a star cutset separating $c$ from
  $X_2$. By the same way, $G_1$ has \an\ \even\ skew partition.

  If there is a vertex $c$ complete to $A_i \cup B_i$ then we may
  assume $C_i = \{c\}$ for otherwise there would be another vertex
  $c'$ in $C_i$ and $\{c\} \cup A_i \cup B_i$ would be a star cutset
  separating $c'$ from the rest of the graph. By the preceding
  paragraph, we may assume that there is a vertex $a \in A_1$ and a
  vertex $b \in B_1$ missing $a$. Then $a \tp c \tp b$ is an outgoing
  path of even length from $A_i$ to $B_i$. By the penultimate
  paragraph, we may assume $(X_1, X_2)$ being connected. Thus by
  Lemma~\ref{l.2jAiBi}, there is no edge between $A_i$ and $B_i$. If
  there are two vertices $a \neq a' \in A_i$ then $\{a\} \cup N(a)
  \setminus \{a'\}$ is a star cutset of $G$ separating $a'$ from
  $B_{3-i}$. Thus may assume $|A_i| = 1$, and similarly $|B_i|=1$.
  Thus, $X_i$ is an outgoing path of length~2 from $A_i$ to $B_i$,
  which contradicts $(X_1, X_2)$ being substantial.  By the same way,
  one of $G_1$, $G_2$ has \an\ \even\ skew partition.
\end{proof}

\begin{lemma}
  \label{l.connect}
  Let $G$ be a graph with a non-degenerate 2-join $(X_1, X_2)$. Let
  $i$ be in $\{1, 2\}$. Then for every vertex $v \in X_i$ there is a
  path $P_a = \bp a \tp \cdots \tp v$ and a path $P_b = \bp b \tp
  \cdots \tp v$ such that:
  \begin{itemize}
  \item $a \in A_i$, $b \in B_i$;
  \item Every interior vertex of $P_a, P_b$ is in $X_i \setminus (A_i
    \cup B_i)$.
  \end{itemize}
\end{lemma}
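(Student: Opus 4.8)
The plan is to prove this by induction on the number of vertices of $X_i$ outside $A_i \cup B_i$, or more precisely by a direct extremal argument: fix $v \in X_i$ and suppose, for contradiction, that there is no path $P_a$ from some $a \in A_i$ to $v$ whose interior avoids $A_i \cup B_i$ (the argument for $P_b$ is symmetric, so it suffices to handle one of the two). First I would deal with the trivial case $v \in A_i$, where the one-vertex path works, and the case $v \in B_i$, where any neighbour $a \in A_i$ of nothing — wait, one must be slightly careful; if $v\in B_i$ then I want a path from $A_i$ to $v$ with interior in $C_i$, which exists because $(X_1,X_2)$ is connected (a consequence of non-degeneracy via the third bullet of the definition of degenerate): indeed the component of $G[X_i]$ containing $v$ meets $A_i$, and a shortest path in that component from $A_i$ to $v$ has all interior vertices outside $A_i$, and outside $B_i$ too since $v\in B_i$ is already a $B_i$-end and a shortest such path cannot revisit $B_i$ before reaching $v$... this needs the edge structure, so I'd instead argue directly from connectivity of the component.

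So assume $v \in C_i$. Let $Y$ be the set of vertices reachable from $v$ by a path whose interior (and the vertex $v$ itself, and all intermediate vertices) lies in $C_i$; equivalently $Y$ is $\{v\}$ together with the vertices of the component of $G[C_i]$ containing $v$, plus the vertices of $A_i\cup B_i$ adjacent to that component. The key dichotomy: either $Y \cap A_i \neq \emptyset$, in which case we are done (such a path is exactly a $P_a$), or $Y \cap A_i = \emptyset$. In the latter case I claim $(X_1,X_2)$ is degenerate, contradicting the hypothesis. The idea is that the component $Z$ of $G[X_i]$ containing $v$ then has a peculiar structure: since $(X_1,X_2)$ is connected, $Z$ meets $A_i$; but every path inside $C_i$ from $v$ fails to reach $A_i$, so the vertices of $A_i$ in $Z$ are "screened off" from $v$ by $B_i$. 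More precisely, consider the component $W$ of $G[C_i]$ containing $v$; then $N(W)\cap A_i=\emptyset$ by assumption, so $W$ together with its neighbours in $B_i$ forms a piece of $Z$ whose only attachment to the rest of $Z$ is through $B_i$. This lets me produce a skew cutset: $B_i\cup B_{3-i}$ separates $W$ (which is nonempty and avoids $B_i$) from $A_i$ (also nonempty, and meeting $Z$), so $B_1\cup B_2$ is a cutset — provided $W$ does not already contain all of $Z\setminus B_i$, i.e.\ provided there is something on the far side, which holds because $Z$ meets $A_i$ and $A_i\cap W=\emptyset$. Hence $B_1\cup B_2$ is a skew cutset (it is a complete-to-each-other pair of nonempty sets, hence induces a graph that is not anticonnected), so $(X_1,X_2)$ is degenerate by the second bullet of that definition.

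I would organize the write-up as: (1) reduce to producing $P_a$, by symmetry; (2) handle $v\in A_i$ (done) and observe it suffices to find, for every $v\in X_i$, a path from $A_i$ to $v$ with interior in $C_i$ when $v\notin A_i$ — for $v\in B_i$ the "interior in $C_i$" is automatic, for $v\in C_i$ we want interior in $C_i$ and the $A_i$-end is the only vertex of the path in $A_i\cup B_i$ on that end, $v$ is in $C_i$; (3) run the reachability/component argument above; (4) in the bad case, exhibit the skew cutset $B_1\cup B_2$ (or $A_1\cup A_2$ in the symmetric treatment) and invoke degeneracy. One small subtlety to get right: when $v\in B_i$, the path $P_a$ I construct has its last vertex $v\in B_i$, which is fine since the statement only forbids interior vertices from lying in $A_i\cup B_i$; similarly the path $P_b$ for such a $v$ is just the one-vertex path. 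Another subtlety: I must make sure the cutset $B_1\cup B_2$ genuinely disconnects, i.e.\ that $A_i\cap Z\neq\emptyset$ really gives a vertex in a different component of $G\setminus(B_1\cup B_2)$ from $v$ — this is because any path from $v$ to $A_i$ inside $G\setminus(B_1\cup B_2)$ would have to stay in $X_i$ (no edges to $X_{3-i}$ survive except through $A_i,B_i$, and $B_i$ is deleted) and avoid $B_i$, hence stay in $\{v\}\cup C_i$, contradicting $N(W)\cap A_i=\emptyset$. The main obstacle is really just bookkeeping: making the component/reachability sets precise and checking that the set one deletes is nonempty on both sides so that "skew cutset" applies; there is no deep idea beyond "if you can't reach $A_i$ through $C_i$, then $B_i$ is in the way, and $B_i$ enlarged to $B_1\cup B_2$ is a skew cutset."
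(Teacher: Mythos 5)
Your core case is correct and is essentially the paper's own argument: for $v\in C_i$, either the component $W$ of $G[C_i]$ containing $v$ has a neighbour in $A_i$, or $B_1\cup B_2$ is a skew cutset separating $W$ from (the non-empty set) $A_i$, so the 2-join is degenerate by the second bullet of that definition. (The paper runs the mirror image: if every path in $G[X_i]$ from $v$ to $B_i$ goes through $A_i$, then $A_1\cup A_2$ is a skew cutset.) The genuine gap is the boundary case $v\in B_i$ (equivalently, by your $A$/$B$ symmetry, producing $P_b$ when $v\in A_i$). You notice yourself that a shortest path from $A_i$ to $v$ may revisit $B_i$, promise to ``argue directly from connectivity of the component'', and then later assert that for $v\in B_i$ the condition ``interior in $C_i$'' is automatic. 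It is not, and your cutset argument does not transfer: if $v\in B_i$ then $v$ itself lies in $B_1\cup B_2$, so that set cannot separate $v$ from $A_i$. Worse, if $v\in B_i$ has no neighbour at all in $X_i\setminus B_i$, then no path $P_a$ exists, and the lemma only survives because such a $v$ makes the 2-join degenerate by the \emph{first} bullet of the definition (a vertex of $B_i$ with no neighbour in $X_i\setminus B_i$) --- a clause your proof never invokes, so your argument as written does not exclude this configuration.

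The repair is exactly what the paper does for its $v\in A_i$ sub-case: by non-degeneracy (first bullet), $v\in B_i$ has a neighbour $w\in X_i\setminus B_i$; either $w\in A_i$, and the one-edge path $w\tp v$ is $P_a$, or $w\in C_i$, and your $C_i$-argument applied to $w$ yields a path $Q$ from $A_i$ to $w$ with interior in $C_i$, after which a shortest path from $A_i$ to $v$ inside $G[V(Q)\cup\{v\}]$ has all its interior in $C_i$. With that sub-case added, your proof is complete and coincides with the paper's.
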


\begin{proof}
  Note that $(X_1, X_2)$ is connected since it is not
  degenerate. Suppose first $v \in X_i \setminus (A_i \cup B_i)$.  By
  the definition of connected 2-joins, the connected component $X_v$
  of $v$ in $G[X_i]$ meets both $A_i$, $B_i$ and there is at least one
  path from $v$ to a vertex of $B_i$ in $G[X_i]$.  If every such path
  of $G[X_i]$ from $v$ to $B_i$ goes through $A_i$, then $A_i$ is a
  cutset of $G[X_i]$ that separates $v$ from $B_i$. Thus $A_1\cup A_2$
  is a skew cutset of $G$, so $(X_1, X_2)$ is degenerate, a
  contradiction. So there is a path $P_b$ as desired, and by the same
  way, $P_a$ exists.

  If $v\in A_i$, then $P_a$ exists and has length~0: put $P_a = v$.
  The vertex $v$ has a neighbor $w$ in $X_i \setminus A_i$ otherwise
  $(X_1, X_2)$ is degenerate.  By the preceding paragraph, there is a
  path $Q$ from $w$ to $b \in B_i$ whose interior vertices lie in $X_i
  \setminus (A_i \cup B_i)$.  So $P_b$ exists: consider a shortest
  path from $v$ to $b$ in $G[V(Q) \cup \{ b \}]$.
\end{proof}

\begin{lemma}
  \label{l.overlap}
  Let $G$ be a Berge graph with a non-degenerate 2-join $(X_1, X_2)$.
  Let $F$ be \an\ \even\ skew cutset of $G$. Then for some $i \in \{1,
  2\}$ either:
  \begin{itemize}
  \item
    $F \subsetneq X_i$;
  \item 
    $F \cap X_i \subsetneq X_i$ and one of $(F \cap X_i) \cup
    A_{3-i}$, $(F \cap X_i) \cup B_{3-i}$ is \an\  \even\  skew cutset of
    $G$.
  \end{itemize}
\end{lemma}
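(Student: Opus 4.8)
The plan is to take an even skew cutset $F$ of $G$ and analyze how it sits with respect to the 2-join $(X_1,X_2)$. First I would dispose of the trivial placements: if $F\subseteq X_1$ or $F\subseteq X_2$ we are essentially done (the first bullet), so I would assume $F$ meets both sides, and in particular $F\cap X_1\neq\emptyset$ and $F\cap X_2\neq\emptyset$. The key observation is that $F$ must behave well with respect to the ``attachment sets'' $A_i,B_i$: because $(X_1,X_2)$ is not degenerate, $A_1\cup A_2$ and $B_1\cup B_2$ are not skew cutsets, and more importantly, every vertex of $A_i$ has a neighbor in $X_i\setminus A_i$ and a neighbor in $A_{3-i}$, so the adjacency across the 2-join is rigid. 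I would use Lemma~\ref{l.connect} to guarantee, for any $v\in X_i\setminus F$, paths $P_a,P_b$ inside $X_i$ from $v$ to $A_i$ and to $B_i$ avoiding interior contact with $A_i\cup B_i$; these are the tool that lets me reroute connections through the 2-join.

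Next I would show that $F$ cannot ``straddle'' the 2-join in a complicated way. Suppose $F\cap X_1\subsetneq X_1$ and $F\cap X_2\subsetneq X_2$ (the remaining case after the trivial one). Pick a component $K$ of $G\setminus F$. Using Lemma~\ref{l.connect} applied on each side, I would argue that if $K$ contains a vertex of $X_i\setminus F$, then $K$ reaches into $A_i\setminus F$ or $B_i\setminus F$, and then — unless $A_i$ or $B_i$ is entirely inside $F$ — $K$ crosses to the other side via the complete pairs $(A_1,A_2)$ or $(B_1,B_2)$. The upshot is a dichotomy: either $A_i\subseteq F$ for some $i$, or $B_i\subseteq F$ for some $i$ (otherwise $G\setminus F$ would be connected, contradicting that $F$ is a cutset). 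Say $A_1\subseteq F$; then $A_2$ is anticomplete to nothing relevant and complete to $A_1$, and I claim $F'=(F\cap X_2)\cup A_1$ — rewritten as $(F\cap X_2)\cup A_1$, but we want it in the form $(F\cap X_i)\cup A_{3-i}$: take $i=2$, so $F'=(F\cap X_2)\cup A_1=(F\cap X_2)\cup A_{3-2}$. One checks $F'$ is still a cutset (it separates the same components restricted to $X_2$, since the $X_1$-side collapses through $A_1\cup A_2$), still skew (the split is inherited, as $A_1$ is complete to $A_2$ and hence to any $B$-part on the $X_2$ side that was complete to $F\cap X_2$… here I must be careful and instead recover the split from scratch), and the parity conditions transfer via Lemmas~\ref{l.2jAiAi}, \ref{PgivePA1} and \ref{antiPgivePA1}, which were proved precisely to push paths and antipaths across a 2-join while controlling parity.

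The main obstacle I expect is the last part: verifying that $F' = (F\cap X_i)\cup A_{3-i}$ (or with $B_{3-i}$) is genuinely an \emph{even} skew cutset, not merely a skew cutset. ``Skew'' requires exhibiting a split $(A_1',A_2',B_1',B_2')$ of the partition $(V(G)\setminus F', F')$; the natural guess is to keep the split of $F$ restricted to $X_2$ and throw $A_1$ into the appropriate $B$-part, but one must check $A_1$ is complete to that part and anticomplete to the other, which forces $A_1$'s old split-role to be compatible — this may require first massaging $F$ so that $A_1$ lies wholly in one side $B_1$ or $B_2$ of the split of $F$ (using that $A_1$ is complete to $A_2$, the vertices of $A_2\setminus F$ are on one side, so $A_1$ is forced). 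For evenness: any outgoing path of length $\geq 2$ with ends in $F'$ and interior outside $F'$ either stays on the $X_2$ side (then it was already an $F$-path, even by hypothesis) or it has an end in $A_1$; in that case Lemma~\ref{PgivePA1} decomposes it, and Lemma~\ref{l.2jAiAi} handles the $A_i$-to-$A_i$ outgoing subpaths, while the crossing case yields a path of $G$ that either reduces to a shorter $F$-path after rerouting through $A_2$ or contradicts $G$ being Berge. The antipath condition is symmetric, using Lemma~\ref{antiPgivePA1} and the antipath clause of Lemma~\ref{l.2jAiAi}. Keeping the bookkeeping of ``which vertices of $A_1$ actually survive in $V(G)\setminus F'$'' (note $F'\supseteq A_1$, so none do) is what makes the parity transfer clean: paths through $F'$ simply cannot use $A_1$ as an interior vertex, so every such path restricted to its behaviour near the 2-join is governed by the cited lemmas. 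That said, I would be alert that a symmetric subtlety may force me to allow $B_{3-i}$ instead of $A_{3-i}$ depending on whether it was an $A$-part or a $B$-part that got swallowed by $F$, which is exactly why the statement offers both options.
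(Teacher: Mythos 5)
Your overall strategy (reduce to $F'=(F\cap X_i)\cup A_{3-i}$ or $(F\cap X_i)\cup B_{3-i}$ and transfer parity via Lemmas~\ref{l.2jAiAi}, \ref{PgivePA1}, \ref{antiPgivePA1}) matches the paper's, but the structural claim on which you hang the whole case analysis is false. You assert that if $F$ meets both sides then ``either $A_i\subseteq F$ for some $i$, or $B_i\subseteq F$ for some $i$, otherwise $G\setminus F$ would be connected.'' This is not so. The paper's Case~1 is precisely the configuration you are missing: $F$ can meet all four of $A_1,A_2,B_1,B_2$ while containing \emph{none} of them entirely (non-degeneracy in fact forces $A_i\setminus F\neq\emptyset$, $B_i\setminus F\neq\emptyset$ there), with $F\cap C_1=F\cap C_2=\emptyset$, and still be a cutset: a piece of $C_i$ may attach to $A_i$ and $B_i$ only through $F\cap A_i$ and $F\cap B_i$. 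Your rerouting argument via Lemma~\ref{l.connect} does not rescue this, because that lemma only guarantees a path from $v$ to \emph{some} vertex of $A_i$ (which may lie in $F$), and its interior may a priori meet $F\cap C_i$. So your dichotomy skips an entire case, and it is the hardest one: there one must first prove $A_i\cap F$ is complete to $B_i\cap F$ and $F\cap C_i=\emptyset$ (using that $\overline{G}[F]$ is disconnected), and then argue that $(F\cap X_2)\cup A_1$ still separates the relevant pieces even though $A_1\not\subseteq F$, by showing any escaping path through $X_1$ can be rerouted through $B_1\setminus F$.

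A second, smaller gap: the first bullet of the lemma is $F\subsetneq X_i$ (proper containment), so the case $F=X_i$ must be explicitly refuted rather than folded into ``essentially done.'' The paper does this by a parity argument: if $F=X_2$ and some $A_2$--$B_2$ path through $C_2$ is odd, Lemma~\ref{l.2jAiBi} plus evenness of $F$ forces $A_2$ complete to $B_2$ (degenerate); if such a path is even, there are no $A_2B_2$ edges and anticonnectivity of $\overline{G}[X_2]$ forces a vertex of $C_2$ complete to $A_2\cup B_2$ (degenerate again). Neither step is automatic, so this needs to appear in a complete proof.
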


\begin{proof}
  We consider three cases:
  
  \noindent{\bf Case 1:} $F \cap A_1$, $F \cap A_2$, $F \cap B_1$, $F
  \cap B_2$ are all non-empty.

  If there is a vertex $a \in A_1\cap F$ non-adjacent to a vertex $b
  \in B_1 \cap F$ then there is an antipath of length at most~3
  between any vertex of $F$ and $a$, which contradicts $\overline{G}[F]$
  being disconnected. Thus $A_1 \cap F$ is complete to $B_1 \cap F$,
  and similarly $A_2 \cap F$ is complete to $B_2 \cap F$. It can be
  shown by similar techniques that $F \cap C_1 = F \cap C_2 =
  \emptyset$.  If $A_1 \subset F$ then there is a vertex in $B_1$ that
  is complete to $A_1$, which contradicts $(X_1, X_2)$ being
  non-degenerate. Thus $A_1 \setminus F \neq \emptyset$, and
  similarly $A_2 \setminus F \neq \emptyset$, $B_1 \setminus F \neq
  \emptyset$, $B_2 \setminus F \neq \emptyset$.

  Let $E_1$ be the component of $G \setminus F$ that contains
  $(A_1\setminus F) \cup (A_2 \setminus F)$. Let $E_2$ be another
  component of $G\setminus F$.  Up to  symmetry we assume $E_2 \cap
  X_2 \neq \emptyset$.  We claim that $F' = (F \cap X_2) \cup A_1$ is
  a skew cutset of $G$ that separates $E_1 \cap X_2$ from $E_2 \cap
  X_2$. For suppose not. This means that there is a path $P$ of $G
  \setminus F'$ with an end in $E_1\cap X_2$ and an end in $E_2 \cap
  X_2$.  If $P$ has no vertex in $X_1$ then $P \subset G \setminus F$
  and $P$ contradicts $E_1, E_2$ being components of $G \setminus
  F$. If $P$ has a vertex in $X_1$ then this vertex $b$ is unique and
  is in $B_1$ because $A_1 \subset F'$. By replacing $b$ by any vertex
  of $B_1 \setminus F$, we obtain again a path that contradicts $E_1,
  E_2$ being components of $G \setminus F$.  Thus $F'$ is a skew
  cutset of $G$. Note that this skew cutset is included in $A_1 \cup
  A_2 \cup B_2$. Let us prove that this skew cutset is \even.

  Let $P$ be an outgoing path from $F'$ to $F'$. Let us apply
  Lemma~\ref{PgivePA1} to $P$. If Outcome~1 of the lemma holds then
  $P$ has even length. If Outcome~2 of the lemma holds then $V(P)
  \subset X_2 \cup \{a, b\}$. Let $a_1$ be a vertex of $A_1 \cap F$
  and $b_1$ be a vertex of $B_1 \setminus F$ such that $a_1$ misses
  $b_1$. Note that $b_1$ exists for otherwise $(X_1, X_2)$ would be a
  degenerate 2-join of $G$.  After possibly replacing $a$ by $a_1$ and
  $b$ by $b_1$, we obtain an outgoing path from $F$ to $F$ that has
  the same length as $P$. Thus, $P$ has even length since $F$ is
  \an\ \even\ skew cutset. If Outcome~3 of the lemma holds then $P$
  has one end in $A_1$ and one end in $B_2$ and $P$ is a path from
  $A_1$ to $B_1$ whose interior is in $C_1$, plus one edge. Note that
  there is an edge between $A_2$ and $B_2$ so by Lemma~\ref{l.2jAiBi}
  every path from $A_1$ to $B_1$ whose interior is in $C_1$ has odd
  length. Hence in every case $P$ has even length.

  Let $Q$ be an antipath with both ends in $G \setminus F'$ and 
  interior in $F'$. If $Q$ has length~3 then $Q$ may be seen as an
  outgoing path from $F'$ to $F'$, so we may assume that $Q$ has
  length at least~5. By Lemma~\ref{antiPgivePA1} applied to $Q$,
  either $Q$ has even length or $V(Q) \subset X_2 \cup \{a\}$. If
  $a\in A_1$ let us replace $a$ by a vertex of $F \cap A_1$ and if $a
  \in B_1$ let us replace $a$ by a vertex of $B_1 \setminus F$. We
  obtain an antipath that has the same length as $Q$, that has both
  ends outside  $F$ and  interior in $F$. Thus $Q$ has even
  length because $F$ is \an\  \even\  skew cutset.

  \noindent{\bf Case 2:} one of $F \cap A_1$, $F \cap A_2$, $F \cap
  B_1$, $F \cap B_2$ is empty and $F \cap X_1$, $F \cap X_2$ are both
  non-empty. 

  We assume up to  symmetry that one of $B_1\cap F$, $B_2 \cap F$ is
  empty.  Since $F \cap X_1$ and $F \cap X_2$ are both non-empty,
  there is a least one edge between $F \cap X_1$ and $F \cap X_2$
  because $\overline{G}[F]$ is disconnected. Thus we know that $F \cap
  A_1$ and $F\cap A_2$ are both non-empty.  If $(F \cap X_1) \setminus
  A_1$ and $(F \cap X_2) \setminus A_2$ are both non-empty then there
  is a vertex of $F$ in one of $C_1, C_2$ since one of $B_1\cap F$,
  $B_2 \cap F$ is empty. Up to  symmetry, suppose $C_1 \cap F \neq
  \emptyset$. Then $\overline{G}[F]$ is connected since every vertex
  in it can be linked to a vertex of $C_1$ by an antipath of length at
  most~2, a contradiction. Hence one of $(F \cap X_1) \setminus A_1$
  and $(F \cap X_2) \setminus A_2$ is empty.  Thus we may assume $F
  \subset X_2 \cup A_1$.  Suppose $B_2 \subset F$. Then $B_2$ and $F
  \cap A_1$ are in the same component of $\overline{G}[F]$, thus there
  must be a vertex $v$ in $F$ that is complete to $B_2 \cup (F \cap
  A_1)$. So, $v$ is in $A_2$, and $v$ is complete to $B_2$,
  which contradicts $(X_1, X_2)$ being non-degenerate. We proved that
  there is at least one vertex $u$ in $B_2 \setminus F$. In particular,
  $F \cap X_2 \subsetneq X_2$. By Lemma~\ref{l.connect} there is a
  path from every vertex of $X_1 \setminus F$ to $u$ whose interior is
  in $X_1 \setminus A_1$, thus there is a component $E_1$ of $G
  \setminus F$ that contains $X_1 \setminus F$ and $u$. There is
  another component $E_2$ included in $X_2$. Thus $(F \cap X_2) \cup
  A_1$ is a skew cutset of $G$ that separates $B_1$ from $E_2$.  We
  still have to prove that the skew cutset $(F \cap X_2) \cup A_1$ is
  \even.

  Let $P$ be an outgoing path from $(F \cap X_2) \cup A_{1}$ to $(F
  \cap X_2) \cup A_{1}$. Let us apply Lemma~\ref{PgivePA1} to $P$. If
  Outcome~1 of the lemma holds then $P$ has even length. If Outcome~2
  of the lemma holds then $V(P) \subset X_2 \cup \{a, b\}$. Let $a_1$
  be a vertex of $A_1 \cap F$ and $b_1$ be a vertex of $B_1$ such that
  $a_1$ misses $b_1$. Note that $b_1$ exists for otherwise $(X_1,
  X_2)$ would be a degenerate 2-join of $G$.  After possibly replacing in
  $P$ $a$ by $a_1$ and $b$ by $b_1$, we obtain an outgoing path from
  $F$ to $F$ that has the same length as $P$. Thus, $P$ has even
  length since $F$ is \an\ \even\ skew cutset. If Outcome~3 of the
  lemma holds then $P = a \tp \cdots \tp b \tp b_2 \tp \cdots \tp
  c$. Let $a_1$ be in $A_1 \cap F$. By Lemma~\ref{l.connect} there is
  a path $P_1$ of $G[X_1]$ from $a_1$ to a vertex $b_1 \in
  B_1$. Moreover, $P_1$ has an end in $A_1$, an end in $B_1$ and
  interior in $C_1$. Note that by Lemma~\ref{l.2jAiBi}, $P_1$ and $a
  \tp P \tp b$ have the same parity. Thus $a_1 \tp P_1 \tp b_1 \tp b_2 \tp
  P \tp c$ is an outgoing path from $F$ to $F$ that has the same
  parity as $P$. Thus $P$ has even length.

  If $Q$ is an antipath with both ends in $G \setminus ((F \cap X_2)
  \cup A_1)$ and its interior in $(F \cap X_2) \cup A_1$, we prove
  that $Q$ has even length like in Case~1.
    
  \noindent{\bf Case 3:} One of $F \cap X_1, F \cap X_2$ is empty.
 
  Since $F \subsetneq X_2$ is an output of the lemma, we may assume up
  to  symmetry $F = X_2$ an look for a contradiction. If there is a
  path of odd length from $A_2$ to $B_2$ whose interior is in $C_2$,
  then there is by Lemma~\ref{l.2jAiBi} a similar path $P$ from $A_1$
  to $B_1$ of odd length. Hence $A_2$ is complete to $B_2$ because a
  pair of non-adjacent vertices yields together with $P$ an outgoing
  path of odd length from $F$ to $F$, which contradicts $F$ being
  \an\ \even\ skew cutset. In particular, there is a vertex of $A_2$
  that is complete to $B_2$, which implies $(X_1, X_2)$ being degenerate, a
  contradiction. If there is a path of even length from $A_2$ to $B_2$
  whose interior is in $C_2$ then by Lemma~\ref{l.2jAiBi} there are no
  edges between $A_2$ and $B_2$. Since $X_2=F$ is not anticonnected,
  there is a vertex in $C_2$ that is complete to $A_2 \cup B_2$,
  which implies again $(X_1, X_2)$ being degenerate, a contradiction.
\end{proof}

\begin{lemma}
  \label{l.skew2joind}
  Let $G$ be a Berge graph and $(X_1, X_2)$ be a proper 2-join of
  $G$. If $G$ has \an\ \even\ skew partition then at least one of the
  blocks of $G$ has \an\ \even\ skew partition.
\end{lemma}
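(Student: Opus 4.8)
The plan is to take an even skew cutset $F$ of $G$ and show that, after possibly replacing $F$ by an equivalent even skew cutset and after switching $X_1 \leftrightarrow X_2$, one of the blocks inherits an even skew cutset. First I would dispose of the degenerate case: if $(X_1,X_2)$ is degenerate, then Lemma~\ref{l.degenerate} immediately gives that at least one block has an even skew partition, so we may assume $(X_1,X_2)$ is non-degenerate. Now apply Lemma~\ref{l.overlap} to $F$: after renaming, either $F \subsetneq X_i$ for some $i$, or $F \cap X_i \subsetneq X_i$ and one of $(F\cap X_i)\cup A_{3-i}$, $(F\cap X_i)\cup B_{3-i}$ is an even skew cutset of $G$. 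In the second case we replace $F$ by that cutset, so in all cases we may assume there is an even skew cutset $F$ of $G$ with $F \subseteq X_i \cup A_{3-i}$ or $F \subseteq X_i \cup B_{3-i}$, with $F \cap X_i \subsetneq X_i$; switching the roles of the two sides if needed, say $i=1$, so $F \subseteq X_1 \cup \{a_2\text{-type vertices}\}$, i.e. $F \cap X_2 \subseteq A_2$ (or $\subseteq B_2$), and $F$ misses at least one vertex of $X_1$.

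The key step is then to transport this cutset into the block $G_2$, which is obtained from $G$ by replacing $X_1$ by a flat path $P_1$ from $a_1$ (complete to $A_2$) to $b_1$ (complete to $B_2$) of the appropriate parity. Since $F \cap X_1 \subsetneq X_1$, the set $F' = (F \cap X_2) \cup (F \cap X_1)$ lives almost entirely in $X_2$; more precisely I would argue that $F \cap X_1$, together with the knowledge that $F$ meets $X_2$ only inside $A_2$ (or only inside $B_2$), forces $F \cap X_1$ to behave, with respect to the rest of $G$, exactly like a subset of the flat path $P_1$: in the block $G_2$ we take $F'' = (F \cap X_2) \cup S$ where $S$ is the corresponding subset of $V(P_1) \cup \{a_1, b_1\}$ — concretely $S = \{a_1\}$ when $F \cap X_2 \subseteq A_2$ and $F$ meets $A_1$, etc. The point is that $G_2 \setminus F''$ is disconnected (any path across it would pull back to a path across $G\setminus F$ in $G$, using Lemma~\ref{l.connect} to route through $X_1$), that $F''$ is not anticonnected (inherited from $F$ since the complete pairs are preserved by the block construction), and that $F''$ is even: outgoing paths and long antipaths of $G_2$ through $F''$ pull back to outgoing paths and antipaths of $G$ through $F$ of the same parity by the parity lemmas — Lemmas~\ref{l.2jAiBi},~\ref{l.2jAiAi},~\ref{PgivePA1},~\ref{antiPgivePA1} and the definition of blocks, which fixes the parity of $P_1$ to match that of paths from $A_1$ to $B_1$ through $C_1$.

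The main obstacle I anticipate is the bookkeeping in the even-ness verification: a path in $G_2$ meeting $F''$ may use the new path $P_1$ in several ways (as a whole sub-path from $a_1$ to $b_1$, or entering and leaving $P_1$ through one endpoint), and for each configuration one must produce a genuine path in $G$ — using Lemma~\ref{l.connect} to find a replacement path in $G[X_1]$ with an end in $A_1$, an end in $B_1$, interior in $C_1$ — of the same parity, and similarly for antipaths. This is exactly the type of case analysis already carried out inside the proof of Lemma~\ref{l.overlap}, so I expect to reuse those arguments almost verbatim; the genuinely new content is only matching the definition of the block's flat path against the parity constraint from Lemma~\ref{l.2jAiBi}. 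Finally, I would note that the case $F \subsetneq X_i$ is the easy one: then $F$ is already a subset of one side, misses nothing outside, and is literally a skew cutset of the block $G_{3-i}$ (since $X_{3-i}$ minus its flat-path replacement is connected to $A_i \cup B_i$ in the usual way), and its even-ness is again inherited through the parity lemmas.
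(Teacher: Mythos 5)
Your proposal follows the paper's proof essentially step for step: dispose of the degenerate case by Lemma~\ref{l.degenerate}, apply Lemma~\ref{l.overlap} to reduce to a \even\ skew cutset contained in one side $X_i$ (or in $X_i$ together with all of $A_{3-i}$), transport it to the block that keeps $X_i$ intact by replacing $A_{3-i}$ with the single new endpoint of the flat path, and verify the skew, anticonnectivity and parity conditions by pulling paths and antipaths back to $G$ via Lemmas~\ref{PgivePX2}, \ref{PgivePA1}, \ref{antiPgivePX2}, \ref{antiPgivePA1} and the parity matching of Lemma~\ref{l.2jAiBi}. The one thing to fix is a labelling inconsistency in the middle of your write-up: having normalized to $F\subseteq X_1\cup A_2$ with $F\cap X_1\subsetneq X_1$, you must pass to the block in which $X_1$ survives (the one obtained by replacing $X_2$ by a flat path), not to $G_2$, which collapses $X_1$ and would discard the bulk of $F$ --- the set that ``behaves like'' the new path endpoint is the complete set $A_{3-i}$ on the \emph{other} side, not $F\cap X_i$; with the indices straightened out this is exactly the argument of the paper.
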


\begin{proof}
  If $(X_1, X_2)$ is degenerate, then the conclusion holds by
  Lemma~\ref{l.degenerate}. From now on, we assume that $(X_1, X_2)$
  is non-degenerate.  Suppose that $G$ has \an\ \even\ skew partition
  $(E, F)$.  By Lemma~\ref{l.overlap} and up to  symmetry either $F
  \subsetneq X_2$, or $(F\cap X_2) \subsetneq X_2$ and $A_1 \subset F$,
  after possibly replacing $F$ by $(F \cap X_2) \cup A_1$.

  If $F \subsetneq X_2$ then we claim that $F$ is \an\ \even\ skew
  cutset of $G_2$. Note that there is at least one component $E$ of
  $G\setminus F$ that has some vertex in $X_2$ but no vertex in $A_2
  \cup B_2$. Else every component of $G\setminus F$ has neighbors in
  $A_1$ or $B_1$, and therefore contains $A_1 \cup B_1$ because $(X_1,
  X_2)$ is connected. This implies $G\setminus F$ being connected, a
  contradiction. Thus, $F$ is a skew cutset of $G_2$ that separates
  $E$ from $V(G_2) \setminus X_2$.  Let $P$ be an outgoing path of
  $G_2$ from $F$ to $F$. Note that $G_2$ has an obvious 2-join,
  $(V(G_2) \setminus X_2, X_2)$, possibly non-substantial.  Let us
  apply Lemma~\ref{PgivePX2} to $P$. If Outcome~1 of the Lemma holds
  then after possibly replacing $a$ by any $a_1 \in A_1$ and $b$ by
  any $b_1 \in B_1$ non-adjacent to $a_1$, $P$ may be viewed as an
  outgoing of $G$ from $F$ to $F$, thus $P$ has even length. Note that
  $b_1$ may be chosen non-adjacent to $a_1$ because $(X_1, X_2)$ is
  non-degenerate. If Outcome~2 of the lemma holds, then $P = c \tp
  \cdots \tp a_2 \tp a_1 \tp \cdots \tp b_1 \tp b_2 \tp \cdots \tp
  c'$. Let $P'$ be any path from $A_1$ to $B_1$ whose interior is in
  $C_1$.  Then $c \tp \cdots \tp a_2 \tp P' \tp b_2 \tp \cdots \tp c'$
  is an outgoing path of $G$ from $F$ to $F$ that has the same parity as
  $P$ by Lemma~\ref{l.2jAiBi}. Thus $P$ has even length.  Let $Q$ be
  an antipath of $G_2$ with its ends out of $F$ and its interior in
  $F$. Let us apply Lemma~\ref{antiPgivePX2} to $Q$: $V(Q) \subseteq
  X_2 \cup \{a\}$. Thus, after possibly replacing $a$ by a vertex in
  $A_1 \cup B_1$, $Q$ may be seen as an antipath of $G$ that has the same
  length as $Q$. Thus $Q$ has even length.

  If $(F\cap X_2) \subsetneq X_2$ and $A_1 \subset F$ then we put $F'
  = (F\cap X_2) \cup \{a_1\}$. We claim that $F'$ is \an\ \even\ skew
  cutset of $G_2$. Exactly as above, we prove that $F'$ is a skew
  cutset of $G_2$ that separates $b_1$ from a component of $G\setminus
  F$ that has vertices in $X_2$ but no vertex in $B_2$.  Let $P$ be an
  outgoing path from $F'$ to $F'$. By similar techniques it can be
  shown that $P$ has even length by Lemma~\ref{PgivePA1}.  Let $Q$ be
  an antipath of $G_2$ with its ends out of $F'$ and its interior in
  $F'$.  As above, we prove that $Q$ has even length by
  Lemma~\ref{antiPgivePA1}.
\end{proof}

\begin{lemma}
  \label{l.skew2join}
  Let $G$ be a Berge graph and $(X_1, X_2)$ be a non-cutting
  substantial 2-join of $G$. Then $G$ has \an\ \even\ skew partition
  if and only if one of the blocks of $G$ has \an\ \even\ skew
  partition.
\end{lemma}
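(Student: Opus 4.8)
The plan is to establish the two directions of the equivalence separately, disposing first of the degenerate case. If $(X_1,X_2)$ is degenerate, the claim is immediate from Lemma~\ref{l.degenerate}, so we may assume $(X_1,X_2)$ is non-degenerate; since one of the alternatives in the definition of a degenerate 2-join is ``$(X_1,X_2)$ is not connected'', a non-degenerate substantial 2-join is automatically proper, so the blocks $G_1,G_2$ are defined. The forward implication is then nothing but Lemma~\ref{l.skew2joind}: if $G$ has \an\ \even\ skew partition, so does $G_1$ or $G_2$.

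For the converse, I would assume up to symmetry that $G_1$ has \an\ \even\ skew cutset $F$, and write $P_2$ for the flat path of $G_1$ replacing $X_2$, with ends $a_2$ complete to $A_1$ and $b_2$ complete to $B_1$. The heart of the argument is a reduction to the case $F\subseteq X_1$. Because the interior vertices of $P_2$ have degree $2$ in $G_1$ and $a_2,b_2$ have exactly one neighbour outside $X_1$, a short case check on which subsets of $V(P_2)$ can be not anticonnected while disconnecting $G_1$ rules out that $F$ meets the interior of $P_2$; and if $F$ meets $\{a_2,b_2\}$, I would either replace $F$ by $(F\cap X_1)\cup A_1$ or $(F\cap X_1)\cup B_1$ — still \an\ \even\ skew cutset of $G_1$, by arguments paralleling the proofs of Lemmas~\ref{l.overlap} and~\ref{l.skew2joind} and using Lemmas~\ref{l.2jAiAi} and~\ref{antiPgivePX2} for the parity conditions — or else observe that the obstruction to this move exhibits sets $A_3\subseteq A_2$ and $B_3\subseteq B_2$ witnessing that $(X_1,X_2)$ is cutting of type~1 or type~2, a contradiction; residual degenerate-looking subcases are absorbed by Lemma~\ref{l.starcutset} applied to $G$ or $\overline{G}$. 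This is exactly the point where the hypothesis that $(X_1,X_2)$ is not cutting is needed.

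It then remains to verify that an \even\ skew cutset $F\subseteq X_1$ of $G_1$ is an \even\ skew cutset of $G$. Since $G[X_1]=G_1[X_1]$, the set $F$ induces the same graph in both, so it is not anticonnected in $G$. To see that $G\setminus F$ is disconnected, note that a component of $G_1\setminus F$ contained in $X_1$ has no vertex of $A_1\cup B_1$ (otherwise it would reach $a_2$ or $b_2$, hence meet $P_2$), and therefore has no neighbour in $X_2$ in $G$; conversely, a path of $G\setminus F$ joining two vertices of $X_1$ through $X_2$ contains a subpath of $G[X_2]$ from $A_2$ to $B_2$, which may be replaced by $P_2$ to yield a path of $G_1\setminus F$. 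Finally, given an outgoing path of $G$ from $F$ to $F$ (resp. an antipath with ends off $F$ and interior in $F$), Lemma~\ref{PgivePX2} (resp. Lemma~\ref{antiPgivePX2}) shows it either stays essentially inside $X_1$ — hence is a path/antipath of $G_1$ once at most one vertex of $A_2$ and one of $B_2$ are renamed $a_2,b_2$ — or crosses $X_2$ along a path from $A_2$ to $B_2$, which I replace by $P_2$; the replacement preserves parity by Lemma~\ref{l.2jAiBi} and the definition of blocks. In every case one obtains a path/antipath of $G_1$ of the same parity with ends/interior in $F$, hence of even length, so $F$ is \even\ in $G$.

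The step I expect to be the main obstacle is the reduction of the second paragraph: showing that no \even\ skew cutset of a block can genuinely use the flat path, and matching the exceptional configurations precisely with cutting 2-joins of types~1 and~2, so that ``non-cutting'' is exactly the right hypothesis. The parity bookkeeping of the third paragraph, though routine once Lemmas~\ref{PgivePX2}--\ref{antiPgivePA1} are available, also requires some care because the obvious 2-join $(X_1,V(P_2))$ of $G_1$ need not be substantial.
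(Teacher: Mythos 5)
Your overall strategy coincides with the paper's: dispose of the degenerate case by Lemma~\ref{l.degenerate}, get the forward direction from Lemma~\ref{l.skew2joind}, and for the converse take a balanced skew cutset $F$ of a block, show it cannot use the contracted flat path except possibly at its ends, identify the obstruction (both ends in $F$) with cutting 2-joins of types~1 and~2, and transfer back to $G$ via the path/antipath parity lemmas. This is exactly the skeleton of the paper's proof.

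The genuine gap is in your central reduction, which you announce as ``a reduction to the case $F\subseteq X_1$''. That reduction is not achievable, and it is not what the paper proves. If, say, $a_2\in F$, the correct move is to keep $a_2$ in the cutset of the block and, only when passing to $G$, expand it to the full set $A_2$ it represents: the resulting cutset of $G$ is $(F\cap X_1)\cup A_2$, which is \emph{not} contained in $X_1$, and $A_2$ genuinely cannot be traded for vertices of $X_1$ (a component of the complement of the cutset may meet $A_1$, and only $A_2$ covers that component's edges into $X_2$). Your proposed substitute $(F\cap X_1)\cup A_1$ is a subset of $X_1$ and there is no reason for it to be a cutset of $G_1$ or of $G$; I suspect an index slip, but it matters because your third paragraph --- the verification that the transferred set is a balanced skew cutset of $G$ --- is written only for $F\subseteq X_1$ and invokes only Lemmas~\ref{PgivePX2} and~\ref{antiPgivePX2}. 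The case $(F\cap X_1)\cup A_2$ needs a separate and more delicate verification using Lemmas~\ref{PgivePA1} and~\ref{antiPgivePA1}, and that is where the paper spends most of its effort. A further point you leave implicit: the identification of ``both ends of the flat path lie in $F$'' with a cutting 2-join only arises when the flat path has length~1 or~2, i.e.\ precisely when $(X_1,X_2)$ is a path 2-join whose path-side is the contracted side; for a longer flat path the two ends are non-adjacent with no common neighbour, so they can never both lie in a skew cutset, and Lemma~\ref{l.overlap} applies directly. Making that case split explicit is what lets the hypothesis ``non-cutting'' do exactly the work you predicted it would.
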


\begin{proof}
  If $G$ has \an\ \even\ skew partition then by
  Lemma~\ref{l.skew2joind} one of the blocks of $G$ has
  \an\ \even\ skew partition.  If $(X_1, X_2)$ is degenerate, then the
  conclusion holds by Lemma~\ref{l.degenerate}. From now on, we assume
  that $(X_1, X_2)$ is non-degenerate. In particular, it is connected
  and proper. Let us suppose that one of $G_1, G_2$ (say $G_2$ up to 
  symmetry) has \an\ \even\ skew cutset $F'$. We denote by $P_1 = a_1
  \tp \cdots \tp b_1$ the path induced by $V(G_2) \setminus X_2$.  Note
  that $G_2$ has an obvious connected path 2-join: $(P_1, X_2)$,
  possibly non-substantial.

  \begin{claim}
    \label{cclem}
    Either: 
    \begin{itemize}
    \item
      $F' \subsetneq X_2$;
    \item 
      $F' \cap X_2 \subsetneq X_2$ and one of $(F' \cap X_2) \cup
      \{a_1\}$, $(F \cap X_2) \cup \{b_1\}$ is \an\ \even\ skew cutset
      of $G_2$.
    \end{itemize}
  \end{claim}

  \begin{proofclaim}
    If $P_1$ has length~3 or~4, then $(P_1, X_2)$ is proper. It is
    non-degenerate because $(X_1, X_2)$ is non-degenerate.  Let us
    apply Lemma~\ref{l.overlap}. The conclusion $F' \subsetneq X_1$,
    is impossible since then by Lemma~\ref{l.connect}, $G_2 \setminus
    F'$ is connected.  Also $(F'\cap P_1) \cup A_2$ and $(F'\cap P_1)
    \cup B_2$ cannot be skew cutsets of $G_2$, because $a_1, b_1$
    cannot be both in a skew cutset of $G_2$ since they are non
    adjacent with no common neighbors. Hence, Lemma~\ref{l.connect}
    proves that $(F'\cap P_1) \cup A_2$ and $(F'\cap P_1) \cup B_2$
    are not cutsets of $G_2$.  Thus~(\ref{cclem}) is simply the only
    possible conclusion of Lemma~\ref{l.overlap}.

    If $P_1$ has length~2 then $P_1 = a_1 \tp c_1 \tp b_1$. If $a_1,
    b_1$ are both in $F'$, then $F' = \{a_1, c_1, b_1\}$ because $c_1$
    is the only common neighbor of $a_1, b_1$ in $G_2$. This means
    that $G_2[X_2] = G[X_2]$ is disconnected, which implies that $(X_1,
    X_2)$ is a cutting 2-join of type~1, a contradiction. By
    Lemma~\ref{l.connect} applied to $G_2[X_2] = G[X_2]$, none of
    $a_1, b_1$ can be the center of a star cutset of $G$. Hence, $c_1
    \notin F'$. Thus, $F \cap X_2 \subsetneq X_2$ because any induced
    subgraph of $P_1$ containing $c_1$ is connected. We
    proved~(\ref{cclem}) when $P_1$ has length~2.

    We are left with the case when $P_1 = a_1 \tp b_1$. If $a_1, b_1$
    are both in $F'$ then $F' \subset \{a_1, b_1\} \cup A_2 \cup
    B_2$. If $F'\cap A_2 \neq \emptyset$ and $F'\cap B_2 \neq
    \emptyset$ then putting $A_3 = F' \cap A_2$ and $B_3 = F' \cap
    B_2$ we see that $(X_1, X_2)$ is a cutting 2-join of type~2 of
    $G$. Indeed, $A_3$ is complete to $B_3$ for otherwise, $F'$ would be
    anticonnected.  The requirements on the parity of paths and
    antipaths are satisfied because $F'$ is \an\ \even\ skew cutset.
    If at least one of $F' \cap A_2$ and $F' \cap B_2$ is empty then
    we see that $(X_1, X_2)$ is a cutting 2-join of type~1.  Both
    cases contradict $(X_1, X_2)$ being non-cutting. Thus we know that
    at most one of $a_1, b_1$ is in $F$. Also $F' \cap X_2 \subsetneq
    X_2$ because every induced subgraph of $P_1$ is connected.
  \end{proofclaim}

  By~(\ref{cclem}), we may assume that not both $a_1, b_1$ are in
  $F'$. Up to  symmetry, we assume $b_1 \notin F'$.  If $a_1 \in F'$,
  put $A'_1= A_1$, else put $A'_1 = \emptyset$.  Now $F = (F' \cap
  X_2) \cup A'_1$ is a skew cutset of $G$ that separates a vertex of
  $X_2$ from $X_1 \setminus A'_1$.  The proof that $F$ is
  \an\ \even\ skew cutset of $G$ is entirely similar to the similar
  proofs above: we consider an outgoing path of $G$ from $F$ to $F$.
  Lemma~\ref{PgivePX2} or Lemma~\ref{PgivePA1} shows that $P$ has the
  the same parity as an outgoing path of $G_2$ from $F'$ to $F'$.  We
  consider an antipath $Q$ of $G$ of length at least~2 with all its
  interior vertices in $F$ and with its end-vertices outside  $F$.
  Lemma~\ref{antiPgivePX2} or Lemma~\ref{antiPgivePA1} shows that $Q$
  has the the same parity as a similar antipath with respect to $F'$ in
  $G_2$.
\end{proof}

\begin{lemma}
  \label{l.skew2joinnonpath}
  Let $G$ be a Berge graph and $(X_1, X_2)$ be a non-path proper
  2-join of $G$. Then $G$ has \an\ \even\ skew partition if and only
  if one of the blocks of $G$ has \an\ \even\ skew partition.
\end{lemma}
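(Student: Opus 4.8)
The plan is to obtain this as an immediate consequence of Lemma~\ref{l.skew2join}. First I would observe that a non-path 2-join is automatically non-cutting. Indeed, the first item in the definition of a cutting 2-join of type~1, and likewise the first item in the definition of a cutting 2-join of type~2, requires that $(X_1, X_2)$ be a path 2-join (with path-side $X_1$). Being a path 2-join is a property of the partition $(X_1, X_2)$ itself --- it holds precisely when some split exhibits one of the two sides as a path with one end in $A_i$, one end in $B_i$, and interior in $C_i$ --- so a 2-join that is not a path 2-join can satisfy neither of those first items, and hence is neither cutting of type~1 nor cutting of type~2; that is, it is non-cutting.

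Next I would use that ``proper'' means ``connected and substantial'', so in particular the given 2-join $(X_1, X_2)$ is substantial. Thus $(X_1, X_2)$ is a non-cutting substantial 2-join of the Berge graph $G$, and Lemma~\ref{l.skew2join} applies word for word, giving that $G$ has \an\ \even\ skew partition if and only if one of its blocks $G_1, G_2$ does. (If one prefers, the two implications can be separated: the forward direction is Lemma~\ref{l.skew2joind} since $(X_1,X_2)$ is proper, and the backward direction is the content of Lemma~\ref{l.skew2join} in the non-cutting case.)

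I do not anticipate any genuine obstacle here: the only point that warrants care is checking that the ``path 2-join'' requirement in the two definitions of cutting 2-joins is phrased in terms of the partition $(X_1, X_2)$ rather than some auxiliary data, so that it really rules out every non-path 2-join irrespective of which split is chosen. Once that is confirmed, the lemma is a direct corollary of Lemma~\ref{l.skew2join}.
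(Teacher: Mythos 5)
Your proposal is correct and matches the paper's own proof, which likewise reduces the statement to Lemma~\ref{l.skew2join} via the one-line observation that a non-path 2-join is non-cutting (both types of cutting 2-joins require a path 2-join by their first defining item), with properness supplying substantiality. You have merely made explicit the details that the paper leaves as "clear."
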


\begin{proof}
  Clear by Lemma~\ref{l.skew2join} since a non-path 2-join is a
  non-cutting 2-join.
\end{proof}

\subsection{\Even\ skew partitions overlapping homogeneous 2-joins}

\noindent A homogeneous 2-join $(A, B, C, D, E, F)$ is said to be
\emph{degenerate} if either:

\begin{itemize}
  \item
    there is a vertex $x \in C$ with no neighbor in $E \cup D$ or a
    vertex $y \in D$ with no neighbor in $E \cup C$;
  \item
    there is a vertex $x\in C$ such that $N(x) \subset A \cup D \cup
    E$ or a vertex $y\in D$ such that $N(y) \subset B \cup C \cup E$.
\end{itemize}

\begin{lemma}
  \label{l.homod}
  Let $G$ be a Berge graph with a degenerate homogeneous 2-join. Then
  $G$ has \an\ \even\ skew partition.
\end{lemma}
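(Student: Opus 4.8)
The plan is to unpack the two ways a homogeneous 2-join $(A,B,C,D,E,F)$ can be degenerate and, in each case, exhibit either a star cutset of $G$ or of $\overline G$ (so that Lemmas~\ref{l.starcutset} and~\ref{espcomp} finish the job), or else reduce to a 2-join of $G$ that is itself degenerate in the sense of the earlier definition, so that Lemma~\ref{l.degenerate} applies. The key structural facts we will use about the non-$E$ part are those built into the definition of a homogeneous pair: $(C,A)$, $(A,F)$, $(F,B)$, $(B,D)$ are complete pairs, and $(D,A)$, $(A,E)$, $(E,B)$, $(B,C)$ are anticomplete pairs; moreover every vertex of $A$ has both a neighbor and a non-neighbor in $B$ and vice versa, and every vertex of $E$ has degree~$2$ lying on a flat odd path from $C$ to $D$ with interior in $E$. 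We also note at the outset that $|V(G)|$ is large (the six sets are non-empty and $E$-paths are odd, so in particular $G$ has at least one edge and is not $\overline{C_4}$), so the hypotheses of Lemma~\ref{l.starcutset} are met whenever we produce a star cutset.

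First I would handle the first type of degeneracy: say there is $x\in C$ with no neighbor in $E\cup D$. Since $(C,A)$ is complete, $x$ is complete to $A$; since $(B,C)$ is anticomplete, $x$ has no neighbor in $B$; and $x$ has no neighbor in $C\setminus\{x\}$ unless $G[C]$ forces one, but a neighbor inside $C$ is harmless — the point is that the neighborhood of $x$ is contained in $A\cup C$, and in fact $N(x)\supseteq A$. Pick $a\in A$; because $a$ has a non-neighbor $b\in B$, the set $\{a\}\cup N(a)$ fails to contain $b$, and I claim $\{a\}\cup N(a)$, or a slightly trimmed version of it, is a star cutset separating $x$ (or the anticomponent structure) appropriately — more cleanly, I would argue that $A\cup(C\setminus\{x\})\cup(\text{nothing else needed})$ already separates $x$ from $B\cup D$, since any path from $x$ leaving $C$ must pass through $A$ (as $x$ has no neighbor in $B\cup D\cup E$). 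If $|C|\ge 2$ this is a cutset of the form $\{a\}\cup N(a)\setminus\{\text{some }c'\}$ is not quite right; instead I simply observe $A\cup (C\setminus\{x\})$ is a cutset, and since $A$ is contained in the neighborhood of $x$, the set $N(x)\cup\{x\}$ contains it, giving a star cutset centered at $x$ of size $\ge 2$ (here $|A|\ge 2$ may be assumed, else if $A=\{a\}$ one uses that $\{a\}$ plus the interface is still a star cutset centered at $a$). By Lemma~\ref{l.starcutset}, $G$ has \an\ \even\ skew partition. The subcase with $y\in D$ having no neighbor in $E\cup C$ is symmetric (swap $A\leftrightarrow B$, $C\leftrightarrow D$).

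Next I would treat the second type: there is $x\in C$ with $N(x)\subseteq A\cup D\cup E$ (the $y\in D$ case being symmetric). Since $(B,C)$ is anticomplete this is automatic on the $B$ side, so the content is that $x$ has no neighbor in $C\setminus\{x\}$ and no neighbor in $F$ — wait, $(C,A)$ complete forces $A\subseteq N(x)$, and we are told $N(x)\subseteq A\cup D\cup E$, so $x$ misses all of $F$, all of $B$, and all of $C\setminus\{x\}$. I would then look at the flat $E$-paths: each vertex of $E$ has degree~$2$ on an odd flat path from $C$ to $D$, so the vertices of $C$ that are endpoints of $E$-paths are distinguished; but the cleaner route is to consider the 2-join of $G$ whose path-side is one such flat $E$-path $P$ (its existence as a proper 2-join is part of the definition of homogeneous 2-join). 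The end of $P$ in $C$ is a vertex, call it $c$; if $c=x$ then since $N(x)\subseteq A\cup D\cup E$ and $x$'s only $E$-neighbor on $P$ is the relevant one, we find that in the corresponding 2-join split, $x\in A_1$ (say) has its only neighbor outside $A_1$ along $P$, so either this vertex is "complete to $B_1$" or has "no neighbor in $X_1\setminus A_1$" — one of the degeneracy conditions of the 2-join — and Lemma~\ref{l.degenerate} gives \an\ \even\ skew partition; if $c\ne x$ then $x$ is in $C_2$-side of that 2-join with neighborhood still inside $A\cup D\cup E$, and because $x$ misses $F$ entirely while every vertex of $A$ sees all of $F$, the set $A\cup\{\text{interface}\}$ — concretely $A\cup D\cup(E\cap(\text{the rest}))$ suitably — separates $x$ from $F\cup B$, and since $A\subseteq N(x)$ this is again a star cutset centered at $x$, so Lemma~\ref{l.starcutset} applies.

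The main obstacle, I expect, is the bookkeeping in the second type: one must make sure that after removing $A$ (which is forced to be in $N(x)$) one has genuinely disconnected $G$, i.e.\ that $B$, $D$, $F$, and $E$ do not sneak back to $x$ along some path avoiding $A$. The structural completes/anticompletes handle most of this — $x$ sees nothing in $B\cup F$, $E$ has degree~$2$ so any $E$-path from $x$ is one specific flat path whose far end is in $D$, and $D$ only reconnects to $B$ (through $(B,D)$ complete) and to $A$ (anticomplete, so no) — so any walk from $x$ to $F$ must traverse $A$, unless it goes $x$–$E$–$D$–$B$–$F$, which is a path from $x$ to $F$ avoiding $A$! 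So removing $A$ alone is not enough; one must also remove enough of $D$ or of $B$. I would resolve this by removing $A\cup D$: that is a cutset separating $x$ from $B\cup F$ (now every route out of $x$ that reaches $B\cup F$ must pass through $A$ or through $D$), it contains $A\subseteq N(x)$ and contains $D\subseteq N(x)$ (by the hypothesis $N(x)\supseteq$? — no, $N(x)\subseteq A\cup D\cup E$ does not force $D\subseteq N(x)$). The correct repair is to center the star at a vertex that genuinely dominates the cutset — alternatively, observe that $A\cup D$ being a cutset while $A$ is complete to $C$ and $D$ is complete to $B$ makes $A\cup D$ a skew cutset directly: $A\cup D$ induces a disconnected graph (no $A$–$D$ edges) — that's the "not connected" side — and we need a "not anticonnected" side, which is $B\cup C\cup E\cup F\cup\{?\}$; here $A$ complete to $F$ complete to $B$ shows $A\cup B\cup F$ is anticonnected only if... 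This is getting delicate, so in the writeup the clean move is: show $A\cup D$ (resp.\ $B\cup C$) is a skew cutset and then invoke the fact that a star cutset hiding inside it — namely $\{a\}\cup N(a)$ for suitable $a$ — is what Lemma~\ref{l.starcutset} needs; failing a clean star, one shows directly the skew cutset $A\cup D$ is \even\ using Lemma~\ref{l.2jAiAi}-type parity arguments on the flat $E$-paths, exactly as in the proof of Lemma~\ref{l.degenerate}. I would present it in that last form to keep it uniform with the surrounding lemmas.
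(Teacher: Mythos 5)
There are genuine gaps in both cases, and in both the root cause is the same: the sets you propose are either not cutsets or not stars, because you lose track of which adjacencies the definition of a homogeneous pair actually constrains. In the first case ($x\in C$ with no neighbor in $E\cup D$) you assert that $A\cup(C\setminus\{x\})$ separates $x$ from $B\cup D$ ``since any path from $x$ leaving $C$ must pass through $A$''. This is false: nothing forbids edges between $C$ and $F$, so $x$ may have a neighbor $f\in F$, and $f$ is complete to $B$, giving a path $x\tp f\tp b$ that avoids your set entirely. The correct cutset is $(A\cup C\cup F)\setminus\{x\}$, but that set is not contained in the closed neighborhood of $x$ (nor of any single vertex), so it is not a star cutset of $G$; the paper's move is to pass to the complement and observe that the \emph{other} side, $\{x\}\cup B\cup D\cup E$, is a star cutset of $\overline{G}$ centered at $x$ (since $x$ is anticomplete to $B\cup D\cup E$ in $G$), then apply Lemmas~\ref{l.starcutset} and~\ref{espcomp}. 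Your claim that $N(x)\cup\{x\}$ contains $A\cup(C\setminus\{x\})$ is also unjustified, as $x$ need not see $C\setminus\{x\}$.

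In the second case you never reach a valid conclusion, and your announced fallback cannot work: $A\cup D$ is not a skew cutset, because $(D,A)$ is an anticomplete pair, so $\overline{G}[A\cup D]$ is connected, i.e.\ $A\cup D$ is anticonnected. Nor is there a star hiding inside it: $x$ need not see all of $D$, and no vertex of $A$ sees any vertex of $D$. The missing idea is the one the paper uses: let $D_x$ be the set of ends in $D$ of the flat paths from $x$ to $D$ with interior in $E$ (including length-$1$ paths, i.e.\ direct neighbors of $x$ in $D$); if some $f\in F$ missed some $d\in D_x$, then $f$, a non-adjacent pair $a\in A$, $b\in B$, and the odd flat path from $x$ to $d$ would induce an odd hole, so $F$ is complete to $D_x$. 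Then for any $f\in F$ the set $\{f\}\cup N(f)\setminus B$ is a star cutset centered at $f$ separating $x$ from $B$, because it contains $A$ and $D_x$, and every path leaving $x$ must first meet $A\cup D_x$ (any excursion into $E$ is trapped on a flat path ending in $D_x$). Lemma~\ref{l.starcutset} then finishes the argument.
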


\begin{proof}
  Suppose first that there exists a vertex $x \in C$ with no neighbor
  in $E \cup D$ (the case with $y\in D$ is similar). Then, $(A \cup C
  \cup F) \setminus \{x\}$ is a skew cutset that separates $x$ from
  the rest of the graph. Thus, $\overline{G}$ has a star cutset
  centered on $x$. By Lemma~\ref{l.starcutset}, $\overline{G}$ has
  \an\ \even\ skew partition and by Lemma~\ref{espcomp} so is $G$.
 
  Suppose now that there exists $x\in C$ such that $N(x) \subset A
  \cup D \cup E$ (the case with $y\in D$ is similar). Let $D_x$ be the
  set of those vertices of $D$ that are the ends of a path from $C$ to
  $D$ whose interior is in $E$ and starting from $x$.  Note that all
  such paths have odd length (possibly~1). If a vertex $f\in F$ misses
  $d \in D_x$, then consider a pair $a \in A, b \in B$ of non-adjacent
  vertices. Then $\{a, b, f\} \cup P$, where $P$ is a path from $x$ to
  $d$ whose interior is in $E$, induces an odd hole. Thus $F$ is
  complete to $D_x$.  Thus, for any $f \in F$, $\{f\} \cup N(F)
  \setminus B$ is a star cutset of $G$ that separates $x$ from
  $B$. Thus, by Lemma~\ref{l.starcutset}, $G$ has \an\ \even\ skew
  partition.
\end{proof}

The following little fact is needed twice in the proof of
Theorem~\ref{th.th}:

\begin{lemma}
  \label{l.twice}
  Let $G$ be a Berge graph. Suppose that $G$ has a vertex $u$ of
  degree~3 whose neighborhood induces a stable set. Moreover, $G$ has
  a stable set $\{x, y, z\}$ such that $x, y, z$ all have degree at
  least~3. Then $G$ is not a path-cobipartite graph, not a path-double
  split graph and $G$ has no non-degenerate homogeneous 2-join.
\end{lemma}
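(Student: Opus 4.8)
The plan is to rule out each of the three structures separately by examining degrees and the local structure around the prescribed vertices. First I would handle the two basic-type classes. In a path-cobipartite graph with partition $(A,B,P)$, every vertex of $P$ has degree~$2$, so the degree-$3$ vertex $u$ must lie in $A\cup B$, say in the clique $A$. If $|A|\geq 3$ then $A$ contains a triangle, contradicting that $N(u)$ is a stable set (any two other vertices of $A$ are neighbors of $u$ and are adjacent); if $|A|\le 2$ then $u$ has at most one neighbor in $A$ and hence at least two neighbors in $P$, but the two clique-endpoints of distinct paths through $u$ would have to be equal (namely $u$ itself is the clique endpoint, and each interior vertex of $P$ belongs to a unique path), so $u$ is the common endpoint of two such paths, giving it neighbors in $P$; still $u$ also has a neighbor in $B$-side only through $P$, so degree~$3$ forces $u$ to have exactly one neighbor in $A$ and two in $P$ — but then those two $P$-neighbors, together with the $A$-neighbor, form a stable set, which is fine, so instead I use the stable set $\{x,y,z\}$: in a path-cobipartite graph any stable set has at most one vertex in $A$, at most one in $B$, and the rest in $P$; a $P$-vertex has degree~$2$, so at most two of $x,y,z$ can have degree $\ge 3$, contradiction. (This second argument via $\{x,y,z\}$ alone already kills path-cobipartite graphs, so I would streamline and present only that.)

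For path-double split graphs the same idea applies: with the notation $A,B,C,D,E$ of the definition, vertices of $E$ have degree~$2$, vertices of $A\cup B$ have degree~$\le 3$ but more importantly any stable set meets $A\cup B$ in at most\ldots — here I must be a bit more careful, since $A$ and $B$ are \emph{not} cliques. The cleaner route: in a path-double split graph a stable set of size~$3$ of vertices each of degree $\ge 3$ cannot use $E$ (degree~$2$), so $\{x,y,z\}\subseteq A\cup B\cup C\cup D$; but $C$ is complete to $A$, $D$ is complete to $B$, and the adjacencies among $C\cup D$ are such that $\{c_i,d_i\}$ is complete to $\{c_j,d_j\}$ for $i\ne j$, so $C\cup D$ has independence number~$2$ (one can pick at most a pair $\{c_i,d_i\}$). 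Meanwhile any vertex $a_i\in A$ is non-adjacent to $a_j,b_j$ ($j\ne i$) and to all but two vertices of each $\{c_j,d_j\}$ — so large stable sets are possible inside $A$, and this is exactly where I would instead invoke the degree-$3$ vertex $u$ with stable neighborhood: inspect which vertices of a path-double split graph have degree exactly~$3$ and a stable neighborhood, and check each leads to a contradiction with the existence of $\{x,y,z\}$. I expect a short case analysis on whether $u\in A\cup B$ (degree $n+1$ in the un-subdivided part, reduced along its path, so $u$ has a neighbor structure dictated by its two $C\cup D$-neighbors plus its path-neighbor) or $u\in C\cup D$ (high degree, $\ge 2n+m-2 > 3$ unless $n,m$ are tiny).

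For the homogeneous 2-join case, let $(A,B,C,D,E,F)$ be a non-degenerate homogeneous 2-join and suppose $u$ is a degree-$3$ vertex with $N(u)$ stable. Since $E$-vertices have degree~$2$, $u\notin E$. Working through the completeness/anticompleteness constraints of a homogeneous pair: if $u\in A$ then $u$ is complete to $C\cup F$, so $C\cup F$ induces a stable set of size $\le 3$; but $C$ is complete to $A$ and $(F,B)$ is complete, and $C$ must be non-empty with each vertex of $C$ having a neighbor in $E\cup D$ (non-degeneracy), which forces enough vertices and adjacencies to contradict $|C\cup F|\le 2$ together with $C,F$ non-empty and $u$ having a neighbor and non-neighbor in $B$. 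A symmetric argument handles $u\in B$, and $u\in C\cup D\cup F$ is handled by the same bookkeeping of which sets $u$ is complete/anticomplete to. The hard part will be being careful about the degenerate exclusions: I must genuinely use non-degeneracy (otherwise, e.g., a vertex of $C$ with $N(x)\subseteq A\cup D\cup E$ could have degree~$3$ with stable neighborhood), and I must use the \emph{second} hypothesis — the stable triple $\{x,y,z\}$ of degree-$\ge 3$ vertices — to finish the cases where the degree-$3$ vertex alone does not yet yield a contradiction. Concretely, in a non-degenerate homogeneous 2-join the only vertices of degree~$\ge 3$ that can sit in a stable set of size~$3$ are constrained to lie in $A\cup B$ (since $C,D$ are each complete to one side and to most of $C\cup D$, limiting independent sets there, and $E$ is degree~$2$), and then the "neighbor and non-neighbor in $B$/$A$'' condition together with $F$ being complete to both $A$ and $B$ bounds $|F|$, eventually contradicting $F\ne\emptyset$ or the degrees. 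So the overall structure is: eliminate $E$ by the degree-$2$ observation; for each remaining location of $u$ use that $N(u)$ stable bounds the size of some complete neighbor-set; if that is not yet a contradiction, bring in $\{x,y,z\}$ and the global non-degeneracy to finish. I expect the homogeneous 2-join case to be the most delicate, precisely because of the interplay with the degeneracy conditions.
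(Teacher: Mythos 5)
Your path-cobipartite argument is exactly the paper's: the vertices of degree at least~3 in a path-cobipartite graph are precisely $A\cup B$, a union of two cliques, so a stable set can contain at most two of them and $\{x,y,z\}$ is impossible. That case is fine.

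For the other two cases there is a genuine gap: your plan leans on the stable triple $\{x,y,z\}$, but that hypothesis cannot finish either case, and the decisive observation about $u$ is missing. In a path-double split graph with $m\geq 3$, the set $\{a_1,a_2,a_3\}$ is stable and each $a_i$ has degree $n+1\geq 3$, so ``a contradiction with the existence of $\{x,y,z\}$'' is not available; the contradiction must come from $u$ alone. The fact you need is that \emph{every} vertex of degree exactly~3 in a path-double split graph has an edge in its neighborhood: vertices of $E$ have degree~2, vertices of $C\cup D$ have degree $2n+m-2\geq 4$, and a degree-3 vertex of $A\cup B$ forces $n=2$, so its two neighbors in $C\cup D$ lie in $\{c_1,d_1\}$ and $\{c_2,d_2\}$ respectively and are therefore adjacent. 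This contradicts $N(u)$ being stable, with no case analysis on $\{x,y,z\}$ at all. The same single observation disposes of the homogeneous 2-join: $|A|,|B|\geq 2$ (each vertex of $A$ has a neighbor and a non-neighbor in $B$ and vice versa), so every vertex of $F$ has degree at least~4; a vertex of $A$ is complete to the nonempty set $F$ and has a neighbor in $B$, and $(F,B)$ is complete, giving an edge in its neighborhood; a vertex of $C$ is complete to $A$ and, by non-degeneracy, has a neighbor in $C\cup F$, which is again complete to $A$, giving an edge in its neighborhood (symmetrically for $B$ and $D$); and $E$-vertices have degree~2. So $u$ fits nowhere. Your sketch for this case also contains a structural error: you assert that $C$ and $D$ are ``complete to most of $C\cup D$,'' but the homogeneous pair definition imposes no adjacencies inside $C\cup D$ whatsoever (you are importing the double split graph structure), so your claim that degree-$\geq 3$ stable triples must lie in $A\cup B$ is unsupported and, fortunately, unnecessary.
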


\begin{proof}
  In a path-cobipartite graph the vertices of degree at least~3
  partition into 2 cliques. Since $\{x, y, z\}$ contradicts this
  property, $G$ is not a path-cobipartite graph.

  In a path-double split graph, every vertex of degree exactly~3
   must have an edge in his neighborhood. Since $u$ contradicts
  this property, $G$ is not a path-double split graph.

  If $G$ has a non-degenerate homogeneous 2-join $(A,$ $B,$ $C,$ $D,$
  $E,$ $F)$, then every vertex in $F$ has degree at least~4. Every
  vertex in $A, B$ has an edge in his neighborhood. Every vertex in
  $C$ has a neighbor in $C$ or $F$ for otherwise, $(A,$ $B,$ $C,$ $D,$
  $E,$ $F)$ would be degenerate. Thus, every vertex in $C$, and by the same
  way every vertex in $D$, has an edge in his neighborhood. Every
  vertex in $E$ has degree~2. Hence, $u$ is in none of $A,$ $B,$ $C,$
  $D,$ $E,$ $F$, a contradiction.
\end{proof}

\section{Proof of Theorem~\ref{th.th}}
\label{proof}

For any graph $G$, let $f(G)$ be the number of maximal flat paths of
length at least~3 in $G$.  Let us consider $G$, a counter-example to
Theorem~\ref{th.th} such that $f(G) + f(\overline{G})$ is minimal.
Since $G$ is a counter-example and since $G$ is Berge, by
Theorem~\ref{th.1} and up to a complementation of~$G$, we may assume
that:

\renewcommand{\theenumi}{\alph{enumi}}

\begin{enumerate}
\item 
  $G$ is not basic, none of $G, \overline{G}$ is a path-cobipartite
  graph, none of $G, \overline{G}$ is a path-double split graph, $G$
  has no \even\ skew partition, none of $G, \overline{G}$ has a
  non-path proper 2-join, none of $G, \overline{G}$ has a homogeneous
  2-join;
  
\item
  $G$ has a path proper 2-join.
\end{enumerate}

\noindent 
Since $G$ has a path proper 2-join, $G$ has a flat path of length at
least~3, so $f(G) \geq 1$. We choose such a flat path $X_1$
inclusion-wise maximal. Note that by Lemma~\ref{l.thimplies}, $(X_1,
V(G) \setminus X_1)$ is a proper 2-join of $G$ since $G$ is not basic
and has no \even\ skew partition.  Let us consider $(X_1, X_2, A_1,
B_1, A_2, B_2)$ a split of this 2-join. Note that $G[X_2]$ is not a
path since $G$ is not bipartite. We denote by $a_1$ the only vertex in
$A_1$ and by $b_1$ the only vertex in $B_1$. We put $C_1 = X_1
\setminus \{a_1, b_1\}$, and $C_2 = X_2 \setminus (A_2 \cup B_2)$.

\noindent If one of $G$, $\overline{G}$ has a degenerate proper
2-join, a degenerate homogeneous 2-join or a star cutset then one of
$G, \overline{G}$ has \an\  \even\  skew partition by
Lemma~\ref{l.degenerate}, Lemma~\ref{l.homod} or
Lemma~\ref{l.starcutset}. So $G$ has \an\  \even\  skew partition by
Lemma~\ref{espcomp}.  This contradicts $G$ being a
counter-example. Thus:

\begin{enumerate}
\setcounter{enumi}{2}
\item \label{prop.degenerate}
  $G$ and $\overline{G}$ have no degenerate proper 2-join, no
  degenerate homogeneous 2-join and no star cutset.
\end{enumerate}

\noindent Suppose that $a_1$ has degree~2 in $G$. Since $X_1$ is the
path-side of a path 2-join, this means that the unique neighbor $a$ of
$a_1$ in $X_2$ sees at least one neighbor $b\in X_2$ of
$b_1$. Otherwise, $X_1 \cup \{a\}$ is flat path which contradicts $X_1$
being maximal. Hence, $b$ is a vertex of $B_2$ complete to $A_2 =
\{a\}$, which implies $(X_1, X_2)$ being degenerate, a contradiction.
Hence:

\begin{enumerate}
\setcounter{enumi}{3}
\item   \label{prop.deg3} 
  $a_1, b_1$ both have degree at least~3 in $G$. 
\end{enumerate}

\noindent Let us study the connectivity of $G$.  If $G[X_2]$ is
disconnected, then let $X'_2$ be any component of $G[X_2]$.  Since
$(X_1,X_2)$ is proper, the sets $A_2 \cap X'_2$ and $B_2 \cap X'_2$
are not empty.  So $(V(G) \setminus X'_2,X'_2)$ is a 2-join of $G$.
Let us suppose that $X'_2$ is not a path of length~1 or~2 from $A_2$
to $B_2$ whose interior is in $C_2$.  This implies that $(V(G)
\setminus X'_2,X'_2)$ is a proper 2-join.  So since $G$ is a
counter-example, we know that $(V(G) \setminus X'_2,X'_2)$ is a path
2-join of $G$. Since $X_1$ is a maximal flat path of $G$, $V(G)
\setminus X'_2$ cannot be the path side of this 2-join. Thus $G[X'_2]$
is the path side of this 2-join.  Hence we know that every component
of $X_2$ is a path from $A_2$ to $B_2$ whose interior is in $C_2$.
This implies that $G$ is bipartite, which contradicts $G$ being a
counter-example. Hence:

\begin{enumerate}
\setcounter{enumi}{4}
\item \label{prop.X2conn}
   $G[X_2]$ is connected.
\end{enumerate}

\noindent 
Since by Property~\ref{prop.degenerate}, $(X_1, X_2)$ is
non-degenerate, the following is a direct consequence of
Lemma~\ref{l.connect}:

\begin{enumerate}
\setcounter{enumi}{5}
\item \label{prop.conn} In $G[X_2]$, there exists a path from $A_2$ to
  $B_2$ whose interior is in $C_2$. Moreover, for every $A'_2
  \subseteq A_2$, $B'_2 \subseteq B_2$ the graphs $G[A'_2 \cup C_2
    \cup B_2 \cup \{b_1\}]$ and $G[B'_2 \cup C_2 \cup A_2 \cup
    \{a_1\}]$ are connected.
\end{enumerate}
  
\noindent
The six properties listed above will be referred as the
\emph{properties of $G$} in the rest of proof.  We denote by
$\varepsilon \in \{0, 1\}$ the parity of the length of the path
$X_1$. We now consider three cases according to the structure of the
2-join $(X_1, X_2)$. In each case, we will consider a graph $G'$
obtained from $G$ by destroying the path 2-join $(X_1, X_2)$, and we
will show that $G'$ is a counter-example that contradicts $f(G) +
f(\overline{G})$ being minimal.

\medskip
\subsection{Case 1: $X_1$ may be chosen in such a way that
$(X_1, X_2)$ is cutting of type~1.}  
\label{mainproofCase1}

  \setcounter{claim}{0}

  Up to  symmetry we assume that $G[X_2 \setminus A_2]$ is
  disconnected. Let $X$ be a component of $G[X_2 \setminus A_2]$. If
  $X$ is disjoint from $B_2$ then $\{a_1\} \cup A_2$ is a star cutset
  of $G$ separating $X$ from $X_2 \setminus X$, which contradicts the
  properties of $G$. Thus $X$ intersects $B_2$, and by the same proof
  so is any component of $X_2 \setminus X$. Hence, there are two
  non-empty sets $B_3 = B_2 \cap X$ and $B_4 = B_2 \setminus X$. Also
  we put $C_3 = C_2 \cap X$, $C_4 = C_2 \setminus X$. Possibly, $C_3$,
  $C_4$ are empty. There are no edges between $B_3 \cup C_3$ and $B_4
  \cup C_4$.

  We consider the graph $G'$ obtained from $G$ (see
  Fig.~\ref{fig:cut1}) by deleting $X_1 \setminus \{a_1, b_1\}$.
  Moreover, we add new vertices: $c_1, c_2, b_3, b_4$.  Then we add
  every possible edge between $b_3$ and $B_3$, between $b_4$ and
  $B_4$.  We also add edges $a_1 c_1$, $c_2 b_3$, $c_2 b_4$. If
  $\varepsilon = 0$, we consider for convenience $c_1 = c_2$, so that
  $c_1$ is always a vertex of $G'$. Else we consider $c_1 \neq c_2$
  and we add an edge between $c_1$ and $c_2$. Note that in $G'$,
  $N(b_1) = B_2$.  

  \begin{lemmaS}
    \label{cberge}
    $G'$ is Berge.
  \end{lemmaS}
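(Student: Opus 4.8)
The plan is to show that $G'$ has no odd hole and no odd antihole by relating every putative odd hole or antihole of $G'$ to a corresponding odd hole or antihole of $G$, which is Berge by hypothesis. The key structural observation is that $G'$ is obtained from $G$ by replacing the flat path $X_1$ (interior $C_1$) and a carefully-chosen bit of $X_2$ by a small gadget: the path $a_1\tp c_1(\tp c_2)\tp b_3$ together with $c_2 b_4$ and the apex vertices $b_3,b_4$ which are ``clones'' of $b_1$ restricted to $B_3$ and $B_4$ respectively. So the vertex set of $G'$ is $(X_2 \cup \{a_1,b_1\}) \cup \{c_1,c_2,b_3,b_4\}$ (with $c_1=c_2$ when $\varepsilon=0$), and the two ``new'' graphs $G'$ and $G$ share the large induced subgraph $G[X_2\cup\{a_1,b_1\}]$.

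First I would record the local adjacency data in $G'$: $b_1$ has neighborhood exactly $B_2 = B_3\cup B_4$; $b_3$ is complete to $B_3$ and its only other neighbor is $c_2$; $b_4$ is complete to $B_4$ with only other neighbor $c_2$; $c_2$ is adjacent to $b_3,b_4$ (and to $c_1$ if $\varepsilon=1$); $c_1$ is adjacent to $a_1$ (and $c_2$), and $a_1$ keeps its neighbors $A_2$ inside $X_2$. Crucially, by Property~(\ref{prop.X2conn}) and~(\ref{prop.conn}) there is in $G$ a path $P$ from $A_2$ to $B_2$ with interior in $C_2$, and the path $a_1\tp c_1(\tp c_2)\tp b_3$ followed by an edge $b_3b$ for $b\in B_3$ has the same parity as $a_1\tp X_1\tp b_1$ extended by $b_1 b$ — this parity bookkeeping, together with Lemma~\ref{l.2jAiBi}, is what makes the gadget ``simulate'' the contracted path without changing parities of holes.

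Then I would take a hole $H$ of $G'$ and do a case analysis on how $H$ meets $\{c_1,c_2,b_3,b_4\}$. If $H$ avoids all of these new vertices then $H$ lives in $G[X_2\cup\{a_1,b_1\}]$, hence is a hole of $G$, hence even. If $H$ uses $b_3$ (say), then since $d_{G'}(b_3)$-restricted, $H$ enters and leaves $b_3$ through $c_2$ and through some vertex of $B_3$; I would show $H$ cannot also use $b_4$ (its neighbors $b_3,b_4$ have common neighbor $c_2$ so both being on a hole forces a short hole, i.e. a triangle through $c_2$, impossible), so $H$ meets $\{b_3,b_4\}$ in exactly one vertex; similarly control $b_1$. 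The generic picture is that $H$ is edgewise a union of two paths between $a_1$ and some vertex of $B_2$ (or between two vertices of $B_2$), one of which runs through the gadget $a_1\tp c_1(\tp c_2)\tp b_3$ and one of which stays in $G[X_2]$; replacing the gadget path by $a_1\tp X_1\tp b_1$ (or by $a_1\tp P'\tp b_2$ using Lemma~\ref{l.2jAiBi} and Property~(\ref{prop.conn})) produces a hole of $G$ of the same parity. The boundary subcases — $H\cap\{c_1,c_2,b_3,b_4\}$ a single vertex, or $H$ containing $b_1$ but not the gadget — reduce likewise, using $N_{G'}(b_1)=B_2$ and the no-edges-between-$B_3\cup C_3$-and-$B_4\cup C_4$ property to see $H$ must go through $a_1$.

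For antiholes $\overline H$ of length at least~$7$ I would use the degree constraints: $c_1, b_3, b_4$ have small degree in $G'$ (at most $3$), and in an antihole on $k\ge 7$ vertices every vertex has degree $k-3\ge 4$, so none of $c_1,b_3,b_4$ lies on $\overline H$; $c_2$ has degree~$2$ or~$3$ as well, so it is also excluded. Hence $\overline H \subseteq X_2\cup\{a_1,b_1\}$ is an antihole of $G$, so it is even. I expect the hole analysis to be the main obstacle — there are several genuinely different ways a hole can thread the four added vertices, and one must check in each that the parity is preserved; the antihole part is essentially immediate from the degree count. I would organize the hole argument as a short sequence of claims: (i) $H$ meets $\{b_3,b_4\}$ in at most one vertex and likewise cannot contain $b_1$ together with a gadget vertex unless it passes through $a_1$; (ii) if $H$ misses the gadget it is a hole of $G$; (iii) otherwise $H$ decomposes as the gadget path plus a path of $G[X_2]$, and the substitution above yields an even hole of $G$.
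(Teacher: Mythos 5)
Your overall strategy (reduce holes and antiholes of $G'$ to structures in $G$) is the same as the paper's at a high level, but three of your concrete structural claims are false, and one of them hides the genuinely hard case. First, a hole of $G'$ can perfectly well contain both $b_3$ and $b_4$: take $b\tp b_3\tp c_2\tp b_4\tp b'\tp\cdots\tp b$ with $b\in B_3$, $b'\in B_4$ and the rest of the hole in $X_2$. Two non-adjacent vertices with a common neighbour do not force a triangle, so your claim~(i) is wrong as stated (the paper handles this configuration by splitting $H\setminus\{b_3,b_4\}$ into outgoing paths from $B_2$ to $B_2$). Second, and more seriously: when a hole $H$ contains $b_3$ but none of $a_1,c_1,c_2,b_1,b_4$, the path $H\setminus b_3$ joins two vertices of $B_3$ but its interior may pass through $B_4$ and $A_2$ arbitrarily often, since $b_3$ has no neighbours there. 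Your proposed substitution --- replace the gadget by $b_1$ or by $a_1\tp X_1\tp b_1$ --- does not produce a hole of $G$ in this case, because $b_1$ is complete to all of $B_2$ in $G$ and would acquire chords to the interior vertices of $H\setminus b_3$ lying in $B_4$. This is precisely why the paper proves, by a separate parity-counting argument (decomposing the path into ``$A$-segments'' and ``$B$-segments'' and counting $A$-edges modulo~2), that every outgoing path of $G'$ from $B_3$ to $B_3$ is even; no single hole of $G$ certifies the parity here, and your proposal has no mechanism for this case.

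Third, your antihole argument rests on the claim that $b_3$ and $b_4$ have degree at most~$3$ in $G'$; in fact $\deg_{G'}(b_3)=|B_3|+1$, which is unbounded, and likewise $a_1$ and $b_1$ have large degree, so an antihole on at least~$7$ vertices can pass through one of $a_1,b_1,b_3,b_4$ and is then not an antihole of $G$. The paper excludes $c_1,c_2$ (which genuinely do have small degree) by a triangle/square argument, shows that a long antipath meets $\{a_1,b_1,b_3,b_4\}$ in at most one vertex, and then handles an antihole through exactly one of these via four claims asserting that antipaths of $G'$ with interior in $A_2$, $B_2$, $B_3$ or $B_4$ and ends outside have even length (these do reduce to Lemma~\ref{l.2jAiAi} in $G$). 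So the antihole half is not ``essentially immediate from the degree count''; it needs the same kind of claim-by-claim parity bookkeeping as the hole half.
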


  \begin{proof}

  \begin{claim}
    \label{clodd}
    Every path of $G'$ from $B_2$ to $A_2$ with no interior vertex in
    $A_2 \cup B_2$ has length of parity $\varepsilon$.
  \end{claim}

  \begin{proofclaim}
    If such a path contains one of $a_1, b_3, b_4, c_1, c_2$ then it
    has length~$4+\varepsilon$.  Else such a path may be viewed as a
    path of $G$ from $B_2$ to $A_2$. By Lemma~\ref{l.2jAiBi} it has
    parity~$\varepsilon$.
  \end{proofclaim}

  \begin{claim}
    \label{clevenB}
    Every outgoing path of $G'$ from $B_2$ to $B_2$ has even length.
  \end{claim}
  
  \begin{proofclaim}
    For suppose there is such a path $P = b \tp \cdots \tp b'$, $b, b'
    \in B_2$. If $P$ goes through $b_1$ then it has length~2.  If $P$
    goes through $b_3$ and $b_4$ it has length~4.  If $P$ goes through
    only one of $b_3, b_4$ then either $P$ has length~2 or we may
    assume up to  symmetry that $P= b \tp b_3 \tp c_2 \tp c_1 \tp a_1
    \tp a \tp \cdots \tp b'$ where $a \in A_2$.  So, $a \tp P \tp b'$
    is a path from $A_2$ to $B_2$ whose interior is in $C_2$ and
    by~(\ref{clodd}) it has parity~$\varepsilon$.  So, $P$ has even
    length.  If $P$ goes through $c_2$ or $c_1$ then it must goes
    through at least one of $b_3, b_4$, and by the discussion above it
    must have even length. So we may assume that $P$ goes through none
    of $c_1, c_2, b_1, b_3, b_4$.  Hence $P$ may be viewed as a path
    of $G$. Thus, $P$ has even length by Lemma~\ref{l.2jAiAi}.  In
    every case, $P$ has even length.
  \end{proofclaim}

  \begin{claim}
    \label{clevenA}
    Every outgoing path of $G'$ from $A_2$ to $A_2$ has even length.
  \end{claim}

  \begin{proofclaim}
    For suppose there is such a path $P = a \tp \cdots \tp a'$, where
    $a, a' \in A_2$.  If $P$ goes through $a_1$ then it has length~2.
    So we may assume that $P$ does not go through $a_1$. Note that if
    $c_1 \neq c_2$ then $P$ does not go through $c_1$.

    If $P$ goes through $c_2$ or through both $b_3, b_4$ then we may
    assume $P = a \tp \cdots \tp b \tp b_3 \tp c_2 \tp b_4 \tp b' \tp
    \cdots \tp a'$ where $b\in B_3$ and $b'\in B_4$.  By~(\ref{clodd})
    $b \tp P \tp a$ and $a' \tp P \tp b'$ have both
    parity~$\varepsilon$.  Thus, $P$ has even length. If $P$ goes
    through $B_3$, $b_1$ and $B_4$ then we prove that it has even
    length by the same way. So we may assume that $P$ neither goes
    through $c_2$ nor through both $b_3, b_4$ nor through $B_3$, $b_1$
    and $B_4$.

    If $P$ goes through exactly one of $b_3, b_4$, say $b_3$ up to 
    symmetry, then just like above $P = a \tp \cdots \tp b \tp b_3 \tp
    b' \tp \cdots \tp a'$, where both $b \tp P \tp a$ and $a' \tp P
    \tp b'$ are paths from $B_2$ to $A_2$.  So by~(\ref{clodd}), they
    both have parity~$\varepsilon$. Thus, $P$ has even length. If $P$
    goes through $b_1$ and exactly one of $B_3, B_4$, then we prove
    that it has even length by the same way. So we may assume that $P$
    goes though none of $b_1, b_3, b_4$.

    Now $P$ goes through none of $a_1, c_1, c_2, b_1, b_3, b_4$, so
    $P$ may be viewed as an outgoing path of $G$ from $A_2$ to
    $A_2$. It has even length by Lemma~\ref{l.2jAiAi}.

    In every case, $P$ has even length.
  \end{proofclaim}

  \begin{claim}
    \label{clevenB3}
    Every outgoing path of $G'$ from $B_3$ to $B_3$ (resp. $B_4$
    to $B_4$) has even length.
  \end{claim}

  \begin{proofclaim}  
    Suppose that there is an outgoing path $P = b \tp \cdots \tp b'$
    from $B_3$ to $B_3$ (the case with $B_4$ is similar).  Note that
    $P$ may have interior vertices in $B_4$, so~(\ref{clevenB}) does
    not apply to $P$.  If $P$ goes through $b_1$ or $b_3$ it has
    length~2.  So we may assume that $P$ does not go through $\{b_1,
    b_3\}$.  If $P$ has no vertex in $A_2$, then $P$ has no interior
    vertices in $B_4$ since $B_3$ and $B_4$ are in distinct components
    of $G\setminus (\{b_1, b_3\} \cup A_2)$.  So~(\ref{clevenB})
    applies and $P$ has even length.

    So we may assume that $P$ has at least one vertex in $A_2$.  Let us
    then call \emph{$B$-segment of $P$} every subpath of $P$ whose end
    vertices are in $B_2$ and whose interior vertices are not in
    $B_2$.  Note that $P$ is edgewise partitioned into its
    $B$-segment.  Similarly, let us call \emph{$A$-segment of $P$}
    every subpath of $P$ whose end-vertices are in $A_2$ and whose
    interior vertices are not in $A_2$.  By~(\ref{clevenA}), every
    $A$-segment has even length or has length~1.  An $A$-segment of
    length~1 is called an \emph{$A$-edge}.  Suppose that $P$ has odd
    length.  Let $b, b' \in B_2$ be the end-vertices of $P$.  Along
    $P$ from $b$ to $b'$, let us call $a$ the first vertex in $A_2$
    after $b$, and $a'$ the last vertex in $A_2$ before $b'$.  So $b
    \tp P \tp a$ and $a' \tp P \tp b'$ are both paths from $B_2$ to
    $A_2$, and by~(\ref{clodd}) they have the same parity.  So $a \tp P
    \tp a'$ is a path of odd length that is edgewise partitioned into
    its $A$-segment, and that contains all the $A$-segments of $P$.
    Thus $P$ has an odd number of $A$-edges.  Since $P$ is edgewise
    partitioned into into its $B$-segments, there is a $B$-segment
    $P'$ of $P$ with an odd number of $A$-edges. Let $\beta, \beta'$
    be the end-vertices of $P'$. Along $P'$ from $\beta$ to $\beta'$,
    let us call $\alpha$ the first vertex in $A_2$ after $\beta$, and
    $\alpha'$ the last vertex in $A_2$ before $\beta'$.  So $P'' =
    \alpha \tp P' \tp \alpha'$ is a path that is edgewise partitioned
    into its $A$-segment with an odd number of $A$-edge.  Thus $P''$
    has odd length.  Since $\beta \tp P \tp \alpha$ and $\alpha' \tp P
    \tp \beta'$ are both paths from $B_2$ to $A_2$, they have the same
    parity by~(\ref{clodd}).  Finally, $P'$ is of odd length, outgoing
    from $B_2$ to $B_2$, and contradicts~(\ref{clevenB}).  Thus $P$
    has even length.
  \end{proofclaim}


  \begin{claim}
    \label{clantiA2}
    Every  antipath of  $G'$  with  length at  least~2,  with its  end
    vertices in $V(G')\setminus A_2$, and all its interior vertices in
    $A_2$ has even length.
  \end{claim}

  \begin{proofclaim}
    Let $Q$ be such an antipath.  We may assume that $Q$ has length at
    least~3. So each end-vertex of $Q$ must have a neighbor in $A_2$
    and a non-neighbor in $A_2$.  So none of $a_1, c_1, c_2, b_1, b_3,
    b_4$ can be an end-vertex of $Q$, and $Q$ may be viewed as an
    antipath of $G$. So $Q$ has even length by Lemma~\ref{l.2jAiAi}.
  \end{proofclaim}

  \begin{claim}
    \label{clantiB2}
    Every  antipath of  $G'$  with  length at  least~2,  with its  end
    vertices in $V(G')\setminus B_2$, and all its interior vertices in
    $B_2$ has even length.
  \end{claim}

  \begin{proofclaim}
    Let $Q$ be such an antipath.  We may assume that $Q$ has length at
    least~3. So each end-vertex of $Q$ must have a neighbor in $B_2$
    and a non-neighbor in $B_2$.  So none of $a_1, b_1, c_1, c_2$ can
    be an end-vertex of $Q$. If $b_3$ is an end-vertex of $Q$, then
    the other end-vertex must be adjacent to $b_3$ while not being in
    $B_2 \cup \{a_1, b_1, c_1, c_2\}$, a contradiction.  So $b_3$ is
    not an end-vertex of $Q$ and by a similar proof, neither is
    $b_4$. So none of $a_1, c_1, c_2, b_1, b_3, b_4$ is in $Q$ and $Q$
    may be viewed as an antipath of $G$. So $Q$ has even length by
    Lemma~\ref{l.2jAiAi}.
  \end{proofclaim}

  \begin{claim}
    \label{clantiB3}
    Every antipath of $G'$ with length at least~2, with its end
    vertices in $V(G')\setminus B_3$ (resp. $V(G')\setminus B_4$), and
    all its interior vertices in $B_3$ (resp. $B_4$) has even length.   
  \end{claim}
  
  \begin{proofclaim}
    Let $Q$ be such an antipath whose interior is in $B_3$ (the case
    with $B_4$ is similar).  We may assume that $Q$ has length at
    least~3. So each end-vertex of $Q$ must have a neighbor in
    $B_3$. So no vertex of $B_4$ can be an end-vertex of $Q$.
    Thus~(\ref{clantiB2}) applies and $Q$ has even length.
  \end{proofclaim}

  \begin{claim}
    \label{clantip}
    Let $Q$ be an antipath of $G'$ of length at least~4. Then $Q$ does
    not go through  $c_1, c_2$. Moreover $Q$ goes  through at most one
    of $a_1, b_1, b_3, b_4$. 
  \end{claim}

  \begin{proofclaim}
    In an antipath of length at least~4, each vertex either is in a
    square of the antipath or in a triangle of the antipath.  So,
    $c_1, c_2$ are not in $Q$ since they are not in any triangle or
    square of $G'$.  In an antipath of length at least 4, for any pair
    $x,y$ of non-adjacent vertices, there must be a third vertex
    adjacent to both $x,y$.  Thus, $Q$ goes through at most one vertex
    among $a_1, b_3, b_4$. Suppose now that $Q$ also goes through
    $b_1$. Then it does not go through $a_1$ since $a_1, b_1$ have no
    common neighbors. So, up to  symmetry we may assume that $Q$
    goes through $b_3$ and $b_1$. There is no vertex in $G'\setminus c_2$
    seeing $b_3$ and missing $b_1$. So $b_1$ is an end of $Q$. Along
    $Q$, after $b_1$ we meet $b_3$. The next vertex along $Q$ must be
    in $B_4$. The next one, in $B_3$. The next one must see $b_3$ and
    must have a neighbor in $B_4$, a contradiction.
  \end{proofclaim}


    Let us now finish the proof of the lemma.  Let $H$ be a hole of
    $G'$.  Suppose first that $H$ goes through $a_1$.  If $H$ does not
    go through $c_1$, then $H \setminus a_1$ is a path of even length
    by~(\ref{clevenA}), so $H$ has even length.  If $H$ goes through
    $c_1$ then $H$ goes though exactly one of $b_3, b_4$, say $b_3$ up
    to symmetry, and $H\setminus \{a_1, c_1, c_2, b_3\}$ is a path
    $P$.  If $P$ does not go through $b_1$ then it has
    parity~$\varepsilon$ by~(\ref{clodd}). If $P$ goes through $b_1$,
    then $P = b \tp b_1 \tp b' \tp \dots \tp a$ where $b' \tp P \tp a$
    is from $B_4$ to $A_2$. So, again $P$ has parity~$\varepsilon$
    by~(\ref{clodd}).  So $H$ has even length and we may assume that
    $H$ does not go through $a_1$.  If $c_1 \neq c_2$ then $H$ does
    not go through $c_1$.  If $H$ goes through $c_2$ then the path $H
    \setminus \{b_3, c_2, b_4\}$ has even length by~(\ref{clevenB}),
    so $H$ is even. If $H$ goes through $b_1$ then the path $H
    \setminus \{b_1\}$ has even length by~(\ref{clevenB}), so $H$ is
    even. So we may assume that $H$ does not go through $b_1, c_2$.
    If $H$ goes through both $b_3, b_4$ then $H \setminus \{b_3,
    b_4\}$ is partitioned into two outgoing paths from $B_2$ to $B_2$
    that both have even length by~(\ref{clevenB}). Thus $H$ has even
    length. If $H$ goes through $b_3$ and not through $b_4$, then
    $H\setminus b_3$ is an outgoing path from $B_3$ to
    $B_3$. By~(\ref{clevenB3}) it has even length, so $H$ is even.  If
    $H$ goes through $b_4$ and not through $b_3$ then $H$ is even by a
    similar proof. So we may assume that $H$ goes through none of
    $b_3, b_4$.  Now, $H$ goes through none of $a_1, c_1, c_2, b_1,
    b_3, b_4$.  So $H$ may be viewed as a hole of $G$, and so it is
    even. So every hole of $G'$ is even.

    Let us now consider an antihole $H$ of $G'$.  Since the antihole
    on 5~vertices is isomorphic to $C_5$, we may assume that $H$ has
    at least~7 vertices.  Let $v$ be a vertex of $H$ that is not in
    $\{a_1, c_1, c_2, b_1, b_3, b_4\}$.  By~(\ref{clantip}) applied to
    $H\setminus \{v\}$, $H$ does not go through $c_1, c_2$ and goes
    through at most one vertex of $\{a_1, b_1, b_3, b_4\}$.  If $H$
    goes through $a_1$, the antipath $H\setminus a_1$ has all its
    interior vertices in $A_2$ and by~(\ref{clantiA2}),
    $H\setminus a_1$ has even length, thus $H$ is even.  If $H$ goes
    through $b_1$ then the antipath $H\setminus b_1$ has all its
    interior vertices in $B_2$ and by~(\ref{clantiB2}),
    $H\setminus b_1$ has even length, thus $H$ is even.  If $H$ goes
    through one of $b_3, b_4$, say $b_3$ up to  symmetry, the
    antipath $H\setminus b_3$ has all its interior vertices in $B_3$
    and by~(\ref{clantiB3}), $H\setminus b_3$ has even length,
    thus $H$ is even.  If $H$ goes through none of $a_1, c_1, c_2,
    b_1, b_3, b_4$ then $H$ may be viewed as an antihole of $G$. So
    every antihole of $G'$ has even length. Hence, $G'$ is Berge.
  \end{proof} 



  \begin{lemmaS}
    \label{cesp}
    \label{cd2join}
    $G'$ has no \even\ skew partition. Moreover, $G'$ and
    $\overline{G'}$ have no degenerate substantial 2-join, no
    degenerate homogeneous 2-join and no star cutset.
  \end{lemmaS}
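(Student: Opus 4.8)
The plan is to reduce all four assertions to the single claim that $G'$ has no balanced skew partition. Indeed, $G'$ is Berge by Lemma~\ref{cberge}, so a star cutset of $G'$ yields a balanced skew partition of $G'$ by Lemma~\ref{l.starcutset} (its hypotheses hold since $G'$ has more than four vertices, has an edge, say $a_1c_1$, and is not $\overline{C_4}$), a degenerate substantial $2$-join of $G'$ does so by Lemma~\ref{l.degenerate}, and a degenerate homogeneous $2$-join of $G'$ does so by Lemma~\ref{l.homod}. Applying the same three lemmas to $\overline{G'}$ (also Berge) and using Lemma~\ref{espcomp} handles the corresponding objects in $\overline{G'}$. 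So I would then only have to prove that $G'$ has no balanced skew partition.

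Suppose for contradiction that $(E,F)$ is a balanced skew partition of $G'$, chosen with $|F|$ maximum, and set $Y = X_2 \cup \{a_1, b_1\}$. The basic remark is that $G'[Y] = G[Y]$: inside $X_2$ nothing changed, in both graphs $a_1$ is complete to $A_2$ and anticomplete to $X_2 \setminus A_2$, $b_1$ is complete to $B_2$ and anticomplete to $X_2 \setminus B_2$, and $a_1$ is non-adjacent to $b_1$ (in $G$ because $X_1$, being the path-side of a proper path $2$-join, has length at least~$3$, and in $G'$ because $N_{G'}(b_1) = B_2$). Moreover $G'$ is obtained from $G[Y]$ by gluing on the ``gadget'' $\{c_1, c_2, b_3, b_4\}$, which joins $a_1$ to $B_2$ just as the interior of $X_1$ does in $G$, and with the same parity~$\varepsilon$; this is exactly what the parity claims (\ref{clodd})--(\ref{clantip}) in the proof of Lemma~\ref{cberge} record. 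The goal is to show that $F$, after a harmless modification, is a balanced skew cutset of $G$, contradicting the properties of $G$.

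The hard step is to confine $F$ to $Y$. The pair $c_1, c_2$ is, when $\varepsilon = 1$, a degree-$2$ vertex together with a degree-$3$ vertex with stable neighbourhood, and when $\varepsilon = 0$ a single degree-$3$ vertex with stable neighbourhood; if one of them lay in $F$ it would be complete in $G'$ to a non-empty anticomponent of $\overline{G'}[F]$, and, since the neighbourhoods of $c_1$ and $c_2$ are small and $a_1 \not\sim c_2$, this forces $F$ into a small set such as $\{a_1, c_1\} \cup A_2$ or a subset of the gadget. Property~\ref{prop.conn} together with the non-degeneracy of $(X_1, X_2)$ then shows that no such $F$ is a cutset of $G'$, so $c_1, c_2 \notin F$. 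A vertex $b_3 \in F$ has its relevant anticomponent inside $B_3$, and one shows that $b_3$ plays the same role, for a skew cutset, as a vertex of $B_3$ or as $b_1$, so the skew cutset is matched by one in which $b_3$ (and $b_4$) is replaced accordingly; here one uses once more the non-degeneracy of $(X_1, X_2)$ (no vertex of $A_2$ is complete to $B_2$, no edge joins $B_3$ and $B_4$) and, when convenient, the maximality of $|F|$. The outcome is that we may assume $F \subseteq Y$.

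With $F \subseteq Y$, the transport to $G$ is routine. Since $G[Y] = G'[Y]$, $\overline{G}[F] = \overline{G'}[F]$ is disconnected. The interior of $X_1$ is, in $G \setminus F$, a connected subgraph attached only to $\{a_1, b_1\} \setminus F$; comparing $G \setminus F$ with $G' \setminus F$, the gadget merges vertices at least as much as the interior of $X_1$ does, except possibly when $B_2 \subseteq F$ --- but then $b_1 \in F$ too, since in $G'$ only $b_1$ is complete to $B_2$ (non-degeneracy excludes a vertex of $A_2$ or of $C_2$), and this case causes no trouble. Hence $G \setminus F$ is disconnected because $G' \setminus F$ is. Finally $F$ is balanced in $G$: an outgoing path, or an antipath of length at least~$2$, of $G$ from $F$ to $F$ that avoids the interior of $X_1$ reads off in $G'$ and is therefore even, while one that uses the interior of $X_1$ has, by Lemmas~\ref{PgivePX2}--\ref{antiPgivePA1} and Lemma~\ref{l.2jAiBi}, the parity of a path or antipath of $G'$ obtained by substituting the corresponding gadget portion of the same parity~$\varepsilon$, hence is again even. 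So $F$ is a balanced skew cutset of $G$, a contradiction. I expect the real obstacle to be this first step: the construction's asymmetry (the gadget hangs off $a_1$, not off $b_1$) and the non-degeneracy of $(X_1, X_2)$ have to be used to show that the gadget creates no new skew cutset.
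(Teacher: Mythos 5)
Your proposal follows essentially the same route as the paper: reduce the star-cutset, degenerate 2-join and degenerate homogeneous 2-join claims to the no-balanced-skew-partition claim via Lemmas~\ref{l.starcutset}, \ref{l.degenerate}, \ref{l.homod} and~\ref{espcomp}, then transport a balanced skew cutset $F'$ of $G'$ back to one of $G$ by first excluding $c_1,c_2$ (using Property~\ref{prop.conn} and non-degeneracy of $(X_1,X_2)$), swapping $b_3$ for $b_1$ (via $N(b_3)\cap X_2\subseteq N(b_1)\cap X_2$), and checking parity with Lemmas~\ref{PgivePX2}--\ref{antiPgivePA1}, exactly as in the paper. The sketch is correct in outline; the only soft spots are bookkeeping ones the paper handles explicitly (comparing $G\setminus F$ with $G'\setminus F'$ rather than $G'\setminus F$ when $b_3\in F'\setminus F$, and the case of a path of $G$ avoiding the interior of $X_1$ but ending at $b_1\in F\setminus F'$, which still needs the $b_1\mapsto b_3$ or $b_1\mapsto b_3\tp c_2\tp b_4$ substitution).
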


  \begin{proof}
    Let $(F', E')$ be \an\  \even\  skew partition of $G'$ with a split
    $(E'_1, E'_2, F'_1, F'_2)$. Starting from $F'$, we shall build \an\ 
    \even\  skew cutset $F$ of $G$, which contradicts the properties of
    $G$.
    
    Let us first suppose $c_1\neq c_2$ and $c_1 \in F'$. Then, $F'$
    must contains at least one neighbor of $c_1$. If $F'$ contains $a_1$
    and not $c_2$, then $F'$ is a star cutset of $G'$ centered on
    $a_1$.  But this contradicts Property~\ref{prop.conn}
    of~$G$. If $F'$ contains $c_2$ and not $a_1$, then $F'$ is a star
    cutset of $G'$ centered on $c_2$.  But this again contradicts 
    Property~\ref{prop.conn} of~$G$. So, $F'$ must contain $a_1$ and
    $c_2$.  Since $a_1, c_2$ have no common neighbors we have $F' =
    \{a_1, c_1, c_2\}$. This is a contradiction since $G' \setminus
    \{a_1, c_1, c_2\}$ is connected by  Property~\ref{prop.conn}
    of~$G$.  So if $c_1 \neq c_2$ then $c_1 \notin F'$.
  
    Suppose $c_2 \in F'$.  By  Property~\ref{prop.conn} of~$G$, no
    subset of $\{c_2, b_3, b_4\}$ can be a cutset of $G$. So, $F'$
    must be a star cutset centered on one of $b_3, b_4$.  This again
    contradicts  Property~\ref{prop.conn} of~$G$.  So $c_2 \notin
    F'$.  Not both $b_3, b_4$ can be in $F'$ since they have no common
    neighbors in $F'$. So we assume $b_4 \notin F'$

    Up to  symmetry, we may assume $\{c_1, c_2, b_4\} \subset
    E'_1$. Also, $\{a_1, b_3\} \cap E' \subset E'_1$. We claim that
    $\{b_1\} \cap E' \subset E'_1$. Else, $F'$ separates $b_1$ from
    $c_2$. Hence we must have $B_4\subset F'$. Now $b_3 \in F'$ is
    impossible since there is no vertex seeing $b_3$ and having a
    neighbor in $B_4$. So, $B_3 \subset F'$. Since there is no edge
    between $B_3$ and $B_4$, there must be a vertex in $F'$ that is
    complete to $B_3 \cup B_4 = B_2$. The only place to find such a
    vertex is in $A_2$. But this implies $(X_1, X_2)$ being
    degenerate, which contradicts Property~\ref{prop.degenerate} of
    $G$.

    We proved $\{c_1, c_2, b_4\} \subset E'_1$ and $\{a_1, b_1, b_3\}
    \cap E' \subset E'_1$.  Let $v$ be any vertex of $E'_2$.  Since
    $\{a_1, c_1, c_2, b_1, b_3, b_4\} \cap E' \subset E'_1$, we have
    $v \in X_2$.  If $b_3$ is in $F$, put $B'_1= \{b_1\}$, else put
    $B'_1 = \emptyset$.  Now $F = (F' \setminus \{b_3\}) \cup B'_1$ is
    a skew cutset of $G$ that separates $v$ from the interior vertices
    of the path induced by $X_1$. Indeed, either $F=F'$, or $F'$ is
    obtained by deleting $b_3$ and adding $b_1$. Since $N(b_3) \cap
    X_2 \subset N(b_1) \cap X_2$, $F$ is not anticonnected and is a
    cutset.  It suffices now to prove that $F$ is \an\ \even\ skew
    cutset of $G$.

    Let $P$  be an  outgoing path of  $G$ from  $F$ to $F$.   We shall
    prove that $P$ has even length.

    If $a_1, b_1 \notin F$, then $F \subset X_2$ and the end-vertices
    of $P$ are both in $X_2$.  So Lemma~\ref{PgivePX2} applies to
    $P$. Suppose that the first outcome of Lemma~\ref{PgivePX2} is
    satisfied: $V(P) \subseteq X_2 \cup \{a_1, b_1 \}$.  Note that by
    the definition of $F$, $b_1 \notin F$ implies $b_1 \notin
    F'$. Hence, $P$ may be viewed as an outgoing path from $F'$ to
    $F'$, so $P$ has even length since $F'$ is \an\  \even\  skew cutset of
    $G'$.  Suppose now that the second outcome of Lemma~\ref{PgivePX2}
    is satisfied: $P = c \tp \cdots \tp a_2 \tp a_1 \tp X_1 \tp b_1
    \tp b_2 \tp \cdots \tp c'$. Put $i=3$ if $b_2 \in B_3$ and $i = 4$
    if $b_2 \in B_4$.  Put $P' = c \tp P \tp a_2 \tp a_1 \tp c_1 \tp
    c_2 \tp b_i \tp b_2 \tp P \tp c'$. Note that by the definition of
    $F$, $b_1 \notin F$ implies $b_3 \notin F'$. The paths $P$ and
    $P'$ have the same parity and $P'$ is an outgoing path of $G'$
    from $F'$ to $F'$.  So $P'$ and $P$ have even length since $F'$ is
    \an\  \even\  skew cutset of $G'$.

    If $a_1 \in F$, note that $b_1 \notin F$ since $a_1, b_1$ are
    non-adjacent with no common neighbors (in both $G, G'$).  We have
    $F' = F \subset X_2 \cup \{a_1\}$, the end-vertices of $P$ are
    both in $X_2 \cup \{a_1\}$ and no interior vertex of $P$ is in
    $\{a_1\}$ since $a_1 \in F$.  So Lemma~\ref{PgivePA1} applies. If
    Outcome~1 of the lemma holds, then $P$ has even length. If
    Outcome~2 of the lemma holds, then just like in the preceding
    paragraph, we can build a path $P'$ of $G'$ that is outgoing from
    $F$ to $F$ and that has a length with the same parity as $P$. So
    $P$ has even length.  If Outcome~3 of the lemma holds, the proof
    is again similar to the preceding paragraph.

    If $b_1 \in F$ then $a_1 \notin F$, $F \subset X_2 \cup \{b_1\}$,
    and Lemma~\ref{PgivePA1} applies. If Outcome~1 of the lemma holds,
    then $P$ has even length.  If Outcome~2 of the lemma holds, we may
    assume that $b_1$ is in $F \setminus F'$ and that $b_1$ is an
    end of $P$, for otherwise the proof would work like in the paragraph
    above. Then we build a path $P'$ of $G'$ that is outgoing from
    $F'$ to $F'$ and that has a length with the same parity as $P$, by
    replacing $\{b_1\}$ by $\{b_3\}$ (if $P$ goes through $B_3$) or by
    $\{b_3, c_2, b_4\}$ (if $P$ goes through $b_4$).  So $P$ has even
    length.  If Outcome~3 of the lemma holds then $P = b_1 \tp X_1 \tp
    a_1 \tp a_2 \tp \cdots \tp c$ where $a_2 \in A_2$, $c \in
    X_2$. Note that one of $b_1, b_3$ is in $F'$.  If $b_3 \in F'$,
    then we put $P' = b_3 \tp c_2 \tp c_1 \tp a_1 \tp a_2 \tp P \tp
    c$. If $b_3\notin F'$ then up to  symmetry, we assume $V(a_2 \tp
    P \tp c) \subset A_2 \cup C_3$. Note that $b_1 \in F'$.  We put
    $P' = b_1 \tp b \tp b_4 \tp c_2 \tp c_1 \tp a_1 \tp a_2 \tp P \tp
    c$ where $b$ is any vertex in $B_4$. It may happen that $P'$ is
    not a path of $G'$ because of the chord $a_2 b$.  But then we put
    $P'= b_1 \tp b \tp a_2 \tp P \tp c$. In every case, $P'$ is
    outgoing from $F'$ to $F'$, and has the same parity as $P$. Hence,
    $P$ has even length.

    Now, let $Q$ be an antipath of $G$ of length at least~2 with all
    its interior vertices in $F$ and with its end-vertices outside 
    $F$.  We shall prove that $Q$ has even length. Note that we may
    assume that $Q$ has length at least~5, because if $Q$ has
    length~3, it may be viewed as an outgoing path from $F$ to $F$,
    that has even length by the discussion above on paths.

    If both $a_1, b_1 \notin F$, then $F \subset X_2$ and the interior
    vertices of $Q$ are all in $X_2$.  So Lemma~\ref{antiPgivePX2}
    applies: $V(Q) \subseteq X_2 \cup \{a\}$ where $a \in \{a_1,
    b_1\}$.  So $Q$ may be viewed as an antipath of $G'$ that has even
    length because $F'$ is \an\  \even\  skew cutset of $G'$.
    
    If $a_1 \in F$, let us remind that $b_1 \notin F$.  We have $F
    \subset X_2 \cup \{a_1\}$, the interior vertices of $Q$ are in
    $X_2 \cup \{a_1\}$ and the end-vertices of $Q$ are not in
    $\{a_1\}$ since $a_1 \in F$.  So Lemma~\ref{antiPgivePA1}
    applies. We may assume that Outcome~2 holds. Once again, $Q$ may
    be viewed as an outgoing path of $G'$ that has even length because
    $F'$ is \even. 

    If $b_1 \in F$, we have to consider the case when $b_1 \notin F'$
    (else the proof is like in the paragraph above). Since $b_1 \notin
    F'$, we have $b_3 \in F'$. Note that $B_4 \cap F' = B_4 \cap F =
    \emptyset$ since there are no edges between $b_3, B_4$ and no
    vertex seeing $b_3$ while having a neighbor in $B_4$. So, if $Q$
    is an antipath whose interior is in $F$, then $Q$ does not go
    through $B_4$. Hence, if we replace $b_1$ by $b_3$, we obtain an
    antipath $Q'$ whose interior is in $F'$ and whose ends are
    not. Hence, $Q$ has even length.

    In every case, $Q$ has even length. We proved that $G'$ has no
    \even\ skew partition.  If one of $G'$, $\overline{G'}$ has a
    degenerate substantial 2-join, a degenerate homogeneous 2-join or
    a star cutset then $G'$ has \an\ \even\ skew partition by
    Lemma~\ref{l.degenerate}, \ref{l.homod} or~\ref{l.starcutset}, a
    contradiction.
  \end{proof}



  \begin{lemmaS}
    \label{c2jfinal}
    $G'$ has no proper non-path 2-join.
  \end{lemmaS}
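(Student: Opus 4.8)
The plan is to argue by contradiction, lifting a hypothetical $2$-join of $G'$ to one of $G$. Suppose $(Y_1,Y_2)$ is a proper non-path $2$-join of $G'$ with split $(Y_1,Y_2,P_1,Q_1,P_2,Q_2)$, and write $R_i=Y_i\setminus(P_i\cup Q_i)$; set $S=V(G')\setminus X_2=\{a_1,b_1,c_1,c_2,b_3,b_4\}$ (with $c_1=c_2$ when $\varepsilon=0$). Everything hinges on the very restricted adjacencies inside $G'$: $N(b_1)=B_2\subseteq X_2$; $N(b_3)=B_3\cup\{c_2\}$ and $N(b_4)=B_4\cup\{c_2\}$; $N(a_1)=A_2\cup\{c_1\}$; $N(c_1),N(c_2)\subseteq S$; and $b_1,b_3,b_4$ are pairwise non-adjacent. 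In particular $G'[S]$ has $b_1$ as an isolated vertex and its other component is a tree (a path $a_1c_1c_2$ with pendant edges $c_2b_3$, $c_2b_4$ if $\varepsilon=1$, or the star $K_{1,3}$ with centre $c_1=c_2$ if $\varepsilon=0$), and $G'$ is not a hole since $c_2$ has degree~$3$. Also $G'[X_2]=G[X_2]$ and $G'[X_2\cup\{a_1,b_1\}]=G[X_2\cup\{a_1,b_1\}]$.

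First I would rule out that $X_2$ lies inside one side, say $X_2\subseteq Y_1$, so that $Y_2\subseteq S$. Connectivity of $(Y_1,Y_2)$ forces every component of $G'[Y_2]$ to meet both $P_2$ and $Q_2$; as $b_1$ is isolated in $G'[S]$ this gives $b_1\notin Y_2$, so $Y_2$ is a connected subset of size at least~$3$ of the tree $G'[S\setminus\{b_1\}]$. Running through the (finitely many, at most six up to the symmetry $b_3\leftrightarrow b_4$) such subsets, each falls into one of two patterns: either $G'[Y_2]$ is an induced path from $P_2$ to $Q_2$ with interior in $R_2$, making $(Y_1,Y_2)$ a path $2$-join with path-side $Y_2$ — impossible since $G'$ is not a hole and $(Y_1,Y_2)$ is non-path; or $(Y_1,Y_2)$ cannot be a $2$-join at all, because the central vertex $c_2$ (resp. $c_1=c_2$) sends an edge into $Y_1$ that is neither a $P$- nor a $Q$-edge, or because the three ``attachment groups'' $A_2$ (through $a_1$), $B_3$ (through $b_3$), $B_4$ (through $b_4$) cannot be distributed among only the two classes $P_2,Q_2$ without forcing $P_1$ or $Q_1$ empty (two of $a_1,b_3,b_4$ in the same class would make $P_1\subseteq N(a_1)\cap N(b_3)\cap X_2=\emptyset$, etc.). Hence $X_2$ meets both $Y_1$ and $Y_2$.

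Now, since $G[X_2]$ is connected (Property~\ref{prop.X2conn}), the edges of $G[X_2]$ across $(X_2\cap Y_1,X_2\cap Y_2)$ all lie in $(P_1\cap X_2)\times(P_2\cap X_2)$ or $(Q_1\cap X_2)\times(Q_2\cap X_2)$; up to exchanging $P,Q$ we may take $P_1\cap X_2,P_2\cap X_2\neq\emptyset$. I would next show $Q_1\cap X_2,Q_2\cap X_2\neq\emptyset$ as well: if, say, $Q_2\cap X_2=\emptyset$ then $Q_2\subseteq S$, and tracing the possibilities for $Q_1,Q_2$ (using again that a vertex of $S$ reaches $X_2$ only through one of $A_2,B_2,B_3,B_4$) yields either a star cutset or a degenerate substantial $2$-join of $G'$, both excluded by Lemma~\ref{cesp}. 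So $(X_2\cap Y_1,X_2\cap Y_2,P_1\cap X_2,Q_1\cap X_2,P_2\cap X_2,Q_2\cap X_2)$ is a split of a $2$-join of $G[X_2]$. Finally I would reinflate: the vertices of $S$ attach to $X_2$ only through $A_2,B_2,B_3,B_4$, and a case analysis according to the sides of $a_1,b_1$ and of the relevant one of $A_2,B_2,B_3,B_4$ — discarding via Lemma~\ref{cesp} those configurations where some $A_2,B_3,B_4$ would be split or an end of the path falls into an $R_i$ — shows that putting $X_1$ back as a flat path on the side(s) containing $a_1,b_1$ produces a split of a $2$-join $(Z_1,Z_2)$ of $G$ refining the $G[X_2]$-$2$-join. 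Properties~\ref{prop.conn} and~\ref{prop.degenerate} then make $(Z_1,Z_2)$ connected and substantial, and (since only one side of a $2$-join of the non-hole graph $G$ can be a path-side, and a path-side of $(Z_1,Z_2)$ would back-propagate to a path-side of $(Y_1,Y_2)$) it is non-path. This contradicts the assumption that neither $G$ nor $\overline G$ has a non-path proper $2$-join, and finishes the proof.

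The hard part is the last paragraph: the bookkeeping needed to turn the $2$-join of $G'$ into a genuine proper non-path $2$-join of $G$. One must pin down, for each of $a_1,b_1,c_1,c_2,b_3,b_4$ and for each of $A_2,B_2,B_3,B_4$, on which side of $(Y_1,Y_2)$ it sits, and deal with the several degenerate sub-configurations one by one — each time either completing the lift or exhibiting a star cutset or a degenerate substantial $2$-join of $G'$ to invoke Lemma~\ref{cesp}. The two earlier reductions (no $X_2$ entirely on one side; both $Q$-parts meeting $X_2$) are comparatively short, being driven by the tree-like, low-degree structure of the gadget $S$.
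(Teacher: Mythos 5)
Your overall strategy is the same as the paper's --- assume a proper non-path 2-join of $G'$, lift it to a proper 2-join of $G$, and use the properties of $G$ (every proper 2-join of $G$ is a path 2-join whose path-side must avoid the high-degree vertex $b_1$) to force the original 2-join of $G'$ to be a path 2-join. Your two preliminary reductions (no side of the 2-join can swallow all of $X_2$; both colour classes of the 2-join meet $X_2$) are sound and can be checked by the finite enumeration you describe. But the proof as written has a genuine gap, and it is exactly where you say the hard part is: the ``reinflation'' recipe --- \emph{put $X_1$ back as a flat path on the side(s) containing $a_1,b_1$} --- does not work in general, because there is a configuration in which $a_1$ and $b_1$ land on \emph{opposite} sides of the 2-join of $G'$. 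This happens precisely when the induced 2-join of $G[X_2]$ separates $A_2$ from $B_2$ (in the paper's notation, when the unique crossing vertex $x$ of the auxiliary hole lies in the interior of the path from $A_2$ to $B_3$). Then $X_1$, being a path from $a_1$ to $b_1$ whose interior vertices have degree~2, cannot be reinstated on either side, and no 2-join of $G$ ``refining the $G[X_2]$-2-join'' exists in the form you describe. The paper excludes this configuration by a separate, non-obvious argument (its first claim, ruling out sets $Y_1,Z_1,Y_2,Z_2$ with $A_2\subset Y_1\cup Z_1$, $B_2\subset Y_2\cup Z_2$): the set $Y_2\cup Z_2\subseteq X_2$ would itself be one side of a proper 2-join of $G$, which by the properties of $G$ and the maximality of the flat path $X_1$ would have to be the path-side, contradicting $|B_2|\geq 2$. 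Your sketch contains no substitute for this step.

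Beyond that specific failure mode, the remainder of the deferred ``bookkeeping'' is not routine either: the paper spends three further claims pinning down, one vertex at a time, that $c_1,c_2,b_3,b_4,b_1$ all lie on the same side of any proper 2-join of $G'$, and each claim requires tracking the position of a crossing vertex along an auxiliary hole and repeatedly invoking non-degeneracy. Your plan to ``discard via Lemma~\ref{cesp} those configurations where some $A_2,B_3,B_4$ would be split'' gestures at this but does not carry it out, and some of the discarded configurations are eliminated not by degeneracy but by the claim described above. Finally, your back-propagation remark (a path-side of the lifted 2-join yields a path-side of $(Y_1,Y_2)$) needs the explicit observation that the side containing the reinstated $X_1$ also contains $b_1$ together with at least three of its neighbours, so it cannot be a path-side; as stated it is asserted rather than proved.
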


  \begin{proof}

  \begin{claim}
    \label{cYZ}
    There exist no sets $Y_1, Z_1, Y_2, Z_2$ such that:
    \begin{itemize}
    \item
      $Y_1, Z_1, Y_2, Z_2$ are pairwise disjoint and $Y_1 \cup Z_1
      \cup Y_2 \cup Z_2 = X_2$;
    \item
      there are every possible edges between $Y_1$ and $Y_2$, and
      these edges are the only edges between $Y_1 \cup Z_1$ and $Y_2
      \cup Z_2$;
    \item
      $A_2 \subset Y_1 \cup Z_1$ and $B_2 \subset Y_2 \cup Z_2$. 
    \end{itemize}
  \end{claim}

  \begin{proofclaim}
    Suppose such sets exist. Note that $Y_1 \neq \emptyset$ and $Y_2
    \neq \emptyset$ since by Property~\ref{prop.X2conn} of $G$,
    $G[X_2]$ is connected. Note that $Z_1, Z_2$ can be empty. Suppose
    $Y_2 \cap B_2 \neq \emptyset$ and pick a vertex $b \in Y_2 \cap
    B_2$. Up to  symmetry we assume $b\in B_3$ and we pick a vertex
    $b' \in B_4$. Since $B_2 \subset Y_2 \cup Z_2$ we have $b'\in Y_2
    \cup Z_2$. Now $\{b\} \cup N(b)$ is a star cutset of $G$ that
    separates $a_1$ from $b'$, which contradicts the properties of $G$. Thus
    $Y_2 \cap B_2 = \emptyset$. Hence $(Y_2 \cup Z_2, V(G)\setminus
    (Y_2 \cup Z_2))$ is a 2-join of $G$. This 2-join is proper (the
    check of connectivity relies on the fact that $(X_1, X_2)$ is
    connected and on Lemma~\ref{l.connect}). By the properties of $G$,
    this 2-join has to be a path 2-join. Since $X_1$ is a maximal flat
    path of $G$, $Y_2 \cup Z_2$ is the path-side of the 2-join. This
    is impossible because $|B_2| \geq 2$.
  \end{proofclaim}

  Implicitly, when $(X'_1, X'_2)$ is a 2-join, we consider a split
  $(X'_1, X'_2, A'_1, B'_1, A'_2, B'_2)$.  We also put $C'_1 = X'_1
  \setminus (A'_1 \cup B'_1)$ and $C'_2 = X'_2 \setminus (A'_2 \cup
  B'_2)$.

  \begin{claim}
     \label{cc1c2}
     If $G'$ has a proper 2-join $(X'_1, X'_2)$ then either $\{c_1,
     c_2\} \subset X'_1$ or $\{c_1, c_2\} \subset X'_2$.
  \end{claim}

  \begin{proofclaim}
    Suppose not. We may assume that there is a 2-join $(X'_1, X'_2)$
    such that $c_1\in X'_2$ and $c_2\in X'_1$.  In particular, $c_1
    \neq c_2$. Up to  symmetry, we assume $c_1 \in A'_2$ and $c_2 \in
    A'_1$.  Then, $a_1\in X'_2$ for otherwise $c_1$ is isolated in
    $X'_2$, which contradicts $(X'_1, X'_2)$ being proper.  Also one of
    $b_3, b_4$ must be in $X'_1$ for otherwise $c_2$ would be isolated in
    $X'_1$. Up to symmetry we assume $b_3 \in X'_1$.

    By Property~\ref{prop.conn} of~$G$ there is a path $P = h_1
    \tp \cdots \tp h_k$ from $A_2$ to $B_3$ whose interior is in
    $C_2$, with $h_1\in A_2$, $h_k \in B_3$. We denote by $H$ the hole
    induced by $V(P) \cup \{a_1, c_1, c_2, b_3\}$. Note that $H$ has
    an edge whose ends are both in $X'_1$ (it is $c_2 b_3$) and an
    edge whose ends are both in $X'_2$ (it is $a_1 c_1$). So $H$ is
    vertex-wise partitioned into a path from $A'_1$ to $B'_1$ whose
    interior is in $X'_1$ and a path from $B'_2$ to $A'_2$ whose
    interior is in $X'_2$. Hence, starting from $c_1$, then going to
    $a_1$ and continuing along $H$, one will first stay in $X'_2$,
    will meet a vertex in $B'_2$, immediately after that, a vertex in
    $B'_1$, and after that will stay in $X'_1$ and reach $c_2$. We now
    discuss several cases according to the unique vertex $x$ in $H
    \cap B'_2$.

    If $x = a_1$ then $a_1 \in B'_2$. So $b_3 \in C'_1$. This implies
    step by step $B_3 \subset X'_1$, $B_3 \subset C'_1$, $b_1 \in
    X'_1$, $b_1 \in C'_1$, $B_4 \subset X'_1$, $B_4 \subset C'_1$,
    $b_4 \in X'_1$. Let $v$ be a vertex in $C_2$ (if any). Then by 
    Property~\ref{prop.conn} of~$G$ there is a path $Q$ from $v$ to
    $B_2$ with no vertex in $A_2$. If $v\in X'_2$, then $Q$ must
    contain a vertex in $A'_1 \cup B'_1$. This is impossible since no
    vertex in $C_2 \cup B_2$ sees $a_1$ or $c_1$. So, $C_2 \subset
    C'_1$. Let $v$ be a vertex in $A_2$. Note that by 
    Property~\ref{prop.conn} of~$G$, $v$ must have a neighbor in
    $C_2\cup B_2$. So, $v\in X'_1$ since $C_2 \cup B_2 \subset
    C'_1$. Finally, we proved $X'_2 = \{a_1, c_1\}$. This is
    impossible since $(X'_1, X'_2)$ is proper.

    If $x = h_i$ with $1 \leq i < k$, then $h_i \in B'_2\cap(A_2 \cup
    C_2)$ and $h_{i+1} \in B'_1$.  Note that $b_3 \in C'_1$ since
    $b_3$ misses $c_1$ and $h_1$. So, $B_3 \subset X'_1$. By the
    definition of $x$, we know that $a_1 \in C'_2$. So, $A_2 \subset
    X'_2$. We consider now two cases.

    First case: $b_4 \in X'_1$. Since there are no edges between
    $\{b_3, b_4\}$ and $\{c_1, h_i\}$ we know that $\{b_3, b_4\}
    \subset C'_1$. This implies $B_3\cup B_4 \subset X'_1$. Also, $b_1
    \in X'_1$ for otherwise $b_1$ would be isolated in $X'_2$. Now, $A'_1
    \cup B'_1 \subset (B_2 \cup C_2)$. Let us put: $Y_1 = B'_2$, $Z_1
    = (X'_2 \cap X_2) \setminus Y_1$, $Y_2 = B'_1$, $Z_2 = (X'_1 \cap
    X_2) \setminus Y_2$. These four sets yield a contradiction
    to~(\ref{cYZ}).

    Second case: $b_4 \in X'_2$. Then $b_4 \in A'_2$ and $A'_1 =
    \{c_2\}$. If there is a vertex $v$ of $X'_1$ in $B_4$ then $v\in
    A'_1$. This is impossible since $v$ misses $c_1 \in A'_2$. So,
    $B_4 \subset X'_2$. Hence, if $b_1\in X'_1$ then $b_1\in A'_1\cup
    B'_1$. But this is impossible since $b_1$ misses $c_1$ and
    $h_i$. So, $b_1 \in X'_2$. Since $B_3 \subset X'_1$, we know $B_3
    = B'_1$ and $b_1 \in B'_2$.  So $b_3$ is a vertex of $C'_1$
    complete to $A'_1 \cup B'_1$, which implies $(X'_1, X'_2)$ being
    degenerate, a contradiction. 

    If $x = h_k$ then $a_1 \in C'_2$ and $A_2 \subset X'_2$. Let $v$
    be a vertex of $C_2 \cup B_3 \cup B_4 \cup \{b_1, b_4\}$.  By 
    Property~\ref{prop.conn} of~$G$ there is a path $Q$ from $v$ to
    $A_2$ with no interior vertex in $B_3 \cup A_2$. If $v\in X'_1$,
    then $Q$ must have a vertex $u\neq v$ in $A'_2 \cup B'_2$. Note $u
    \notin B_3$. This is impossible because $u$ misses $c_2$ and
    $b_3$. So, $v\in X'_2$. Hence, $X'_1 = \{c_2, b_3\}$, which contradicts
    $(X'_1, X'_2)$ being proper.
  \end{proofclaim}

  \begin{claim}
     \label{ccb34}
     If $G'$ has a proper 2-join $(X'_1, X'_2)$ then either $\{c_1,
     c_2, b_3, b_4\} \subset X'_1$ or $\{c_1, c_2, b_3, b_4\} \subset
     X'_2$.
  \end{claim}

  \begin{proofclaim}
    Suppose not. By~(\ref{cc1c2}), we may assume that there is a
    2-join $(X'_1, X'_2)$ such that $c_1, c_2\in X'_1$ and $b_3\in
    X'_2$. Up to  symmetry, we assume $c_2 \in A'_1$ and $b_3 \in
    A'_2$.  At least one vertex of $B_3$ is in $X'_2$ for otherwise
    $b_3$ would be isolated in $X'_2$. So let $b$ be a vertex of $X'_2 \cap
    B_3$. We claim that there is a hole $H$ that goes through $b_3$,
    $c_2$, $c_1$, $a_1$, $h_1 \in A_2$, \dots $h_k=b$, with at least
    one edge in $X'_1$ and at least one edge in $X'_2$. If $c_1 \neq
    c_2$ then our claim hold trivially: $c_1c_2 \in X'_1$ and $b_3 b
    \in X'_2$. If $c_1 = c_2$, suppose that our claim fails. Then $a_1
    \in X'_2$, which implies $A'_1 = \{c_2\}$ and $a_1 \in A'_2$. We have
    $b_4 \in X'_1$ for otherwise $c_2$ would be isolated in $X'_1$. If $b_4
    \in B'_1$ then $(X'_1, X'_2)$ is degenerate since $b_4$ is
    complete to $A'_1$. So, $b_4 \in C'_1$, which implies $B_4 \subset
    X'_1$.  If $b_1 \in X'_1$ then $b \in B'_1$ since $b \in X'_2$. So
    $B'_2 \subset B_3$ and $b_3$ is a vertex of $A'_2$ that is
    complete to $B'_2$, which implies $(X'_1, X'_2)$ being degenerate, a
    contradiction.  So $b_1 \in X'_2$. Hence $B'_1 = B_4$ because no
    vertex of $B'_1$ can be in $B_3$ since $b_3 \in A'_2$. So $b_4 \in
    C'_1$ is complete to $A'_1 \cup B'_1$, which implies $(X'_1, X'_2)$
    being degenerate, a contradiction.  Thus our claim holds: $H$ has
    an edge in $X'_1$ and an edge in $X'_2$. So there is a unique
    vertex $x$ in $H \cap B'_2$. We now discuss according to the place
    of $x$.

    If $x = a_1$ then by the discussion above $c_1 \neq c_2$. Also,
    $a_1 \in B'_2$ and $c_1 \in B'_1$. Suppose that $X'_1 \cap X_2$
    and $X'_2 \cap X_2$ are both non-empty. The vertices of $A'_2 \cup
    B'_2$ are not in $X_2$ because they have to see either $c_1$ or
    $c_2$. So there are no edges between $X'_1 \cap X_2$ and $X'_2
    \cap X_2$. Hence, $G'[X_2]$ is not connected, which contradicts 
    Property~\ref{prop.X2conn} of $G$. So either $X_2 \subset X'_1$ or
    $X_2 \subset X'_2$. If $X_2 \subset X'_1$ then $X'_2 \subset
    \{a_1, b_1, b_3, b_4\}$, so $X'_2$ is a stable set, which contradicts
    $(X'_1, X'_2)$ being proper. If $X_2 \subset X'_2$ then $b_1$ is
    in $X'_2$ for otherwise it would be isolated in $X'_1$. So, $X'_1
    \subset \{c_1, c_2, b_4\}$. This is a contradiction since no
    subset of $\{c_1, c_2, b_4\}$ can be a side of a proper 2-join of
    $G'$.

    If $x=h_1$ then $h_1 \in B'_2$ and $a_1\in B'_1$.  If $b_4 \in
    X'_1$ then $b_4 \in C'_1$ because of $b_3$ and $h_1$. So, $B_4
    \subset X'_1$. But in fact, by the same way, $B_4 \subset C'_1$,
    and $b_1 \in C'_1$. So, $B_3 \subset X'_1$, which contradicts $h_k \in
    X'_2$.  We proved $b_4 \in X'_2$, which implies $A'_1 = \{c_2\}$. If a
    vertex $v$ of $X_2 \cup \{b_1\}$ is in $X'_1$, then by
    Lemma~\ref{l.connect} applied to $(X'_1, X'_2)$ there is a path of
    $X'_1$ from $v$ to $A'_1 = \{c_2\}$ with no interior vertex in
    $B'_1$, a contradiction. So $X_2 \cup \{b_1\} \subset X'_2$. We
    proved $X'_1 = \{a_1, c_1, c_2\}$, which contradicts $(X'_1, X'_2)$
    being proper.

    If $x=h_i$, $2 \leq i \leq k$ then $h_i\in B'_2$, $h_{i-1} \in
    B'_1$. Since $a_1\in C'_1$ we have $A_2 \subset X'_1$. If $b_4 \in
    X'_1$ then $b_4 \in C'_1$, which implies $B_4 \subset X'_1$. If $b_1 \in
    X'_2$ then $b_1$ must be in $A'_2 \cup B'_2$, a contradiction
    since $b_1$ misses $c_2$ and $h_{i-1}$. So, $b_1 \in X'_1$. Since
    $h_k\in X'_2$, we know $b_1 \in B'_1$. Thus $B'_2 \subset
    B_3$. Hence $b_3$ is a vertex of $A'_2$ that is complete to
    $B'_2$, which implies $(X'_1, X'_2)$ being degenerate, a
    contradiction. We proved $b_4 \in X'_2$. Now $A'_2 = \{b_3,
    b_4\}$. Suppose that there is a vertex $v$ of $X'_1$ in $B_3 \cup
    B_4$.  Then $v$ must be in $A'_1$ since $v$ sees one of $b_3,
    b_4$. But this is a contradiction since $v$ misses one of $b_3,
    b_4$. We proved $B_3 \cup B_4 \subset X'_2$. Also, $b_1 \in X'_2$
    for otherwise, $b_1$ would be isolated in $X'_1$.  Let us put: $Y_1 =
    B'_1$, $Z_1 = (X'_1 \cap X_2) \setminus Y_1$, $Y_2 = B'_2$, $Z_2 =
    (X'_2 \cap X_2) \setminus Y_2$. These four sets yield a
    contradiction to~(\ref{cYZ}).
  \end{proofclaim}

  \begin{claim}
     \label{c2jb1}
     If $G'$ has a proper 2-join $(X'_1, X'_2)$ then either $\{c_1, c_2, b_1,
     b_3, b_4\} \subset X'_1$ or $\{c_1, c_2, b_1, b_3, b_4\} \subset
     X'_2$. 
  \end{claim}

  \begin{proofclaim}
    Suppose not. By~(\ref{ccb34}), we may assume that there is a
    2-join $(X'_1, X'_2)$ of $G'$ such that $c_1, c_2, b_3, b_4 \in
    X'_1$ and $b_1 \in X'_2$. If $\{b_3, b_4\} \cap (A'_1 \cup B'_1) =
    \emptyset$ then $\{b_3, b_4\} \subset C'_1$, so $B_3 \cup B_4
    \subset X'_1$. Hence $b_1$ is isolated in $X'_2$, a contradiction.

    If $|\{b_3, b_4\} \cap (A'_1 \cup B'_1)| = 1$, then up to 
    symmetry we may assume $b_3 \in A'_1$ and $b_4\in C'_1$. Thus $B_4
    \subset X'_1$. Since $b_1 \in X'_2$, we have $B_4 \subset A'_1
    \cup B'_1$. But no vertex $x$ of $B_4$ can be in $A'_1$ because
    $x$ and $b_3$ have no common neighbors, so $B_4 \subset
    B'_1$. Thus $b_1 \in B'_2$. Because of $b_3$, $A'_2 \subset
    B_3$. So $b_1$ is a vertex of $B'_2$ that is complete to $A'_2$,
    which implies $(X'_1, X'_2)$ being degenerate, a contradiction. We proved
    $\{b_3, b_4\} \subset (A'_1 \cup B'_1)$. 

    Since $b_3, b_4$ have no common neighbors in $X'_2$, we may assume
    up to  symmetry that $b_3 \in A'_1$ and $b_4 \in B'_1$. So $b_1$
    have non-neighbors in both $A'_1, B'_1$. This implies $b_1 \in
    C'_2$, and $B_3 \cup B_4 \subset X'_2$. Hence $A'_2 = B_3$ and
    $B'_2 = B_4$. Now, $b_1 \in C'_2$ is complete to $A'_2 \cup B'_2$,
    which implies $(X'_1, X'_2)$ being degenerate, a contradiction. 
  \end{proofclaim}

  Let us now finish the proof.

    Let $(X'_1, X'_2)$ be a proper 2-join of $G'$. By~(\ref{c2jb1}),
    we may assume $\{c_1, c_2, b_1, b_3, b_4\} \subset X'_2$.  If $b_3
    \notin C'_2$ and $b_4 \notin C'_2$ then up to  symmetry we may
    assume $b_3 \in A'_2$, $b_4 \in B'_2$ since $b_3, b_4$ have no
    common neighbors in $X'_1$. So, there is a vertex of $A'_1$ in
    $B_3$ and a vertex of $B'_1$ in $B_4$, which implies $b_1 \in A'_2 \cap
    B'_2$, a contradiction. We proved $b_3 \in C'_2$ or $b_4 \in
    C'_2$. Up to  symmetry we assume $b_3 \in C'_2$, so $B_3
    \subset X'_2$.  Note that $X'_1$ is a subset of $V(G)$. If $A'_1
    \cap B_4, B'_1 \cap B_4$ are both non-empty then $b_1$ must be in
    $A'_2 \cap B'_2$, a contradiction. Thus we may assume $A'_1 \cap
    B_4 = \emptyset$. If $a_1\in X'_1$ and $B'_1 \cap B_4 \neq
    \emptyset$ then $a_1 \notin B'_1$ since $a_1$ misses $b_1$. Thus
    we may assume $B'_1 \cap \{a_1\} = \emptyset$.

    Let us now put: $X''_1 = X'_1$, $X''_2 = V(G) \setminus X''_1$,
    $A''_1 = A'_1$, $B''_1 = B'_1$, $B''_2 = B'_2 \setminus
    \{b_4\}$. If $a_1 \in A'_1$ then $A''_2 = (A'_2 \cap X_2) \cup
    (N_G(a_1) \cap X_1)$ else $A''_2 = A'_2$. Note that $A''_2 \cap
    B''_2 = \emptyset$. Also, if $b_4 \in B'_2$ then $b_1 \in B'_2$
    and $b_1 \in B''_2$. From the definitions it follows that $(X''_1,
    X''_2)$ is a partition of $V(G)$, that $A''_1, B''_1 \subset
    X''_1$, $A''_2, B''_2 \subset X''_2$, that $A''_1$ is complete to
    $A''_2$, that $B''_1$ is complete to $B''_2$ and that there are no
    other edges between $X''_1$ and $X''_2$.  So, $(X''_1, X''_2)$ is
    a 2-join of $G$.


    We claim that $(X''_1, X''_2)$ is a proper 2-join of $G$.  Note
    that $G[X''_1]$ is not a path of length~1 or~2 from $A''_1$ to
    $B''_1$ whose interior is in $C''_1$, because $X''_1 = X'_1$ and
    because $(X'_1, X'_2)$ is a proper 2-join of $G'$. Also $G[X''_2]$
    is not a path from $A''_2$ to $B''_2$ whose interior is in $C''_2$
    because $b_1$ has at least 2 neighbors in $X''_2$ (one in $X_1$,
    one in $B_3$) while having degree at least~3 because of
    $B_4$. Hence $(X''_1, X''_2)$ is substantial. So it is connected
    and proper for otherwise it would be degenerate, which contradicts the
    properties of $G$. This proves our claim.

    Since $(X''_1, X''_2)$ is proper, we know by the properties of $G$
    that $(X''_1, X''_2)$ is a path 2-join of $G$.  If $X''_2$ is the
    path-side of $(X''_1, X''_2)$ then $b_1$ is an interior vertex of
    this path while having degree at least~3, a contradiction. Hence,
    $X''_1$ is the path-side of $(X''_1, X''_2)$.  Since $X''_1=X'_1$,
    $(X'_1, X'_2)$ is a path 2-join of $G'$.
  \end{proof}


  \begin{lemmaS}
    \label{cgbar1}
    $\overline{G'}$ has no proper 2-join.
  \end{lemmaS}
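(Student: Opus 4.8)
The plan is to suppose for contradiction that $\overline{G'}$ has a proper 2-join $(X'_1,X'_2)$ with split $(X'_1,X'_2,A'_1,B'_1,A'_2,B'_2)$, write $C'_i=X'_i\setminus(A'_i\cup B'_i)$, and read this 2-join inside $G'$: there $A'_1$ is anticomplete to $A'_2$, $B'_1$ is anticomplete to $B'_2$, $C'_1$ is complete to $X'_2$, $C'_2$ is complete to $X'_1$, $A'_1$ is complete to $X'_2\setminus A'_2$, and $B'_1$ is complete to $X'_2\setminus B'_2$ (and symmetrically after exchanging the indices $1$ and $2$). The argument rests on the elementary observation that \emph{no vertex of $X'_1$ is complete in $\overline{G'}$ to $X'_2$, and conversely}: a vertex of $X'_1$ lies in $A'_1$, $B'_1$ or $C'_1$, and then misses in $\overline{G'}$, respectively, all of $B'_2$, all of $A'_2$, or all of $X'_2$, each of which is non-empty.

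The core of the proof is to locate the ``gadget'' vertices $a_1,b_1,c_1,c_2,b_3,b_4$ of $G'$ relative to $(X'_1,X'_2)$. The relevant facts are that $V(G')\setminus\{a_1,b_1,c_1,c_2,b_3,b_4\}=X_2$; that $N_{G'}(a_1)=A_2\cup\{c_1\}$, $N_{G'}(b_1)=B_2=B_3\cup B_4$, $N_{G'}(b_3)=B_3\cup\{c_2\}$, $N_{G'}(b_4)=B_4\cup\{c_2\}$, and $N_{G'}(c_2)=\{c_1,b_3,b_4\}$ (when $\varepsilon=0$ one reads $c_1=c_2$ and $N_{G'}(c_1)=\{a_1,b_3,b_4\}$) are all confined to $X_2$ together with a handful of gadget vertices; in particular $c_1$ and $c_2$ have $G'$-degree at most~$3$, and $c_2$ is complete in $\overline{G'}$ to $X_2$. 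First, since a vertex of $C'_i$ is complete in $\overline{G'}$ to the set $X'_{3-i}$, which has size at least~$3$, neither $c_1$ nor $c_2$ lies in $C'_1\cup C'_2$. Then one runs a case analysis in the spirit of the claims in the proof of Lemma~\ref{c2jfinal}, using that $c_2b_3,c_2b_4\in E(G')$ while $b_3\not\sim b_4$, that $c_2$ is the only common $G'$-neighbour of $b_3$ and $b_4$, that $N_{G'}(b_1)=B_3\cup B_4$, and the connectedness facts of Property~\ref{prop.X2conn} and Lemma~\ref{l.connect}, to show that $a_1,b_1,c_1,c_2,b_3,b_4$ all lie on the same side of the 2-join, which up to symmetry we call $X'_2$. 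In each branch discarded along the way, one reaches either $|X'_i|\le 2$ (contradicting properness), or the conclusion that $(X'_1,X'_2)$ or a sub-2-join of it is a degenerate substantial 2-join of $\overline{G'}$, contradicting Lemma~\ref{cd2join}, or the conclusion that one of $G,\overline{G}$ has a non-path proper 2-join or an even skew partition, contradicting the properties of $G$ (via Lemma~\ref{espcomp} and the maximality of the flat path $X_1$).

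Finally, once $a_1,b_1,c_1,c_2,b_3,b_4\in X'_2$ we have $X'_1\subseteq X_2$; since $c_2$ is complete in $\overline{G'}$ to $X_2$, it is complete in $\overline{G'}$ to $X'_1$, and as $c_2\in X'_2$ this contradicts the elementary observation above. Hence $\overline{G'}$ has no proper 2-join. I expect the main obstacle to be exactly this case analysis: it is the complement-side analogue of the three long claims on the positions of $c_1,c_2,b_1,b_3,b_4$ in the proof of Lemma~\ref{c2jfinal}, and the delicate point is checking that every configuration of the gadget vertices genuinely collapses --- to a side that is too small, to a degenerate 2-join covered by Lemma~\ref{cd2join}, or to a non-path proper 2-join (or an even skew partition) of $G$ or $\overline{G}$ that is ruled out by the properties of $G$.
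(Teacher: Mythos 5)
There is a genuine gap: the entire substance of the proof is deferred. Your plan is (i) an elementary observation that no vertex of one side of the 2-join of $\overline{G'}$ can be $\overline{G'}$-complete to the other side, (ii) a case analysis showing that $a_1,b_1,c_1,c_2,b_3,b_4$ all lie on the same side, and (iii) the final contradiction via $c_2$. Steps (i) and (iii) are fine, but step (ii) --- which is the whole proof --- is never carried out; you only describe it as ``a case analysis in the spirit of the claims in the proof of Lemma~\ref{c2jfinal}'' and yourself flag it as the expected main obstacle. Until each configuration of the six gadget vertices is actually shown to collapse, the lemma is not proved. There are also two smaller slips: a vertex of $C'_i$ is \emph{anti}complete in $\overline{G'}$ to $X'_{3-i}$ (i.e.\ complete in $G'$), not complete in $\overline{G'}$ as written; and in the case $c_1=c_2$ the vertex $c_1$ has $G'$-degree exactly~$3$, so $c_1\in C'_i$ with $X'_{3-i}=\{a_1,b_3,b_4\}$ is \emph{not} excluded by a size count alone --- one must argue, as the paper does, that $\overline{G'}[X'_2]$ is then a triangle and the 2-join is degenerate.

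It is worth contrasting this with the paper's actual proof, which shows that the long case analysis you anticipate can be avoided almost entirely. The key is that $c_1$ has degree $n-3$ in $\overline{G'}$: its only non-neighbours there are its two $G'$-neighbours ($a_1$ and $c_2$ when $c_1\neq c_2$; $a_1,b_3,b_4$ when $c_1=c_2$). Placing $c_1\in A'_1$ (the $C'_1$ case being dismissed as above), non-degeneracy forces a non-neighbour of $c_1$ in $B'_1$, and the 2-join definition forces all of $B'_2$ to consist of non-neighbours of $c_1$; so the handful of $G'$-neighbours of $c_1$ must populate both $B'_1$ and $B'_2$, which pins down the positions of $a_1,c_2$ (resp.\ $a_1,b_3,b_4$) up to symmetry in two or three short cases, each of which is immediately degenerate. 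If you want to complete your write-up, adopting this degree observation as the engine of step (ii) is far shorter than mirroring the three long claims of Lemma~\ref{c2jfinal}.
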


  \begin{proof}
    Here the word ``neighbor'' refers to
    the neighborhood in $\overline{G'}$. Let $(X'_1, X'_2)$ be a
    proper 2-join of $\overline{G'}$.
    
    Suppose $c_1\neq c_2$.  In $\overline{G'}$, $c_1$ has degree
    $n-3$, so up to  symmetry we may assume $c_1\in A'_1$. In $B'_2$
    there must be a non-neighbor of $c_1$. Also, since $(X'_1, X'_2)$
    cannot be a degenerate 2-join of $\overline{G'}$, vertex $c_1$
    must have a non-neighbor in $B'_1$. So we have two cases to
    consider.  Case~1: $a_1 \in B'_1$, $c_2 \in B'_2$.  Then $c_2$
    must have a non-neighbor in $A'_2$ for otherwise $(X'_1, X'_2)$ would be
    degenerate. This non-neighbor must be one of $b_3, b_4$. But this
    is impossible since $b_3, b_4$ both see $a_1$ in $\overline{G'}$.
    Case~2: $a_1 \in B'_2$, $c_2 \in B'_1$. Then  $A'_2 \subset
    \{b_3, b_4\}$. So, $a_1 \in B'_2$ is complete to $A'_2$. Again,
    $(X'_1, X'_2)$ is degenerate.

    Suppose $c_1 = c_2$.  Up to  symmetry we assume $c_1\in X'_1$. If
    $c_1\in C'_1$ then the only possible vertices in $X'_2$ are $a_1,
    b_3, b_4$, so $\overline{G'}[X'_2]$ induces a triangle. So, any
    vertex of $A'_2$ is complete to $B'_2$ and $(X'_1, X'_2)$ is
    degenerate, a contradiction. So, $c_1 \notin C'_1$. Up to 
    symmetry, we assume $c_1 \in A'_1$. So, $B'_2\subset \{a_1, b_3,
    b_4\}$ and at least one of $a_1, b_3, b_4$ (say $x$) must be
    in $B'_2$. Since $(X'_1, X'_2)$ is not degenerate, $c_1$ must have
    a non-neighbor in $B'_1$. So, one of $a_1, b_3, b_4$ (say $y$)
    must be in $B'_1$. Since $(X'_1, X'_2)$ is not degenerate, $x$
    must have a non-neighbor $z$ in $A'_2$. But $z$ must also be a
    non-neighbor of~$y$. This is impossible because in $G'\setminus
    c_1$, $N(a_1), N(b_3), N(b_4)$ are disjoint.
  \end{proof}

  \begin{lemmaS}
    \label{clbip}
    \label{ccompbas1}
    $G'$ is not basic. None of $G, \overline{G}$ is a path-cobipartite
    graph, a path-double split graph; none of $G, \overline{G}$ has a
    homogeneous 2-join. Moreover, $\overline{G'}$ has no flat path of
    length at least~3.
  \end{lemmaS}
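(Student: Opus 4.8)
The plan is to verify, in order: the six non-basicness conditions for $G'$; then the path-cobipartite, path-double-split and homogeneous-2-join assertions, separately for $G'$ and for $\overline{G'}$; and finally the absence of flat paths of length at least~$3$ in $\overline{G'}$. Throughout I use that $G'$ is Berge (Lemma~\ref{cberge}), so $\overline{G'}$ is Berge too. First I would record the adjacencies created by the construction: in $G'$ we have $N(a_1)=A_2\cup\{c_1\}$ with $|A_2|\ge 2$ by Property~\ref{prop.deg3}, $N(b_1)=B_2=B_3\cup B_4$, $N(b_3)=B_3\cup\{c_2\}$, $N(b_4)=B_4\cup\{c_2\}$, and $c_2$ has degree exactly~$3$ with a stable neighbourhood (namely $\{c_1,b_3,b_4\}$ when $\varepsilon=1$, and $\{a_1,b_3,b_4\}$ when $\varepsilon=0$, where $c_1=c_2$). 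Moreover, since $(X_1,X_2)$ is non-degenerate (Property~\ref{prop.degenerate}), every vertex of $B_2$ keeps a neighbour in $X_2\setminus B_2$, is adjacent to $b_1$, and gains a neighbour in $\{b_3,b_4\}$, so it has degree at least~$3$ in $G'$. Two consequences are immediate: $\{a_1,b_3,b_4\}$ is a stable set of $G'$, so $\overline{G'}$ contains a triangle and, being Berge, is not bipartite; and $\{c_2\}\cup N(c_2)$ is an induced claw of $G'$, so by Theorem~\ref{th.lgbg} $G'$ is not a line-graph of a bipartite graph.

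For the remaining ``not bipartite / not line-graph'' assertions I would exhibit an induced triangle of $G'$ with a common non-neighbour. Since $G$ is non-bipartite and Berge it has a triangle $T$; interior vertices of $X_1$ have degree~$2$ with non-adjacent neighbours and so lie in no triangle, and $T$ meets $\{a_1,b_1\}$ in at most one vertex; hence either $T\subseteq X_2$, in which case $T$ is a triangle of $G'$ whose vertices are all non-neighbours of $c_1$, or, up to symmetry, $T=\{a_1\}\cup T'$ with $T'$ a $2$-clique contained in $A_2$, in which case $T$ is a triangle of $G'$ whose vertices are all non-neighbours of $b_1$. Either way $G'$ has a triangle (so $G'$ is not bipartite) and an induced copy of $\overline{K_{1,3}}$ (so $\overline{G'}$ has an induced claw and, being Berge, is not a line-graph of a bipartite graph by Theorem~\ref{th.lgbg}). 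Finally, once we know (see the next paragraph) that $G'$ is not a path-double split graph, we get that $G'$ is not a double split graph, because double split graphs are exactly the path-double split graphs in which every $a_ib_i$-path has length~$1$; and since the complement of a double split graph is a double split graph, $\overline{G'}$ is not one either. This establishes that $G'$ is not basic. For the structural decompositions, apply Lemma~\ref{l.twice} to $G'$ with $u=c_2$ and the stable set $\{a_1,\beta_3,\beta_4\}$, $\beta_3\in B_3$, $\beta_4\in B_4$: these three vertices are pairwise non-adjacent (no edge between $B_3$ and $B_4$, and $N(a_1)=A_2\cup\{c_1\}$) and each has degree at least~$3$ in $G'$ by the first paragraph, so Lemma~\ref{l.twice} gives that $G'$ is not path-cobipartite, not path-double split, and has no non-degenerate homogeneous 2-join; combined with Lemma~\ref{cesp} (no degenerate homogeneous 2-join) this shows $G'$ has no homogeneous 2-join at all. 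For $\overline{G'}$, I would instead invoke Lemma~\ref{l.thimplies}: if $\overline{G'}$ were path-cobipartite, path-double split, or carried a homogeneous 2-join, then it would have a proper 2-join (impossible by Lemma~\ref{cgbar1}), or \an\ \even\ skew partition (impossible by Lemma~\ref{cesp} together with Lemma~\ref{espcomp}), or be bipartite, the complement of a bipartite graph, or a double split graph (each making $G'$ basic, contradicting the above). The same device, using the first item of Lemma~\ref{l.thimplies}, handles the last claim: a flat path of length at least~$3$ in $\overline{G'}$ would force $\overline{G'}$ bipartite (so $G'$ basic), or $\overline{G'}$ to have \an\ \even\ skew partition (excluded as before), or that path to be the path-side of a proper path 2-join of $\overline{G'}$ (excluded by Lemma~\ref{cgbar1}).

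The one genuinely fiddly step is the opening bookkeeping in $G'$: pinning down the degrees and the claw/triangle/stable-set configurations, keeping the cases $\varepsilon=0$ (with $c_1=c_2$) and $\varepsilon=1$ (with $c_1\neq c_2$) apart, and using non-degeneracy of $(X_1,X_2)$ to see that every vertex of $B_2$ has degree at least~$3$ in $G'$. Once that elementary analysis is in place, Lemmas~\ref{l.twice}, \ref{l.thimplies}, \ref{cesp}, \ref{cgbar1} and~\ref{espcomp} and Theorem~\ref{th.lgbg} do the rest with essentially no further computation.
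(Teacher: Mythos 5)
Your proposal is correct and follows essentially the same route as the paper: small forbidden configurations plus Theorem~\ref{th.lgbg} for the line-graph exclusions, Lemma~\ref{l.twice} (with the stable set $\{a_1,\beta_3,\beta_4\}$ and a degree-3 vertex with stable neighbourhood) for the path-cobipartite/path-double-split/homogeneous-2-join claims about $G'$, and Lemma~\ref{l.thimplies} combined with Lemmas~\ref{cesp} and~\ref{cgbar1} for $\overline{G'}$. Your minor variants — lifting a triangle of $G$ instead of the paper's two-colouring argument for non-bipartiteness, a claw in $\overline{G'}$ instead of the paper's diamond, and taking $c_2$ rather than $c_1$ as the degree-3 vertex (which is in fact the safer choice when $\varepsilon=1$) — all check out.
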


  \begin{proof}
    If $G'$ is bipartite then all the vertices of $A_2$ are of the
    same color because of $a_1$. Because of $b_1$ all the vertices of
    $B_2$ have the same color. By Property~\ref{prop.conn} of $G$,
    there is a path from $A_2$ to $B_2$ whose interior is in $C_2$
    that has parity~$\varepsilon$ since $G$ is Berge. So, the number
    of colors in $A_2 \cup B_2$ is equal to $1+\varepsilon$, which
    implies that $G$ is bipartite and this contradicts the properties
    of $G$. Hence $G'$ is not bipartite.
 
    One of the graphs $G'[c_2, c_1, b_3, b_4]$, $G'[a_1, c_1, b_3,
      b_4]$ is a claw, so $G'$ is not the line-graph of a bipartite
    graph by Theorem~\ref{th.lgbg}.  Let us choose $b\in B_3, b'\in
    B_4$.  The graph $\overline{G'} [a_1, c_1, b, b']$ is a diamond,
    so $\overline{G'}$ is not the line-graph of a bipartite graph by
    Theorem~\ref{th.lgbg}.

    Note that $b, b'$ both have degree at least~3 in $G'$ because
    since $(X_1, X_2)$ is not degenerate, $b, b'$ have neighbors in
    $A_2 \cup C_2$. Also $a_1$ has degree at least~3 in $G'$ by
    Property~\ref{prop.deg3} of~$G$.  So, there exist in $G'$ a stable
    set of size 3 containing vertices of degree at least~3 ($\{a_1, b,
    b'\}$), and a vertex of degree~3 whose neighborhood induces a
    stable set ($c_1$).  Hence, by Lemma~\ref{l.twice}, $G'$ is not a
    path-cobipartite graph (and in particular, it is not the
    complement of a bipartite graph), not a path-double split graph
    (and in particular, it is not a double split graph) and $G'$ has
    no non-degenerate homogeneous 2-join.  By Lemma~\ref{cd2join},
    $G'$ has no degenerate homogeneous 2-join, so it has no
    homogeneous 2-join.
  
    If $\overline{G'}$ has a flat path of length at least~3, then by
    Lemma~\ref{l.thimplies} there is a contradiction with the fact
    that $\overline{G'}$ is not bipartite, or with Lemma~\ref{cesp}
    or~\ref{cgbar1}.
    \end{proof}

  \begin{lemmaS}
    \label{cfgfgp}
     $f(G') + f(\overline{G'}) < f(G) + f(\overline{G})$.  
  \end{lemmaS}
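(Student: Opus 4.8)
The plan is to establish the two inequalities $f(\overline{G'})=0$ and $f(G')\le f(G)-1$; together with $f(\overline G)\ge 0$ these give the claim. The first is immediate from Lemma~\ref{ccompbas1}, which says $\overline{G'}$ has no flat path of length at least~3. So everything reduces to bounding $f(G')$, and the heart of the matter is that the maximal flat paths of length at least~3 in $G'$ are, up to details, exactly those of $G$ with $X_1$ removed.

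First I would show that none of the added vertices $c_1,c_2,b_3,b_4$ lies on a flat path of $G'$ of length at least~3. When $c_1\ne c_2$, $c_1$ has degree~2 and its only neighbours are $a_1$ (degree at least~3 by Property~\ref{prop.deg3} of~$G$) and $c_2$ (degree~3), so any flat path through $c_1$ has these as its ends and hence length at most~2; when $c_1=c_2$, $c_1$ has degree~3. Thus such a long flat path cannot meet $\{c_1,c_2\}$, and if it met $b_3$ then, $b_3$ being adjacent to the degree-3 vertex $c_2$ (resp.\ $c_1$), $b_3$ would be interior to it, forcing $|B_3|=1$; the unique $b\in B_3$ on the path would then need degree~2 in $G'$, hence degree~1 in $G$, so $b$ has no neighbour in $X_2\setminus B_2$ and $(X_1,X_2)$ is a degenerate proper 2-join of $G$, contradicting Property~\ref{prop.degenerate}; the case of $b_4$ is symmetric. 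Therefore every flat path of $G'$ of length at least~3 lies in $X_2\cup\{a_1,b_1\}$.

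Next I would compare $G$ and $G'$ on $X_2\cup\{a_1,b_1\}$: there they induce the same graph, and the only degree differences are that $\deg(b_1)$ drops by~$1$ while $\deg(v)$ rises by~$1$ for each $v\in B_2$. Since $\deg_G(b_1)=|B_2|+1\ge 3$, $b_1$ is never interior to a flat path of $G$; and $b_1$ cannot be interior to a flat path $P$ of $G'$ of length at least~3 either, for then $\deg_{G'}(b_1)=2$ forces $|B_2|=2$ and $P=b\tp b_1\tp b'$ with $b,b'$ the two vertices of $B_2$, each of degree $\ge 3$ in $G'$ hence an end of $P$, so $P$ has length~2 --- a contradiction. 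Likewise an interior vertex of such a $P$ cannot be in $B_2$ (it would have degree~$1$ in $G$, a degenerate 2-join again). So the interior of $P$ lies in $A_2\cup C_2\cup\{a_1\}$, where $G$ and $G'$ have equal degrees, and the ``no common neighbour outside the path'' condition at the ends of $P$ in $G'$ implies the same in $G$; hence $P$ is a flat path of $G$. Consequently every maximal flat path of $G'$ of length at least~3 is contained in a maximal flat path of $G$ of length at least~3, and that one is not $X_1$, since $X_1\cap(X_2\cup\{a_1,b_1\})=\{a_1,b_1\}$ is too small to contain a path of length at least~3. Conversely, every maximal flat path of $G$ other than $X_1$ is contained in $X_2\cup\{a_1,b_1\}$: one meeting the interior of $X_1$ would, in order to leave it, have $a_1$ or $b_1$ as an interior vertex of degree $\ge 3$, so it must equal $X_1$ (which is itself a maximal flat path by choice).

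It then remains to prove that each maximal flat path $R\ne X_1$ of $G$ contains at most one maximal flat path of $G'$ of length at least~3; granting this, $f(G')\le f(G)-1$. This injectivity step is the main obstacle. I would argue that $R\subseteq X_2\cup\{a_1,b_1\}$ has at most one interior vertex in $B_2$: two such vertices $b,b^*$ would both (being in $B_2$, of degree~$2$ in $G$) be adjacent on $R$ to $b_1$, making $b_1$ interior to $R$, which is impossible since $\deg_G(b_1)\ge 3$. If there is no such vertex, all interior vertices of $R$ keep degree~$2$ in $G'$, so $R$ --- or, should its end condition fail in $G'$ because of $b_3$ or $b_4$, a unique maximal subpath of it --- is the only long flat path of $G'$ inside $R$. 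If there is one such vertex $b$, then $b$ is adjacent on $R$ to the endpoint $b_1$ of $R$, and in $G'$ it has degree~$3$ and hence ``blocks'' $R$ there, so the only place a long flat path of $G'$ can live inside $R$ is within $R\setminus\{b_1\}$ --- again at most one. Hence $f(G')\le f(G)-1$, and therefore $f(G')+f(\overline{G'})\le f(G)-1<f(G)+f(\overline G)$, as required.
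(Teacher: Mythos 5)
Your proposal is correct and takes essentially the same route as the paper's proof: a degree count showing that $c_1$, $c_2$, $b_3$, $b_4$, $b_1$ and the vertices of $\{a_1\}\cup B_2$ cannot lie in (the interior of) a flat path of $G'$ of length at least~3, the identification of the remaining long flat paths of $G'$ with flat paths of $G$, the observation that $X_1$ is destroyed, and $f(\overline{G'})=0$ from Lemma~\ref{ccompbas1}. The only difference is that you replace the paper's one-line assertion that every maximal flat path of $G'$ of length at least~3 \emph{is} a maximal flat path of $G$ by a containment-plus-injectivity count; this extra care is harmless and yields the same bound $f(G')\le f(G)-1$.
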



  \begin{proof}
    Every vertex in $\{a_1\} \cup B_3 \cup B_4$ has degree at least~3
    in $G'$. For $a_1$, this is Property~\ref{prop.deg3} of~$G$
    and for vertices in $B_3 \cup B_4$, this is because $(X_1, X_2)$
    is not degenerate. Hence no vertex in $\{a_1\} \cup B_3 \cup B_4$
    can be an interior vertex of a flat path of $G'$, and no vertex in
    $\{c_1, c_2, b_3, b_4, b_1\}$ can be in a maximal flat path of
    $G'$ of length at least~3. Hence, every maximal flat path of $G'$
    of length at least~3 is a maximal flat path of $G$, so
    $f(G') \leq f(G)$. But in fact $f(G') < f(G)$ because $X_1$ is a
    flat path of $G$ that is no more a flat path in $G'$.
    By Lemma~\ref{ccompbas1}, we know $0 = f(\overline{G'}) \leq
    f(\overline{G})$.  We add these two inequalities.
  \end{proof}



  Let us now finish the proof in Case~1.  By
  Lemmas~\ref{cberge}---\ref{clbip},  $G'$ is a counter-example to the
  theorem we are proving now. Hence, Lemma~\ref{cfgfgp} contradicts the
  minimality of $G$.  This completes the proof in Case~1.

\medskip
\subsection{Case 2:
$X_1$ may be chosen in such a way that there are sets $A_3$, $B_3$
satisfying the items~\ref{cond.first}--\ref{cond.penul} of the
definition of cutting 2-joins of type~2.}
\label{mainproofCase2}

The frame of the proof is very much like in Case~1, but the details
differ and are simpler.  We consider the graph $G'$ obtained from $G$
by deleting $X_1 \setminus \{a_1, b_1\}$ (see Fig.~\ref{fig:cut2}).
Moreover, we add new vertices: $c_1, c_2, a_3, b_3$.  Then we add
every possible edge between $a_3$ and $A_3$, between $b_3$ and $B_3$.
We also add edges $a_1 c_1$, $c_1c_2$, $c_2b_1$, $a_3b_3$, $c_1a_3$,
$c_2b_3$.  
  
\begin{lemmaS}
  \label{cberge2}
  $G'$ is Berge.
\end{lemmaS}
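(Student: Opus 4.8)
The plan is to follow the proof of Lemma~\ref{cberge} (Case~1): first establish a battery of parity statements about paths and antipaths of $G'$ that meet the new vertices $\{c_1,c_2,a_3,b_3\}$ --- the analogues of the parity claims in the proof of Lemma~\ref{cberge}, with the new vertices $a_3,b_3$ in place of the Case~1 vertices $b_3,b_4$ --- and then dispose of holes and antiholes of $G'$ by a finite case analysis. As in Case~1, write $\varepsilon\in\{0,1\}$ for the parity of the length of $X_1$ and keep the convention relating $c_1$ to $c_2$ to it, so that any path from $a_1$ to $b_1$ through the new vertices has length of parity $\varepsilon$. One observation is used everywhere: $G$ and $G'$ induce the same graph on $X_2\cup\{a_1,b_1\}$ (the interior of $X_1$ is deleted and $a_1b_1$ is a non-edge in both, since $X_1$ has length at least~3), so any path, hole, or antihole of $G'$ avoiding $\{c_1,c_2,a_3,b_3\}$ is one of $G$ and is handled by Lemmas~\ref{l.2jAiBi},~\ref{l.2jAiAi} and the Bergeness of $G$.

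The first block of work is to prove, for $G'$: (i) every path from $A_2$ to $B_2$ with no interior vertex in $A_2\cup B_2$ has length of parity $\varepsilon$; (ii) every outgoing path from $A_2$ to $A_2$, from $B_2$ to $B_2$, from $A_3$ to $A_3$, or from $B_3$ to $B_3$ has even length; (iii) every antipath of length at least~2 whose interior lies in one of $A_2,B_2,A_3,B_3$ and whose ends lie outside it has even length; (iv) every antipath of $G'$ of length at least~4 misses $c_1$ and $c_2$ and meets at most one of $a_1,b_1,a_3,b_3$. In (iv), $c_1,c_2$ are excluded because their degree and the number of edges in their neighbourhood are too small to occur in an antihole (or as an interior vertex of a long antipath) of length at least~7, and the rest is the usual common-neighbour argument in antiholes. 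Each of (i)--(iii) is proved by the same device as in Case~1: a path or antipath not meeting $\{c_1,c_2,a_3,b_3\}$ lives in $G$, where Lemma~\ref{l.2jAiBi} or~\ref{l.2jAiAi} applies; one that does meet these vertices induces on them an arc of one of finitely many shapes (a detour $a_1\tp c_1\tp\cdots\tp c_2\tp b_1$, an arc $\cdots\tp c_1\tp a_3\tp\cdots$ into $A_3$, an arc $\cdots\tp c_2\tp b_3\tp\cdots$ into $B_3$, or a combination), and replacing that arc by an arc of $G$ built out of $X_1$, of the completeness of $a_1$ to $A_2$ and of $b_1$ to $B_2$, and of an edge of $G$ between $A_3$ and $B_3$ (present since $A_3,B_3\neq\emptyset$ and $A_3$ is complete to $B_3$) produces a path or antipath of $G$ of the same parity. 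The $A_3$ and $B_3$ parts of~(ii) and~(iii) may need, in addition, a light version of the $A$-segment/$B$-segment counting used in Case~1.

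The crux of~(ii) and~(iii) is this: an outgoing path of $G'$ from $B_3$ to $B_3$ that runs through the new vertices becomes, after the substitution above, an outgoing path of $G$ from $B_3\cup\{a_1\}$ to $B_3\cup\{a_1\}$, hence of even length by item~4 of the definition of cutting 2-joins of type~2; symmetrically with $A_3\cup\{b_1\}$, and likewise for antipaths via item~5. This is exactly why items~4 and~5 were written into the definition. I expect this translation --- manufacturing, from a path of $G'$ that weaves through $c_1,c_2,a_3,b_3,A_3,B_3$, a genuine outgoing path or antipath of $G$ anchored at $B_3\cup\{a_1\}$ or at $A_3\cup\{b_1\}$, and in particular deciding for each detour whether it ``turns around'' in $B_3\cup\{a_1\}$ or in $A_3\cup\{b_1\}$ before the right parity hypothesis can be invoked --- to be the main obstacle; the rest is bookkeeping.

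With~(i)--(iv) available, the conclusion goes as in Case~1. A hole $H$ of $G'$ that meets $\{c_1,c_2,a_3,b_3\}$ can traverse these vertices in only finitely many ways, since $N_{G'}(c_1)=\{a_1,c_2,a_3\}$ and $N_{G'}(c_2)=\{b_1,c_1,b_3\}$ are stable and $\{c_1,a_3,b_3,c_2\}$ induces a $C_4$; in each way, delete from $H$ the arc on these vertices, leaving one or two subpaths that are outgoing paths among $A_2,B_2,A_3,B_3$ or paths from $A_2$ to $B_2$ controlled by~(i)--(iii), and check that the fixed length of the removed arc together with those parities sums to an even number --- the same $\varepsilon$-bookkeeping as in Case~1 --- so $H$ is even. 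For an antihole $H$ with $|V(H)|\geq 7$, (iv) gives that $H$ misses $c_1,c_2$ and uses at most one vertex $x$ of $\{a_1,b_1,a_3,b_3\}$; then $H\setminus x$ is an antipath whose interior lies in the appropriate one of $A_2,B_2,A_3,B_3$, hence even by~(iii), so $H$ is even. Therefore every hole and antihole of $G'$ is even and $G'$ is Berge. As the text says, this runs shorter than Case~1, the delicate segment analysis there being largely replaced by the ready-made parity hypotheses~4 and~5.
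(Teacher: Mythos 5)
Your architecture matches the paper's: the same battery of parity claims (paths from $A_2$ to $B_2$; outgoing paths and antipaths anchored at $A_2$, $B_2$, $A_3$, $B_3$; a structural claim restricting how long antipaths can meet the new vertices), followed by the same hole/antihole case analysis, with items~4 and~5 of the definition of cutting 2-joins of type~2 doing exactly the work you say they do. Two of your steps, however, are not right as stated. First, the parity of the connector: the graph $G'$ of Case~2 always joins $a_1$ to $b_1$ by the length-3 path $a_1\tp c_1\tp c_2\tp b_1$; there is no ``$c_1=c_2$ when $\varepsilon=0$'' convention here, and your claim~(i) (``parity $\varepsilon$'') is consistent with that fixed detour only if $\varepsilon=1$. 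That $\varepsilon=1$ is indeed forced is the missing observation: pick $b\in B_3\neq\emptyset$; then $a_1\tp X_1\tp b_1\tp b$ is an outgoing path from $B_3\cup\{a_1\}$ to $B_3\cup\{a_1\}$, hence even by item~4, so $X_1$ has odd length. Without this, your claim~(i) either conflicts with the actual construction or proves Bergeness of a different graph; with it, the correct version of~(i) (all such paths of $G'$ are odd) follows from Lemma~\ref{l.2jAiBi}.

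Second, your claim~(iv) is too strong. An antipath of $G'$ of length at least~5 can in principle meet both $a_1$ and $a_3$: take $a_1\tp a_3\tp q_3\tp q_4\tp\cdots$ with $q_3\in A_2\setminus A_3$ and $q_4,\dots\in A_3$ forming a suitable antipath inside $A_2$; nothing in the hypotheses excludes this, and the common-neighbour argument does not either, since $a_1$ and $a_3$ do have common neighbours (all of $A_3$). What one can prove is only that such an antipath misses $c_1,c_2$ and that at least one of its intersections with $\{a_1,a_3\}$ and with $\{b_1,b_3\}$ is empty. Consequently your antihole endgame has a gap: an antihole through both $a_1$ and $a_3$ is not excluded by your~(iv). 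It is excluded, but by a different argument: once $b_3$ and $c_1$ are known to lie outside the antihole, no remaining vertex of $G'$ sees $a_3$ while missing $a_1$, whereas every ordered pair of vertices of an antihole on at least~7 vertices admits such a witness inside the antihole. Both defects are repairable, but each requires an idea your sketch does not contain.
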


\begin{proof}

  \begin{claim}
    \label{clodd2}
    Every path of $G'$ from $B_2$ to $A_2$ with no interior vertex in
    $A_2 \cup B_2$ has odd length.
  \end{claim}\vspace{-2ex}

  \begin{proofclaim}
    If such a path contains one of $a_1, b_1, a_3, b_3,
    c_1, c_2$ then it has length~$3$ or~$5$.  Else such a path may be
    viewed as a path of $G$ from $B_2$ to $A_2$. By
    Lemma~\ref{l.2jAiBi} it has odd length.
  \end{proofclaim}

  \begin{claim}
    \label{clevenAB2}
    Every outgoing path of $G'$ from $A_2$ to $A_2$ (resp.  $B_2$
    to $B_2$) has even length.
  \end{claim}
  
  \begin{proofclaim}
    For suppose there is such a path $P$ from $A_2$ to $A_2$ (the case
    with $B_2$ is similar). If $P$ goes through $a_1$ then it has
    length~2.  If $P$ goes through at least one of $c_1, c_2, a_3,
    b_3, b_1$ then $P$ is the union of two edge-wise-disjoint paths
    from $A_2$ to $B_2$. Thus $P$ has even length
    by~(\ref{clodd2}). Else, $P$ may be viewed as an outgoing path of
    $G$ from $A_2$ to $A_2$, that has even length by
    Lemma~\ref{l.2jAiAi}.  In every case, $P$ has even length.
 \end{proofclaim}

  \begin{claim}
    \label{clevenAB3}
    Every outgoing path of $G'$ from $A_3$ to $A_3$ (resp. $B_3$
    to $B_3$) has even length.
  \end{claim}
  
  \begin{proofclaim}
    For suppose there is such a path $P$ from $A_3$ to $A_3$ (the case
    with $B_3$ is similar). If $P$ goes through $a_1$, $a_3$ or $B_3$
    then it has length~2.  From now on, we assume that $P$ goes
    through none of $a_1, a_3, B_3$. Hence $P$ cannot go through
    $b_3, c_1, c_2$.

    If $P$ goes through $b_1$ then $P$ is the edge-wise-disjoint union
    of two outgoing paths of $G$ from $A_3 \cup \{b_1\}$ to $A_3 \cup
    \{b_1\}$. Thus $P$ has even length by the definition of cutting
    2-joins of type~2. Thus we may assume that $P$ does not go through
    $b_1$.

    Now $P$ may be viewed as an outgoing path of $G$ from $A_3$ to
    $A_3$, that does not go through $b_1$. Thus $P$ is outgoing from
    $A_3 \cup \{b_1\}$ to $A_3 \cup \{b_1\}$, it has even length by
    the definition of cutting 2-joins of type~2.
 \end{proofclaim}


  \begin{claim}
    \label{clantiAB2}
    Every antipath of $G'$ with length at least~2, with its end
    vertices in $V(G')\setminus A_2$ (resp. $V(G')\setminus B_2$), and
    all its interior vertices in $A_2$ (resp. $B_2$) has even length.
  \end{claim}

  \begin{proofclaim}
    Let $Q$ be such an antipath whose interior is in $A_2$ (the case
    with $B_2$ is similar).  We may assume that $Q$ has length at
    least~3. So each end-vertex of $Q$ must have a neighbor in $A_2$
    and a non-neighbor in $A_2$.  So none of $a_1, c_1, c_2, b_1, b_3$
    can be an end-vertex of $Q$. If $a_3$ is an end of $Q$ then the
    other end of $Q$ must be a neighbor of $a_3$, a contradiction.
    Thus $Q$ may be viewed as an antipath of $G$. By
    Lemma~\ref{l.2jAiAi}, $Q$ has even length.
  \end{proofclaim}

  \begin{claim}
    \label{clantiAB3}
    Every antipath of $G'$ with length at least~2, with its end
    vertices in $V(G')\setminus A_3$ (resp. $V(G')\setminus B_3$), and
    all its interior vertices in $A_3$ (resp. $B_3$) has even length.
  \end{claim}

  \begin{proofclaim}
    Let $Q$ be such an antipath whose interior is in $A_3$ (the case
    with $B_3$ is similar).  We may assume that $Q$ has length at
    least~3. So each end-vertex of $Q$ must have a neighbor in $A_3$
    and a non-neighbor in $A_3$.  So none of $a_1, a_3, c_1, c_2, b_1,
    b_3$ can be an end-vertex of $Q$.  Thus $Q$ may be viewed as an
    antipath of $G$. It has even length by the definition of cutting
    2-joins of  type~2.
  \end{proofclaim}

  \begin{claim}
    \label{clantip2}
    Let $Q$ be an antipath of $G'$ of length at least~5. Then $Q$ does
    not go through $c_1, c_2$. Moreover one of $V(Q) \cap \{a_1,
    a_3\}$, $V(Q) \cap \{b_1, b_3\}$ is empty.
  \end{claim}

  \begin{proofclaim}
    Let $Q$ be such an antipath.  In an antipath of length at least~5,
    each vertex is in a triangle of the antipath. So, $c_1, c_2$ are
    not in $Q$ since they are not in any triangle of $G'$.
 
    Suppose $V(Q) \cap \{a_1, a_3\}$, $V(Q) \cap \{b_1, b_3\}$ are
    both non-empty.  In an antipath of length at least~6, for every
    pair $u,v$ of vertices, there is a vertex $x$ seeing both
    $u,v$. Thus $Q$ has length~5 because no vertex of $G'$ has
    neighbors in both $\{a_1, a_3\}$, $\{b_1, b_3\}$. Let $q_1, \dots,
    q_6$ be the vertices of $Q$ in their natural order. Since $V(Q)
    \cap \{a_1, a_3\}$, $V(Q) \cap \{b_1, b_3\}$ are both non-empty
    there are two vertices of $Q$ that have no common neighbors in
    $G'$. These vertices must be $q_2$ and $q_5$, and up to  symmetry
    we must have $q_2= a_3$, $q_5 = b_3$. Thus $q_3$ must be a vertex
    of $B_3$ and $q_4$ must be a vertex of $A_3$. There is a
    contradiction since by the definition of cutting 2-joins of
    type~2, $A_3$ is complete to $B_3$.
  \end{proofclaim}

    Let us now finish the proof. Let $H$ be a hole of $G'$. 

    If $H$ goes through both $c_1, c_2$ then $H$ has length~4 or it
    must contains one of $\{a_1, b_1\}$, $\{a_1, b_3\}$, $\{b_1,
    a_3\}$. In the first case, $H$ is edge-wise partitioned into two
    paths  from $A_2$ to $B_2$. Thus $H$ has even length
    by~(\ref{clodd2}). In the second case $H$ is edge-wise
    partitioned into two paths outgoing from $B_3 \cup \{a_1\}$ to
    $B_3 \cup \{a_1\}$, one of them of length~4, the other one
    included in $V(G)$. Thus $H$ has even length by the definition of
    cutting 2-joins of type~2. The third case is similar. From now on,
    we assume that $H$ goes through none of $c_1, c_2$.  If $H$ goes
    through both $a_1, a_3$ then it has length~4. If $H$ goes through
    $a_2$ and not through $a_3$ then $H$ has even length
    by~(\ref{clevenAB2}). If $H$ goes through $a_3$ and not through
    $a_2$ then $H$ has even length by~(\ref{clevenAB3}). Thus, we may
    assume that $H$ goes through none of $a_1, a_3$. Similarly, we may
    assume that $H$ goes through none of $b_1, b_3$.  Now $H$ may be
    viewed as a hole of $G$. In every case, $H$ has even length.

    Let us now consider an antihole $H$ of $G'$. We may assume that
    $H$ has length at least~7. Let $v$ be a vertex of $V(H) \setminus
    \{a_1, b_1, c_1, c_2, a_3, b_3\}$.  By~(\ref{clantip2}) the
    antipath $V(H) \setminus v$ does not go through $c_1, c_2$ and we
    may assume up to  symmetry that $V(Q) \cap \{b_1, b_3\}$ is
    empty. If $H$ goes through both $a_1, a_3$ then $H$ must contains
    a vertex that sees $a_3$ and misses $a_1$, a contradiction. If $H$
    goes through $a_1$ and not through $a_3$ then $H$ has even length
    by~(\ref{clantiAB2}). If $H$ goes through $a_3$ and not through
    $a_1$ then $H$ has even length by~(\ref{clantiAB3}). If $H$ goes
    through none of $a_1, a_3$ then $H$ may be viewed as an antihole
    of $G$. In every case, $H$ has even length.
  \end{proof}


  \begin{lemmaS}
    \label{cesp2}
     \label{cd2join2}
    $G'$ has no \even\ skew partition. Moreover, $G'$ and
     $\overline{G'}$ have no degenerate substantial 2-join, no
     degenerate homogeneous 2-join and no star cutset.
  \end{lemmaS}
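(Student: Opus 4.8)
The plan is to adapt, with some simplifications, the proof of Lemma~\ref{cesp}; the added vertices $c_1,c_2,a_3,b_3$ now play the role that $c_1,c_2,b_3,b_4$ played there. A preliminary remark: the conditions of the definition of cutting 2-joins of type~2 force $\varepsilon=1$. Indeed, by Property~\ref{prop.conn} there is a path of $G[X_2]$ from some $b\in B_3$ to some $a\in A_2$ with interior in $C_2$; by Lemma~\ref{l.2jAiBi} it has parity $\varepsilon$, so $b\tp \cdots\tp a\tp a_1$ is an outgoing path from $B_3\cup\{a_1\}$ to $B_3\cup\{a_1\}$ of length $\varepsilon+1$, which the definition requires to be even. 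Consequently the replacement path $a_1\tp c_1\tp c_2\tp b_1$ of $G'$, of length~3, has the parity of $X_1$. Also note that in $G'$ the vertices $a_3$ and $b_3$ are adjacent but have no common neighbour, that $N_{G'}(a_3)\cap X_2=A_3\subseteq A_2=N_G(a_1)\cap X_2$, and that $N_{G'}(b_3)\cap X_2=B_3\subseteq B_2=N_G(b_1)\cap X_2$.

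Now suppose $G'$ has \an\ \even\ skew partition $(F',E')$ with split $(E'_1,E'_2,F'_1,F'_2)$, so $F'$ is the skew cutset; we derive from it \an\ \even\ skew cutset of $G$, contradicting the properties of $G$. First, as in Lemma~\ref{cesp} (using Property~\ref{prop.conn}, Property~\ref{prop.X2conn}, and, when needed, the fact that the relevant outgoing path through $c_1,c_2$ would have the wrong parity), the degree-$3$ vertices $c_1,c_2$ do not lie in $F'$; since $c_1\tp c_2$ is an edge, $c_1,c_2$ belong to one component of $G'\setminus F'$, say $E'_1$, and then every one of $a_1,b_1,a_3,b_3$ that lies in $E'$ is adjacent to $c_1$ or $c_2$ and hence lies in $E'_1$; so $E'_2\subseteq X_2$.

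I would then split into two sub-cases. If both $a_3$ and $b_3$ lie in $F'$, then, being adjacent with no common neighbour, they lie in opposite sides of the split, and since $c_1,c_2\notin F'$ we get $F'_1\subseteq B_3\cup\{a_3\}$, $F'_2\subseteq A_3\cup\{b_3\}$, so $F'\subseteq A_3\cup B_3\cup\{a_3,b_3\}$. Put $A'_3=F'\cap A_3$ and $B'_3=F'\cap B_3$. Then $E'_2\subseteq C_2$ has no neighbour in $X_1\cup(A_2\setminus A'_3)\cup(B_2\setminus B'_3)$, so $A'_3\cup B'_3$ is a cutset of $G$; if one of $A'_3,B'_3$ is empty then $A_2$ (or $B_2$) separates a subset of $C_2$ from $X_1$ in $G$ and $(X_1,X_2)$ is cutting of type~1, which puts us in Case~1, so we may assume both are non-empty, and then $A'_3\cup B'_3$ is a skew cutset of $G$ ($A_3$ is complete to $B_3$) which one checks is \even\ by translating every outgoing path or antipath of $G$ incident to it into one of $G'$ relative to $F'$ of the same parity (replacing the interior of $X_1$ by $c_1\tp c_2$ when needed), a contradiction. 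Otherwise at most one of $a_3,b_3$ is in $F'$, say $b_3\notin F'$; set $F=(F'\cap X_2)\cup A''_1\cup B''_1$, where $A''_1=\{a_1\}$ if $F'\cap\{a_1,a_3\}\neq\emptyset$ and $A''_1=\emptyset$ otherwise, and $B''_1=\{b_1\}$ if $b_1\in F'$ and $B''_1=\emptyset$ otherwise. Since $a_1$ dominates $N_{G'}(a_3)\cap X_2$ and is complete to $A_2$ in both graphs, the substitution $a_3\mapsto a_1$ turns the split of $F'$ into a split of $F$, so $F$ is a skew cutset of $G$ separating a vertex of $X_2$ from the interior of $X_1$, and it is \even\ exactly as in Lemma~\ref{cesp}: for an outgoing path of $G$ from $F$ to $F$ one applies Lemma~\ref{PgivePX2} or~\ref{PgivePA1} and builds an outgoing path of $G'$ from $F'$ to $F'$ of the same parity, and similarly for antipaths using Lemma~\ref{antiPgivePX2} or~\ref{antiPgivePA1}. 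This contradicts the properties of $G$, so $G'$ has no \even\ skew partition. Finally, if $G'$ (which is Berge by Lemma~\ref{cberge2}) or $\overline{G'}$ had a degenerate substantial 2-join, a degenerate homogeneous 2-join or a star cutset, then $G'$ would have \an\ \even\ skew partition by Lemmas~\ref{l.degenerate}, \ref{l.homod}, \ref{l.starcutset} and~\ref{espcomp}, which is impossible.

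The main obstacle is the bookkeeping in the last two paragraphs: one must check in each sub-case that the path or antipath of $G'$ produced from one of $G$ (or vice versa) is genuinely induced and has its ends and interior on the correct side of $F'$, controlling the chords through $c_1,c_2,a_3,b_3$ according to whether $F$ meets $\{a_1,b_1\}$ and which outcome of the ``$PgivePA1$''-type lemma occurs; this mirrors, with fewer cases, the corresponding part of the proof of Lemma~\ref{cesp}. The one genuinely new point, absent from Case~1, is the sub-case where $a_3$ and $b_3$ both belong to $F'$, which must be disposed of either as a cutting 2-join of type~1 or as \an\ \even\ skew cutset $A'_3\cup B'_3$ of $G$ itself.
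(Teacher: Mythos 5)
Your overall strategy is the paper's: lift a balanced skew cutset $F'$ of $G'$ back to a balanced skew cutset of $G$, then derive the ``moreover'' part from Lemmas~\ref{l.degenerate}, \ref{l.homod}, \ref{l.starcutset} and~\ref{espcomp}. The exclusion of $c_1,c_2$ from $F'$, the localisation $E'_2\subseteq X_2$, and your sub-case where both $a_3,b_3\in F'$ (where your $A'_3\cup B'_3$ is exactly the set $F'\setminus\{a_3,b_3\}$ that the paper uses throughout) all match the paper; your preliminary remark that $\varepsilon=1$ is correct and is used implicitly in the paper's Claim~(\ref{clodd2}).

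The weak point is your other sub-case. The paper takes $F=F'\setminus\{a_3,b_3\}$, i.e.\ it simply deletes $a_3$ and $b_3$; non-anticonnectedness of the smaller set is saved by the observation, derived from Property~\ref{prop.conn}, that $F'$ cannot be a star cutset centred on $a_3$ or $b_3$, so neither is ever alone in its anticomponent of $F'$. You instead substitute $a_1$ for $a_3$, putting $a_1$ into $F$ even when $a_1\notin F'$. This creates outgoing paths of $G$ from $F$ to $F$ with an end at $a_1$ whose first edge is $a_1a$ with $a\in A_2\setminus A_3$; such a path has no immediate counterpart at $a_3$ in $G'$ (since $N_{G'}(a_3)\cap X_2=A_3$, which may be much smaller than $A_2$), and the naive translation of a path of $G$ through $a_1$ into a path of $G'$ through $a_3$ can also fail to be induced, because interior vertices lying in $A_3\setminus F'$ become chords at $a_3$. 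So ``balanced exactly as in Lemma~\ref{cesp}'' does not literally apply to your $F$: Lemmas~\ref{PgivePX2}, \ref{PgivePA1}, \ref{antiPgivePX2} and~\ref{antiPgivePA1} are stated relative to the sets $A_1,B_1$ of the 2-join, not relative to $a_3$ with its restricted neighbourhood. This is repairable --- either switch to the paper's $F$, or supply the extra parity arguments for paths and antipaths incident to $a_1$ --- but as written the verification you defer as ``bookkeeping'' is both the bulk of the lemma and genuinely harder for your choice of $F$ than for the paper's.
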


  \begin{proof}
    Suppose that $G'$ has \an\  \even\  skew partition $(E', F')$ with a
    split $(E'_1,$ $E'_2,$ $F'_1,$ $F'_2)$. Starting from $F'$, we shall
    build \an\  \even\  skew cutset $F$ of $G$, which contradicts the
    properties of $G$.
    
    By Property~\ref{prop.conn} of~$G$, $F'$ cannot be a star
    cutset centered on any of $a_1,$ $b_1,$ $c_1,$ $c_2,$ $a_3,$
    $b_3$. For the same reason, $F'$ cannot be a subset of any of
    $\{c_1,$ $c_2,$ $a_3,$ $b_3\}$, $\{a_1,$ $c_1,$ $a_3\}$ $\cup
    A_3$, $\{b_1,$ $c_2,$ $b_3\}$ $\cup B_3$.  Thus, $c_1 \notin F'$
    and $c_2 \notin F'$.  Since $a_1, b_1$ are non-adjacent with no
    common neighbors, they are not together in $F'$ and we may assume $b_1
    \notin F'$.  Up to symmetry we may assume $\{c_1, c_2\} \subset
    E'_1$, so $\{a_1,$ $a_3,$ $c_1,$ $c_2,$ $b_1,$ $b_3\}$ $\cap
    E' \subset E'_1$.  Let $v$ be any vertex of $E'_2$.  Since $\{a_1,
    a_3, c_1, c_2, b_1, b_3\} \cap E' \subset E'_1$, we have $v \in
    X_2$.

    We claim that $F = F' \setminus \{a_3, b_3\}$ is a skew cutset of
    $G$ that separates $v$ from the interior vertices of the path
    induced by $X_1$. Since $F'$ is not a star cutset centered on any
    of $a_3, b_3$, we know that if $a_3 \in F'$ (resp $b_3 \in F'$)
    then $a_3$ (resp. $b_3$) is not the only vertex in its
    anticomponent of $F'$. Hence, $F$ is not anticonnected. If $P$ is
    a path of $G\setminus F$ from $v$ to a vertex $u$ in the interior
    of $X_1$ then up to  symmetry, $P = v \tp \cdots \tp a_1 \tp X_1
    \tp u$. Hence $v \tp P \tp a_1 \tp c_1$ is a path of $G' \setminus
    F'$, which contradicts $F'$ being a cutset of $G'$. We proved our
    claim. Let us prove that the skew cutset $F$ is \even.

    Let $P$ be an outgoing path of $G$ from $F$ to $F$.  We shall
    prove that $P$ has even length.  If $a_1 \notin F$, then $F
    \subset X_2$ and the end-vertices of $P$ are both in $X_2$.  So
    Lemma~\ref{PgivePX2} applies to $P$. Suppose that the first
    outcome of Lemma~\ref{PgivePX2} is satisfied: $V(P) \subseteq X_2
    \cup \{a_1, b_1 \}$.  Hence, $P$ may be viewed as an outgoing path
    from $F'$ to $F'$, so $P$ has even length since $F'$ is
    \an\ \even\ skew cutset of $G'$.  Suppose now that the second
    outcome of Lemma~\ref{PgivePX2} is satisfied: $P = c \tp \cdots
    \tp a_2 \tp a_1 \tp X_1 \tp b_1 \tp b_2 \tp \cdots \tp c'$. Put
    $P' = c \tp P \tp a_2 \tp a_1 \tp c_1 \tp c_2 \tp b_1 \tp b_2 \tp
    P \tp c'$.  The paths $P$ and $P'$ have the same parity and $P'$ is an
    outgoing path of $G'$ from $F'$ to $F'$.  So $P'$ and $P$ have even
    length since $F'$ is \an\ \even\ skew cutset of $G'$.  If $a_1 \in
    F$ then $F \subset X_2 \cup \{a_1\}$ and Lemma~\ref{PgivePA1}
    applies. If Outcome~1 of the lemma holds, then $P$ has even
    length.  If Outcome~2 of the lemma holds then $P$ may be viewed as
    an outgoing path of $G'$ from $F'$ to $F'$. Hence $P$ has even
    length.  If Outcome~3 of the lemma holds then $P = a_1 \tp X_1 \tp
    b_1 \tp b_2 \tp \cdots \tp c$ where $b_2 \in B_2$, $c \in X_2$.
    We put $P' = a_1 \tp c_1 \tp c_2 \tp b_1 \tp b_2 \tp P \tp c$. So
    $P'$ is outgoing from $F'$ to $F'$ in $G'$ while having the same
    parity as $P$. In every case $P$ has even length.

    Now, let $Q$ be an antipath of $G$ of length at least~5 with all
    its interior vertices in $F$ and with its end-vertices outside 
    $F$.  We shall prove that $Q$ has even length.  If $a_1 \notin F$,
    then $F \subset X_2$ and the interior vertices of $Q$ are all in
    $X_2$.  So Lemma~\ref{antiPgivePX2} applies: $V(Q) \subseteq X_2
    \cup \{a\}$ where $a \in \{a_1, b_1\}$.  So $Q$ may be viewed as
    an antipath of $G'$ that has even length because $F'$ is
    \an\ \even\ skew cutset of $G'$.  If $a_1 \in F$, the proof is
    similar. Hence, $Q$ has even length.

    We proved that $G'$ has no \even\ skew partition. If one of $G'$,
    $\overline{G'}$ has a degenerate substantial 2-join, a degenerate
    homogeneous 2-join or a star cutset, then $G'$ has
    \an\ \even\ skew partition by Lemma~\ref{l.degenerate},
    \ref{l.homod} or~\ref{l.starcutset}.  This is a contradiction.
  \end{proof}



  \begin{lemmaS}
    \label{c2jfinal2}
    $G'$ has no non-path proper 2-join. 
  \end{lemmaS}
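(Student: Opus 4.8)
The plan is to follow the proof of Lemma~\ref{c2jfinal} step for step. Suppose $(X'_1, X'_2)$ is a proper 2-join of $G'$ with split $(X'_1, X'_2, A'_1, B'_1, A'_2, B'_2)$; being proper it is substantial, hence non-degenerate by Lemma~\ref{cesp2}. The goal is to show that the gadget vertices $a_1, c_1, c_2, b_1, a_3, b_3$ all lie on the same side, say $X'_2$, so that $X'_1 \subseteq X_2$; then uncontracting the gadget produces a 2-join $(X''_1, X''_2)$ of $G$ with $X''_1 = X'_1$, which by the properties of $G$ must be a path 2-join whose path-side --- it cannot be $X''_2$, since $X''_2 \supseteq X_1$ with $X_1$ a maximal flat path of $G$ and $b_1$ has degree at least~$3$ by Property~\ref{prop.deg3} --- is $X''_1 = X'_1$; hence $(X'_1, X'_2)$ is a path 2-join of $G'$.

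First I would prove the analogue of Claim~(\ref{cYZ}) from that proof: there is no partition $Y_1, Z_1, Y_2, Z_2$ of $X_2$ with all edges between $Y_1$ and $Y_2$ present, no other edges between $Y_1 \cup Z_1$ and $Y_2 \cup Z_2$, $A_2 \subseteq Y_1 \cup Z_1$ and $B_2 \subseteq Y_2 \cup Z_2$; for such a partition would make $(Y_2 \cup Z_2,\, V(G)\setminus(Y_2 \cup Z_2))$ a proper 2-join of $G$ (exactly as in the proof of~(\ref{cYZ}), using Property~\ref{prop.X2conn}, Lemma~\ref{l.connect}, and the fact, which follows from Property~\ref{prop.degenerate}, that $|B_2|\ge 2$), hence a path 2-join by the properties of $G$, forcing its path-side to be $Y_2\cup Z_2$, which is impossible since $|B_2|\ge 2$.

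The bulk is a chain of claims paralleling~(\ref{cc1c2}),~(\ref{ccb34}) and~(\ref{c2jb1}), pinning the gadget to one side. The leverage is that $N(c_1)=\{a_1,c_2,a_3\}$ and $N(c_2)=\{c_1,b_1,b_3\}$ are stable triples and $c_1\sim c_2$, so if $c_1$ or $c_2$ is separated from the other, or lies in some $A'_i$ or $B'_i$, one side becomes very small; one then uses Property~\ref{prop.conn} to pick a path of $G[X_2]$ from $A_2$ to $B_3$ through $C_2$, closes it with the gadget into a hole $H$ of $G'$ that has an edge inside $X'_1$ and an edge inside $X'_2$, locates the unique vertex of $H$ in $B'_2$, and propagates adjacencies step by step ($B_3\subseteq X'_j$, then $b_1\in X'_j$, then all of $X_2$ on one side, etc.), discarding bad outcomes via ``no degenerate 2-join''/``no star cutset'' of $G$ and the auxiliary claim above. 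The $a\leftrightarrow b$ symmetry of the type-$2$ gadget makes the two halves of this argument (first $\{c_1,c_2,a_3,b_3\}$ together, then $\{a_1,b_1\}$) mirror images, so I would write one and invoke symmetry for the other.

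For the finishing, with $\{a_1,c_1,c_2,b_1,a_3,b_3\}\subseteq X'_2$, set $X''_1 = X'_1$, $X''_2 = V(G)\setminus X'_1$, $A''_1 = A'_1$, $B''_1 = B'_1$, and build $A''_2,B''_2$ from $A'_2,B'_2$ by replacing $a_3$ by $A_3$, $b_3$ by $B_3$ and the path $a_1c_1c_2b_1$ by the flat path $X_1$, absorbing the overlaps $A_3\subseteq A_2$, $B_3\subseteq B_2$ consistently (here one must check that a vertex of $A'_1$ cannot be adjacent to $a_1$ and to $a_3$ in incompatible ways, which fixes whether $a_3\in A'_2$ or $a_3\in C'_2$ and hence the right definition of $A''_2$). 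Then $(X''_1,X''_2)$ is a 2-join of $G$, substantial since $X''_1=X'_1$ is not a path of length at most~$2$ (the $G'$ 2-join being proper) and $b_1\in X''_2$ has degree at least~$3$; by Property~\ref{prop.degenerate} it is connected and proper, hence a path 2-join with path-side $X''_1=X'_1$, and we conclude as above. The main obstacle will be the middle step: the position-by-position analysis of the transition vertex of $H$ through the gadget is long and delicate, just as in Claims~(\ref{cc1c2})--(\ref{c2jb1}), and picks up a few extra subcases from the $A_3$--$B_3$ structure.
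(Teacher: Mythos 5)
There is a genuine gap, on two counts. First, the structural claim you aim for --- that all six gadget vertices $a_1, c_1, c_2, b_1, a_3, b_3$ end up on the same side $X'_2$ --- is too strong, and your finishing paragraph is predicated on it. The paper's own proof of this lemma only establishes $\{c_1,c_2,a_3,b_3\}\subset X'_2$ and that at most one of these four lies in $A'_2\cup B'_2$; it must then separately admit the configuration $c_1\in A'_2$, which forces $A'_1=\{a_1\}$ (since $N_{G'}(c_1)=\{a_1,c_2,a_3\}$ and $c_2,a_3\in X'_2$), i.e.\ $a_1\in X'_1$. That configuration is not excluded, and the definition of $A''_2$ in the uncontracted 2-join has to be adjusted for it (the paper sets $A''_2=(A'_2\cap X_2)\cup(N_G(a_1)\cap X_1)$ in exactly this case). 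Your parenthetical about fixing whether $a_3\in A'_2$ or $a_3\in C'_2$ does not cover this: the problem is $a_1$, not $a_3$, crossing to $X'_1$.

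Second, the heart of the argument --- ruling out every way the gadget can straddle the 2-join --- is deferred to "the same method as Claims~(\ref{cc1c2})--(\ref{c2jb1})", but that machinery does not transplant. Case~1's hole-propagation argument is driven by the disconnection of $X_2\setminus A_2$ into the two pieces $B_3\cup C_3$ and $B_4\cup C_4$, which is what makes the unique transition vertex in $B'_2$ along the hole so constraining; the type-2 gadget has no analogue of $B_4$, and the paper's proof of the key claim~(\ref{cc1c2a3b3}) instead runs five short local cases exploiting that $c_1$ and $c_2$ each have degree~3 with stable neighborhoods (so e.g.\ $c_1\in A'_1$ forces $A'_1=\{c_1\}$, and a vertex of $A_3$ on the wrong side immediately produces a degenerate 2-join, contradicting Lemma~\ref{cesp2}). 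Only one of the five cases even needs Property~\ref{prop.X2conn}, and none needs a hole or an analogue of Claim~(\ref{cYZ}) --- that auxiliary claim is specific to Case~1 and is never used here. So while your finishing step and the parity of effort are recognizable, the plan as written would stall both at the false intermediate claim and at the unexecuted case analysis, which is where all the content of this lemma lives.
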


  \begin{proof}

  \begin{claim}
     \label{cc1c2a3b3}
     If $G'$ has a proper 2-join $(X'_1, X'_2)$ then either $\{c_1,$
     $c_2,$ $a_3,$ $b_3\}$ $\subset X'_1$ or $\{c_1,$ $c_2,$ $a_3,$
     $b_3\}$ $\subset X'_2$.
  \end{claim}

  \begin{proofclaim}
    Suppose not. Up to  symmetry, we have five cases to consider
    according to $X'_1 \cap \{c_1,$ $c_2,$ $a_3,$ $b_3\}$.  Each of them
    leads to a contradiction:

    \vspace{1ex}

    \noindent Case $\{c_1\} \subset X'_1$ and $\{c_2, a_3, b_3\}
    \subset X'_2$:

      Up to  symmetry, we assume $c_1 \in A'_1$ and $c_2, a_3 \in
      A'_2$. Note that $A'_1 = \{c_1\}$ because $c_1$ is the only
      vertex in $X'_1$ that sees both $c_2, a_3$. Note that $a_1$ is
      in $X'_1$ for otherwise $c_1$ would be isolated in $X'_1$. Also if a
      vertex $x$ of $A_3$ is in $X'_1$ then $x$ must be in $A'_1$
      since it sees $a_3$. This is impossible since $x$ misses
      $c_2$. Thus $x\in X'_2$. Since $x$ sees $a_1 \in X'_1$, $x$ must
      be in $B'_2$ and $a_1$ must be in $B'_1$. So, $a_1$ is a vertex
      of $B'_1$ that is complete to $A'_1$, which implies $(X'_1, X'_2)$
      being degenerate, which contradicts Lemma~\ref{cd2join2}.

    \vspace{1ex}

    \noindent Case $\{a_3\} \subset X'_1$ and $\{c_1, c_2, b_3\}
    \subset X'_2$:

      This case is like the previous one, we just sketch it. We assume
      $a_3 \in A'_1$, which implies $c_1, b_3 \in A'_2$. Thus $A'_1 =
      \{a_3\}$. There is a vertex $x$ of $X'_1$ in $A_3$. Also, $a_1
      \in X'_2$ for otherwise $a_1 \in A'_1$ while missing $b_3$, a
      contradiction. Thus $x\in B'_1$, and $x$ is a vertex of $B'_1$
      that is complete to $A'_1$, a contradiction.
  
    \vspace{1ex}

    \noindent Case $\{c_1, c_2\} \subset X'_1$ and $\{a_3, b_3\}
    \subset X'_2$:
      
      Up to  symmetry, we assume $c_1 \in A'_1$, $a_3 \in A'_2$, $c_2
      \in B'_1$, $b_3 \in B'_2$.  Since by Lemma~\ref{cd2join2} $(X'_1,
      X'_2)$ is not degenerate, $a_3$ must have a non-neighbor $x$ in
      $B'_2$. Since $x$ must see $c_2$ we have $x=b_1$ and $b_1 \in
      B'_2$. Similarly, $b_3$ must have a non-neighbor in $A'_2$,
      which implies $a_1 \in A'_2$.  Now put $Y_1 = X_2 \cap X'_1$ and $Y_2
      = X_2 \cap X'_2$.  Note that $Y_1 \neq \emptyset$ for otherwise
      $X'_1 = \{c_1, c_2\}$ and $(X'_1, X'_2)$ is not proper. Also
      $Y_2 \neq \emptyset$ for otherwise, $a_1$ is isolated in
      $X'_2$. If there is an edge of $G'$ with an end in $Y_1$ and an
      end $y$ in $Y_2$, then $y$ must be in one of $A'_2,
      B'_2$. This is a contradiction since $y$ misses both $c_1,
      c_2$. Thus there is no edge with an end in $Y_1$ and an end
      $Y_2$. This contradicts $G[X_2]$ being connected
      (Property~\ref{prop.X2conn} of $G$).
  
    \vspace{1ex}
 
    \noindent Case $\{c_1, a_3\} \subset X'_1$ and $\{c_2, b_3\}
    \subset X'_2$:
      
      Up to  symmetry, we assume $c_1 \in A'_1$, $a_3 \in B'_1$, $c_2
      \in A'_2$, $b_3 \in B'_2$.  Since by Lemma~\ref{cd2join2} $(X'_1,
      X'_2)$ is not degenerate, $a_3$ must have a non-neighbor $x$ in
      $A'_1$. Since $x$ must see $c_2$ we have $x=b_1$ and $b_1 \in
      A'_1$. Similarly, $b_3$ must have a non-neighbor in $A'_2$,
      which implies $a_1 \in A'_2$.  So, $b_1 \in A'_1$, $a_1 \in A'_2$ and
      $a_1 b_1 \notin E(G')$, a contradiction.
        
    \vspace{1ex}

    \noindent Case $\{c_1, b_3\} \subset X'_1$ and $\{c_2, a_3\}
    \subset X'_2$:

      Up to  symmetry, we assume $c_1 \in A'_1$, $a_3 \in A'_2$, $c_2
      \in A'_2$, $b_3 \in A'_1$. There is a vertex $x$ of $X'_1$ in
      $B_3$ for otherwise $b_3$ would be isolated in $X'_1$. Also, $b_1 \in
      X'_2$ for otherwise $c_2$ would be isolated in $X'_2$. But $b$ sees
      $x$. Since $b_1 \in A'_2$ is impossible because $b_1$ misses
      $c_1$ we have $b_1 \in B'_2$. By similar techniques, it can be
      shown that $a_1 \in B'_1$. So, $b_1 \in B'_2$, $a_1 \in B'_1$
      and $a_1 b_1 \notin E(G')$, a contradiction.
  \end{proofclaim}

    Let us now finish the proof.  
    By~(\ref{cc1c2a3b3}), we may assume $\{c_1, c_2, a_3, b_3\}
    \subset X'_2$.  We claim that at most one of $c_1,$ $c_2,$ $a_3,$
    $b_3$ is in $A'_2 \cup B'_2$. For otherwise, up to  symmetry
    there are four cases. First case, $a_3 \in A'_2$, $b_3 \in B'_2$,
    so $A'_1 \subset A_3$ and $B'_1 \subset B_3$, which implies
    $(X'_1, X'_2)$ being degenerate because any vertex of $A'_1$ is
    complete to $B'_1$, contradictory to Lemma~\ref{cd2join2}.  Second case,
    $c_1 \in A'_2$, $c_2 \in B'_2$, which implies $A'_1 = \{a_1\}$, $B'_1 =
    \{b_1\}$, $a_3, b_3 \in C'_2$, $A_3 \cup B_3 \subset X'_2$. Hence,
    $X'_1 \cap X_2 \neq \emptyset$ and $A_3 \cup B_3$ are in different
    components of $G[X_2]$ contradictory to Property~\ref{prop.X2conn} of
    $G$.  Third case, $a_3 \in A'_2$, $c_1 \in B'_2$ so $A'_1
    \subset A_3$, $a_1 \in B'_1$, which implies $(X'_1, X'_2)$ being
    degenerate because $a_1 \in B'_1$ is to complete to $A'_1$,
    contradictory to Lemma~\ref{cd2join2}.  Fourth case, $a_3 \in A'_2$, $c_2
    \in B'_2$, which implies $b_1 \in B'_1$. Also $b_3 \in C'_2$ because
    $b_3, c_2$ (resp. $b_3, a_3$) have no common neighbors in
    $X'_1$. So $B_3 \subset X'_2$ and because of $b_1$, $B_3 \subset
    B'_2$. Because of $a_3$ there is a vertex $a$ of $A'_1$ in
    $A_3$. Hence $a$ is a vertex of $A'_1$ that has a neighbor in
    $B'_2$, a contradiction. All four cases yield a contradiction, so
    our claim is proved.

    Thus up to  symmetry we assume that we are in one of the three
    cases that we describe below:

    \begin{itemize}
    \item  
      $a_3 \in A'_2$. Moreover, $a_1 \in X'_2$ because $c_1 \in C'_2$.
      Because of $a_3$, there is a vertex of $X'_1$ in $A_3$, which implies
      $a_1 \in A'_2$ and $B_3 \subset A'_2$.
    \item  
      $c_1 \in A'_2$. This implies $a_1 \in A'_1$. Since $a_3 \in
      C'_2$, we have $A_3 \subset X'_2$ and $A_3 \subset A'_2$ because
      of $a_1$. Note that $A'_1 = \{a_1\}$ because $a_1$ is the only
      neighbor of $c_1$ in $X'_1$.
    \item  
      $a_2 \notin A'_2$ and $c_1 \notin A'_2$. Moreover, $a_1 \in
      X'_2$ and $A_3 \subset X'_2$.
    \end{itemize}

    In every case, $c_2, b_3 \in C'_2$, so $\{b_1\} \cup B_3
    \subset X'_2$. Note that $X'_1 \subset V(G)$.  Let us now put:
    $X''_1 = X'_1$, $X''_2 = V(G) \setminus X''_1$, $A''_1 = A'_1$,
    $B''_1 = B'_1$, $B''_2 = B'_2$. If $c_1 \in A'_2$ then put $A''_2
    = (A'_2 \cap X_2) \cup (N_G(a_1) \cap X_1)$.  If $c_1 \notin A'_2$
    then put $A''_2 = A'_2\setminus\{a_3\}$.  From the definitions it
    follows that $(X''_1, X''_2)$ is a partition of $V(G)$, that
    $A''_1, B''_1 \subset X''_1$, $A''_2, B''_2 \subset X''_2$, that
    $A''_1$ is complete to $A''_2$, that $B''_1$ is complete to
    $B''_2$ and that there are no other edges between $X''_1$ and
    $X''_2$.  So, $(X''_1, X''_2) = (X'_1, V(G) \setminus X'_1) $ is a
    2-join of $G$.


    Note that $G[X''_1]$ is not a path of length~1 or~2 from $A''_1$
    to $B''_1$ whose interior is in $C''_1$, because $(X'_1, X'_2)$ is
    a proper 2-join of $G'$ and because $X''_1 = X'_1$. Also
    $G[X''_2]$ is not an outgoing path from $A''_2$ to $B''_2$ whose
    interior is in $C''_2$ because $b_1$ has at least~2 neighbors in
    $X''_2$ ($c_2$ and one in $B_3$) while having degree at least~3 by
     Property~\ref{prop.deg3} of $G$. This proves that $(X''_1,
    X''_2)$ is substantial. It is connected for otherwise it would be
    degenerate, contradictory to Lemma~\ref{cd2join2}.  So $(X''_1, X''_2)$
    is proper and we know by the properties of $G$ that $(X''_1,
    X''_2)$ is a path 2-join of $G$.  If $X''_2$ is the path-side of
    $(X''_1, X''_2)$ then $b_1$ is an interior vertex of this path
    while having degree at least~3 by Property~\ref{prop.deg3} of $G$,
    a contradiction. Hence, $X''_1$ is the path-side of $(X''_1,
    X''_2)$. Thus $(X'_1, X'_2)$ is a path 2-join of $G'$ because
    $X''_1 = X'_1$.
  \end{proof}

  \begin{lemmaS}
    \label{cgbar2}
    $\overline{G'}$ has no proper 2-join.
  \end{lemmaS}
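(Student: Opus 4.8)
The plan is to imitate the proof of Lemma~\ref{cgbar1}. The key point is that the two new vertices $c_1,c_2$ have very high degree in $\overline{G'}$: the only non-neighbours of $c_1$ in $\overline{G'}$ are $a_1,c_2,a_3$, and the only non-neighbours of $c_2$ are $c_1,b_1,b_3$. In particular every vertex of $X_2$ is adjacent in $\overline{G'}$ to both $c_1$ and $c_2$, and $\{a_1,a_3,b_1,b_3\}$ induces in $\overline{G'}$ a $K_4$ minus the single edge $a_3b_3$. Throughout the proof ``neighbour'' means neighbour in $\overline{G'}$, and I suppose for a contradiction that $\overline{G'}$ has a proper 2-join $(X'_1,X'_2)$ with split $(X'_1,X'_2,A'_1,B'_1,A'_2,B'_2)$; recall that by Lemma~\ref{cd2join2} this 2-join is non-degenerate.

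The first step is to locate $c_1$ and $c_2$. A vertex with only three non-neighbours that lies in some $C'_i$ forces $X'_{3-i}$ into its non-neighbourhood, hence $|X'_{3-i}|\le 3$; a substantial 2-join has both sides of size at least~$3$, so that side would be $\{a_1,c_2,a_3\}$ or $\{c_1,b_1,b_3\}$, each of which induces a triangle of $\overline{G'}$, and a triangle side is always degenerate. So $c_1,c_2\notin C'_1\cup C'_2$, and after exchanging the two sides and exchanging $A'\leftrightarrow B'$ if necessary I may take $c_1\in A'_1$. Then $B'_2\cup C'_2$ is anticomplete to $c_1$, hence $B'_2\subseteq\{a_1,c_2,a_3\}$, and non-degeneracy gives $B'_1\cap\{a_1,c_2,a_3\}\neq\emptyset$. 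Since $c_2$ misses $c_1\in A'_1$ we get $c_2\notin A'_2$, and $c_2\notin A'_1$ (else $B'_2\cup C'_2$ would be anticomplete to both $c_1$ and $c_2$, hence inside $\{a_1,c_2,a_3\}\cap\{c_1,b_1,b_3\}=\emptyset$). So $c_2\in B'_1$ or $c_2\in B'_2$, giving two cases.

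If $c_2\in B'_1$: then $A'_2\cup C'_2$ is anticomplete to $c_2$, so (as $c_1\in X'_1$) $A'_2\cup C'_2\subseteq\{b_1,b_3\}$, which with $B'_2\cup C'_2\subseteq\{a_1,c_2,a_3\}$ and $c_2\in X'_1$ yields $C'_2=\emptyset$ and $X'_2\subseteq\{a_1,a_3,b_1,b_3\}$. A short case check on the splits of a $3$- or $4$-element subset of $K_4$ minus an edge into non-empty $A'_2,B'_2$ (with possible $C'_2$) shows that one always finds a vertex of $A'_2$ complete to $B'_2$, a vertex of $B'_2$ complete to $A'_2$, or a vertex of $C'_2$ complete to $A'_2\cup B'_2$; the 2-join is degenerate, a contradiction. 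If $c_2\in B'_2$: symmetrically $A'_1\subseteq\{c_1,b_1,b_3\}$, $C'_1\subseteq\{b_1,b_3\}$, $C'_2\subseteq\{a_1,a_3\}$, so $X_2$ is partitioned into $X_2\cap B'_1\subseteq B'_1$ and $X_2\cap A'_2\subseteq A'_2$. Here I would exploit that in $\overline{G'}$ the vertices $b_1,b_3,a_1,a_3$ are anticomplete respectively to the non-empty sets $B_2,B_3,A_2,A_3\subseteq X_2$, together with the completeness requirements of the 2-join (so that a side containing one of these vertices is confined to its small non-neighbourhood), the connectivity of $G[X_2]$ and of $G[X_2\cup\{a_1\}]$, $G[X_2\cup\{b_1\}]$ (Property~\ref{prop.X2conn}), and non-degeneracy: handle first the subcases where one of $X_2\cap B'_1$, $X_2\cap A'_2$ is empty (which collapse quickly, forcing $A'_1=\{c_1\}$, then one of $b_1,b_3$ into $C'_1$, then $X'_2$ inside a set disjoint from the non-empty $A_2$), then the subcase where both are non-empty. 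This exhausts the cases.

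The hard part will be this last subcase, $c_2\in B'_2$ with both $X_2\cap B'_1$ and $X_2\cap A'_2$ non-empty: one must follow exactly how $A_2$, $B_2$, $A_3$ and $B_3$ split across the 2-join and rule out every resulting configuration by non-degeneracy and connectivity; all the other placements of $c_1$ and $c_2$ collapse immediately via Lemma~\ref{cd2join2}.
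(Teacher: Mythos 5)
Your localization of $c_1$ and $c_2$ and your treatment of the case $c_2\in B'_1$ are correct and essentially coincide with the paper's argument (the paper dispatches $c_2\in B'_1$ by showing that $b_1$ is forced into $A'_2$ and is then complete to $B'_2\subseteq\{a_1,a_3\}$, whereas you run a case check on the $K_4$ minus an edge induced by $\{a_1,a_3,b_1,b_3\}$; both work, modulo noting that the sub-configuration $A'_2=\{b_3\}$, $B'_2=\{a_3\}$ is excluded by substantiality rather than degeneracy). The genuine gap is the case $c_2\in B'_2$, which you explicitly leave as a plan (``the hard part'') rather than a proof. Moreover, the plan you propose --- tracking how $A_2$, $B_2$, $A_3$, $B_3$ distribute over $X_2\cap B'_1$ and $X_2\cap A'_2$ and invoking connectivity of $G[X_2]$ --- is much heavier than what is needed, and as stated gives no guarantee that every resulting configuration is actually contradictory.

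This case closes in a few lines using only the six special vertices and non-degeneracy, with no reference to $X_2$ at all. Since $c_2\in B'_2$ cannot be complete to $A'_2$, it has a non-neighbour in $A'_2$, so one of $b_1,b_3$ lies in $A'_2$; symmetrically, since $c_1\in A'_1$ cannot be complete to $B'_1$, one of $a_1,a_3$ lies in $B'_1$. There are no edges of $\overline{G'}$ between $B'_1$ and $A'_2$, and the unique non-edge of $\overline{G'}$ among the pairs in $\{a_1,a_3\}\times\{b_1,b_3\}$ is $a_3b_3$; hence $a_3\in B'_1$ and $b_3\in A'_2$. Now $a_1$ is adjacent in $\overline{G'}$ to $b_3\in A'_2$, to $c_2\in B'_2$ and to $a_3\in B'_1$, which rules out $a_1\in X'_1$ and forces $a_1\in B'_2$. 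Finally $b_1$ is adjacent in $\overline{G'}$ to $c_1\in A'_1$, $a_3\in B'_1$, $b_3\in A'_2$ and $a_1\in B'_2$, so it can be placed in neither side of the 2-join --- a contradiction. You should replace your deferred subcase analysis by an argument of this kind (or actually carry out and verify the one you sketch); as submitted, the lemma is not proved.
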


  \begin{proof}
    Here the word ``neighbor'' refers to
    the neighborhood in $\overline{G'}$. Let $(X'_1, X'_2)$ be a
    proper 2-join of $\overline{G'}$.
    
    If $c_1 \in C'_1$ then $X'_2 \subset \{a_1, a_3, c_2\}$, so
    $(X'_1, X'_2)$ is degenerate or non-proper,
    a contradiction to Lemma~\ref{cd2join2}. Thus, we may assume $c_1 \in
    A'_1$. Similarly $c_2$ must be in one of $A'_1$, $A'_2$, $B'_1$,
    $B'_2$. But $c_2 \in A'_2$ is impossible because $c_2$ is not a
    neighbor of $c_1$. Also $c_2 \in A'_1$ is impossible because
    otherwise $B'_2= \emptyset$ since no vertex of $\overline{G'}$ can
    be a non-neighbor of both $c_1, c_2$. Thus $c_2$ is in one of
    $B'_1, B'_2$.

    If $c_2 \in B'_1$ then $A'_2 \subset \{b_1, b_3\}$ because of
    $c_2$ and $B'_2 \subset \{a_1, a_3\}$ because of $c_1$. But $b_1$
    must be in $A'_2$ because it is a common neighbor of $c_1, a_1,
    a_3$. Thus $b_1$ is a vertex of $A'_2$ that is complete to $B'_2$,
    so $(X'_1, X'_2)$ is degenerate, a contradiction to
    Lemma~\ref{cd2join2}.

    If $c_2 \in B'_2$ then there is a non-neighbor of $c_2$ in $A'_2$
    for otherwise $(X'_1, X'_2)$ would be degenerate. Thus at least one of
    $b_1, b_3$ is in $A'_2$. Similarly, because of $c_1$, at least one
    of $a_1, a_3$ must be in $B'_1$. But since there is no edge of
    $\overline{G'}$ between $B'_1, A'_2$, we have $a_3 \in B'_1$, $b_3
    \in A'_2$. Since $a_3, b_3, c_2$ are neighbors of $a_1$, we know
    $a_1 \in B'_2$. Now $b_1$ is a neighbor of $c_1 \in A'_1$, $a_3
    \in B'_1$, $a_1 \in B'_2$, $b_3 \in A'_2$, a contradiction.
  \end{proof}


  \begin{lemmaS}
    \label{clbip2}
    \label{ccompbas2}
    $G'$ is not basic. None of $G, \overline{G}$ is a path-cobipartite
    graph, a path-double split graph; none of $G, \overline{G}$ has a
    homogeneous 2-join. Moreover, $\overline{G'}$ has no flat path of
    length at least~3.
  \end{lemmaS}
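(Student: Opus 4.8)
The statement is the Case~2 analogue of Lemma~\ref{ccompbas1}, so the plan is to transcribe that proof, replacing the Case~1 gadget vertices $c_1,c_2,b_3,b_4$ by the Case~2 gadget vertices $c_1,c_2,a_3,b_3$ and the auxiliary claims of Lemma~\ref{cberge} by those of Lemma~\ref{cberge2}. I would establish the lemma in five short steps: (i)~$G'$ is not bipartite; (ii)~neither $G'$ nor $\overline{G'}$ is the line-graph of a bipartite graph; (iii)~$G'$ is not path-cobipartite, not path-double split, and has no homogeneous 2-join; (iv)~hence $G'$ is not basic; and (v)~$\overline{G'}$ has no flat path of length at least~$3$, from which the remaining assertions about $\overline{G'}$ follow.

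For (i): if $G'$ were bipartite, $A_2$ would be monochromatic (because $a_1$ is complete to $A_2$) and $B_2$ monochromatic (because $b_1$ is complete to $B_2$); by Property~\ref{prop.conn} of $G$ there is a path from $A_2$ to $B_2$ with interior in $C_2$, and by Claim~(\ref{clodd2}) of Lemma~\ref{cberge2} it --- and hence, by Lemma~\ref{l.2jAiBi}, $X_1$ itself --- has odd length; so $A_2$ and $B_2$ get distinct colours, $G[X_2\cup\{a_1,b_1\}]=G'[X_2\cup\{a_1,b_1\}]$ is bipartite with $a_1$ and $b_1$ of distinct colours, and the colouring extends along the odd path $X_1$ to a $2$-colouring of $G$, contradicting the properties of $G$. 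For (ii): $G'[\{c_1,a_1,c_2,a_3\}]$ is an induced claw --- the $G'$-neighbourhood of $c_1$ is precisely the stable set $\{a_1,c_2,a_3\}$ --- and $\overline{G'}[\{a_1,b_1,a_3,b_3\}]$ is an induced diamond --- the only edge of $G'$ inside that $4$-set is $a_3b_3$ --- so Theorem~\ref{th.lgbg} applies to each of $G'$ and $\overline{G'}$.

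For (iii): $c_1$ is a vertex of degree~$3$ in $G'$ whose neighbourhood induces a stable set, and $\{a_1,b_1,a_3\}$ is a stable set of $G'$ all of whose vertices have degree at least~$3$ (for $a_1$ and $b_1$ by Property~\ref{prop.deg3} of $G$, since deleting the interior of $X_1$ removes exactly one neighbour of each, replaced by $c_1$ resp.\ $c_2$; for $a_3$ because $A_3\neq\emptyset$); Lemma~\ref{l.twice} then excludes path-cobipartite, path-double split and non-degenerate homogeneous 2-joins for $G'$, and Lemma~\ref{cd2join2} excludes degenerate ones, so $G'$ has no homogeneous 2-join. Step~(iv) then follows from (i)--(iii) together with the facts that the complement of a double split graph is a double split graph, and that complement-of-bipartite (resp.\ double split) graphs are particular path-cobipartite (resp.\ path-double split) graphs, so that (iii) also gives that $G'$ is neither co-bipartite nor a double split graph. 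For (v): by the first item of Lemma~\ref{l.thimplies} applied to the Berge graph $\overline{G'}$, a flat path of length at least~$3$ in $\overline{G'}$ would make $\overline{G'}$ bipartite, or give it an even skew partition, or be the path-side of a proper path 2-join of $\overline{G'}$; the first is impossible since $G'$ is not co-bipartite, the second by Lemma~\ref{cesp2} with Lemma~\ref{espcomp}, the third by Lemma~\ref{cgbar2}. Hence $\overline{G'}$ has no flat path of length at least~$3$, and the second item of Lemma~\ref{l.thimplies}, fed the same three facts (plus $G'$ not bipartite and not a double split graph), shows in addition that $\overline{G'}$ is neither path-cobipartite nor path-double split and has no homogeneous 2-join.

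I do not anticipate a genuine obstacle: the argument is a line-by-line copy of Case~1. The only care needed is in the routine adjacency verifications carried out inside $G'$ --- that $N_{G'}(c_1)=\{a_1,c_2,a_3\}$ is stable, that $\{a_1,b_1,a_3\}$ is stable with every degree at least~$3$, and that $G'[\{a_1,b_1,a_3,b_3\}]$ induces exactly the edge $a_3b_3$ --- and in the parity bookkeeping of step~(i), which rests on the Case~2 fact, already packaged into Claim~(\ref{clodd2}), that here $X_1$ is forced to have odd length.
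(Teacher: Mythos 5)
Your proposal is correct and follows essentially the same route as the paper's proof: the same claw $G'[\{c_1,a_1,c_2,a_3\}]$ and diamond $\overline{G'}[\{a_1,b_1,a_3,b_3\}]$, the same parity argument for non-bipartiteness, and the same appeal to Lemma~\ref{l.twice} (you use the stable set $\{a_1,b_1,a_3\}$ where the paper uses $\{b_1,b_3,c_1\}$; both work) followed by Lemma~\ref{l.thimplies} with Lemmas~\ref{cesp2} and~\ref{cgbar2} for the assertions about $\overline{G'}$.
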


  \begin{proof}
    If $G'$ is bipartite then all the vertices of $A_2$ are of the
    same color because of $a_1$. Because of $b_1$ all the vertices of
    $B_2$ have the same color. By Property~\ref{prop.conn} of~$G$,
    there is a  path from $A_2$ to $B_2$ that has odd length
    since $G$ is Berge.  Thus $G$ is bipartite, which contradicts the
    properties of $G$. Hence $G'$ is not bipartite.
    
    The graph $G'[c_2, c_1, a_1, a_3]$ is a claw, so $G'$ is not the
    line-graph of a bipartite graph. Since $\overline{G'}[a_1, b_1,
      a_3, b_3]$ is a diamond, $\overline{G'}$ is not the line-graph
    of a bipartite graph.

     Note that $b_1$ has degree at least~3 in $G'$ by
     Property~\ref{prop.deg3} of~$G$. So, there exists in $G'$ a stable
     set of size 3 containing vertices of degree at least~3 ($\{b_1,
     b_3, c_1\}$), and a vertex of degree~3 whose neighborhood induces
     a stable set ($c_1$). Hence, by Lemma~\ref{l.twice}, $G'$ is not
     a path-cobipartite graph (and in particular, it is not the
     complement of a bipartite graph), not a path-double split graph
     (and in particular, it is not a double split graph) and $G'$ has
     no non-degenerate homogeneous 2-join. Hence by Lemma~\ref{cd2join2},
     $G'$ has no homogeneous 2-join.

     If $\overline{G'}$ has a flat path of length at least~3, then by
     Lemma~\ref{l.thimplies} there is a contradiction with the fact
     that $\overline{G'}$ is not bipartite, or with Lemma~\ref{cesp2}
     or~\ref{cgbar2}.
  \end{proof}


  \begin{lemmaS}
    \label{cfgfgp2}
     $f(G') + f(\overline{G'}) < f(G) + f(\overline{G})$.  
  \end{lemmaS}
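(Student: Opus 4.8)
The plan is to follow the proof of Lemma~\ref{cfgfgp} (the Case~1 analogue), replacing its new vertices by the four new vertices $c_1,c_2,a_3,b_3$ of the present construction. The first step is to identify the vertices of $G'$ that have degree at least~$3$. The vertex $a_1$ does: by Property~\ref{prop.deg3} it has at least three neighbours in $G$, and since $A_1=\{a_1\}$ these are the vertices of $A_2$ together with the first interior vertex of $X_1$, so $|A_2|\geq 2$, whence $a_1$, whose neighbourhood in $G'$ is $A_2\cup\{c_1\}$, has degree at least~$3$ in $G'$; the same holds for $b_1$. Each vertex of $A_3\subseteq A_2$ is adjacent in $G'$ to $a_1$, to $a_3$, and, because $(X_1,X_2)$ is not degenerate (Property~\ref{prop.degenerate}), to at least one vertex of $X_2\setminus A_2$, so it has degree at least~$3$; symmetrically for $B_3$. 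The vertex $a_3$ is adjacent to $A_3\cup\{b_3,c_1\}$ with $A_3\neq\emptyset$, hence has degree at least~$3$, and symmetrically so does $b_3$; and $c_1$ is adjacent exactly to $\{a_1,c_2,a_3\}$ and $c_2$ exactly to $\{c_1,b_1,b_3\}$, so both have degree~$3$.

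The second step records the consequences for flat paths of $G'$: no vertex of $\{a_1,b_1,a_3,b_3,c_1,c_2\}\cup A_3\cup B_3$ can be interior to a flat path of $G'$, and moreover none of $c_1,c_2,a_3,b_3$ can lie on a maximal flat path of $G'$ of length at least~$3$ --- such a vertex cannot be interior, and it cannot be an end of a flat path of length at least~$2$ since all three of its neighbours have degree at least~$3$ in $G'$ and so cannot be interior either. Hence every maximal flat path of $G'$ of length at least~$3$ avoids the four new vertices; as these are the only vertices of $G'$ not in $V(G)$, and as $G'$ restricted to $V(G)\setminus C_1$ coincides with $G\setminus C_1$, such a path is an induced path of $G$, in fact a flat path of $G$ (its interior vertices, lying outside $\{a_1,b_1\}\cup A_3\cup B_3$, have the same neighbourhood in $G$ as in $G'$, and its ends have no common neighbour outside it in $G$).

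The third step upgrades this to a \emph{maximal} flat path of $G$: any extension of such a path $P$ inside $G$ would have to occur at an end of $P$ lying in $\{a_1,b_1\}\cup A_3\cup B_3$ (the only vertices of $P$ whose $G'$-neighbourhood could differ from their $G$-neighbourhood), but these vertices also have degree at least~$3$ in $G$, so no extension exists and $P$ is maximal in $G$ already. Distinct maximal flat paths of $G'$ of length at least~$3$ map to distinct maximal flat paths of $G$, so $f(G')\leq f(G)$, and this is strict because $X_1$ is a maximal flat path of $G$ of length at least~$3$ whose interior is deleted in $G'$, hence not among the paths so obtained. By Lemma~\ref{ccompbas2}, $\overline{G'}$ has no flat path of length at least~$3$, so $f(\overline{G'})=0\leq f(\overline{G})$; adding the two inequalities gives $f(G')+f(\overline{G'})<f(G)+f(\overline{G})$.

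The step I expect to be the main obstacle is the third one: showing that a maximal flat path of $G'$ is not merely a subpath of a longer flat path of $G$. This is where all the degree bounds from the first step are needed, and it is the passage treated rather tersely in the proof of Lemma~\ref{cfgfgp}; the underlying point is that the deleted vertices $C_1$ are attached to the rest of $G$ only through $a_1$ and $b_1$, both of which have degree at least~$3$ in $G$, so reinserting them cannot prolong a flat path.
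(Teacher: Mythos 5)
Your proof is correct and takes essentially the same route as the paper's: degree bounds on the new and affected vertices show that every maximal flat path of $G'$ of length at least~3 avoids $\{c_1,c_2,a_3,b_3\}$ and is a maximal flat path of $G$, the path $X_1$ is lost in the passage from $G$ to $G'$, and Lemma~\ref{ccompbas2} gives $f(\overline{G'})=0\leq f(\overline{G})$. The only quibble is your side claim that the vertices of $A_3\cup B_3$ have degree at least~3 in $G$ itself (the paper asserts this only for $G'$, and when $|A_3|=1$ or $|B_3|=1$ it is not obvious in $G$); this is harmless, though, since a vertex of $A_3\cup B_3$ of degree~2 in $G$ would have all of its $G'$-neighbours of degree at least~3 and so could not lie on any flat path of $G'$ of length at least~3 in the first place.
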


  \begin{proof}
    Every vertex in $\{a_1, b_1\} \cup A_3 \cup B_3$ has degree at
    least~3 in $G'$. For $a_1$ and $b_1$, this is 
    Property~\ref{prop.deg3} of~$G$ and for vertices in $A_3 \cup
    B_3$, this is clear. Hence no vertex in $\{a_1, b_1\} \cup A_3
    \cup B_3$ can be an interior vertex of a flat path of $G'$, and no
    vertex in $\{c_1, c_2, a_3, b_3\}$ can be in a maximal flat path
    of $G'$ of length at least~3. Hence, every maximal flat path of
    $G'$ of length at least~3 is a maximal flat path of $G$, so $f(G')
    \leq f(G)$. But in fact $f(G') < f(G)$ because $X_1$ is a flat
    path of $G$ that is no more a flat path in $G'$.  By
    Lemma~\ref{ccompbas2}, we know $0 = f(\overline{G'}) \leq
    f(\overline{G})$.  We add these two inequalities.
  \end{proof}


  Let us now finish the proof in Case~2.  By
  Lemmas~\ref{cberge2}---\ref{clbip2},  $G'$ is a counter-example to the
  theorem we are proving now. Hence, Lemma~\ref{cfgfgp2} contradicts the
  minimality of $G$.  This completes the proof in Case~2.

\medskip
\subsection{Case 3: We are neither in Case~1 nor in Case~2.}
\label{mainproofCase3}

In Case~3, the proof is shorter than in the other cases, but is more
complicated in some respects. Indeed, homogeneous 2-joins will  be
found. Again, we shall build a graph $G'$ that is a counter-example.
Note that the following claim is about $G$ itself:

\begin{claim}
  \label{c.noncutting}
  $G$ has no cutting 2-join. 
\end{claim}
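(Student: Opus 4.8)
The plan is to argue by contradiction: suppose $G$ has a cutting 2-join. I would first recall that $G$ was chosen with $X_1$ a fixed maximal flat path of length at least $3$, and that the hypotheses of Case~3 say precisely that $X_1$ \emph{cannot} be chosen so that $(X_1, X_2)$ is cutting of type~1 or cutting of type~2 (more precisely, cannot be chosen so that items~\ref{cond.first}--\ref{cond.penul} of the type~2 definition hold for some $A_3,B_3$). So the cutting 2-join we are assuming must be a \emph{different} 2-join, say $(Y_1, Y_2)$ with path-side $Y_1$, whose path-side $Y_1$ is not our chosen $X_1$. The first task is therefore to understand how $Y_1$ relates to $X_1$ and to $X_2$. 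Since $Y_1$ is a flat path of $G$ of length at least $3$ (its interior vertices have degree~2) and $X_1$ is an \emph{inclusion-wise maximal} flat path, either $Y_1 \subseteq X_1$ up to its ends, or $Y_1$ and $X_1$ interact only in a very restricted way; in particular $Y_1$ cannot strictly contain $X_1$.

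The key step is to show that a cutting 2-join on $Y_1 \ne X_1$ forces $(X_1, X_2)$ itself to be cutting, contradicting the Case~3 hypothesis. First I would dispose of the case $Y_1 \subseteq X_1$ (counting ends): if $Y_1$ is contained in the path $X_1$ then its two ends $a, b$ of the split $(Y_1, Y_2, \ldots)$ are vertices of the path $X_1$, and then $A_2 \cup \{a\}$ and/or $B_2 \cup \{b\}$ (or the analogous sets on the $X_2$-side) give small cutsets that the properties of $G$ — via Lemma~\ref{l.starcutset}, Property~\ref{prop.degenerate}, Property~\ref{prop.deg3} — forbid, since the interior vertices of $X_1$ have degree~2 and $a_1, b_1$ have degree at least~3. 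The harder case is $Y_1 \not\subseteq X_1$: then $Y_1$, being flat and of length $\geq 3$, must live essentially inside $X_2$, with at most one of $a_1, b_1$ possibly touching it (the interior vertices of $X_1$ have degree~2 so they cannot be interior vertices of another flat path unless $Y_1 = X_1$). So up to symmetry $Y_1 \subseteq X_2 \cup \{a_1\}$ or $Y_1 \subseteq X_2$, and now the cutting condition on $(Y_1, Y_2)$ — a disconnection of $G[Y_2 \setminus A(Y_2)]$ in the type~1 case, or the existence of parity-respecting sets $A_3, B_3$ in the type~2 case — can be translated, using that $(X_1, X_2)$ is connected and the parity lemmas (Lemmas~\ref{l.2jAiBi}, \ref{l.2jAiAi}, \ref{l.connect}), into a disconnection of $G[X_2 \setminus A_2]$ or into sets $A_3 \subseteq A_2$, $B_3 \subseteq B_2$ satisfying items~\ref{cond.first}--\ref{cond.penul} for $(X_1, X_2)$. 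That last translation is where the bulk of the work lies, and it is the main obstacle: I expect several subcases depending on whether the ``cut'' produced by $Y_1$ separates $a_1$ and $b_1$, or lies on one side, and each subcase requires checking that the even-path and even-antipath conditions transfer correctly — for which the outgoing-path and antipath lemmas of Section~\ref{lemmas} were set up precisely.

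Once this translation is done, we get that $(X_1, X_2)$ is cutting of type~1, or that there exist $A_3, B_3$ satisfying items~\ref{cond.first}--\ref{cond.penul}, which would put us in Case~1 or Case~2 respectively, contradicting the standing assumption that we are in Case~3. The remaining loose end is the possibility that the cutting 2-join of $G$ has a path-side that is \emph{not} a maximal flat path but sits inside one that is not $X_1$ — but maximal flat paths of $G$ are pairwise ``disjoint'' in the relevant sense (two distinct maximal flat paths of length $\geq 3$ share at most their ends, and an end of one cannot be an interior degree-$2$ vertex of another), so the cutting 2-join's path-side $Y_1$ is contained in one maximal flat path $Y_1'$, and if $Y_1' = X_1$ we are in the first case above, while if $Y_1' \ne X_1$ we can re-run the argument with $X_1$ replaced by $Y_1'$ (all the ``properties of $G$'' hold for \emph{any} choice of maximal flat path, since none of them referred to the particular $X_1$). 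This closes the argument and establishes (\ref{c.noncutting}).
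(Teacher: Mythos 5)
The paper's proof of this claim is a single sentence: it ``follows directly from the fact that we are not in Case~1, 2.'' The reason it is immediate is that Cases~1 and~2 are stated existentially over the choice of $X_1$ (``$X_1$ \emph{may be chosen} in such a way that\dots''), so the hypothesis of Case~3 is already the statement that \emph{no} maximal flat path of length at least~3 is the path-side of a 2-join that is cutting of type~1 or satisfies items~\ref{cond.first}--\ref{cond.penul} of type~2; since the path-side of any cutting proper 2-join is such a flat path, a cutting 2-join would itself witness Case~1 or Case~2. You do state exactly this observation, but only in your closing sentences, as a ``remaining loose end.''

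The problem is that you make something else the centre of your proof: the claim that a cutting 2-join $(Y_1,Y_2)$ with $Y_1\neq X_1$ ``forces $(X_1,X_2)$ \emph{itself} to be cutting,'' to be established by translating the disconnection of $G[Y_2\setminus A(Y_2)]$, or the sets $A_3,B_3$ for $(Y_1,Y_2)$, into the corresponding data for the \emph{fixed} $(X_1,X_2)$. This step is a genuine gap on two counts. First, you do not carry it out — you explicitly flag it as ``the main obstacle'' and ``where the bulk of the work lies'' — and it is not a routine verification: a cutting 2-join whose path-side sits deep inside $X_2$ gives no reason whatsoever for $G[X_2\setminus A_2]$ to be disconnected, nor for parity-respecting sets $A_3\subseteq A_2$, $B_3\subseteq B_2$ to exist relative to $(X_1,X_2)$, so I would expect the translation to be false in general. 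Second, it is unnecessary: because the case division quantifies over all choices of the maximal flat path, there is nothing to translate — one simply takes the (maximal flat path containing the) path-side of the hypothetical cutting 2-join as the new $X_1$ and observes that all the ``properties of $G$'' were established independently of the particular choice. Had you led with that remark instead of burying it, your proof would coincide with the paper's.
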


\begin{proofclaim}
  Follows directly from the fact that we are not in Case~1, 2.
\end{proofclaim}

We consider the graph $G'$ obtained from $G$ by replacing $X_1$ by a
path of length $2 - \varepsilon$ from $a_1$ to $b_1$. Possibly, this
path has length~2. In this case we denote by $c_1$ its unique interior
vertex. Else, this path has length~1, and for convenience we put $c_1
= a_1$ (thus $c_1$ is a vertex of $G'$ whatever $\varepsilon$). Note
that $(V(G') \setminus X_2, X_2)$ is not a proper 2-join of $G$ since
$V(G') \setminus X_2$ is a path of length~1 or~2 from $a_1$ to $b_1$.
Note that $a_1 \tp c_1 \tp b_1$ a flat path of $G'$ (possibly of
length~1 when $a_1 = c_1$) because if there is a common neighbor $c$
of $a_1, b_1$, then $(X_1, X_2)$ is not a 2-join of~$G$.  Note that
$G'$ is what we call in section~\ref{sec.defpiece} the block $G_2$ of
$G$ with respect to the 2-join $(X_1, X_2)$.

\begin{claim}
  \label{c.skew3}
  $G'$ has no \even\ skew partition, and none of $G$, $\overline{G'}$
  has a star cutset, a degenerate substantial 2-join or a degenerate
  homogeneous 2-join.
\end{claim}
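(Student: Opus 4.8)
The plan is to observe that, since we are in Case~3, the graph $G'$ is the block $G_2$ of $G$ with respect to the proper 2-join $(X_1,X_2)$ --- exactly as noted just before the statement of~(\ref{c.skew3}) --- so every assertion follows from the lemmas of Section~\ref{lemmas} with essentially no extra computation.

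First I would record that $G'$ is Berge: by Lemma~\ref{l.blockberge} both blocks of $G$ with respect to the proper 2-join $(X_1,X_2)$ are Berge, and hence so is $\overline{G'}$. Next, $G'$ has no \even\ skew partition: by~(\ref{c.noncutting}) the 2-join $(X_1,X_2)$ is non-cutting, and it is substantial since it is proper, so Lemma~\ref{l.skew2join} applies; as $G$ has no \even\ skew partition (by the properties of $G$), neither block of $G$ can have one, and in particular $G'$ has none.

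For $G$ itself, the properties of $G$ already guarantee that $G$ has no star cutset and no degenerate homogeneous 2-join, and a degenerate substantial 2-join of $G$ would by Lemma~\ref{l.degenerate} yield \an\ \even\ skew partition of $G$, contradicting the properties of $G$; so $G$ has none of the three objects. For $\overline{G'}$, suppose it had a star cutset, a degenerate substantial 2-join, or a degenerate homogeneous 2-join. Since $\overline{G'}$ is Berge, has at least $|X_2|+2\geq 5$ vertices (so in particular it is not the complement of $C_4$), and has an edge (for instance $a_1$ is non-adjacent in $G'$ to any vertex of the non-empty set $B_2$), Lemma~\ref{l.starcutset}, \ref{l.degenerate} or~\ref{l.homod} respectively would give \an\ \even\ skew partition of $\overline{G'}$; by Lemma~\ref{espcomp} this would give \an\ \even\ skew partition of $G'$, contradicting the previous paragraph. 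This settles~(\ref{c.skew3}).

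I expect no genuine obstacle here: the real content is already packaged in the ``if and only if'' of Lemma~\ref{l.skew2join} and in~(\ref{c.noncutting}), which is precisely why Case~3 is advertised as the shorter one. The only points deserving a moment of care are the identification $G'=G_2$ --- which amounts to checking that the replacement path of length $2-\varepsilon$ has the parity prescribed in Section~\ref{sec.defpiece}, and this holds because $X_1$ is itself a path from $A_1$ to $B_1$ with interior in $C_1$ --- and the innocuous non-degeneracy hypotheses needed to invoke Lemma~\ref{l.starcutset} on $\overline{G'}$.
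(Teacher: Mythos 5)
Your proof is correct and takes essentially the same route as the paper: Lemma~\ref{l.skew2join} combined with~(\ref{c.noncutting}) for the skew-partition part, and Lemmas~\ref{l.starcutset}, \ref{l.degenerate} and~\ref{l.homod} together with Lemma~\ref{espcomp} for the remaining assertions. One remark: the ``$G$'' in the statement is almost certainly a typo for ``$G'$'' --- the later claims~(\ref{c.prop2jcomp3}), (\ref{c.clinegraph3}) and~(\ref{c.homo3}) all invoke the non-degeneracy statements for $G'$ and $\overline{G'}$ --- but your argument for $\overline{G'}$ applies verbatim to $G'$ (without even needing Lemma~\ref{espcomp}), so nothing is lost.
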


\begin{proofclaim}
   Since $G'$ is a block of $G$, and since $(X_1, X_2)$ is not
   cutting, by Lemma~\ref{l.skew2join}, if $G'$ has \an\  \even\  skew
   partition then so is $G$, which contradicts the properties of $G$. By
   Lemma~\ref{l.starcutset}, \ref{l.degenerate} and~\ref{l.homod}, $G,
   \overline{G}$ have no star cutset, no degenerate 2-join and no
   degenerate homogeneous 2-join.
\end{proofclaim}

\begin{claim}
  \label{c.berge3}
  $G'$ is Berge.
\end{claim}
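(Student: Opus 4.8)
The plan is short: the graph $G'$ built in Case~3 is, by design, exactly the block $G_2$ of $G$ with respect to the 2-join $(X_1, X_2)$, and the Bergeness of blocks is already packaged in Lemma~\ref{l.blockberge}. So the only work is to make the identification $G' = G_2$ explicit and then invoke that lemma.

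First I would recall the data at hand. Right after $X_1$ was introduced we observed, via Lemma~\ref{l.thimplies}, that $(X_1, X_2) = (X_1, V(G)\setminus X_1)$ is a \emph{proper} 2-join of the Berge graph $G$, with $X_1$ its path-side, $A_1 = \{a_1\}$, $B_1 = \{b_1\}$. By definition (Section~\ref{sec.defpiece}) the block $G_2$ is obtained from $G$ by replacing $X_1$ by a flat path from a vertex complete to $A_2$ to a vertex complete to $B_2$; since $X_1$ is the path-side, that flat path has length $1$ or $2$, and its parity is that of a path from $A_2$ to $B_2$ with interior in $C_2$, which by Lemma~\ref{l.2jAiBi} (the 2-join being connected) is the parity $\varepsilon$ of $X_1$. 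Now $G'$ is obtained from $G$ by replacing $X_1$ by a path of length $2-\varepsilon$ from $a_1$ to $b_1$: in $G$ the vertex $a_1$ is complete to $A_2$ and $b_1$ is complete to $B_2$ (because $A_1\cup A_2$ and $B_1\cup B_2$ are complete pairs and $A_1=\{a_1\}$, $B_1=\{b_1\}$), the value $2-\varepsilon$ lies in $\{1,2\}$ and has parity $\varepsilon$, and $a_1 \tp c_1 \tp b_1$ was already noted to be a flat path of $G'$ (no common neighbour of $a_1,b_1$, since such a vertex would sit in $A_2\cap B_2=\emptyset$). Hence $G'$ is precisely the block $G_2$.

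With that identification in place, the conclusion is immediate: $G$ is Berge and $(X_1, X_2)$ is a proper 2-join, so by Lemma~\ref{l.blockberge} both blocks $G_1, G_2$ of $G$ with respect to $(X_1, X_2)$ are Berge; in particular $G' = G_2$ is Berge.

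There is no real obstacle here — unlike Lemmas~\ref{cberge} and~\ref{cberge2}, where $G'$ is a genuinely new gadget and Bergeness had to be checked hole-by-hole and antihole-by-antihole, in Case~3 all the substantive case analysis is absorbed into Lemma~\ref{l.blockberge}. The only point requiring care is the parity bookkeeping above, namely that the length $2-\varepsilon$ prescribed for the contracted path matches the ``length $1$ or $2$, parity $\varepsilon$'' prescription of the block construction; this is routine given Lemma~\ref{l.2jAiBi}.
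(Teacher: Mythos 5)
Your proof is correct and follows exactly the paper's route: the paper likewise notes, immediately after constructing $G'$ in Case~3, that $G'$ is the block $G_2$ of $G$ with respect to $(X_1,X_2)$, and then disposes of the claim with the single line ``Clear by Lemma~\ref{l.blockberge}.'' You have merely made explicit the parity and flatness bookkeeping that the paper leaves implicit, which is fine.
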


\begin{proofclaim}
  Clear by Lemma~\ref{l.blockberge}.
\end{proofclaim}

\begin{claim}
  \label{c.np2j3}
  $G'$ has no proper non-path 2-join. 
\end{claim}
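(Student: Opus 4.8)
The plan is to argue by contradiction in the style of Lemmas~\ref{c2jfinal} and~\ref{c2jfinal2}, but more briefly, since $G'$ differs from $G$ only by the contraction of the flat path $X_1$ into the short flat path $a_1 \tp c_1 \tp b_1$ (of length $2-\varepsilon$, with $c_1 = a_1$ when $\varepsilon = 1$). So suppose $(X'_1, X'_2)$ is a proper 2-join of $G'$, fix a split $(X'_1, X'_2, A'_1, B'_1, A'_2, B'_2)$, put $C'_i = X'_i \setminus (A'_i \cup B'_i)$, and aim to show that $(X'_1, X'_2)$ is a path 2-join --- contradicting the statement.

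The core step is to locate $a_1, c_1, b_1$ with respect to $(X'_1, X'_2)$ and to show $\{a_1, c_1, b_1\}$ lies on a single side, say $X'_2$ up to symmetry. Since $c_1$ (when $c_1 \neq a_1$) has degree~2 with $N(c_1) = \{a_1, b_1\}$, while $a_1$ (resp.\ $b_1$) has all its neighbours other than those on the path in $A_2$ (resp.\ $B_2$), one checks the few ways the path can cross the 2-join: in each, one of the crossing endpoints ends up with no neighbour on its own side outside its attachment set, or complete to the opposite attachment set, so $(X'_1, X'_2)$ is degenerate (contradicting Claim~\ref{c.skew3}); or else no edge remains between $X'_1 \cap X_2$ and $X'_2 \cap X_2$, so $G'[X_2] = G[X_2]$ is disconnected, contradicting Property~\ref{prop.X2conn} of $G$. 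This case analysis --- much lighter than in Cases~1 and~2, since only the single vertex $c_1$ is new --- is what I expect to be the main obstacle.

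Granting $\{a_1, c_1, b_1\} \subseteq X'_2$, we have $X'_1 \subseteq X_2$, and the plan is to expand $G'$ back to $G$: set $X''_1 = X'_1$, $X''_2 = V(G) \setminus X'_1 \supseteq X_1$, $A''_1 = A'_1$, $B''_1 = B'_1$, and reattach the ends of $X_1$ on side~2 exactly as $a_1, b_1$ were attached in $G'$ (if $a_1 \in A'_2$ put $a_1 \in A''_2$ and the part of $X_1$ adjacent only inside $X_1$ goes into $C''_2$; symmetrically for $b_1$ and $B'_2$; if $a_1 \in C'_2$ then $A_2 \cap X'_1 = \emptyset$ and all of $X_1$ falls into $C''_2$). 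Since $G$ and $G'$ induce the same graph on $X_2$ and the same adjacencies between $X'_1$ and its complement, $(X''_1, X''_2)$ is a 2-join of $G$; it is substantial ($|X''_1| = |X'_1| \geq 3$ and is not a path of length~2 with interior in $C''_1$ because $(X'_1, X'_2)$ is substantial, and $|X''_2| \geq |X_1| \geq 4$), and if it were degenerate, Lemma~\ref{l.degenerate} would give $G$ an even skew partition, against the properties of $G$; hence it is proper.

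Since $G$ is a counter-example, $(X''_1, X''_2)$ is a path 2-join of $G$. If its path-side were $X''_2$, then either $X''_2 \supsetneq X_1$, which is impossible because a path-side is a flat path of $G$ and $X_1$ was chosen inclusion-wise maximal among flat paths; or $X''_2 = X_1$, that is $X'_1 = X_2$, whence $X'_2 = \{a_1, c_1, b_1\}$ is a path-side of $(X'_1, X'_2)$. So the path-side of $(X''_1, X''_2)$ is $X''_1 = X'_1$; then, as $G[X'_1] = G'[X'_1]$ and $A''_1 = A'_1$, $B''_1 = B'_1$, $C''_1 = C'_1$, the graph $G'[X'_1]$ is a path from $A'_1$ to $B'_1$ with interior in $C'_1$, so $X'_1$ is a path-side of $(X'_1, X'_2)$. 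In every case $(X'_1, X'_2)$ is a path 2-join of $G'$, the desired contradiction.
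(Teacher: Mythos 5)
There is a genuine gap at what you call the ``core step.'' You assert that whenever the contracted flat path $a_1\tp c_1\tp b_1$ straddles the 2-join $(X'_1,X'_2)$, one of the crossing endpoints forces $(X'_1,X'_2)$ to be degenerate, or else $G'[X_2]$ becomes disconnected. This is false. The essential surviving configuration is $a_1=c_1\in B'_1$, $b_1\in B'_2$ with $|B'_1|\geq 2$ and $|B'_2|\geq 2$: here $a_1$ is complete to $B'_2\setminus\{b_1\}\subseteq A_2$ and $b_1$ is complete to $B'_1\setminus\{a_1\}\subseteq B_2$, these two sets are complete to each other, nothing is degenerate, and $G[X_2]$ stays connected (this is exactly the picture of $G_c$ in Fig.~\ref{fig:cut2}). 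The paper's proof handles it by setting $A_3=B'_2\setminus\{b_1\}$, $B_3=B'_1\setminus\{a_1\}$, deriving the parity conditions from Lemma~\ref{l.2jAiAi} applied to $(X'_1,X'_2)$ in $G'$, and concluding that $(X_1,X_2)$ satisfies items~\ref{cond.first}--\ref{cond.penul} of the definition of a cutting 2-join of type~2 --- contradicting the hypothesis of Case~3 (your claim~(\ref{c.noncutting})). Your proposal never invokes the Case~3 hypothesis, and without it there is no contradiction available in this configuration; this is precisely the ``little twist'' the author flags in Section~\ref{decth}, where items~\ref{cond.first}--\ref{cond.penul} are needed without the last item of the definition.

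The second half of your argument (all of $a_1,c_1,b_1$ on one side, expand back to a 2-join of $G$, conclude it is a path 2-join with path-side $X'_1$) is sound, though the paper disposes of that case more directly: subdividing $a_1\tp c_1\tp b_1$ back into $X_1$ turns a non-path proper 2-join of $G'$ into a non-path proper 2-join of $G$, which the properties of $G$ already forbid. You also need the intermediate step that when the path crosses with $c_1\neq a_1$, one gets $|B'_2|=1$ and the single vertex of $B'_2$ can be shifted across, again reducing to a non-path proper 2-join of $G$. But none of this repairs the missing case above, which is where the real content of the claim lies.
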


\begin{proofclaim}
  Let $(X'_1, X'_2, A'_1, B'_1, A'_2, B'_2)$ be a split of a proper
  non-path 2-join of $G'$.  If $a_1 \in X'_1$, $b_1 \in X'_1$ then
  $c_1 \in X'_1$ since otherwise $c_1$ would be isolated in $X'_2$. If $c_1
  \neq a_1$ then $c_1 \in C'_1$ because $c_1$ has degree~2.  So, by
  subdividing $a_1 \tp c_1 \tp b_1$ we obtain a non-path proper 2-join
  of $G$, which contradicts the properties of $G$. Thus, since $a_1 \tp
  c_1 \tp b_1$ is a flat path of $G'$, up to  symmetry, we may assume
  $c_1 \in B'_1$, $b_1 \in B'_2$.

  Suppose $|B'_2| = 1$. Then no vertex of $A'_2$ has a neighbor in
  $B'_2$ for otherwise, $(X_1, X_2)$ would be degenerate. Thus, $(X'_1 \cup
  B'_2, X'_2 \setminus B'_2)$ is a non-path proper 2-join of $G'$, and
  by subdividing $a_1 \tp c_1 \tp b_1$, we obtain a non-path proper
  2-join of $G$, which contradicts the properties of $G$. Thus, $|B'_2|
  \geq 2$. In particular, $c_1 = a_1$, and similarly $|B'_1| \geq 2$.
 
  In $G$, $a_1$ is complete to $B'_2 \setminus \{b_1\}$, and $b_1$ is
  complete to $B'_1 \setminus \{a_1\}$.  We put $A_3 = B'_2 \setminus
  \{b_1\}$, $B_3 = B'_1 \setminus \{a_1\}$.  In $G$, $X_1$ is a flat
  path from $a_1$ to $b_1$, $A_3 \subset A_2$ and $B_3  \subset B_2$
  and $A_3$ is complete to $B_3$.  We claim that every path of $G$
  outgoing from $A_3 \cup \{b_1\}$ to $A_3 \cup \{b_1\}$ has even
  length. Note that after possibly deleting the interior of $X_1$,
  such a path $P$ may be viewed as a path $P'$ of $G'$ that has same
  parity as $P$.  In $G'$, $P'$ is an outgoing path from $B'_1$ to
  $B'_1$ and by Lemma~\ref{l.2jAiAi}, $P$ has even length as claimed.
  We claim that every  antipath of $G$ whose interior is in
  $A_3 \cup \{b_1\}$ and whose ends are outside  $A_3 \cup \{b_1\}$
  has even length. Let $Q$ be such an antipath of length at least~5.
  Note that the interior vertices of $X_1$ are not in $Q$ since every
  vertex in $Q$ has degree at least~3.  Thus $Q$ is an  antipath
  of $G'$ whose interior is in $B'_1$ and whose ends are not in $B'_1$
  and by Lemma~\ref{l.2jAiAi}, $Q$ has even length as claimed. The
  same properties hold with $B_3 \cup \{a_1\}$. Now, $A_3, B_3$ show
  that $(X_1, X_2)$ satisfies the
  items~\ref{cond.first}--\ref{cond.penul} of the definition of
  cutting 2-joins of type~2, which contradicts that we are not in Case~2
  of the proof of our theorem.
\end{proofclaim}

\begin{claim}
  \label{c.prop2jcomp3}
   $\overline{G'}$ has no proper 2-join. 
\end{claim}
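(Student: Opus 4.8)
\medskip

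\noindent The plan is to follow the pattern of Lemmas~\ref{cgbar1} and~\ref{cgbar2}: assume $\overline{G'}$ has a proper $2$-join, fix a split $(X'_1, X'_2, A'_1, B'_1, A'_2, B'_2)$ of it, and run a case analysis according to how the three distinguished vertices $a_1$, $b_1$, $c_1$ are distributed among $A'_1, B'_1, C'_1, A'_2, B'_2, C'_2$. By Claim~\ref{c.skew3} this $2$-join is non-degenerate, hence connected and substantial, and that is what drives the contradictions. The feature that distinguishes Case~3 is that now an admissible outcome of the analysis is that $G$ itself has a homogeneous $2$-join --- which is forbidden by the properties of $G$, hence still a contradiction.

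First I would record the local picture in $\overline{G'}$. When $\varepsilon=0$ the vertex $c_1$ is complete to $X_2$ and misses $a_1$ and $b_1$; when $\varepsilon=1$ we have $c_1=a_1$ and $a_1b_1\notin E(\overline{G'})$. In both cases $a_1$ is complete to $X_2\setminus A_2$ and misses $A_2$, $b_1$ is complete to $X_2\setminus B_2$ and misses $B_2$, each of $a_1,b_1$ and of $A_2\cup B_2$ has degree at least~$3$ in $G'$ (by Property~\ref{prop.deg3} of $G$, which forces $|A_2|,|B_2|\ge 2$), and the interior vertices of $X_1$ (or of the length-$2$ path, when $\varepsilon=0$) have degree~$2$ in $G'$ and degree $|V(G)|-3$ in $\overline{G}$. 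For $\varepsilon=0$ the analysis is short: because $c_1$ is complete to all of $X_2$, if $c_1\in X'_1$ then $c_1$ cannot be in $C'_1$ (this would force $X'_2\subseteq\{a_1,b_1\}$), and if $c_1\in A'_1$ (resp. $B'_1$) then $B'_2\cup C'_2$ (resp. $A'_2\cup C'_2$) is contained in $\{a_1,b_1\}$; in each sub-case, $|A_2|,|B_2|\ge 2$, non-degeneracy, and the connectedness of $G[X_2]$ (Property~\ref{prop.X2conn} of $G$) quickly yield a non-substantial side or a configuration that ``un-contracts'' to a degenerate or non-path proper $2$-join, a star cutset, or an even skew partition of $G$ or $\overline{G}$, all impossible.

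The genuinely new work is the case $\varepsilon=1$. Here I would first show that $a_1$ and $b_1$ lie on the same side of $\overline{G'}$'s $2$-join, say $X'_1$: if they were separated one would try to reinsert the flat path $X_1$ of $G$ between them, but its interior vertices are universal-up-to-two-neighbours in $\overline{G}$, so they can only sit in $A'_1\cup B'_1$, which then pins $B'_2\cup C'_2$ (or $A'_2\cup C'_2$) inside a set of at most two vertices and destroys substantiality. Once $a_1,b_1$, and hence all of $X_1$, is confined to one side, the $2$-join, restricted to $X_2$ together with the adjacencies of $a_1$ and $b_1$, either reinserts $X_1$ to give a degenerate or non-path proper $2$-join of $G$ or $\overline{G}$ --- impossible --- or else it presents the partition of $V(G)$ with $C=\{a_1\}$, $D=\{b_1\}$, $E$ the union of the interiors of all flat odd paths of $G$ from $a_1$ to $b_1$, and $A$, $B$, $F$ read off from $A'_1\cup B'_1$, $A'_2\cup B'_2$, and the remaining part of $X_2$. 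The hard part is then to verify that this is a homogeneous $2$-join of $G$: the completeness and anticompleteness of the four pairs of a homogeneous pair are inherited from the $2$-join of $\overline{G'}$ and from the path-$2$-join structure of $(X_1,X_2)$, but the ``touching'' condition on $A$ and $B$ must be squeezed out of the properness of $\overline{G'}$'s $2$-join together with non-degeneracy; the parity condition on the paths in $E$ is where $\varepsilon=1$ is used; the control of which degree-$2$ vertices lie on a flat path from $a_1$ to $b_1$ comes from the maximality of the flat path $X_1$; and that each such flat path is the path-side of a \emph{non-cutting} proper $2$-join of $G$ follows, for $X_1$ from Lemma~\ref{l.thimplies} and Claim~\ref{c.noncutting}, and for the others from the same argument after noting they too are flat paths of $G$. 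With the homogeneous $2$-join exhibited, the properties of $G$ are contradicted, which finishes the claim.
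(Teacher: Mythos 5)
Your overall strategy --- a case analysis on where $a_1,b_1,c_1$ sit in the split, with non-degeneracy (from~(\ref{c.skew3})) doing the work and a homogeneous 2-join of $G$ as the new admissible contradiction --- is the right one, and your $\varepsilon=0$ discussion matches the paper's (there, the fact that $c_1$'s only non-neighbours in $\overline{G'}$ are $a_1,b_1$ pins both of them down and forces a common neighbour of $a_1,b_1$ in $G$, contradicting the flatness of $a_1\tp c_1\tp b_1$). But there is a genuine gap in the $\varepsilon=1$ case: your claim that $a_1$ and $b_1$ must lie on the same side of the 2-join of $\overline{G'}$ is false. Since $a_1b_1$ is a flat edge of $G'$, in $\overline{G'}$ these two vertices are non-adjacent with no common non-neighbour, and up to symmetry this leaves \emph{three} configurations: $a_1\in C'_1$ with $b_1\in X'_2$; $a_1\in A'_1$ with $b_1\in B'_2$; and $a_1\in A'_1$ with $b_1\in B'_1$. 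The second configuration puts $a_1$ and $b_1$ on different sides and is the longest and hardest case of the paper's proof; it cannot be dismissed by ``reinserting $X_1$ and destroying substantiality'', because a 2-join of $\overline{G'}$ has no reason to extend to a 2-join of $\overline{G}$ --- indeed the whole point of Case~3, and the reason homogeneous 2-joins were introduced at all, is that contracting the path side can \emph{create} 2-joins in the complement that do not lift back. In that case the paper shows (when $C'_1\neq\emptyset$) that $a_1$ has no neighbour in $B'_1$, that $|A'_1|=|B'_2|=1$, and that every vertex of $C'_1$ has both a neighbour and a non-neighbour in $B'_1$ and vice versa, which exhibits $(C'_1,B'_1,\{b_1\},\{a_1\},E,A'_2\cup C'_2)$ as a homogeneous 2-join of $G$; when $C'_1=C'_2=\emptyset$ it instead falls back on~(\ref{c.np2j3}).

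A secondary inaccuracy: in the genuine same-side case $a_1\in A'_1$, $b_1\in B'_1$, the homogeneous 2-join obtained is $(A'_2,B'_2,B'_1,A'_1,X_1\setminus\{a_1,b_1\},C'_1)$, so the sets $C$ and $D$ are the full sets $B'_1$ and $A'_1$ rather than the singletons $\{a_1\},\{b_1\}$ of your sketch, and the homogeneous-pair conditions are verified on $(A'_2,B'_2)$, not on sets read off from $A'_1\cup B'_1$; nothing in this case forces $|A'_1|=|B'_1|=1$. These points are fixable, but as written the proposal skips the central case of the proof and mislocates where the singleton-$C,D$ homogeneous 2-join actually arises.
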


\begin{proofclaim}
  Let us consider a proper 2-join of $\overline{G'}$ with a split
  $(X'_1, X'_2, A'_1, B'_1, A'_2, B'_2)$. If $c_1 \neq a_1$ then $c_1$
  has degree $n-3$ in $\overline{G'}$. Thus, up to  symmetry, we may
  assume $c_1 \in B'_1$. Since $(X'_1, X'_2)$ is not degenerate, $c_1$
  must have a non-neighbor in $A'_1$. Thus, up to  symmetry, we may
  assume $a_1 \in A'_1$, $b_1 \in A'_2$. Now, since $(X'_1, X'_2)$ is
  not degenerate, there exists a vertex of $B'_2$ that is a common
  neighbor of $a_1, b_1$ in $G$, which contradicts $a_1\tp c_1 \tp b_1$
  being a flat path of $G$. We proved $a_1 = c_1$.

  Since $a_1 , b_1$ form a flat edge of $G'$, they must be
  non-adjacent in $\overline{G'}$ with no common non-neighbor. Thus,
  up to  symmetry we have to deal with three cases:
  
    \vspace{1ex}

    \noindent Case  $a_1 \in C'_1$, $b_1 \in X'_2$:

    Since in $G'$ $a_1 b_1$ is flat, in $\overline{G'}$ $a_1$ is
    complete to $A'_1 \cup B'_1$ or up to  symmetry $b_1 \in A'_2$
    while being complete to $B'_2$. Thus, $(X'_1, X'_2)$ is a
    degenerate 2-join, a contradiction.

    \vspace{1ex}

    \noindent Case  $a_1 \in A'_1$, $b_1 \in B'_2$:

    Since in $G'$, $a_1 b_1$ is flat, in $\overline{G'}$, $a_1$ must
    be complete to $(A'_1 \cup C'_1) \setminus \{a_1\}$.

    Suppose first $C'_1 \neq \emptyset$. There is at least one vertex
    of $C'_1$ that has a neighbor in $B'_1$ for otherwise $A'_1 \cup
    A'_2$ is a skew cutset of $\overline{G'}$, which implies $(X'_1,
    X'_2)$ being degenerate. If $a_1$ has a neighbor in $B_1$ then by
    Lemma~\ref{l.2jAiBi} every path from $A'_1$ to $B'_1$ whose
    interior is in $C'_1$ has odd length. Thus, $a_1$ must see every
    vertex of $B'_1$ that has a neighbor in $C'_1$. This implies that
    $A'_1 \cup (N(a_1) \cap B'_1)$ is a star cutset of $G'$, centered
    on $a_1$ and separating $C'_1$ from $X'_2$. Thus, $a_1$ has no
    neighbor in $B_1$. Hence, there is at least one outgoing path of
    even length from $A'_1$ to $B'_1$,  so no vertex in
    $A'_1$ has a neighbor in $B'_1$. If $|A'_1| \geq 2$ then $\{a_1\}
    \cup C'_1 \cup B'_2$ is a star cutset centered on $a_1$ that
    separates $A'_1 \setminus \{a_1\}$ from $B'_2$. Thus, $|A_1| =
    1$. Since, every path from $A'_1$ to $B'_1$ whose interior is in
    $C'_1$ has even length, we know that every path from $A'_2$ to
    $B'_2$ whose interior is in $C'_2$ has even length. Thus, $C'_2
    \neq \emptyset$. By the same proof as above, this implies $B'_2
    = \{b_1\}$. Note that every vertex in $C'_1$ has a neighbor in
    $B'_1$ because a vertex of $C'_1$ with no neighbor in $B'_1$ can
    be separated from the rest of the graph by a star cutset centered
    on $a_1$. Every vertex in $C'_1$ has a non-neighbor in $B'_1$
    because a vertex of $C'_1$ complete to $B'_1$ would imply $(X'_1,
    X'_2)$ being degenerate. Note also that every vertex in $B'_1$ has
    a neighbor in $C'_1$ for otherwise $(X'_1, X'_2)$ would be
    degenerate. Every vertex in $B'_1$ has a non-neighbor in $C'_1$
    because if there is a vertex $b \in B'_1$ complete to $C'_1$ then
    $|B'_1| \geq 2$ implies that $\{b\} \cup C'_1 \cup B'_2$ is a star
    cutset separating $B'_1 \setminus \{b\}$ from $A'_2$, and $|B'_1|
    = 1$ implies that every vertex in $C'_1$ is complete to $A'_1 \cup
    B'_1$, a case already treated.  Let us come back to $G$: in $G$,
    $X_1$ is a path from $a_1$ to $b_1$. Let us denote by $E$ its
    interior. We observe that $(C'_1, B'_1, \{b_1\}, \{a_1\}, E, A'_2
    \cup C'_2)$ is a homogeneous 2-join of $G$ (the last condition of
    the definition of homogeneous 2-joins is satisfied
    by~(\ref{c.noncutting})). This contradicts the properties of $G$.
  
    We proved $C'_1 = \emptyset$. By the same way, $C'_2 =
    \emptyset$. Thus, $(X'_1, X'_2)$ is a non-path proper 2-join of
    $G'$, contradictory to~(\ref{c.np2j3}).

    \vspace{1ex}

    \noindent Case   $a_1 \in A'_1, b_1 \in B'_1$:

    Since $a_1 \tp b_1$ is a flat edge of $G'$, $C'_2 = \emptyset$. If
    $C'_1 = \emptyset$, then just like above $(X'_1, X'_2)$ is a
    non-path proper 2-join of $G'$, contradictory to~(\ref{c.np2j3}). So,
    $C'_1 \neq \emptyset$. Hence, $(A'_2, B'_2, B'_1, A'_1, X_1
    \setminus \{a_1, b_1\}, C'_1)$ is a homogeneous 2-join of $G$
    (the last condition of the definition of homogeneous 2-joins is
    satisfied by~(\ref{c.noncutting})). This contradicts the
    properties of $G$.
\end{proofclaim}

\begin{claim}
  \label{c.linebip3}
   $G'$ is neither a bipartite graph nor the line-graph of a bipartite
  graph.
\end{claim}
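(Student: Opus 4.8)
The plan is to prove the two halves of Claim~\ref{c.linebip3} separately, each time forcing a contradiction with the properties of $G$ (recall that, being not basic, $G$ is neither bipartite nor the line-graph of a bipartite graph). Throughout I will use the following features of the construction: $G$ and $G'$ induce the same graph on $X_2$; $N_{G}(a_1)\cap X_2 = A_2 = N_{G'}(a_1)\cap X_2$ and $N_{G}(b_1)\cap X_2 = B_2 = N_{G'}(b_1)\cap X_2$; the interior vertices of $X_1$ have degree~$2$ in $G$; $a_1\not\sim_G b_1$ (they are the ends of a flat path of length $\ge 3$); and $N_G(a_1)\cap N_G(b_1)=\emptyset$, since the $X_1$-neighbours of $a_1$ and of $b_1$ are two distinct interior vertices of $X_1$ and $A_2\cap B_2=\emptyset$.

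For the bipartite half, suppose $G'$ is bipartite and fix a proper $2$-colouring. Since $a_1$ is complete to $A_2$ and $b_1$ is complete to $B_2$, all of $A_2$ lies in one colour class and all of $B_2$ in one colour class. By Property~\ref{prop.conn} there is a path $Q$ of $G[X_2]$ from $A_2$ to $B_2$ with interior in $C_2$; joining $Q$ to the path $X_1$ by the edge from $a_1$ to the end of $Q$ in $A_2$ and the edge from $b_1$ to the end of $Q$ in $B_2$ produces a hole of $G$, hence an even one, so $|Q|$ and $|X_1|$ have the same parity $\varepsilon$. Now colour $X_2$ as in $G'$ and colour the path $X_1$ by its unique proper $2$-colouring in which $a_1$ receives the colour \emph{not} used on $A_2$. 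This is a proper $2$-colouring of $G$: it is proper inside $X_2$ and inside $X_1$, the only edges between $X_1$ and $X_2$ run from $a_1$ to $A_2$ (different colours by construction) and from $b_1$ to $B_2$, and the congruences $|X_1|\equiv|Q|\equiv\varepsilon\pmod 2$ force the alternating colouring of $X_1$ to assign $b_1$ the colour not used on $B_2$. Hence $G$ is bipartite, contradicting the properties of $G$; so $G'$ is not bipartite.

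For the line-graph half, recall $G'$ is Berge by Claim~\ref{c.berge3}, so by Theorem~\ref{th.lgbg} it suffices to show that if $G'$ contains no claw and no diamond, then neither does $G$ --- for then Theorem~\ref{th.lgbg} makes $G$ the line-graph of a bipartite graph, contradicting that $G$ is not basic. So assume $G'$ has no claw and no diamond, let $W$ be the vertex set of a claw or a diamond of $G$, and seek a contradiction. I would argue: \emph{(i)} $W$ contains no interior vertex of $X_1$. Such a vertex $v$ has degree~$2$ in $G$, so in a diamond its two neighbours would be the two adjacent high-degree vertices, impossible since $v$'s two $G$-neighbours are its non-adjacent neighbours on the induced path $X_1$; and in a claw $v$ must be a leaf whose centre is a neighbour of $v$ on $X_1$ of degree $\ge 3$, hence $a_1$ or $b_1$, say $a_1$ (so $v$ is the interior vertex next to $a_1$), with the remaining two leaves being non-adjacent vertices $x,y\in A_2$; but then $\{a_1;c_1,x,y\}$ when $\varepsilon=0$, or $\{a_1;b_1,x,y\}$ when $\varepsilon=1$, is an induced claw of $G'$, a contradiction. \emph{(ii)} $W$ does not contain both $a_1$ and $b_1$: since $a_1\not\sim_G b_1$, they would be the two non-adjacent vertices of a diamond or two leaves of a claw, and in either case $W$ would exhibit a common neighbour of $a_1$ and $b_1$ in $G$, contradicting $N_G(a_1)\cap N_G(b_1)=\emptyset$. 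By \emph{(i)} and \emph{(ii)}, $W\subseteq X_2\cup\{a_1\}$ or $W\subseteq X_2\cup\{b_1\}$, and $G$ and $G'$ induce the same graph on each of these sets; so $W$ already induces a claw or a diamond of $G'$ --- the desired contradiction. Applying this to a hypothetical claw and to a hypothetical diamond of $G$ finishes the reduction.

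The main obstacle is making step~\emph{(i)} of the second half airtight: one must check that a degree-$2$ interior vertex of $X_1$ cannot occur in a claw or a diamond of $G$ except in the single configuration of a claw through an end of $X_1$ and the adjacent interior vertex, and then verify that the short replacement path $a_1\tp c_1\tp b_1$ in $G'$ still carries a claw --- which is exactly where the explicit neighbourhoods of $a_1$, $b_1$, $c_1$ in $G'$, together with $A_2\cap B_2=\emptyset$, are used. Everything else is routine bookkeeping with the $2$-join structure.
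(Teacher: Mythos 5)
Your proposal is correct and follows the same strategy as the paper: show that if $G'$ were bipartite or the line-graph of a bipartite graph, then $G$ would be too (since $G$ is obtained from $G'$ by subdividing the flat path $a_1\tp c_1\tp b_1$ into a path of the same parity), contradicting that $G$ is not basic. The paper simply asserts this subdivision fact in one line, whereas you verify it explicitly — via the parity-aware $2$-colouring for the bipartite half and via Theorem~\ref{th.lgbg} (claw- and diamond-freeness) for the line-graph half — and both verifications are sound.
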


\begin{proofclaim}
  Subdividing flat paths of a line-graph of a bipartite graph
  (resp. of a bipartite graph) into a path of the same parity yields a
  line-graph of a bipartite graph (resp. a bipartite graph). Thus, if
  $G'$ is the line-graph of a bipartite graph or a bipartite graph,
  then so is $G$, which contradicts the properties of $G$.
\end{proofclaim}

\begin{claim}
  \label{c.clinegraph3}
   $\overline{G'}$ is not the line-graph of a bipartite graph. 
\end{claim}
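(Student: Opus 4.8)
The plan is to reduce the statement to finding a claw or a diamond inside $\overline{G'}$. By~(\ref{c.berge3}) the graph $G'$ is Berge, hence $\overline{G'}$ has no odd hole, so by Theorem~\ref{th.lgbg} it suffices to produce a claw or a diamond in $\overline{G'}$. Taking complements, a claw in $\overline{G'}$ amounts to four vertices of $G'$ inducing a triangle together with an isolated vertex, and a diamond in $\overline{G'}$ amounts to four vertices of $G'$ with exactly one edge among them. So the whole task is to exhibit one of these two configurations in $G'$. Two facts will be used repeatedly: $|A_2|\geq 2$ and $|B_2|\geq 2$ (by Property~\ref{prop.deg3}, $a_1$ and $b_1$ have degree at least~$3$), and $G'$ is not bipartite by~(\ref{c.linebip3}); since $G'$ has no odd hole, the latter gives a triangle $T$ of $G'$.

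First, the case $\varepsilon=0$. Then $c_1$ has degree~$2$ in $G'$, its two neighbors being $a_1$ and $b_1$, and $a_1b_1\notin E(G')$. I claim that no triangle of $G'$ is dominating. A triangle cannot contain $c_1$, since $a_1\not\sim b_1$; a triangle disjoint from $\{a_1,b_1\}$ contains no neighbor of $c_1$; and a triangle through $a_1$ is of the form $\{a_1,u,v\}$ with $u,v\in A_2$ (as the only neighbors of $c_1$ are $a_1$ and $b_1$, and $b_1\notin A_2$), so it has no neighbor of $b_1$, and symmetrically for a triangle through $b_1$. Hence $T$ is non-dominating: there is a vertex $w$ with no neighbor in $T$, and $T\cup\{w\}$ induces a triangle plus an isolated vertex, i.e.\ $\overline{G'}$ has a claw, and we are done.

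Now the case $\varepsilon=1$, where $a_1b_1$ is a flat edge of $G'$; in particular $a_1$ and $b_1$ have no common neighbor and $N_{G'}[a_1]\cup N_{G'}[b_1]=\{a_1,b_1\}\cup A_2\cup B_2$. If $C_2$ is not a clique, pick non-adjacent $c,c'\in C_2$: then $\{a_1,b_1,c,c'\}$ has unique edge $a_1b_1$, so $\overline{G'}$ has a diamond. So assume $C_2$ is a clique. If some triangle of $G'$ is not dominating we again get a claw in $\overline{G'}$; so assume every triangle of $G'$ is dominating. Then $G'[A_2\cup C_2]$ is triangle-free (a triangle there would miss $b_1$, which is anticomplete to $A_2\cup C_2$, hence would be non-dominating) and, being an induced subgraph of the Berge graph $G'$, it has no odd hole, so it is bipartite; likewise $G'[B_2\cup C_2]$ is bipartite, and since $C_2$ is a clique we get $|C_2|\leq 2$. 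It then remains to locate $T$: either $T$ meets both $A_2$ and $B_2$, or $T=\{a_1,u,v\}$ with $u,v\in A_2$ and $uv\in E(G')$, or $T=\{b_1,u,v\}$ with $u,v\in B_2$. In each of these sub-cases I would use that $(X_1,X_2)$ is non-degenerate (so no vertex of $A_2$ is complete to $B_2$ and no vertex of $B_2$ is complete to $A_2$), together with Property~\ref{prop.conn} and the non-cutting hypothesis~(\ref{c.noncutting}), to produce two non-adjacent vertices $r,s$ in $A_2$ (or in $B_2$) that are both anticomplete to some vertex $t$ of the opposite side; then $\{b_1,r,s,t\}$ (resp.\ $\{a_1,r,s,t\}$) has unique edge $b_1t$ (resp.\ $a_1t$), so $\overline{G'}$ has a diamond. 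Where this is not possible directly, the adjacencies then forced in $G$ should either make $(X_1,X_2)$ satisfy items~\ref{cond.first}--\ref{cond.penul} of the definition of cutting $2$-joins of type~2 (contradicting that we are not in Case~2), or make $G$ basic (contradicting the properties of $G$), as in the analogous step of~(\ref{c.np2j3}).

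The main obstacle is precisely this last step when $\varepsilon=1$: once the degree-two vertex $c_1$ is gone there is no cheap gadget for a diamond, and the non-dominating triangle, or the four-vertex subgraph with a single edge, has to be squeezed out of the structural constraints on the $2$-join — its non-degeneracy, its being non-cutting, the connectivity of $G[X_2]$ — together with the fact that $G$, being a counterexample, is not basic. Everything else (the reduction via Theorem~\ref{th.lgbg}, the case $\varepsilon=0$, and the disposal of the case ``$C_2$ not a clique'') is routine.
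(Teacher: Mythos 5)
Your handling of the case $\varepsilon=0$ (i.e.\ $c_1\neq a_1$) is correct and self-contained: the non-dominating triangle yields a claw in $\overline{G'}$, which together with Theorem~\ref{th.lgbg} and Bergeness finishes that case. It is a slightly different gadget from the paper's (which takes $c\in C_2$ with a non-neighbor $b$ in $A_2\cup B_2$ and observes that $\{a_1,c_1,c,b\}$ induces a single edge, hence a diamond in $\overline{G'}$), but both are routine.

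The case $\varepsilon=1$, however, is a genuine gap, and it is the heart of the claim. You reduce to exhibiting four vertices of $G'$ inducing exactly one edge, or a triangle plus an isolated vertex, and then say you ``would use'' non-degeneracy, Property~\ref{prop.conn} and~(\ref{c.noncutting}) to produce them, falling back on ``the adjacencies then forced should either make $(X_1,X_2)$ cutting of type~2 or make $G$ basic.'' None of this is carried out, and the difficulty is not merely technical: the paper's proof of this case does \emph{not} proceed by finding a claw or diamond at all. It assumes $\overline{G'}=L(B)$ for a bipartite $B$, uses the flatness of $a_1b_1$ to show that every edge of $B$ meets one of the four endpoints of the two edges representing $a_1,b_1$, partitions $V(G')\setminus\{a_1,b_1\}$ into six sets $A_X,A_Y,B_X,B_Y,\{c\},\{d\}$, and then kills each possibility \emph{globally}: if some $|A_X|\geq 2$ then $\overline{G'}$ has a proper $2$-join, contradicting~(\ref{c.prop2jcomp3}); star cutsets are excluded by~(\ref{c.skew3}); and the one surviving configuration is $L(K_{3,3}\setminus e)$, which is self-complementary and so contradicts~(\ref{c.linebip3}). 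In other words, a graph can be locally claw-free and diamond-free here, and the contradiction must come from the already-established global properties of $G'$ and $\overline{G'}$; your purely local gadget-hunting plan cannot succeed as stated, and the global fallback you gesture at is exactly the part that needs to be written. As it stands the proof is incomplete.
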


\begin{proofclaim}
  Suppose that $\overline{G'}$ is the line-graph of bipartite
  graph. If $c_1 \neq a_1$ then by the properties of $G$ there exists
  a path of even length from $A_2$ to $B_2$ whose interior is in
  $C_2$. Thus, there is a vertex $c \in C_2$. Since $(X_1, X_2)$ is
  not degenerate, $c$ has at least one non-neighbor $b$ in one of
  $A_2, B_2$, say $B_2$ up to symmetry.  Now $\{a_1, c_1, c, b\}$
  induces a diamond of $\overline{G'}$, a contradiction. This proves
  $a_1 = c_1$.

  Let $B$ be a bipartite graph such that $G' = \overline{L(B)}$. Let
  $(X,Y)$ be a bipartition of $B$. So, $a_1, b_1$ may be seen as edges
  of $B$. Let us suppose $a_1 = a_X a_Y$ and $b_1 = b_X b_Y$ where
  $a_X, b_X \in X$ and $a_Y, b_Y \in Y$. Note that these four vertices
  of $B$ are pairwise distinct since in $L(B) = \overline{G'}$, $a_1$
  misses $b_1$. Since $a_1b_1$ is flat in $G'$, every edge of $B$ is
  either adjacent to $a_X$, $a_Y$, $b_X$ or $b_Y$. Thus, the vertices
  of $L(B) = \overline{G'}$ that are different from $a_1, b_1$
  partition into six sets:
  
  \begin{itemize}
  \item
    $A_X$, the set of the edges of $B$ seeing $a_X$ and missing $b_Y$;
  \item
    $A_Y$, the set of the edges of $B$ seeing $a_Y$ and missing $b_X$;
  \item
    $B_X$, the set of the edges of $B$ seeing $b_X$ and missing $a_Y$;
  \item
    $B_Y$, the set of the edges of $B$ seeing $b_Y$ and missing $a_X$;
  \item
    possibly a single vertex $c$ representing the edge $a_Xb_Y$;
  \item
    possibly a single vertex $d$ representing the edge $a_Yb_X$.
  \end{itemize}

  Suppose $|A_X| \geq 2$. Then, $B_X \neq \emptyset$ for otherwise
  one of $\{a_1\}$, $\{a_1, c\}$ would be a star cutset of $\overline{G'}$
  separating $A_X$ from $b_1$. We observe that $(A_X \cup B_X, V(G')
  \setminus (A_X \cup B_X))$ is a 2-join of $\overline{G'}$. This
  2-join is substantial since $|A_X| \geq 2$ and by~(\ref{c.skew3}) it
  is non-degenerate and therefore proper,
  contradictory to~(\ref{c.prop2jcomp3}). Thus, $|A_X| \leq 1$, and
  similarly $|B_X| \leq 1$, $|A_X| \leq 1$, $|B_Y| \leq 1$. Note that
  if $|A_X| = 1$, $|B_X| = 1$ then there is an edge between $A_X, B_X$
  for otherwise one of $\{a_1\}$, $\{a_1, c\}$ would be a star cutset
  separating $A_X$ from $B_X$. Similarly, if $|A_Y| = 1$, $|B_Y| = 1$
  then there is an edge between $A_Y, B_Y$.  In the case when $|A_X| =
  |B_X| = |A_Y| = |B_Y| = 1$ and when $c, d$ are both vertices of
  $G'$, we observe that $\overline{G'}$ is the self-complementary
  graph $L(K_{3,3}\setminus e)$ (represented in Fig.~\ref{fig.3l}). Hence,
  $G'$ is an induced subgraph of the line-graph of a bipartite graph,
  and $G'$ is the line-graph of a bipartite graph,
  contradictory to~(\ref{c.linebip3}).
\end{proofclaim}

\begin{claim}
  \label{c.path-cobip3}
  $G'$ is not a path-cobipartite graph (and in particular, not a
  cobipartite graph).
\end{claim}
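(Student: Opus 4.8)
The plan is to suppose, for a contradiction, that $G'$ is path-cobipartite, with non-empty cliques $A,B$ and the set $P$ of degree-$2$ ear-interior vertices as in the definition, and to exploit two facts visible in the proof of Lemma~\ref{l.twice}: in a path-cobipartite graph every vertex of degree at least~$3$ lies in $A\cup B$, and every ear has odd length. First I collect the elementary facts about $\{a_1,b_1\}$. By Property~(\ref{prop.deg3}), $\deg_G a_1=1+|A_2|\ge 3$ and $\deg_G b_1=1+|B_2|\ge 3$, so $|A_2|,|B_2|\ge 2$; since no edge of $G$ incident with an interior vertex of $X_1$ leaves $X_1$, the same count gives $\deg_{G'}a_1,\deg_{G'}b_1\ge 3$, hence $a_1,b_1\in A\cup B$, and moreover $N_G(v)=N_{G'}(v)$ for every $v\in X_2$. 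Finally, $a_1$ and $b_1$ have at most one common neighbour in $G'$: none when $\varepsilon=1$ (they are adjacent, with disjoint neighbourhoods off the edge), and only $c_1$ when $\varepsilon=0$.

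For $\varepsilon=0$ the vertex $c_1$ has degree~$2$ and its two neighbours $a_1,b_1$ are non-adjacent and of degree at least~$3$. I would argue that then $c_1\in P$: if $c_1$ lay in a clique it would be adjacent to every other vertex of that clique, so either $a_1,b_1$ would be adjacent, or that clique would be exactly $\{c_1,x\}$ with $x\in\{a_1,b_1\}$, a degenerate possibility to be treated alongside the $\varepsilon=1$ cases below. Once $c_1\in P$, it is interior to a unique ear whose two endpoints must be its neighbours $a_1,b_1$ (which, having degree at least~$3$, cannot be interior vertices of it); so the ear is $a_1\tp c_1\tp b_1$, of even length~$2$ --- contradicting that ears are odd. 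Thus I may assume $\varepsilon=1$, so $a_1\sim b_1$ in $G'$ with no common neighbour (the ``$\{c_1,x\}$'' possibility of $\varepsilon=0$ behaves like the ``different cliques'' case below).

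Now split according to whether $a_1,b_1$ lie in a common clique. If not, say $a_1\in A$, $b_1\in B$: since $a_1$ has the $B$-neighbour $b_1$ it is not special, and having degree at least~$3$ it is not an ear-interior vertex, so it has no neighbour in $P$; hence $A_2\subseteq A\cup B$, likewise $B_2$, while $C_2$, being anticomplete to $\{a_1,b_1\}$, lies in $P$. Using $A_2\cap B_2=\emptyset$ together with the clique property one deduces $A_2=A\cap X_2$, $B_2=B\cap X_2$, $C_2=P$, and that every ear of $G'$ has both ends in $X_2$; restoring $X_1$, the path $X_1$ plays the role of one further flat path from $A$ to $B$, and one checks that $G$ itself is path-cobipartite, with cliques $\{a_1\}\cup A_2$ and $\{b_1\}\cup B_2$ and with ear-interiors the interior of $X_1$ together with $C_2$ --- contradicting the properties of $G$. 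If $a_1,b_1$ lie in a common clique, say $A$, then a third vertex of $A$ would be a common neighbour, so $A=\{a_1,b_1\}$ and $X_2\subseteq B\cup P$; if $P=\emptyset$ then $X_2$ is the clique $B$, so every vertex of $A_2$ is complete to $B_2$ and $(X_1,X_2)$ is degenerate, contradicting Property~(\ref{prop.degenerate}); if $P\ne\emptyset$, then every ear of $G'$ runs from $a_1$ or from $b_1$ into the clique $B$, and using that $G$ is Berge --- two ears from $a_1$ ending at distinct vertices of $B$, or an ear from $a_1$ and one from $b_1$ ending at the same vertex of $B$, close (together with the clique $B$ and/or the path $X_1$) an induced odd cycle --- one forces all ears issued from $a_1$ (resp.\ from $b_1$) to share one endpoint $w_a$ (resp.\ $w_b$), with $w_a\ne w_b$. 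Then either $B=\{w_a,w_b\}$ and $G$ is bipartite, hence basic, or $|B|\ge 3$ and $\bigl(V(G)\setminus B,\,B\bigr)$ turns out to be a substantial, connected, non-path $2$-join of $G$ --- in both cases contradicting the properties of $G$.

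The routine parts are the two ``$P=\emptyset$'' reductions and the elementary parity verifications; the main obstacle is the case $\varepsilon=1$ with $a_1,b_1$ in a common clique and $P\ne\emptyset$ (and the analogous degenerate branch left over from $\varepsilon=0$), where one must push the odd-cycle argument that collapses all ears onto single endpoints and then verify carefully that the resulting $2$-join is substantial and non-path. In every case the underlying strategy is the same: translate the hypothetical path-cobipartite structure of $G'$ into a structure $G$ cannot possess --- $G$ being path-cobipartite, $G$ (or $\overline G$) having a non-path proper $2$-join, $(X_1,X_2)$ being a degenerate proper $2$-join, or $G$ being bipartite --- each forbidden by the properties of $G$ or by Property~(\ref{prop.degenerate}).
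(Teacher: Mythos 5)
The decisive gap is in your subcase ``$a_1\in A$, $b_1\in B$'' when $P\neq\emptyset$. The paper's notion of a path-cobipartite graph allows exactly \emph{one} subdivided edge: $P$ is the interior of a single odd path from one vertex $a\in A$ to one vertex $b\in B$ (the graph is ``obtained by subdividing an edge'', and the proof of Theorem~\ref{th.case} speaks of ``the unique maximal flat path'' with ends $a$ and $b$; likewise the paper's own proof of this claim treats $P\cup\{a,b\}$ as inducing a single path). You are implicitly working with a multi-ear variant. So when you restore $X_1$ and declare $G$ path-cobipartite ``with ear-interiors the interior of $X_1$ together with $C_2$'', the structure you exhibit has two disjoint flat paths of length at least~3 between the two cliques whenever $C_2=P\neq\emptyset$; such a graph is \emph{not} path-cobipartite in the paper's sense, and no contradiction with the properties of $G$ follows. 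This is exactly where the paper's proof does its real work: because $a_1b_1$ is flat, $N(a_1)\cap B=\{b_1\}$ and $N(b_1)\cap A=\{a_1\}$, so $P\cup X_1\cup\{a,b\}$ induces a hole and $\bigl(P\cup X_1\cup\{a,b\},\,(A\setminus\{a,a_1\})\cup(B\setminus\{b,b_1\})\bigr)$ is a non-path 2-join of $G$; by the properties of $G$ it cannot be proper, which forces $|A|=|B|=3$, and then $G$ is a prism, i.e.\ the line-graph of a bipartite graph --- the final contradiction. Your argument contains no substitute for this step.

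A secondary point: under the correct single-ear definition, your subcase ``$a_1,b_1$ in a common clique, $P\neq\emptyset$'' dies immediately --- the ear's end lying in $A=\{a_1,b_1\}$ has neighbours only in $A\cup P$, hence degree~2 in $G'$, contradicting $\deg_{G'}a_1,\deg_{G'}b_1\geq 3$, which you had already established --- so the elaborate odd-cycle argument you sketch there is unnecessary (and it is only a sketch). The cases $P=\emptyset$ and $\varepsilon=0$ with $c_1\in P$ you handle essentially as the paper does.
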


\begin{proofclaim}
  If $G'$ is a path-cobipartite graph then let $A$, $B$, $P$, $a$, $b$
  be like in the definition. Suppose first $P = \emptyset$. If $a_1
  \in A, b_1 \in A$, then since $a_1b_1$ is a flat edge of $G'$ we
  have $|A| = 2$. If a vertex $c$ of $B$ sees none of $a_1, b_1$ then
  $B\setminus c$ is a star-cutset of $G'$ separating $c$ from
  $a_1b_1$. Thus $\{a_1\} \cup N(a_1)$ and $\{b_1\} \cup N(b_1)$ are
  two cliques of $G'$ that partition $V(G')$. Thus, we may always
  assume that $a_1 \in A$, $b_1 \in B$. So, $G$ is obtained by
  subdividing $a_1b_1$, so $G$ is a path-cobipartite graph, and this
  contradicts the properties of $G$.
  
  Thus $P \neq \emptyset$. Note that $(P\cup \{a, b\}, A
  \setminus\{a\} \cup B\setminus\{b\})$ is a path 2-join of
  $G'$. Also, $G'[(A \cup B) \setminus\{a, b\}]$ is not a single edge,
  for otherwise $G'$ would be a hole, contradictory to~(\ref{c.linebip3}). Thus
  this 2-join is proper, and so it is not degenerate. In particular,
  every vertex in $A\setminus\{a\}$ has a neighbor and a non-neighbor
  in $B\setminus\{b\}$, which implies $|A| \geq 3$, $|B| \geq 3$.  If at
  least one of $a_1, b_1$ is on $P$ then the graph $G$ obtained by
  subdividing $a_1b_1$ is again a path-cobipartite graph,
  which contradicts the properties of $G$. Thus since $a_1b_1$ is a flat
  edge of $G'$, we may assume $a_1 \in A \setminus \{a\}$, $b_1 \in B
  \setminus \{b\}$. The graph $G$ is obtained by subdividing $a_1b_1$
  into a path $Q$. Now $(P \cup Q \cup \{a, b\}, V(G) \setminus (P
  \cup Q \cup \{a, b\})$ is a 2-join of $G$. By the properties of $G$
  this 2-join must be either a path 2-join or a non-proper 2-join,
  meaning that $V(G') \setminus (P \cup Q \cup \{a, b\})$ is a single
  edge. Now we observe that $G$ is the line-graph of a bipartite graph
  (such graphs are called \emph{prisms}
  in~\cite{chudnovsky.r.s.t:spgt}), which contradicts the properties of
  $G$.
\end{proofclaim}

\begin{claim}
  \label{c.doublesplit3}
  $G'$ is not a path-double split graph.
\end{claim}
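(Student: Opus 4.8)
The plan is to suppose $G'$ is a path-double split graph and contradict the properties of $G$ (no non-path proper $2$-join, not a path-double split graph) or the facts already proved in Case~3. Write the data of the decomposition of $G'$ with Greek letters to avoid any clash with $a_1,b_1,c_1$: vertices $\{\alpha_1,\dots,\alpha_m\}\cup\{\beta_1,\dots,\beta_m\}\cup\{\gamma_1,\dots,\gamma_n\}\cup\{\delta_1,\dots,\delta_n\}\cup\Theta$ with $m,n\ge2$, where $\alpha_i,\beta_i$ are joined by an odd flat path through $\Theta$, the $\gamma$'s and $\delta$'s form the ``co-matching'' part, and there are exactly two disjoint edges between $\{\alpha_i,\beta_i\}$ and $\{\gamma_j,\delta_j\}$. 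First I would record two easy facts: $a_1$ and $b_1$ have degree at least~$3$ in $G'$ (their neighbourhood in $G$ and in $G'$ both consist of $|A_2|\ge2$ vertices of $X_2$ plus one path-neighbour, by Property~\ref{prop.deg3}); and every degree-$2$ vertex of a path-double split graph lies in $\Theta$ (vertices of $\alpha_i,\beta_i$ have degree $n+1\ge3$, those of $\gamma_j,\delta_j$ have degree $2n+m-2\ge4$, those of $\Theta$ have degree $2$). Hence $a_1,b_1\notin\Theta$, so both lie in the $\alpha$-$\beta$-$\gamma$-$\delta$ part. Recall that $a_1\tp c_1\tp b_1$ is a flat path of $G'$.

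Next I would dispose of the case $\varepsilon=0$. Then $c_1$ has degree~$2$ in $G'$, hence $c_1\in\Theta$, so $c_1$ is an interior vertex of the unique odd path $\Pi$ between some $\alpha_i$ and $\beta_i$ with interior in $\Theta$, and the two neighbours of $c_1$ along $\Pi$ are $a_1,b_1$; since $a_1,b_1\notin\Theta$ they must be the ends $\alpha_i,\beta_i$ of $\Pi$, so $\Pi=\alpha_i\tp c_1\tp \beta_i$ has length~$2$, contradicting that $\Pi$ is odd. Therefore $\varepsilon=1$, so $a_1\sim b_1$ and $a_1b_1$ is a flat edge of $G'$. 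I would then locate this edge: it cannot join two distinct pairs of $\{\alpha_i,\beta_i\}$ (no such edge exists); and it cannot join $\{\alpha_i,\dots\}$ to $\{\gamma_j,\delta_j\}$, because an edge $\alpha_i\gamma_j$ has, for every $j'\ne j$, a common neighbour of $\alpha_i$ and $\gamma_j$ inside $\{\gamma_{j'},\delta_{j'}\}$ (and $n\ge2$ gives such a $j'$), contradicting flatness. So either \emph{(i)} $\{a_1,b_1\}=\{\alpha_i,\beta_i\}$ for some $i$ (the $\Theta$-path of this pair then having length~$1$), or \emph{(ii)} $\{a_1,b_1\}\subseteq\{\gamma_1,\dots,\gamma_n,\delta_1,\dots,\delta_n\}$; in case~(ii), if $n\ge3$ then some $\gamma_k$ (or $\delta_k$) with $k$ avoiding the two indices involved is a common neighbour, so $n=2$, and up to relabelling $a_1=\gamma_1$, $b_1=\delta_2$. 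In this sub-case flatness forbids, for each $i$, that $\alpha_i$ or $\beta_i$ be adjacent to both $\gamma_1$ and $\delta_2$; together with the ``two disjoint edges'' condition this forces, after possibly swapping $\alpha_i\leftrightarrow\beta_i$, that $\alpha_i\sim\gamma_1,\gamma_2$ and $\beta_i\sim\delta_1,\delta_2$ for all $i$ --- i.e.\ $\{\alpha_1,\dots,\alpha_m\}$ is complete to $\{\gamma_1,\gamma_2\}$ and anticomplete to $\{\delta_1,\delta_2\}$, and $\{\beta_1,\dots,\beta_m\}$ conversely.

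Finally I would derive the contradictions. In case~(i): $G$ is obtained from $G'$ by subdividing the matching edge $a_1b_1=\alpha_i\beta_i$ into the flat path $X_1$, which has odd length since $\varepsilon=1$; adding the interior of $X_1$ to $\Theta$ exhibits $G$ itself as a path-double split graph, contradicting the properties of $G$. In case~(ii): I claim $X_1':=\{\alpha_i\}\cup\{\beta_i\}\cup\Theta$ is a side of a non-path proper $2$-join of $G$, with split $\bigl(X_1',\,\{\gamma_1,\gamma_2,\delta_1,\delta_2\}\cup\operatorname{int}(X_1),\,\{\alpha_i\},\,\{\beta_i\},\,\{\gamma_1,\gamma_2\},\,\{\delta_1,\delta_2\}\bigr)$. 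Indeed the only edges leaving $X_1'$ are those from the $\alpha$'s to $\{\gamma_1,\gamma_2\}$ and from the $\beta$'s to $\{\delta_1,\delta_2\}$ ($\Theta$-vertices have all their neighbours on their $\alpha$-$\beta$-path), the sets $\{\gamma_1,\gamma_2\}$ and $\{\delta_1,\delta_2\}$ are disjoint, $G[X_1']$ is a disjoint union of the $m\ge2$ flat $\alpha_i\beta_i$-paths (each meeting both $\{\alpha_i\}$ and $\{\beta_i\}$), and $G[\{\gamma_1,\gamma_2,\delta_1,\delta_2\}\cup\operatorname{int}(X_1)]$ is the induced cycle on $\gamma_1,\gamma_2,\delta_1,\delta_2$ and $\operatorname{int}(X_1)$ of length $\ge6$ (the edge $\gamma_1\delta_2$ having been subdivided, the edges $\gamma_1\gamma_2,\gamma_2\delta_1,\delta_1\delta_2$ together with the subdivided path close up into one cycle). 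So this $2$-join is connected and substantial, hence proper; and neither side induces a path, so it is not a path $2$-join. This contradicts the properties of $G$, completing the proof.

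The step I expect to be the main obstacle is case~(ii): noticing that a flat edge of $G'$ hidden inside the co-matching part forces $n=2$ together with the ``homogeneous'' adjacency pattern of the pairs $\{\alpha_i,\beta_i\}$ against $\{\gamma_1,\gamma_2,\delta_1,\delta_2\}$, and then recognizing that this pattern yields the large non-path $2$-join $\bigl(\{\alpha_i\}\cup\{\beta_i\}\cup\Theta,\ \text{rest}\bigr)$, which survives in $G$. The case $\varepsilon=0$ and case~(i) are short, and the verifications that the $2$-join in case~(ii) is connected, substantial and not a path $2$-join are routine bookkeeping once the pattern is in hand.
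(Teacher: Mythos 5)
Your proof is correct and takes essentially the same route as the paper's: locate the flat path $a_1\tp c_1\tp b_1$ inside the path-double split structure, observe that if $a_1,b_1$ sit in the matching part then subdividing makes $G$ itself a path-double split graph, and otherwise force $n=2$ with the rigid adjacency pattern and exhibit the resulting non-path proper 2-join of $G$ (your split is the paper's with the two sides' names exchanged and $c'_2$ relabelled as $d'_2$). The only cosmetic differences are that you exclude $\varepsilon=0$ up front by a degree argument, where the paper folds that case into the ``$G$ is a path-double split graph'' branch.
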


\begin{proofclaim}
  Suppose that $G'$ is a path-double split graph. Let $A' = \{a'_1,$
  $\dots, a'_m\}$, $B' = \{b'_1$, $\dots,$ $b'_m\}$, $C' = \{c'_1,
  \dots, c'_n\}$, $D' = \{d'_1, \dots, d'_n\}$ and $E'$ be sets of
  vertices of $G'$ that are like in the definition. If $a_1 \in A'
  \cup E'$ and $b_1 \in B' \cup E'$, then $G$ is obtained from $G'$ by
  subdividing the flat path $a_1 \tp c_1 \tp b_1$. If this yields a
  path of even length between a vertex $a'_i$ and $b'_i$, then this
  path together with a neighbor of $a'_i$ in $C'\cup D'$ and a
  neighbor of $b'_i$ in $C' \cup D'$ that are adjacent, yields an odd
  hole of $G$. Thus every path with an end in $A'$, and end in $B'$
  and interior in $E$ has odd length, and $G$ is a path-double split
  graph, which contradicts the properties of $G$. The case when $a_1 \in B'
  \cup E, b_1 \in A' \cup E$ is symmetric. Since $a_1 \tp c_1 \tp b_1$
  is a flat path of $G'$, there is only one case left up to 
  symmetry: $a_1 = c_1$, $|C'| = |D'| = 2$, $a_1 = c'_1$, $b_1 = c'_2$
  and for every $i \in \{1, \dots, m\}$, $a'_i$ sees $c'_1, d'_2$ and
  $b'_i$ sees $d'_1, c'_2$. So, $G$ is obtained by subdividing $c'_1
  c'_2$ into a path $P$. We see that $(P \cup \{d'_1, d'_2\}, A' \cup
  B' \cup E')$ is a proper non-path 2-join of $G$, which contradicts the
  properties of $G$.
\end{proofclaim}

\begin{claim}
  \label{c.homo3}
  $G'$ has no homogeneous 2-join. 
\end{claim}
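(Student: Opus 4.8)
The plan is to assume, for a contradiction, that $G'$ has a homogeneous 2-join $(A,B,C,D,E,F)$ and to build from it a homogeneous 2-join of $G$, which contradicts the properties of $G$. First I collect basic facts. By~(\ref{c.berge3}) $G'$ is Berge and by~(\ref{c.skew3}) it has no \even\ skew partition, so by Lemma~\ref{l.homod} this homogeneous 2-join is non-degenerate. From the homogeneous-pair conditions, together with $C,F\neq\emptyset$ and $|A|,|B|\geq2$, every vertex of $A\cup B$ has degree at least~3 in $G'$ and every vertex of $F$ has degree at least~4; a vertex of $C$ (resp.\ $D$) of degree~2 would see only the two vertices of $A$ (resp.\ $B$) and hence would make the homogeneous 2-join degenerate, so the set of degree-2 vertices of $G'$ is exactly $E$. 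Finally $a_1$ and $b_1$ have degree at least~3 in $G'$: indeed $\deg_{G'}(a_1)=1+|A_2|$ and $\deg_{G'}(b_1)=1+|B_2|$, and Property~\ref{prop.deg3} of $G$ forces $|A_2|,|B_2|\geq2$. In particular $a_1,b_1\notin E$.

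Now split on the parity $\varepsilon$ of $X_1$. If $\varepsilon=0$ then $c_1\neq a_1$, and since $c_1$ has degree~2 we get $c_1\in E$; so $c_1$ lies, as an interior vertex, on a flat path $Q$ of odd length with one end in $C$, one end in $D$ and interior in $E$. The two neighbours of $c_1$ along $Q$ are its $G'$-neighbours $a_1,b_1$, and since $a_1,b_1\notin E$ they cannot be interior vertices of $Q$; hence they are the two ends of $Q$, so $Q=a_1\tp c_1\tp b_1$ has length~2, contradicting that $Q$ has odd length.

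If $\varepsilon=1$ then $c_1=a_1$ and $a_1b_1$ is a flat edge of $G'$. The central task is to locate $a_1$ and $b_1$ in $(A,B,C,D,E,F)$. Using $a_1,b_1\notin E$, using that $a_1b_1$ is an edge whose endpoints have no common neighbour, and using the completeness of $(C,A),(D,B),(A,F),(F,B)$ together with $|A|,|B|\geq2$ and $C,F\neq\emptyset$, one eliminates every placement of $\{a_1,b_1\}$ except --- up to the symmetry of the definition that exchanges $(A,C)$ with $(B,D)$ --- the placement $a_1\in C$, $b_1\in D$. Most placements die immediately because the two sets involved always have a common neighbour (for instance any two vertices of $A$, or of $C$, or of $F$, have one). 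The single delicate case, $a_1\in A$ and $b_1\in C$, is ruled out by a closer look at $G'$: one exploits $N_{G'}(a_1)=\{b_1\}\cup A_2$ and $N_{G'}(b_1)=\{a_1\}\cup B_2$ (forcing $F\cup(C\setminus\{b_1\})\subseteq A_2$, $A\setminus\{a_1\}\subseteq B_2$, $B\subseteq X_2$), the non-degeneracy of the homogeneous 2-join, and the connectivity Properties~\ref{prop.X2conn} and~\ref{prop.conn} of $G$, in the same spirit as the earlier placement analyses in this proof; I expect this to be the main obstacle.

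Finally, assume $a_1\in C$, $b_1\in D$, and let $C_1=X_1\setminus\{a_1,b_1\}$ be the interior of the flat path $X_1$ of $G$. Here $N_{G'}(a_1)=\{b_1\}\cup A_2$, $N_{G'}(b_1)=\{a_1\}\cup B_2$ and completeness of $(C,A),(D,B),(A,F),(F,B)$ give $A\subseteq A_2$, $B\subseteq B_2$, $F\subseteq X_2$, so $A\cup B\cup E\cup F\cup(C\setminus\{a_1\})\cup(D\setminus\{b_1\})\subseteq X_2$, while the vertices of $C_1$ have degree~2 in $G$ and are anticomplete to $X_2$, hence to $A\cup B$. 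Then $(A,B,C,D,E\cup C_1,F)$ is a homogeneous 2-join of $G$: the homogeneous-pair conditions are inherited from $G'$ because the adjacencies inside $A\cup B$ are unchanged and $C_1$ is anticomplete to $A\cup B$; every vertex of $E\cup C_1$ has degree~2 and lies on a flat path of odd length from $C$ to $D$ with interior in $E\cup C_1$, namely (for a vertex of $E$) the image in $G$ of the corresponding flat path of $G'$, which stays induced, flat and of odd length since it uses at most one of $a_1,b_1$, as an end, and not the edge $a_1b_1$, and (for a vertex of $C_1$) the path $X_1$ itself, which runs from $a_1\in C$ to $b_1\in D$, has odd length and interior $C_1$. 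Lastly every flat path of $G$ outgoing from $C$ to $D$ with interior in $E\cup C_1$ is either $X_1$, which is the path-side of the proper path 2-join $(X_1,X_2)$ of $G$ and non-cutting by~(\ref{c.noncutting}), or a flat path already present in $G'$ with interior in $E$; such a path has length at least~3 (a flat path of length~2 could not be the path-side of a \emph{proper} 2-join, so the definition of homogeneous 2-join of $G'$ forbids it), so by Lemma~\ref{l.thimplies}, $G$ being not bipartite and having no \even\ skew partition, it is the path-side of a proper path 2-join of $G$, again non-cutting by~(\ref{c.noncutting}). Hence $G$ has a homogeneous 2-join, contradicting the properties of $G$; besides the placement step, the only other delicate point is this transfer of ``path-side of a non-cutting proper 2-join'' from $G'$ back to $G$.
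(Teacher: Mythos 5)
Your overall strategy is the paper's: lift the homogeneous 2-join of $G'$ back to a homogeneous 2-join of $G$ by re-subdividing, and contradict the properties of $G$. Two of your local choices actually improve on the paper's terseness: in the $\varepsilon=0$ case you get an immediate parity contradiction (the flat path of odd length through $c_1\in E$ would have to be $a_1\tp c_1\tp b_1$, of length~2), whereas the paper transfers to $G$ there as well; and your verification of the lifted homogeneous 2-join --- in particular deriving the last condition of the definition from~(\ref{c.noncutting}) together with Lemma~\ref{l.thimplies} --- spells out what the paper compresses into one sentence. Your degree bookkeeping ($a_1,b_1\notin E$, the degree-2 vertices of $G'$ being exactly $E$) is also correct.

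The one genuine gap is exactly the step you flag as ``the main obstacle'': eliminating the placement $a_1\in A$, $b_1\in C$ (equivalently, $a_1\in C$, $b_1\in A$). You gesture at the connectivity properties of $G$ and at non-degeneracy ``in the same spirit as the earlier placement analyses'', but those earlier analyses all run on exhibiting a common neighbour, and that engine genuinely stalls here: nothing in the definition forces a vertex of $C$ to be adjacent to $F$ or to the rest of $C$, so no common neighbour is guaranteed. The missing observation is short and uses only a tool you already named. Let $x$ be the $C$-end and $y$ the $A$-end of the flat edge. Since the pairs $(C,A)$ and $(A,F)$ are complete, $y$ is adjacent to every vertex of $F\cup(C\setminus\{x\})$, so flatness of $xy$ forces $N(x)\cap\bigl(F\cup(C\setminus\{x\})\bigr)=\emptyset$; since $(B,C)$ is anticomplete, this gives $N(x)\subseteq A\cup D\cup E$, which is verbatim the second clause of the definition of a \emph{degenerate} homogeneous 2-join. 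By Lemma~\ref{l.homod}, $G'$ would then have \an\ \even\ skew partition, contradicting~(\ref{c.skew3}). With that paragraph inserted, your proof is complete and matches the paper's.
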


\begin{proofclaim}
  Suppose that $G'$ has a homogeneous 2-join $(A, B, C, D, E, F)$.  If
  $c_1 \neq a_1$ then since $c_1$ has degree~2, $c_1$ must be in
  $E$. Thus, by subdividing $a_1 \tp c_1 \tp b_1$ into a path $P$ we
  obtain a graph $G$ with a homogeneous 2-join.  If $c_1 = a_1$ then
  $a_1 b_1$ is a flat edge of $G'$, thus, up to symmetry, either $a_1
  \in C$, $b_1 \in E \cup D$ or $a_1 \in C$, $b_1 \in A$. But the last
  case is impossible since $a_1 b_1$ being flat implies $N(a_1)
  \subset A \cup D \cup E$, which implies $(A, B, C, D, E, F)$ being
  degenerate, which contradicts~(\ref{c.skew3}). Hence, $a_1 \in C$
  and $b_1 \in D \cup E$. So, by subdividing $a_1 b_1$ we obtain a
  graph $G$ that has a homogeneous 2-join. The last condition of the
  definition of homogeneous 2-joins is satisfied
  by~(\ref{c.noncutting}).
\end{proofclaim}

\begin{claim}
  \label{compbas3}
  $\overline{G'}$ is not a path-cobipartite graph, not a path-double
  split graph, has no homogeneous 2-join and no flat path of length at
  least~3.
\end{claim}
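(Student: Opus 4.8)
The plan is to deduce the claim from Lemma~\ref{l.thimplies} applied to $\overline{G'}$, using the self-complementary character of \an\ \even\ skew partition (Lemma~\ref{espcomp}) together with the properties of $\overline{G'}$ already established in~(\ref{c.skew3}), (\ref{c.prop2jcomp3}), (\ref{c.linebip3}), (\ref{c.path-cobip3}) and~(\ref{c.doublesplit3}). This mirrors the final paragraphs of Lemmas~\ref{ccompbas1} and~\ref{ccompbas2} in Cases~1 and~2.

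First I would translate the relevant facts about $G'$ into facts about $\overline{G'}$. By Lemma~\ref{espcomp}, $\overline{G'}$ has \an\ \even\ skew partition if and only if $G'$ does, so by~(\ref{c.skew3}) it has none; and by~(\ref{c.prop2jcomp3}) it has no proper 2-join. Then I would rule out the three ``trivial'' basic shapes of $\overline{G'}$ that Lemma~\ref{l.thimplies} might produce. If $\overline{G'}$ were bipartite, then $G'$ would be cobipartite, contradicting~(\ref{c.path-cobip3}). If $\overline{G'}$ were the complement of a bipartite graph, then $G'$ would be bipartite, contradicting~(\ref{c.linebip3}). Since double split graphs are self-complementary (as noted after their definition), if $\overline{G'}$ were a double split graph then so would be $G'$, hence $G'$ would be a path-double split graph (one with an empty subdivision set), contradicting~(\ref{c.doublesplit3}).

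Now I would invoke Lemma~\ref{l.thimplies}. Its second item, applied to $\overline{G'}$, states that if $\overline{G'}$ is a path-cobipartite graph, a path-double split graph, or has a homogeneous 2-join, then $\overline{G'}$ has a proper 2-join, or \an\ \even\ skew partition, or is a bipartite graph, the complement of a bipartite graph, or a double split graph --- every one of which has just been excluded. Hence $\overline{G'}$ is none of these three things. For the last part of the claim, the first item of Lemma~\ref{l.thimplies} says that a flat path of length at least~3 in $\overline{G'}$ would force $\overline{G'}$ to be bipartite, or to have \an\ \even\ skew partition, or to be the path-side of a proper path 2-join of $\overline{G'}$; all three are impossible by the above and by~(\ref{c.prop2jcomp3}), so $\overline{G'}$ has no flat path of length at least~3.

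This argument is essentially bookkeeping, so I do not anticipate a real obstacle. The only point that needs care is matching each ``$G'$-statement'' proved in the earlier claims of this subsection to the corresponding ``$\overline{G'}$-statement'' through the correct self-complementary fact (\an\ \even\ skew partition, the complement of a bipartite graph, the double split class), and checking that the degenerate 2-joins and star cutsets hidden inside Lemma~\ref{l.thimplies} through Lemma~\ref{l.starcutset} are already disposed of by~(\ref{c.skew3}).
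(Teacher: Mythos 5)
Your proposal is correct and follows essentially the same route as the paper: the paper's proof of this claim is exactly an application of Lemma~\ref{l.thimplies} to $\overline{G'}$, with each possible outcome contradicted by~(\ref{c.prop2jcomp3}), (\ref{c.skew3}) (via Lemma~\ref{espcomp}), (\ref{c.path-cobip3}), (\ref{c.linebip3}) and~(\ref{c.doublesplit3}), just as you describe. Your write-up is merely more explicit about separating the two items of Lemma~\ref{l.thimplies} and about the complementation bookkeeping.
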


\begin{proofclaim}
  Else, by Lemma~\ref{l.thimplies}  either $\overline{G'}$ has a
  proper 2-join, contradictory to~(\ref{c.prop2jcomp3}) or
  $\overline{G'}$ has \an\  \even\  skew partition
  contradictory to~(\ref{c.skew3}), or $\overline{G'}$ is bipartite
  contradictory to~(\ref{c.path-cobip3}), or ${G'}$ is bipartite
  contradictory to~(\ref{c.linebip3}), or $\overline{G'}$ is a double
  split graph and so is $G'$, contradictory to~(\ref{c.doublesplit3}).
\end{proofclaim}

\begin{claim}
  \label{cfgfgp3}
  $f(G') + f(\overline{G'}) < f(G) + f(\overline{G})$.  
\end{claim}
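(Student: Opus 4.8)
The plan is to reproduce the template of Lemmas~\ref{cfgfgp} and~\ref{cfgfgp2}: show $f(\overline{G'}) = 0 \le f(\overline{G})$, show $f(G') < f(G)$, and add the two inequalities. The first point is immediate from~(\ref{compbas3}), which asserts that $\overline{G'}$ has no flat path of length at least~3.

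For the second point I would first fix the degrees of the vertices that the construction touches. By Property~\ref{prop.deg3} of $G$, $a_1$ and $b_1$ have degree at least~3 in $G$; since $G'$ differs from $G$ only on the $X_1$-side, and there $a_1$ (resp.\ $b_1$) retains all of $A_2$ (resp.\ $B_2$) together with exactly one new neighbour (namely $c_1$ when $\varepsilon = 0$, and $b_1$ resp.\ $a_1$ when $\varepsilon = 1$), both still have degree at least~3 in $G'$ and therefore cannot be interior vertices of a flat path of $G'$. When $\varepsilon = 0$, the only new vertex is $c_1$, with $N_{G'}(c_1) = \{a_1, b_1\}$; a maximal flat path of $G'$ of length at least~3 passing through $c_1$ would contain $a_1$ and $b_1$, which, being ineligible for its interior, would have to be its two ends --- forcing the path to be $a_1 \tp c_1 \tp b_1$, of length~$2$, a contradiction. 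Hence every maximal flat path $F$ of $G'$ of length at least~3 avoids $c_1$ and has neither $a_1$ nor $b_1$ in its interior; so $V(F) \subseteq X_2 \cup \{a_1, b_1\}$ with $a_1, b_1$ (if present at all) occurring as ends of $F$, and in fact $F$ can use at most one of $a_1, b_1$ (because $a_1b_1 \in E(G')$ when $\varepsilon = 1$, and when $\varepsilon = 0$ the vertex $c_1$, which is not in $V(F)$, is a common neighbour of $a_1$ and $b_1$).

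The decisive step is then to verify that such an $F$ is also a maximal flat path of $G$ of length at least~3: for $v \in X_2$ one has $\deg_G(v) = \deg_{G'}(v)$, so the interior of $F$ keeps degree~2 in $G$; the ends of $F$ still have no common neighbour outside $F$ in $G$, since the only vertices of $G$ missing from $G'$ are the interior vertices of $X_1$, which see only $a_1$ and $b_1$ and could thus be a common neighbour of the ends of $F$ only if both ends lay in $\{a_1, b_1\}$, which is excluded; and $F$ cannot be enlarged to a flat path of $G$, since any such enlargement would either stay inside $V(G')$ --- contradicting maximality of $F$ in $G'$ --- or use an interior vertex of $X_1$, which would force $a_1$ or $b_1$ to become an interior vertex of degree~$2$, impossible. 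Since $F \mapsto F$ is injective and $X_1$ is a maximal flat path of $G$ of length at least~3 (fixed at the outset) that is not a flat path of $G'$, we get $f(G') \le f(G) - 1 < f(G)$, which together with $f(\overline{G'}) = 0 \le f(\overline{G})$ proves the claim. The hard part will be making this adjacency bookkeeping --- especially the maximality transfer --- fully airtight, but it is routine and runs parallel to the closing paragraphs of Lemmas~\ref{cfgfgp} and~\ref{cfgfgp2}, using Property~\ref{prop.X2conn} of $G$ and the fact that interior vertices of $X_1$ are adjacent only to $a_1$ and $b_1$.
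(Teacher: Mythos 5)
Your strategy is exactly the paper's: the paper's own proof of this claim is just the four\,-sentence version of yours (``every flat path of $G'$ is a flat path of $G$, hence $f(G') \le f(G)$; $X_1$ is lost; $f(\overline{G'})=0$ by~(\ref{compbas3})''), and everything you say about $c_1$, about $a_1, b_1$ keeping degree at least~3 in $G'$, and about interior vertices of $X_1$ seeing only vertices of $X_1$ is correct and is precisely the bookkeeping the paper leaves implicit.

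The one step you defer as ``routine'' is, however, the only one that is not. In your maximality transfer, the branch ``any enlargement staying inside $V(G')$ contradicts maximality of $F$ in $G'$'' presupposes that such an enlargement is again a \emph{flat path of $G'$}, and flatness can be destroyed in passing from $G$ to $G'$: when $\varepsilon = 1$ the new edge $a_1b_1$ makes $b_1$ a common neighbour (in $G'$) of $a_1$ and of every vertex of $B_2$. So a flat path $F'$ of $G$ of length at least~4 with ends $a_1$ and some $y \in B_2$, interior in $X_2$, and $b_1 \notin V(F')$ is an enlargement of the required kind that is \emph{not} flat in $G'$ --- and the symmetric configuration occurs with $a_1, b_1$ and $A_2, B_2$ exchanged. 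Your cited tools do dispose of the sub-case where the enlargement runs from $a_1$ all the way to $b_1$ (its interior would be a degree-2 path from $A_2$ to $B_2$ forming a component of $G[X_2]$, contradicting Property~\ref{prop.X2conn} together with the fact that $G[X_2]$ is not a path), but they say nothing about the $a_1$-to-$B_2$ sub-case. There, the single maximal flat path $F'$ of $G$ would yield \emph{two} maximal flat paths of $G'$ of length at least~3 (delete either end of $F'$; each is flat and non-extendable in $G'$), so the count in that region goes up by one, cancelling the loss of $X_1$, and $f(G') < f(G)$ could fail. To close this you must rule out a flat path of $G$ of length at least~4 from $a_1$ to $B_2$ with interior in $X_2$ --- equivalently, a degree-2 vertex of $A_2$ starting a degree-2 path of $G[X_2]$ that reaches $B_2$ --- and neither your sketch nor the paper's one-line proof supplies that argument; this is where the real content of the claim sits.
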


\begin{proofclaim}
  Every flat path of $G'$ is a flat path of $G$ thus $f(G') \leq
  f(G)$. But in fact $f(G') < f(G)$ since $X_1$ is a flat path of $G$
  and not of $G'$.  By~(\ref{compbas3}), $0 = f(\overline{G'}) \leq
  f(\overline{G})$.  We add these two inequalities.
\end{proofclaim}

Let us now finish the proof.  

\begin{itemize}
\item
By~(\ref{c.berge3}), $G'$ is Berge.

\item
By~(\ref{c.linebip3}, \ref{c.clinegraph3}), none of $G',
\overline{G'}$ is the line-graph of a bipartite graph and $G'$ is not
bipartite.

\item
By~(\ref{c.path-cobip3}), $G'$ is not a path-cobipartite
graph. By~(\ref{c.doublesplit3}), $G'$ is not a path-double split
graph.  By~(\ref{c.homo3}), $G'$ has no homogeneous 2-join.
By~(\ref{compbas3}), $\overline{G'}$ is not a path-cobipartite graph,
not a path-double split graph and has no homogeneous 2-join.

\item
By~(\ref{c.np2j3}), $G'$ has no proper non-path
2-join. By~(\ref{c.prop2jcomp3}), $\overline{G'}$ has no proper 2-join.

\item
By~(\ref{c.skew3}), $G'$ has no \even\ skew partition.
\end{itemize}

So, $G'$ is a counter-example to the theorem we are proving now. Hence
there is a contradiction between the minimality of $G$
and~(\ref{cfgfgp3}).  This completes the proof of Theorem~\ref{th.th}.

\section{Proof of Theorem~\ref{th.case}}
\label{s:proofcase}

Let $G$ be a Berge graph. Note that it is impossible that both $G,
\overline{G}$ have a path proper 2-join because in a graph with a
proper path 2-join, no vertex has degree $n-3$, and this should be the
degree of an interior vertex of the path side of a 2-join of
$\overline{G}$.  Let us now apply Theorem~\ref{th.th} to $G$. If one
of $G, \overline{G}$ is basic, has a non-path proper 2-join, or
\an\ \even\ skew partition, we are done. From now on, we assume that
$G$ has no \even\ skew partition and is not basic. So up to a
complementation we have three cases to consider. In each case, we have
to check that $G$ has at least one path proper 2-join, and that the
contraction of any path proper 2-join leaves the graph \even\ skew
partition-free.

If $G$ has a homogeneous 2-join $(A, B, C, D, E, F)$ then it is not
degenerate since $G$ has no \even\ skew cutset. So, every vertex in $A
\cup B \cup C \cup D \cup F$ has degree at least~3. So every flat path
of length at least~3 in $G$ has an end in $C$, an end in $D$ and
interior in $E$. Let $P$ be such a flat path. By definition of
homogeneous 2-joins, such a path is the path side of a non-cutting
2-join that is also proper.  Hence, by Lemma~\ref{l.skew2join}, the
graph obtained by contracting $P$ has no \even\ skew partition.

If $G$ is path-cobipartite then let $A, B, P$ be three sets that
partition $V(G)$ like in the definition. Since $G$ is not basic, $P$
is not empty and is the interior of the unique maximal flat path $P'$
of $G$ with ends $a\in A$ and $b\in B$. Since $A$ and $B$ are cliques,
$(P', V(G)\setminus P')$ is not a cutting 2-join of type~1 of $G$. If
$(P', V(G)\setminus P')$ is cutting of type~2, this means that there
are non-empty sets $A_3 \subset A \setminus \{a\}$ and $B_3 \subset
B\setminus \{b\}$, complete to one another and such that $H = G
\setminus (P' \cup A_3 \cup B_3)$ is disconnected. But since $A, B$
are cliques, this means that $H$ has exactly two components, say $A'
\subset A$, and $B' \subset B$. We observe that $A_3 \cup B_3 \cup
\{a\}$ is a star cutset of $G$, centered on any vertex of $A_3$, that
separates $A'$ from $B' \cup P$. This is a contradiction since $G$ has
no \even\ skew partition. We proved that the unique proper path 2-join
of $G$ is not cutting. Hence, its contraction does not create
\an\ \even\ skew partition.

If $G$ is a path double split graph then let $V(G)$ be partitioned
into sets $A, B, C, D, E$ like in the definition. Since $G$ is not
basic, we know $E \neq \emptyset$. Hence, there is a flat path $P$ in
$G$ that is the path side of a proper 2-join of $G$. The contraction
of any such path $P$ yields a graph $G'$ that is also a path-double
split graph.  By Lemma~\ref{l.dsg}, $G'$ has  no  \even\ 
skew partition.

This proves Theorem~\ref{th.case}. 

\section{Algorithms}
\label{algos}

By Lemma~\ref{espcomp}, the \even\ skew partition is a
self-complementary notion. Thus, for basic graphs, we have to deal
only with bipartite graphs, line-graphs of bipartite graphs and
double-split graphs. When decomposing, we may switch from the graph to
its complement as often as needed.

\subsection{\Even\ skew partitions in basic graphs}

\begin{lemma}
  Let $G$ be a bipartite graph. Then $(A,B)$ a skew partition of $G$
  if and only if it is \an\ \even\ skew partition of $G$.
\end{lemma}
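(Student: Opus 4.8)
The statement is: in a bipartite graph $G$, a partition $(A,B)$ of $V(G)$ is a skew partition if and only if it is an \even\ skew partition. One direction is trivial — an \even\ skew partition is in particular a skew partition — so the real content is the forward direction. The plan is to show that in a bipartite graph the parity conditions in the definition of ``\even\ skew partition'' are automatically satisfied by \emph{any} skew partition, simply because $G$ contains no odd cycle and no antipath long enough to matter.

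First I would dispose of the antipath condition. The definition of \even\ skew partition requires every antipath of length at least $2$ with ends in $A$ and interior in $B$ to have even length. I would argue that a bipartite graph with at least three vertices cannot contain an induced antipath of length $\ge 3$: such an antipath would be the complement of an induced path on $\ge 4$ vertices, and $\overline{G}$ for $G$ bipartite has the property that its complement is triangle-free, but more directly, an induced antipath $\overline{v_1 \tp v_2 \tp \cdots \tp v_k}$ with $k\ge 4$ contains $v_1, v_3$ adjacent and $v_2, v_4$ adjacent in $G$ while (say) $v_1v_2, v_2v_3 \notin E(G)$; chasing adjacencies, $v_1v_3v_? $ forces a triangle or odd structure — cleanest is: in an induced antipath on $k\ge 4$ vertices, $\{v_1,v_3,v_5,\dots\}$ together with the edges $v_iv_j$ for $|i-j|\ge 2$ quickly produces a triangle among $v_1,v_3$ and any common neighbor, and bipartite graphs are triangle-free. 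In any case, the only antipath of length exactly $2$ is a path $v_1\tp v_2$ of the complement, i.e.\ a non-edge — but ``length at least $2$'' for an antipath means the complement is a path with at least $3$ vertices, and one checks $\overline{P_3}$ (two non-adjacent vertices plus a common neighbor in $G$) has its two ``ends'' being a single edge of $G$; if both ends are in $A$ they'd be adjacent, contradicting $G$ bipartite with $A$ possibly not independent — so I'd just observe that the antipath condition is vacuous or immediate. I'd state this as a short sub-claim: \emph{$G$ bipartite has no induced antipath of length $\ge 3$, and an antipath of length $2$ has even length.}

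Next, the path condition: every induced path of length at least $2$ with ends in $B$ and interior in $A$ must have even length. But such a path $P$ together with... no — here I use bipartiteness directly. If $P$ is any induced path with both ends in $B$, the two ends lie in $B\subseteq V(G)$; but more to the point, an induced path of odd length has its two ends in different colour classes of the (unique up to swap) bipartition of the component containing $P$. So if I can show both ends of $P$ are forced into the same colour class I am done. This is where the skew-partition structure is used: $B$ induces a graph that is not anticonnected, so $B$ splits as $B_1 \cup B_2$ with $(B_1,B_2)$ a complete pair. A path with ends in $B$ and interior in $A$: if the two ends are in the same $B_i$... hmm, that alone doesn't force same colour. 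Let me reconsider — actually the cleanest route: take the path $P$ with ends $b, b' \in B$, interior in $A$, and suppose $P$ has odd length. Append an edge $bb'$? They needn't be adjacent. Instead: since $(A,B)$ is a skew partition, $A$ induces a disconnected graph, so $A = A_1 \cup A_2$ with $(A_1,A_2)$ anticomplete; the interior of $P$ lies in one of $A_1,A_2$ (it's connected). And $B = B_1\cup B_2$ complete. If $b\in B_1, b'\in B_2$ then $bb'\in E(G)$, so $P + bb'$ is an odd cycle, contradicting $G$ Berge/bipartite — \emph{done in that case}. If $b, b'$ both lie in $B_1$ (say), pick any $b'' \in B_2$: then $b''$ is adjacent to both $b$ and $b'$; if $b''$ has no neighbour in the interior of $P$, then $P + b''$ (adding $b''$ adjacent to both ends) is an odd cycle, contradiction. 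If $b''$ does have a neighbour in the interior, that neighbour is in $A$, but $b''\in B\subseteq B_2$ and $B_2$ being a side tells us nothing directly about $B_2$–$A$ adjacency — so I'd instead argue: among all $b''\in B_2$, if some $b''$ misses the interior of $P$ we get the contradiction above; the hard case is when every vertex of $B_2$ has a neighbour among the (at least one) interior vertices of $P$. I expect this to be the main obstacle, and the resolution is to use that a shortest such path can be chosen, or to use the maximality/minimality of the split, or simply to invoke that $P$ together with a carefully chosen $B_2$-vertex still yields an induced odd cycle by taking the subpath between the first and last neighbours of $b''$ on $P$. Because $G$ is bipartite this subpath-plus-$b''$ cycle is odd iff the relevant subpath has odd length, and one pushes the parity argument down to that subpath, terminating by induction on path length.

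In summary: the backward implication is immediate; for the forward implication the antipath conditions are vacuous by triangle-freeness, and the path condition follows by exhibiting, from any odd induced $B$-to-$B$ path with interior in $A$, an induced odd cycle in $G$ — built by closing the path through a vertex of the opposite side $B_2$ of the skew partition (using that $(B_1,B_2)$ is a complete pair) — contradicting that $G$ is bipartite. The main technical point to handle carefully is the case where the chosen closing vertex has neighbours in the interior of the path, which I would resolve by passing to the induced cycle on the relevant subpath and arguing by minimal length.
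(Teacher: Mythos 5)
There are two genuine gaps here, both in the forward direction.

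First, the antipath step. Your claim that a bipartite graph has no induced antipath of length at least~$3$ is false: an antipath of length exactly~$3$ is the complement of a $P_4$, which is again a $P_4$, and bipartite graphs certainly contain induced $P_4$'s (your triangle argument only kicks in at length $\geq 4$, where $v_1,v_3,v_5$ are pairwise adjacent). An antipath of length~$3$ with ends $a,a'\in A$ and interior $b,b'\in B$ is odd, so it must be excluded, and triangle-freeness alone does not exclude it. The paper handles this by observing that such an antipath, read in $G$, is exactly an induced path of length~$3$ with ends in $B$ and interior in $A$, which the path condition (once proved) forbids. Your plan, as written, would wrongly declare this case vacuous.

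Second, the path step. You correctly reduce to closing an odd $B$-to-$B$ path $P$ through a vertex $b''$ of the opposite anticomponent $B_2$, but you explicitly leave the ``main obstacle'' (when $b''$ has neighbours in the interior of $P$) unresolved, offering only a sketch of an induction. This obstacle is in fact a non-issue, and recognizing why collapses the whole argument to one line: in a bipartite graph \emph{every} closed walk of odd length is forbidden, not just induced odd cycles, so $P$ plus the two edges $bb''$, $b''b'$ already gives the contradiction regardless of chords. Equivalently --- and this is the paper's proof --- since $(B_1,B_2)$ is a complete pair and $G$ is triangle-free, $B$ induces a complete bipartite graph whose two parts lie in opposite colour classes of $G$; the ends of any induced $B$-to-$B$ path of length at least~$2$ are non-adjacent, hence lie in the same $B_i$, hence in the same colour class, hence every path between them is even. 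You have all the ingredients, but the argument as submitted is not complete and the antipath claim needs correcting.
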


\begin{proof}
  \An\ \even\ skew partition of $G$ is clearly a skew partition. Let
  us prove the converse. Since $G$ is bipartite, $B$ is a complete
  bipartite graph. Every path of length at least~2 with its ends in
  $B$ and its interior in $A$ has even length, because its ends are in
  the same side of the bipartition.  Since $G$ is triangle-free, every
  antipath of $G$ has length at most~3. Hence, every antipath of
  length at least~2, with its ends in $A$ and its interior in $B$ has
  even length. Because otherwise such an antipath has length~3 and may
  be viewed as a path with its ends in $B$ and interior in $A$.
\end{proof}

By the lemma above, detecting \even\  skew partitions in bipartite graphs
can be performed by running an algorithm for general skew
partitions. Such a fast algorithm for bipartite graphs has been given
by Reed~\cite{reed:skewhist}. It  has complexity $O(n^5)$.

Now, we have to decide if the line-graph of a bipartite graph has \an\ 
\even\  skew partition.  Note that every case is possible: line-graphs
of bipartite graphs may have \even\  skew partitions, skew partitions and
no \even\  skew partition, or no skew partition at all, see
Fig.~\ref{fig.3l}. By Theorem~\ref{th.lgbg} the line-graph of a
bipartite graph has no claw and no diamond.

\begin{figure}[h]
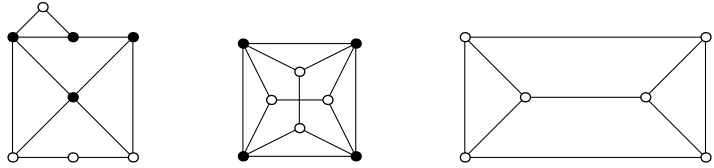

  \center \includegraphics{evenskew.9}\rule{3em}{0em}
  \includegraphics{evenskew.5}\rule{3em}{0em}
  \includegraphics{evenskew.1}
  \caption{Three line-graphs of bipartite graphs. The second one is
    $L(K_{3,3}\setminus e)$\label{fig.3l}}
\end{figure}

The following is implicitly stated and proved in~\cite{reed:skewhist}
in the more general context of line-graphs (of possibly non-bipartite
graphs). We state it and prove it for the sake of completeness.

\begin{lemma}[Reed~\cite{reed:skewhist}]
  \label{l.butorsq}
  Let $G$ be the line-graph of a bipartite graph with a skew partition
  $(A,B)$. Then $B$ is a star or $B$ is a square.
\end{lemma}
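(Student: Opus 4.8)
The plan is to use the structure of line-graphs of bipartite graphs, namely that they contain no claw and no diamond (Theorem~\ref{th.lgbg}), to severely constrain the shape of the skew cutset $B$. Write $(A_1, A_2, B_1, B_2)$ for a split of the skew partition, so $A_1$ is anticomplete to $A_2$ and $B_1$ is complete to $B_2$. First I would observe that $B_1$ and $B_2$ are each cliques: if $b, b' \in B_1$ were non-adjacent then, picking any $b_2 \in B_2$ and any $a_1 \in A_1$, $a_2 \in A_2$, the vertices $b_2, b, b', $ together with an appropriate neighbour would produce a claw or, using that $a_1, a_2$ have no edge, one could locate an induced $K_{1,3}$ centred at $b_2$; so $B_1$ is a clique, and symmetrically so is $B_2$. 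Hence $B = B_1 \cup B_2$ is the union of two cliques complete to each other, i.e.\ $B$ itself is a clique. So $G[B]$ is a clique.

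Next, since $G$ has no diamond, a clique $B$ with $|B| \geq 2$ has the property that no vertex outside $B$ can have two neighbours in $B$ unless it is complete to $B$ (otherwise that vertex, two of its neighbours in $B$, and a third vertex of $B$ missed by it would form a diamond — here one uses $|B|\geq 3$, and the case $|B| = 2$ is handled because a square $B$ of size $2$ is degenerate/trivial or falls under "star"). So every vertex of $A_1 \cup A_2$ is either complete to $B$ or has at most one neighbour in $B$. I would then split into cases on whether any vertex of $A$ is complete to $B$. If some $a \in A$ is complete to $B$, I claim $B$ is a star: indeed if additionally $|B| \geq 3$, take $b, b', b'' \in B$; now $a$ sees all of $b,b',b''$; since $G$ is diamond-free and $a,b,b',b''$ must not induce a diamond while $\{b,b',b''\}$ is a triangle, this is only consistent in a line-graph of a bipartite graph if $B \cup \{a\}$ sits inside a single maximal clique, and in a line-graph such a maximal clique corresponds to a vertex of the root graph — so $B$ has a common neighbour, namely any vertex of that clique, and $B$ together with that common neighbour, or $B$ itself, is a star. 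More carefully: a clique in a line-graph of a bipartite graph is a set of edges of the bipartite graph all sharing a common endpoint (there is no triangle of the other type since the root graph is triangle-free); that shared endpoint, translated back, means the edges of $B$ pairwise meet at one vertex $v$ of the bipartite graph, hence any edge incident to $v$ is a vertex of $G$ adjacent to all of $B$, so $B$ has a center and is a star.

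If, on the other hand, no vertex of $A$ is complete to $B$, then every vertex of $A_1 \cup A_2$ has at most one neighbour in $B$; in particular every vertex of $A_1$ has at most one neighbour in $B$ and every vertex of $A_2$ has at most one neighbour in $B$. Since $G[B]$ is a clique corresponding (in the root bipartite graph) to a set of edges sharing a vertex $v$, and since $B$ separates $A_1$ from $A_2$ with each $A_i$-vertex touching $B$ in at most one vertex, a direct translation to the root bipartite graph shows $B$ must correspond to exactly the edges at $v$ on \emph{both} sides being used — i.e.\ $|B| = 2$ is forced, or $B$ is exactly the set of all edges at $v$, in which case $B$ is a ``square'' in the line-graph sense (the line graph of a star $K_{1,k}$ is a clique, and the relevant minimal non-star skew cutset is the $4$-edge configuration giving $C_4$ in the root graph, whose line graph appears as the square). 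The main obstacle will be pinning down exactly what ``square'' means here and checking the boundary cases $|B| \le 2$: I expect the cleanest route is to pass entirely to the root bipartite graph $H$ with $G = L(H)$, note that a clique of $G$ is the star of edges at some vertex $v$ of $H$ (using that $H$ is triangle-free so there is no triangle-clique in $G$), and then argue that $B$ being a cutset of $G$ forces $v$ to have degree exactly $2$ in the relevant induced sub-bipartite-graph unless $B$ admits a center. That case analysis in $H$ — rather than in $G$ — is where the real work lies, but it is short. I would then conclude that either $B$ has a center (a star) or $B$ is the line graph of a $2$-edge star sitting in a $C_4$, i.e.\ a square, completing the proof.
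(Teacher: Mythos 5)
There is a genuine error at the very first step, and it is fatal. You claim that $B_1$ and $B_2$ are each cliques and hence that $B$ is a clique. This directly contradicts one of the two conclusions of the lemma: a square (an induced $C_4$) is not a clique, yet it is a legitimate skew cutset in a line-graph of a bipartite graph (the paper exhibits one in Fig.~\ref{fig.3l}, and Lemma~\ref{butterequiv} explicitly discusses the case where the cutset is a square). Concretely, if $B$ is a $C_4$ with vertices $b_1,b_2,b_3,b_4$ in cyclic order, then the split has $B_1=\{b_1,b_3\}$ and $B_2=\{b_2,b_4\}$, and neither is a clique. Your argument for the claim does not actually produce a forbidden subgraph: from a non-edge $bb'$ in $B_1$ and a vertex $b_2\in B_2$ you would need a third neighbour of $b_2$ non-adjacent to both $b$ and $b'$ to get a claw, and nothing guarantees such a vertex exists (the vertices $a_1\in A_1$, $a_2\in A_2$ you invoke need not be adjacent to $b_2$ at all). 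Since everything downstream (the translation of $B$ into a star of edges at a single vertex of the root bipartite graph, and the case analysis there) rests on $B$ being a clique, the proof collapses. A secondary error: your description of the ``square'' outcome as ``the set of all edges at $v$'' is also wrong; an induced $C_4$ in $L(H)$ corresponds to a $4$-cycle in $H$, not to a star of edges, whereas a star of edges at a vertex of $H$ gives a clique in $L(H)$.

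For comparison, the paper's proof never leaves $B$ and never passes to the root graph. Assuming $B$ is not a star, every anticomponent of $B$ has at least two vertices and hence contains a non-edge. If there are three or more anticomponents, a non-edge in one together with single vertices from two others gives a diamond. So there are exactly two anticomponents; if the larger one (of size at least $3$ when $|B|\geq 5$) is edgeless it yields a claw with a vertex of the other anticomponent, and if it has an edge, that edge plus a non-edge of the other anticomponent gives a diamond. Hence $|B|\leq 4$, and the only non-star, non-anticonnected graph on at most four vertices is $C_4$. If you want to salvage your approach, you would have to abandon the clique claim entirely and argue along these lines instead.
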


\begin{proof}
  Suppose that $G$ has a skew partition $(A,B)$ such that $B$ has at
  least~5 vertices.  We may assume that $B$ is not a star, so every
  anticomponent of $B$ has at least~2 vertices. Let $B_0$ be such an
  anticomponent, and let $b, b'$ be non-adjacent in $B_0$ (because
  $B_0$ is anticonnected). If $B$ has at least~3 anticomponents say
  $B_0, B_1, B_2, \dots$, then for $b_1 \in B_1$, $b_2 \in B_2$, $\{b,
  b', b_1, b_2\}$ induces a diamond, a contradiction. Thus, $B$ has 2
  anticomponent $B_0, B_1$ and we may assume that $B_0$ has at least~3
  vertices.  If $B_0$ has no edge, then we can pick 3 vertices $b_1,
  b_2, b_3$ in $B_0$ and a vertex $c$ in $B_1$ and $\{c, b_1, b_2,
  b_3\}$ induces a claw, a contradiction. Thus, $B_0$ has at least one
  edge, say $bb'$.  Now consider a non-edge $c,c'$ in $B_1$: $\{b, b',
  c, c'\}$ induces a diamond, a contradiction.  So, we are left with
  the case where $B$ has at most~4 vertices. The only candidate for a
  non-star non-anticonnected graph is the square.
\end{proof}

\begin{lemma}
  \label{butterequiv}
  Let $G$ be the line-graph of a bipartite graph. Suppose that $G$ has
  at least one edge and size at least~5.  Then $G$ has \an\  \even\  skew
  partition if and only if $G$ has a star cutset.
\end{lemma}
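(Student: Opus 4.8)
The statement to prove is: if $G$ is the line-graph of a bipartite graph with at least one edge and at least $5$ vertices, then $G$ has a balanced skew partition if and only if $G$ has a star cutset. One direction is essentially free from earlier results: if $G$ has a star cutset, then since $G$ is Berge (it is a line-graph of a bipartite graph, hence has no odd hole, and by Theorem~\ref{th.lgbg} its complement has no odd antihole — or more directly one checks $G$ is perfect), $G$ has size at least $4$, has an edge, and is not the complement of $C_4$ (the complement of $C_4$ has only $4$ vertices, while $|V(G)|\ge 5$), so Lemma~\ref{l.starcutset} applies and gives a balanced skew partition. So the real content is the forward direction.

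For the forward direction, suppose $G$ has a balanced skew partition $(A,B)$. Then in particular $B$ is a skew cutset, so by Lemma~\ref{l.butorsq}, $B$ is a star or $B$ is a square. If $B$ is a star then $|B|\ge 2$ (a skew cutset is never a single vertex, since a one-vertex set induces a graph that is both connected and anticonnected), so $B$ is a star cutset of size at least $2$, and we are done. Hence the crux is to rule out the case where $B$ is a square, i.e. $B$ induces a $4$-cycle, while $(A,B)$ is \emph{balanced}. The plan is to show that a square can never be a balanced skew cutset of a line-graph of a bipartite graph (given the size hypothesis), so this case simply does not occur, and every balanced skew partition of $G$ in fact has $B$ equal to a star.

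To eliminate the square case, write $B=\{x_1,x_2,y_1,y_2\}$ inducing the $4$-cycle $x_1x_2y_1y_2x_1$, so that the two anticomplete-... wait — in a square skew cutset the split $(A_1,A_2,B_1,B_2)$ has $B_1,B_2$ a complete pair partitioning $B$; here $B_1=\{x_1,y_1\}$ (the two nonadjacent vertices) and $B_2=\{x_2,y_2\}$, since each of $x_1,y_1$ is adjacent to each of $x_2,y_2$. Pick a vertex $a\in A_1$ and $a'\in A_2$ lying in distinct components of $G\setminus B$. I would first argue, using that $G$ has no claw and no diamond (Theorem~\ref{th.lgbg}), that $a$ cannot have two neighbors in $B_1$ that are nonadjacent to each other, and similarly cannot have neighbors in both $B_1$ and $B_2$ unless very constrained — more precisely, the set of neighbors of any vertex outside $B$ meets $B$ in a clique. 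So $a$ sees at most one of $x_1,y_1$ and at most one of $x_2,y_2$, and if it sees one from each they must be adjacent; the same for $a'$. Then one constructs a short path through $B$ joining a neighbor-configuration of $a$ to one of $a'$, together with a shortest path from $a$ into its component and from $a'$ into its component, producing an outgoing path from $B$ to $B$ through $A$; the claw/diamond constraints force this path to have length $3$ (it uses exactly two vertices of $B$, which are nonadjacent, with one interior vertex), hence \emph{odd} — contradicting that $(A,B)$ is balanced. The delicate point, and the step I expect to be the main obstacle, is verifying that such a length-$3$ (or more generally odd-length) path genuinely exists: one must ensure the two chosen vertices of $B$ are nonadjacent (so the path they anchor is genuinely induced and of the right parity) and that $a,a'$ really can be linked to them avoiding the rest of $B$; this requires carefully choosing $a,a'$ as neighbors-of-$B$ inside their respective components and invoking connectivity of $G\setminus B$ restricted to each component. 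If instead $a$ or $a'$ has \emph{no} neighbor in $B$, one pushes along its component until reaching a vertex that does (since $B$ is a cutset separating the components, some vertex of each component touches $B$), and reduces to the previous case. Once the square case is excluded, the proof is complete: $B$ must be a star, of size $\ge 2$, hence a star cutset.
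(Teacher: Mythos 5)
Your backward direction and the reduction to the square case via Lemma~\ref{l.butorsq} match the paper, but the forward direction rests on a false intermediate claim. You propose to show that ``a square can never be a balanced skew cutset of a line-graph of a bipartite graph,'' so that the square case ``simply does not occur.'' It does occur: the paper explicitly points out that the first graph of Fig.~\ref{fig.3l} is a line-graph of a bipartite graph having a square cutset that is a balanced skew cutset. Consequently, any argument that derives an outright contradiction from the existence of a square balanced skew cutset must be wrong, and indeed the step where you assert that claw/diamond constraints ``force'' an odd (length-3) outgoing path from $B$ to $B$ cannot be made to work: in the example just mentioned, every outgoing path between the two non-adjacent vertices of the square has even length. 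You yourself flag this as ``the main obstacle,'' and it is not merely delicate --- it is unfixable in the form stated.

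The paper's proof takes a different logical shape. Rather than excluding the square, it shows that \emph{if} $B$ is a square $b_1b_2b_3b_4$ that is a balanced skew cutset, \emph{then} one of the three stars $\{b_1,b_2,b_4\}$, $\{b_2,b_3,b_4\}$, $\{b_1,b_2,b_3\}$ contained in $B$ is already a cutset of $G$. The contradiction is derived only under the additional assumption that all three of these stars fail to be cutsets: that assumption supplies, first, an induced $b_1$--$b_3$ path avoiding $b_2,b_4$ inside one component (even by balancedness, and of length at least~4 by the diamond-freeness), which is turned into an even hole $H$ through $b_2$; and second, a path from $b_4$ to $H$ avoiding $b_1,b_2,b_3$, whose attachment to $H$ (exactly two adjacent neighbours, by claw-freeness) yields two $b_4$--$b_2$ paths of opposite parity, one of which is an odd outgoing path from $B$ to $B$. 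Your proposal never invokes the hypothesis that these sub-stars are not cutsets, which is precisely what is needed to manufacture the odd path. To repair your write-up you would need to restructure the forward direction along these lines.
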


\begin{proof}
  By Lemma~\ref{l.starcutset}, we know that if $G$ has a star cutset,
  then it has \an\ \even\ skew partition. Let us prove the
  converse. Suppose that $G$ has \an\ \even\ skew partition
  $(A,B)$. We may assume that $B$ is not a star. So by
  lemma~\ref{l.butorsq}, $B$ is a square with vertices say $b_1, b_2,
  b_3, b_4$ and edges $b_1 b_2$, $b_2 b_3$, $b_3 b_4$, $b_4 b_1$. Note
  that in Fig.~\ref{fig.3l}, the first graph represented has a square
  cutset that is \an\ \even\ skew cutset. Let $X$ be a connected
  component of $G \setminus B$.  To finish the proof, it suffices to
  show that one of the stars $\{b_1, b_2, b_4\}$, $\{b_2, b_3, b_4\}$
  or $\{b_1, b_2, b_3\}$ is a cutset. So, let us suppose for a
  contradiction that none of these sets is a cutset.

  Since $\{b_2, b_3, b_4\}$ is not a cutset, $b_1$ has a neighbor in
  $X$ and similarly since $\{b_1, b_2, b_4\}$ is not a cutset, $b_3$
  has a neighbor in $X$. Since $X$ is connected, we know that there is
  a path from $b_1$ to $b_3$ that goes through none of $b_2, b_4$.  We
  may choose this path as short as possible, so it is an induced path,
  say $P = \bp v_1 \tp v_2 \tp \cdots \tp v_{k-1} \tp v_k \ep$, with
  $v_1 = b_1$ and $v_k = b_3$.  Since $(A, B)$ is \even, $P$ has even
  length. One of $b_2, b_4$ (say $b_2$ up to symmetry) must see $v_2$
  for otherwise $\{b_1, v_2, b_2, b_4\}$ induces a claw with center
  $b_1$. If $P$ has length $2$, then, $\{b_1, b_2, b_3, v_2\}$ induces
  a diamond, a contradiction. So, $P$ has length at least~$4$.  But
  then, $\bp v_2 \tp P \tp v_k \tp b_2 \tp v_2 \ep$ is a cycle of odd
  length $\geq 5$, thus it has a chord $b_2 v_i$. But $i$ must equal
  $k-1$ for otherwise, $b_2, b_1, b_3, v_i$ induce a claw. So $H = \bp
  b_2 \tp v_2 \tp P \tp v_{k-1} \tp b_2 \ep$ is a hole. We rename its
  vertices $h_1, \dots, h_l$.

  Since $\{b_1, b_2, b_3\}$ is not a cutset, there is a path $Q$ that
  goes through none of $b_1, b_2, b_3$, from $b_4$ to a vertex that
  has a neighbor in $H$. Let us choose $Q = b_4 \tp \cdots \tp x'
  \tp x$ of minimal length. Note that $Q$ has length at least~1, for
  otherwise, $b_4$ has a neighbor $v_i\in H$. If $2<i<k-1$ then
  $\{b_4, b_1, b_3, v_i\}$ induces a claw and if $i=2$ then $\{b_1,
  b_2, b_4, v_i\}$ induces a diamond ($i=k-1$ is symmetric).  If $x$
  sees two non-adjacent vertices $y, z$ in $H$, then $\{x, x', y, z\}$
  induces a claw. If $x$ sees only one vertex $h_i$ in $H$ then
  $\{h_i, h_{i-1}, h_{i+1}, x\}$ induces a claw. So, $x$ has exactly
  two adjacent neighbors in $H$, say $h_i, h_{i+1}$. Since $H$ is an
  even hole, the induced paths $b_4 \tp Q \tp x \tp h_i \tp H\setminus
  h_{i+1} \tp b_2$ and $b_4 \tp Q \tp x \tp h_{i+1} \tp H \setminus
  h_i \tp b_2$ have different parity.  So one of them has odd length,
  which contradicts $(A, B)$ being \even.
\end{proof}

By the previous lemma we know that an algorithm that detects star
cutsets is sufficient to decide whether a line-graph of a bipartite
graph has or not \an\ \even\ skew
partition. Chv\'atal~\cite{chvatal:starcutset} gave such an
$O(nm)$-time algorithm.  Note that in~\cite{reed:skewhist}, Reed gives
a fairly optimised algorithm for detecting general skew partitions in
line graphs with complexity $O(n^2m)$.  So, the obvious algorithm for
detecting \an\ \even\ skew partition in the line-graph of a bipartite
graph is faster than the optimised algorithm for general skew
partition. This might be general: detecting a skew partition might be
harder than \an\ \even\ skew partition for perfect graphs.

The detection of \even\  skew partitions in double split graphs takes
constant time by Lemma~\ref{l.dsg}: answer ``No''.

Our main algorithm needs also to recognize basic graphs. This can be
done in linear time for bipartite graphs (this is a classical result)
and for line-graphs of bipartite graphs
(see~\cite{lehot:root,roussopoulos:linegraphe}). For double split
graphs, this can be done in linear time by looking at the degrees
since vertices of the matching all have degree $1 + n$ and vertices of
the anti-matching all have degree $2n-2 + m$ (these numbers are
different since $n \geq 2, m \geq 2$ implies $2n -2 + m > 1 + n$).
Hence, the recognition can be performed as follows: compute the
degrees, check whether the vertices of smallest degree induce a
matching, that the rest of the graph induces the complement of
a matching, and check for every edge $xy$ of the matching and every
non-edge $\overline{uv}$ of the antimatching, that $\{x,y,u,v\}$
induces a path on 4 vertices. The computing of degrees takes linear
time, and the checking to be done afterward does not take more than
$O(m)$ time.

Let us sum up this subsection.

\begin{theorem}[Several authors]
  \label{th.basicalgo}
  There is an $O(n+m)$ algorithm that decides whether a given graph is
  basic. There is an $O(n^5)$ algorithm that given a basic graph $G$
  decides whether $G$ has \an\  \even\  skew partition or not.
\end{theorem}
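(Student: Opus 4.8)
The plan is simply to assemble the results established earlier in this section. For the recognition statement, I would run, on the input graph $G$, the three known linear-time recognition procedures: the classical bipartiteness test, the tests of Lehot and of Roussopoulos for being the line-graph of a bipartite graph, and the degree-based test for double split graphs described above (compute all degrees, check that the vertices of smallest degree induce a matching, that the rest induce the complement of a matching, and that each relevant quadruple induces a $P_4$). Running the analogous three tests on $\overline{G}$ — equivalently, testing directly whether $G$ is co-bipartite, the complement of a line-graph of a bipartite graph, or a double split graph — a graph is basic exactly when one of these six tests succeeds. Each test is $O(n+m)$, so the recognition is $O(n+m)$.

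For the detection statement, I would first run the recognition to learn which basic class $G$ lies in. Since by Lemma~\ref{espcomp} having \an\ \even\ skew partition is a self-complementary property, after possibly replacing $G$ by $\overline{G}$ we may assume $G$ is a bipartite graph, the line-graph of a bipartite graph, or a double split graph. After disposing in constant time of the trivial inputs left aside by Lemmas~\ref{l.starcutset} and~\ref{butterequiv} (graphs on at most $4$ vertices, graphs with no edge, and the complement of $C_4$), the three cases are treated as follows:
\begin{itemize}
\item if $G$ is a double split graph, answer ``No'' in constant time, which is correct by Lemma~\ref{l.dsg};
\item if $G$ is the line-graph of a bipartite graph (with at least one edge and at least $5$ vertices), then by Lemma~\ref{butterequiv} $G$ has \an\ \even\ skew partition if and only if $G$ has a star cutset, which is detected by Chv\'atal's $O(nm)$ algorithm \cite{chvatal:starcutset};
\item if $G$ is bipartite, then \an\ \even\ skew partition is exactly a skew partition (by the first lemma of this subsection), so we run Reed's $O(n^5)$ algorithm for skew partitions in bipartite graphs \cite{reed:skewhist}.
\end{itemize}
Adding the running times, the bipartite case dominates and yields the claimed $O(n^5)$ bound.

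Since every substantive ingredient (Lemmas~\ref{l.dsg}, \ref{butterequiv}, \ref{l.butorsq}, \ref{l.starcutset}, the bipartite equivalence, and the cited recognition/skew-partition algorithms) is already available, the only real work in the write-up is the bookkeeping: checking that the reduction to the complement together with the three routines genuinely covers every basic graph, and that the small graphs excluded from Lemmas~\ref{l.starcutset} and~\ref{butterequiv} are correctly handled by direct inspection. That bookkeeping is the main — and essentially the only — obstacle, and it is routine.
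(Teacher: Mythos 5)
Your proposal is correct and follows exactly the paper's route: the theorem is stated as a summary of the preceding subsection, and the paper's argument is precisely the assembly you describe (reduction to the complement via Lemma~\ref{espcomp}, the bipartite case via the skew-partition equivalence and Reed's $O(n^5)$ algorithm, the line-graph case via Lemma~\ref{butterequiv} and Chv\'atal's star-cutset algorithm, the double split case via Lemma~\ref{l.dsg}, and linear-time recognition of each basic class including the degree-based test for double split graphs). Nothing is missing.
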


\subsection{2-join decomposition}

Let us define a decomposition tree $T_G$ of a Berge graph $G$: 

\begin{itemize}
\item
  The root of $T_G$ is $G$ itself.
\item
  If a node $F$ of the tree is a basic graph then it is a leaf marked
  with label ``basic''.
\item
  Else, if $F$ is a graph on at most 10 vertices, then it is a leaf
  marked with label ``small''. 
\item
  Else, if none of $F, \overline{F}$ has a substantial 2-join then $F$
  is a leaf marked with label ``no decomposition''.  
\item
  Else, one of $F, \overline{F}$ has a substantial 2-join and has at
  least 11~vertices. If possible, we choose this substantial 2-join
  $(X_1, X_2)$ non-path. If $(X_1, X_2)$ is degenerate then $F$ is
  a leaf marked with label ``degenerate''.
\item
  Else, note that $(X_1, X_2)$ is connected and proper. Up to a
  complementation, we suppose that the 2-join is in $F$. Up to 
  symmetry we suppose $|X_2| \leq |X_1|$.  If $|X_2| = 4$ and if there
  exist vertices $a \in A_1$, $b\in B_1$ such that $\{a, b\}$ is a
  component of $G[X_1]$ then we replace $X_2$ by $X_2\cup \{a,b\}$ and
  $X_1$ by $X_1 \setminus \{a, b\}$. We obtain again a proper 2-join
  of $G$ because $|X_2| = 4$ implies $|X_1| \geq 7$.  Finally we
  define the children of $F$ to be the blocks of $F$ with respect to
  $(X_1, X_2)$ (these blocks are defined
  in Subsection~\ref{sec.defpiece}).
\end{itemize}

Note that every node of $G$ is Berge by Lemma~\ref{l.blockberge}.  We
claim that $T_G$ has size at most $O(n)$. To prove this we need to
study these 2-joins of $G$ with one side of size 4 or 5, because such
2-joins are likely to yield blocks of the same size as the graph,
possibly leading us to a decomposition tree with infinitely many
nodes. A \emph{check} in a graph $F$ is a set of 4 vertices that can
be named $a, b, c, d$ in such a way that:

\begin{itemize}
\item
  either $E(F) \cap {\{a, b, c, d\} \choose 2} = \{ab, bd, dc, ca\}$
  or $E(F) \cap {\{a, b, c, d\} \choose 2} = \{ad, bc\}$;
\item
  $N(a) \setminus \{a, b, c, d\} = N(b) \setminus \{a, b, c, d\}$;
\item
  $N(c) \setminus \{a, b, c, d\} = N(d) \setminus \{a, b, c, d\}$;

\item
  there is a vertex in $F$ that sees both $a,b$ and misses both $c, d$;
\item
  there is a vertex in $F$ that sees both $c,d$ and misses both $a, b$.
\end{itemize}

For any graph $F$, we denote by $c(F)$ the maximum size of a set of
checks of $F$ that are pairwise disjoint. Note that $c(F) \leq
\left\lfloor |V(F)| / 4 \right\rfloor$.  We put: 

\[\psi(F) = 2|V(F)| - 20 + c(F)\]

\[\phi(F) = \max (\psi(F) , 1)\]

Note that a check of $F$ is a check of $\overline{F}$ so that $c(F) =
c(\overline{F})$, $\psi(F) = \psi(\overline{F})$ and $\phi(F) =
\phi(\overline{F})$.

\begin{lemma}
  \label{l.counting}
  Let $H$ be a non-leaf node of $T_G$ and let $H_1, H_2$ be its two
  children. Then $\phi(H) \geq \phi(H_1) + \phi(H_2)$.
\end{lemma}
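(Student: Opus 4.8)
The plan is to work with $\psi$ directly and recover $\phi=\max(\psi,1)$ only at the end. Since $H$ is a non-leaf node it has at least $11$ vertices, so $\psi(H)=2|V(H)|-20+c(H)\ge 2$ and hence $\phi(H)=\psi(H)$. Write $\ell_i$ for the number of vertices of the flat path that replaces $X_i$ when the other block is built, so that $|V(H_1)|=|X_1|+\ell_2$ and $|V(H_2)|=|X_2|+\ell_1$; recall $\ell_i\in\{4,5\}$ unless $X_i$ is a path-side of the $2$-join, in which case $\ell_i\in\{2,3\}$, and at most one of $X_1,X_2$ is a path-side because $H$ is not a hole. Using $|V(H)|=|X_1|+|X_2|$ one gets the bookkeeping identity
\[
\psi(H)-\psi(H_1)-\psi(H_2)=20-2(\ell_1+\ell_2)+c(H)-c(H_1)-c(H_2),
\]
so everything reduces to controlling how the numbers of pairwise disjoint checks behave under the construction of the blocks.

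The heart of the matter is that checks are essentially confined to one side of the $2$-join. First I would show that a check of $H_1$ all of whose vertices lie in $X_1$ is also a check of $H$: the induced graph on $X_1$ is unchanged, the adjacency of an $X_1$-vertex to the new path in $H_1$ is governed by membership in $A_1$, $B_1$ or $C_1$ in exactly the same way as its adjacency to $X_2$ in $H$, and the two required witnesses transfer since $A_2,B_2\neq\emptyset$ (a witness lying on the new path is replaced by any vertex of $A_2$ or $B_2$). The symmetric statement holds for $H_2$ and $X_2$. Hence, writing $p_i$ for the maximum number of pairwise disjoint checks of $H_i$ contained in $X_i$ and $q_i$ for the number of remaining checks in a maximum packing of $H_i$, the $p_1$ checks inside $X_1$ and the $p_2$ checks inside $X_2$ form a family of pairwise disjoint checks of $H$, so $c(H)\ge p_1+p_2$ and $c(H_1)+c(H_2)\le c(H)+q_1+q_2$. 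It then remains to bound the $q_i$: since every interior vertex of a flat path has both of its neighbours on that path, a check meeting the new path meets it in at least two vertices, and it drags in the neighbour on the path of any endpoint of the path that it contains; this gives $q_i\le 2$ in general, $q_i\le 1$ when the new path has length $2$, $q_i=0$ when it has length $4$, and — crucially — $q_i=0$ when it has length $3$ and the side it replaced had exactly $4$ vertices, since in that case the only way a check could meet the path would be to force a component of $G[X_j]$ that is a single edge between $A_j$ and $B_j$, the configuration removed by the preprocessing step of the tree construction. Plugging the resulting bound $q_1+q_2\le 20-2(\ell_1+\ell_2)$ into the identity gives $\psi(H)\ge\psi(H_1)+\psi(H_2)$ in every case.

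Finally I would pass to $\phi$. If $\psi(H_1)\ge1$ and $\psi(H_2)\ge1$ the inequality just proved is exactly what is needed; if both are below $1$ then $\phi(H_1)+\phi(H_2)=2\le\psi(H)$. The only genuinely delicate case is when exactly one of them, say $\psi(H_2)$, is below $1$, where one must prove $\psi(H)\ge\psi(H_1)+1$, i.e. $2(|X_2|-\ell_2)+c(H)-c(H_1)\ge1$; since $c(H)-c(H_1)\ge -q_1$ this is immediate once $|X_2|\ge6$. One first observes that $|X_2|\ge4$ always, because a $3$-vertex side forces a degeneracy condition (so $H$ would be a leaf), and then, after the preprocessing step, one runs through the short list of non-degenerate, connected, non-path $2$-joins with $|X_2|\in\{4,5\}$ and exhibits one more check in each — a matching-type $2K_2$ inside $X_2$, or a check of $H$ straddling the $2$-join obtained from two twin vertices of $X_2$ and their common neighbourhood — so that $c(H)\ge p_1+p_2+1$ and the gap closes. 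I expect this last step, the structural case analysis of small sides, to be the main obstacle: precisely when $|X_2|$ is small the block $H_1$ need not be smaller than $H$, so the inequality genuinely hinges on locating these extra checks, and making the enumeration of small $2$-join configurations both complete and self-evidently exhaustive is the fiddly part.
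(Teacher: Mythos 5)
Your proposal is correct and follows essentially the same route as the paper: the same facts about checks (checks confined to one side transfer to $H$, no check meets a length-$4$ replacement path, a check meeting a shorter replacement path occupies at least two of its vertices, and a non-degenerate non-path side of size at most $5$ must itself contain a check after the preprocessing step), assembled via a single bookkeeping identity in place of the paper's eight-case table. The two claims you flag as the remaining work --- the total exclusion of checks from length-$4$ replacement paths and the extra check hidden inside a $4$- or $5$-vertex non-path side --- are precisely the paper's claims requiring the longest case analyses, and they do hold, so the sketch is sound.
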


\begin{proof}
  Since $H$ is not a leaf of $T_G$, $H_1$ and $H_2$ are the blocks of
  $H$ with respect to a 2-join $(X_1, X_2)$ of $G$. Let us denote by
  $P$ the flat path of $H_2$ that represents $X_1$. By definition of
  the blocks, $P$ has length 1, 2, 3 or 4. We denote by $p_1, \dots,
  p_k$ $(k \in \{2, 3, 4, 5\})$ the vertices of $P$. Up to  symmetry
  we suppose $p_1$ complete to $A_2$ and $p_k$ complete to $B_2$. The
  following six claims are concerned with $X_2$ and $H_2$, but similar
  claims can be proved with $X_1$ and $H_1$. Here, a 2-join is said to
  be \emph{even} or  \emph{odd} according to the parity of the paths
  with an end in $A_1$, an end in $B_1$ and interior in $C_1$. This is
  well defined by Lemma~\ref{l.2jAiBi}.

  \begin{claim}
    \label{c.oddcheck}
    Suppose that $(X_1, X_2)$ is an odd 2-join, either non-path or
    such that $X_2$ is not its path-side. If $|X_2| \leq 4$ then $X_2$
    is a check of $H$.
  \end{claim}

  \begin{proofclaim}
    If $|X_2| = 3$ then $(X_1, X_2)$ is degenerate, which contradicts $H$
    being a non-leaf node of $T_G$. So $|X_2| = 4$. Every path from
    $A_2$ to $B_2$ whose interior is in $C_2$ has length~1 because
    $(X_1, X_2)$ is non-path while $|X_2| = 4$. Let $a$ be in
    $A_2$. Since $(X_1, X_2)$ is not degenerate, $a$ must have a
    neighbor $c$ in $B_2$ and a non-neighbor $d$ in $B_2$. Similarly
    $c$ must have a non-neighbor $b$ in $A_2$. Now $X_2 = \{a, b, c,
    d\}$. Vertex $d$ must have a neighbor in $A_2$ and the only
    candidate is $b$. Either $ab, cd$ are both edges of $H$ or both
    non-edge of $H$ for otherwise $H$ contains a $C_5$. In either
    cases, $\{a, b, c, d\}$ is a check of $H$ because $(X_1, X_2)$ is
    a 2-join of $H$.
  \end{proofclaim}

  \begin{claim}
    \label{c.evencheck}
    Suppose that $(X_1, X_2)$ is an even 2-join, either non-path or
    such that $X_2$ is not its path-side. If $|X_2| \leq 5$ then
    $|X_2| = 5$ and $X_2$ contains a check of $H$.
  \end{claim}

  \begin{proofclaim}
    Each path from $A_2$ to $B_2$ whose interior is in $C_2$ has
    length~2 because $(X_1, X_2)$ is non-path while $|X_2| \leq
    5$. Since $(X_1, X_2)$ is even there is at least one vertex $a \in
    C_2$. Since $(X_1, X_2)$ is not degenerate, $a$ is not complete to
    $A_2 \cup B_2$. Hence there is another vertex $b\in C_2$, and $a$
    has a non-neighbor $d$ in one of $A_2, B_2$, say $A_2$ up to
    symmetry. Vertex $a$ must have a neighbor $c \in A_2$ for
    otherwise $B_1 \cup B_2$ is a skew cutset of $H$, which
    contradicts $(X_1, X_2)$ being non-degenerate. Since $|X_2| \leq
    5$, we have $A_2 = \{c, d\}$, $C_2 = \{a, b\}$ and there is a
    single vertex $e$ in $B_2$. Vertex $d$ must have a neighbor in
    $C_2$ because $(X_1, X_2)$ is not degenerate and $b$ is the only
    candidate. Vertex $e$ sees at least one of $a, b$ say $a$ up to
    symmetry. If $e$ misses $b$ then $b$ must see $a$ for otherwise
    $A_1 \cup B_1$ is a skew cutset separating $b$ from the rest of
    the graph. But then $e \tp a \tp b \tp d$ is a path of odd length
    from $B_2$ to $A_2$, which contradicts $(X_1, X_2)$ being even. We
    proved that $e$ sees both $a, b$.  Also, $b$ must have a
    non-neighbor in $A_2 \cup B_2$ and $c$ is the only
    candidate. Either $ab, cd$ are both edges of $H$ or both non-edges
    of $H$ for otherwise $H$ contains a $C_5$.  In either cases, $\{a,
    b, c, d\}$ is a check of $H$ because $(X_1, X_2)$ is a 2-join of~$H$.
  \end{proofclaim}

  \begin{claim}
    \label{c.notinter}
    Every check of $H_2$ that does not intersect $\{p_1, \dots, p_k\}$
    is a check of $H$.
  \end{claim}

  \begin{proofclaim}
    Clear by the definition of the checks.
  \end{proofclaim}

  \begin{claim}
    \label{c.5inter}
    If $k=5$ then none of $p_1, \dots, p_k$ is in a check of $H_2$.
  \end{claim}

  \begin{proofclaim}
    Let $C$ be a check of $H$ that does intersect $\{p_1, \dots,
    p_5\}$. Let $i \in \{1, \dots, 5\}$ be an integer closest to $3$
    such that $p_i\in C$. Up to symmetry, $i\in \{1, 2, 3\}$.
    
    If $i=3$ then $p_3$ must have a neighbor in $C$ and a neighbor out
    of $C$ (by definition of checks). Hence we may assume $p_2 \in C,
    p_4\notin C$. There is a vertex $a\in C$ satisfying $N(a)
    \setminus C = N(p_2) \setminus C$. Hence, $p_1\notin C$ and $a\in
    A_2$. For the same reason, $p_5\in C$. There is a contradiction
    since $ap_5 \notin E(H_2)$.

    If $i=2$ then $p_3 \notin C$. So, $p_1\in C$. By definition of
    checks, there exists $a\in C$ such that $N(a) \setminus C = N(p_2)
    \setminus C$. So, $a = p_4$. Hence, $p_5 \in C$ and $C=\{p_1, p_2,
    p_4, p_5\}$, a contradiction since no vertex in $H_2$ sees both
    $p_1, p_5$.

    If $i=1$ then $p_2 \notin C$. By definition of checks, there
    exists a vertex $a$ in $C$ such that $N(a) \setminus C = N(p_1)
    \setminus C$. This is impossible because of $p_2$.
  \end{proofclaim}

  \begin{claim}
    \label{c.4inter}
    If $k=4$ and if $C$ is a check of $H_2$ that does intersect
    $\{p_1, \dots, p_k\}$ then $C \cap \{p_1, \dots, p_4\} = \{p_2,
    p_3\}$ and $|X_1| > 4$.
  \end{claim}

  \begin{proofclaim}
    Suppose $p_1 \in C$. If $p_2\notin C$ then by definition of checks
    $p_3 \in C, p_4 \in C$ and there is a vertex $a\in A_2$ in
    $C$. Since $N(a) \setminus C = N(p_4) \setminus C$, vertex $a\in
    A_2$ must be complete to $B_2$, which contradicts $(X_1, X_2)$ being
    non-degenerate. Hence, $p_1 \in C$, $p_2\notin C$ is impossible, and
    symmetrically, $p_4 \in C$, $p_3\notin C$ is impossible. If $p_2 \in
    C$ then $p_4\in C$, $p_3\notin C$, a contradiction by the preceding
    sentence.

    We proved $p_1\notin C$. Similarly, $p_4 \notin C$. Hence, up to
    symmetry, $p_2 \in C$ and $p_3 \in C$ by definition of
    checks. Hence, $C = \{p_2, p_3, a, b\}$ where $a\in A_2$, $b \in
    B_2$. So, $N(a) \setminus C = N(p_2) \setminus C$ and $N(b)
    \setminus C = N(p_3) \setminus C$ and $\{p_2, p_3\}$ is a
    component of $H[X_2]$. By the way $T_G$ is constructed, this
    implies $|X_1| > 4$.
  \end{proofclaim}

  \begin{claim}
    \label{c.3inter}
    If $k=3$ and if $C$ is a check of $H_2$ that does intersect
    $\{p_1, \dots, p_k\}$ then $|\{p_1, p_2, p_3 \} \cap C| \geq 2$.
  \end{claim}

  \begin{proofclaim}
    If $p_2\in C$ then $p_2$ must have a neighbor in $C$, so at least
    one of $p_1, p_3$ is in $C$. Hence we may assume up to symmetry $C
    \cap \{p_1, p_2, p_3\} = \{p_1\}$. In $C$ there is a vertex $a$
    such that $N(a) \setminus C = N(p_1) \setminus C$. This is a
    contradiction because of $p_2$.
  \end{proofclaim}

  To finish the proof of the lemma we will study eight cases described
  Table~\ref{table.cases}. In the description of the cases, when we
  write ``$X_i$ non-path'', we mean either $(X_1, X_2)$ is not a path
  2-join or $X_i$ is not the path-side of $(X_1, X_2)$. Note that up
  to symmetry the eight cases cover all the possibilities because
  $X_1$ and $X_2$ cannot both be path-side of $(X_1, X_2)$ since $H$
  is not bipartite, and because at least one of $X_1, X_2$ has size at
  least~6 since $|V(H)| \geq 11$.  

  \begin{table}[h]
    \center
    \begin{tabular}{|l|c|c|}\hline
      $(X_1, X_2)$ is an even 2-join & $X_2$ non-path, $|X_2| \geq 6$
      & $X_2$ non-path, $|X_2| \leq 5$\\\hline $X_1$ non-path,
      $|X_1|\geq 6$ & Case 1 & Case 2\\\hline $X_1$ path, $|X_1| \geq
      5$ & Case 3 & Case 4\\\hline
    \end{tabular}

    \vspace{5ex}

    \begin{tabular}{|l|c|c|}\hline
      $(X_1, X_2)$ is an odd 2-join& $X_2$ non-path, $|X_2| \geq 5$ & $X_2$
      non-path,   $|X_2| \leq  4$\\\hline
      $X_1$ non-path,  $|X_1|\geq 5$ & Case 5 & Case 6\\\hline
      $X_1$ path,  $|X_1| \geq 4$ & Case 7 & Case 8\\\hline
    \end{tabular}
    \caption{The eight cases\label{table.cases}}
  \end{table}

  \noindent{\bf Case 1: }By definition of the blocks:
  $|V(H_1)| =  |X_1| +5$ and $|V(H_2)| = |X_2| +5$. 

  \noindent By~(\ref{c.notinter}) and~(\ref{c.5inter}):

  $c(H) \geq c(H_1) + c(H_2)$. 

  \noindent Since $|X_1|, |X_2| \geq 6$ we have
  $|V(H_1)|, |V(H_2)| \geq 11$, which implies:

  $\psi(H_1), \psi(H_2) \geq 2$
  and $\psi(H_1) = \phi(H_1)$, $\psi(H_2) = \phi(H_2)$. 

  \noindent So:
  
  \begin{tabular}{lcl}
    $\phi(H)$ & $=$    & $2|X_1| + 2|X_2| -20 + c(H)$\\
              & $\geq$ & $2|V(H_1)| + 2|V(H_2)| + c(H_1) + c(H_2) -40$\\
              & $\geq$ & $\phi(H_1) + \phi(H_2)$
  \end{tabular}

  \vspace{2ex}

  In Cases 2--8 we will prove the following three inequalities:
  $\phi(H) > \psi(H_1)$, $\phi(H) > \psi(H_2)$, $\phi(H) \geq
  \psi(H_1) + \psi(H_2)$. Since $H$ is not a leaf of $T_G$ we have
  $|H| \geq 11$ so that $\phi(H) = \psi(H) \geq 2$. So the three
  inequalities mentioned above will imply $\phi(H) \geq \phi(H_1) +
  \phi(H_2)$.

  \vspace{2ex}

  \noindent{\bf Case 2: } By definition of the blocks:
  $|V(H_1)| = |X_1| +5$ and $|V(H_2)| \leq 10$

  \noindent By~(\ref{c.notinter}), (\ref{c.evencheck})
  and~(\ref{c.5inter})

  $|X_2| = 5$.  

  $c(H) \geq c(H_1) +1$.

  \noindent So:

  \begin{tabular}{lcl}
    $\phi(H)$ & $=$    & $2|X_1| + 2|X_2| - 20 + c(H)$\\ 
              & $\geq$ & $2|V(H_1)| - 20 + c(H_1) + 2|X_2| - 9$\\
              & $>$    & $\psi(H_1)$
  \end{tabular}

  The inequalities $\phi(H) > \psi(H_2)$ and $\phi(H) \geq \psi(H_1) +
  \psi(H_2)$ are easy since $|V(H_2)| = 10$ and~(\ref{c.notinter})
  implies $\psi(H_2) \leq 1$.

  \vspace{2ex}

  \noindent{\bf Case 3: } By definition of the blocks:
  $|V(H_1)| = |X_1| +5$ and $|V(H_2)| = |X_2| +3$. 

  \noindent By~(\ref{c.notinter}), (\ref{c.3inter}), (\ref{c.5inter})
  and since $H_1$ is a hole:

  $c(H_1) = 0$ 

  $c(H) \geq c(H_2) -1$
  
  \noindent So:

  \begin{tabular}{lcl}
    $\phi(H)$ & $=$    & $2|X_1| + 2|X_2| - 20 + c(H)$ \\ 
              & $\geq$ & $2|V(H_2)| - 20 + c(H_2) + 2|X_1| - 7$ \\
              & $>$    & $\psi(H_2)$
  \end{tabular}

  \begin{tabular}{lcl}
    $\phi(H)$ & $=$    & $2|X_1| + 2|X_2| - 20 + c(H)$\\
              & $\geq$ & $2|V(H_1)| - 20 + c(H_1) + 2|X_2| - 10$ \\
              & $>$    & $\psi(H_1)$
  \end{tabular}

  \noindent Finally, $\phi(H) \geq \psi(H_1) + \psi(H_2)$ holds similarly.

  \vspace{2ex}

  \noindent{\bf Case 4: } By definition of the blocks:
  $|V(H_1)| = |X_1| +5$ and $|V(H_2)| \geq 8$. 

  \noindent By~(\ref{c.notinter}), (\ref{c.evencheck}),
  (\ref{c.3inter}) and since $H_1$ is a hole:

  $c(H_1) = 0$ 

  $|X_2| = 5$

  $c(H) \geq c(H_2) \geq 1$
  
  \noindent So:

  \begin{tabular}{lcl}
    $\phi(H)$ & $=$    & $2|X_1| + 2|X_2| - 20 + c(H)$\\
              & $\geq$ & $2|V(H_1)| - 20 + c(H_1) + 2|X_2| - 9$ \\
              & $>$    & $\psi(H_1)$
  \end{tabular}

  The inequalities $\phi(H) > \psi(H_2)$ and $\phi(H) \geq \psi(H_1) +
  \psi(H_2)$ are easy since $V(H_2) = 8$ implies $\psi(H_2) \leq 0$.

  \vspace{2ex}

  \noindent{\bf Case 5: } By definition of the blocks:
  $|V(H_1)| = |X_1| + 4$ and $|V(H_2)| = |X_2| + 4$. 

  \noindent By~(\ref{c.notinter}) and(\ref{c.4inter}):

  $c(H) \geq c(H_1) + c(H_2) - 2$

  $c(H) \geq c(H_1) - 1$

  $c(H) \geq c(H_2) - 1$
  
  \noindent So:

  \begin{tabular}{lcl}
    $\phi(H)$ & $=$    & $2|X_1| + 2|X_2| - 20 + c(H)$\\
              & $\geq$ & $2|V(H_1)| - 20 + c(H_1) + 2|X_2| - 9$ \\
              & $>$    & $\psi(H_1)$
  \end{tabular}

  The inequalities $\phi(H) > \psi(H_2)$ and $\phi(H) \geq \psi(H_1) +
  \psi(H_2)$ hold similarly.

  \vspace{2ex}

  \noindent{\bf Case 6: } By definition of the blocks:
  $|V(H_1)| = |X_1| + 4$ and $|V(H_2)| \leq 8$. 

  \noindent By~(\ref{c.notinter}), (\ref{c.oddcheck}) and since by
  (\ref{c.4inter}) viewed in $H_1$ no check of $H_1$ intersect
  $H_1\setminus X_1$ because $|X_2| \leq 4$:

  $c(H) \geq c(H_1) + 1$
  
  $|X_2| = 4$

  \noindent So:

  \begin{tabular}{lcl}
    $\phi(H)$ & $=$    & $2|X_1| + 2|X_2| - 20 + c(H)$\\
              & $\geq$ & $2|V(H_1)| - 20 + c(H_1) + 2|X_2| - 7$ \\
              & $>$    & $\psi(H_1)$
  \end{tabular}

  The inequalities $\phi(H) > \psi(H_2)$ and $\phi(H) \geq \psi(H_1) +
  \psi(H_2)$ are easy since $|V(H_2)| \leq 8$ implies $\psi(H_2) \leq 0$.

  \vspace{2ex}

  \noindent{\bf Case 7: } By definition of the blocks:
  $|V(H_1)| = |X_1| + 4$ and $|V(H_2)| = |X_2| + 2$. 

  \noindent By~(\ref{c.notinter}), (\ref{c.oddcheck}),
  (\ref{c.4inter}) and since $H_1$ is a hole:

  $c(H_1) = 0$

  $c(H) \geq c(H_2) - 2$
 
  \noindent So:

  \begin{tabular}{lcl}
    $\phi(H)$ & $=$    & $2|X_1| + 2|X_2| - 20 + c(H)$\\
              & $\geq$ & $2|V(H_1)| - 20 + c(H_1) + 2|X_2| - 8$ \\
              & $>$    & $\psi(H_1)$
  \end{tabular}

  \begin{tabular}{lcl}
    $\phi(H)$ & $=$    & $2|X_1| + 2|X_2| - 20 + c(H)$\\
              & $\geq$ & $2|V(H_2)| - 20 + c(H_2) + 2|X_1| - 6$ \\
              & $>$    & $\psi(H_2)$
  \end{tabular}

  The inequality $\phi(H) \geq \psi(H_1) + \psi(H_2)$ holds similarly. 

  \vspace{2ex}

  \noindent{\bf Case 8: } By definition of the blocks: 
  $|V(H_1)| = |X_1| + 4$ and $|V(H_2)| \leq 6$.

  \noindent By~(\ref{c.notinter}), (\ref{c.oddcheck}) and since $H_1$
  is a hole:

  $c(H) > c(H_1) = 0$ 

  $|X_2| = 4$

  \noindent So:

  \begin{tabular}{lcl}
    $\phi(H)$ & $=$    & $2|X_1| + 2|X_2| - 20 + c(H)$\\
              & $\geq$ & $2|V(H_1)| - 20 + c(H_1) + 2|X_2| - 7$ \\
              & $>$    & $\psi(H_1)$
  \end{tabular}

  The inequalities $\phi(H) \geq \psi(H_1) + \psi(H_2)$ and $\phi(H) >
  \psi(H_2)$ are easy since $|V(H_2)| = 6$ implies $\psi(H_2) \leq 0$.

\end{proof}

In $T_G$ every node $F$ satisfies $\phi(F) \geq 1$. Hence, by
Lemma~\ref{l.counting}, $T_G$ has at most $\phi(G) = O(n)$
leaves. Since every non-leaf node of $T_G$ has exactly two children,
$T_G$ has at most $2\phi(G) = O(n)$ nodes as claimed above.

We claim that $T_G$ can be constructed in time $O(n^9)$.  Indeed,
testing whether $G$ is basic is easy (see Theorem~\ref{th.basicalgo}).
In~\cite{conforti.c.k.v:eh2}, an $O(n^8)$ algorithm, due to
Cornu\'ejols and Cunningham~\cite{cornuejols.cunningham:2join}, for
constructing a substantial non-path 2-join of an input graph is
given. Note that what we call non-path substantial 2-join is simply
called 2-join in~\cite{conforti.c.k.v:eh2}. Finding substantial path
2-joins is easy in linear time by checking every vertex of
degree~2. Testing whether a 2-join is degenerate is easy in linear
time. By the paragraph above, to construct $T_G$ in the worst case, we
will have to run $O(n)$ times the $O(n^8)$ algorithm that detects
non-path substantial 2-joins.

We claim that $G$ has \an\ \even\ skew partition if and only if one of
the leaves of $T_G$ has \an\ \even\ skew partition. Indeed, if $G$ has
\an\ \even\ skew partition then Lemma~\ref{l.skew2joind} shows by an
easy induction that at least one of the leaves of $T_G$ has
\an\ \even\ skew partition.  Conversely, if a leaf $F$ of $T_G$ has
\an\ \even\ skew partition then suppose for a contradiction that $G$
has no \even\ skew partition. Among the nodes of $T_G$, let $H$ be the
graph with no \even\ skew partition, closest to $F$ along the unique
path of $T_G$ from $G$ to $F$. The graph $H$ is Berge, has no
\even\ skew partition, and is not basic.  Since it is not a leaf, $H$
has a proper 2-join by definition of $T_G$.  If $H$ has a non-path
proper 2-join, then by Lemma~\ref{l.skew2joinnonpath} the children of
$H$ in $T_G$ has no \even\ skew partitions contradictory to the
definition of $H$.  Else, by Theorem~\ref{th.case}, the children of
$H$ has no \even\ skew partition, a contradiction again.

We claim that we can test whether a leaf $L$ of $T_G$ has \an\ \even\
skew partition in $O(n^5)$. If $L$ is marked ``basic'', this is true
by Theorem~\ref{th.basicalgo}. If $L$ is marked ``small'', this is
trivial. If $L$ is marked ``no decomposition'', this is done in constant
time by answering ``YES'', the correct answer by
Theorem~\ref{th.case}. If $L$ is marked ``degenerate'', this is done
in constant time by answering ``YES'', the correct answer by
Lemma~\ref{l.degenerate}.

By the claims above, detecting \even\ skew partitions in a Berge graph
$G$ can be performed as follows: construct $T_G$ and test whether a
leaf has or not \an\ \even\ skew partition.  Note that in the case
when $G$ has no \even\ skew partition, then the leaves of $T_G$ are
all basic. We proved:

\begin{theorem}
  There is an $O(n^9)$-time algorithm that decides whether a Berge
  graph has or not \an\  \even\  skew partition.
\end{theorem}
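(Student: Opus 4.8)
The plan is to assemble the pieces already in hand: the decomposition tree $T_G$, the potential function $\phi$, and the structural Theorem~\ref{th.case}. The algorithm is simply: build $T_G$, then test each leaf for \an\ \even\ skew partition, and answer YES if and only if some leaf has one. So three things need to be established — that $T_G$ is small, that it is computable within the time budget, and that the YES/NO answer is preserved from the leaves to the root — and then the per-leaf tests must be shown to be cheap.

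First I would bound the size of $T_G$. Every node of $T_G$ is Berge by Lemma~\ref{l.blockberge}, and by construction every node $F$ satisfies $\phi(F)\ge 1$. Since each internal node has exactly two children $H_1,H_2$ and $\phi(H)\ge\phi(H_1)+\phi(H_2)$ by Lemma~\ref{l.counting}, an induction up $T_G$ shows that the number of leaves is at most $\phi(G)=O(n)$; as $T_G$ is binary this gives $O(n)$ nodes in total. This is the main obstacle of the whole argument, but it is precisely the content of the ``check'' apparatus and of Lemma~\ref{l.counting}: the delicate point is that a $2$-join with a side of size $4$ or $5$ could a priori produce a block no smaller than its parent, which is why $\psi$ is defined to reward disjoint checks rather than merely to count vertices, so that such near-trivial $2$-joins still make progress.

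Next, the construction time. Deciding whether a node is basic, or is small, or whether a given $2$-join is degenerate, is easy in (at most) linear time (Theorem~\ref{th.basicalgo} and direct checks); a substantial path $2$-join is found in linear time by inspecting the degree-$2$ vertices; and a substantial non-path $2$-join is found by the cited $O(n^8)$ algorithm of Cornu\'ejols and Cunningham. Since $T_G$ has $O(n)$ nodes, building $T_G$ costs $O(n)\cdot O(n^8)=O(n^9)$. For correctness of the reduction to leaves: if $G$ has \an\ \even\ skew partition then, applying Lemma~\ref{l.skew2joind} repeatedly down a branch of $T_G$, some leaf has one. Conversely, if some leaf has \an\ \even\ skew partition but $G$ does not, let $H$ be the node on the root-to-leaf path that has no \even\ skew partition and is closest to the leaf; then $H$ is Berge, non-basic, and not a leaf, hence has a proper $2$-join, while both children of $H$ have \an\ \even\ skew partition. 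If that $2$-join is non-path this contradicts Lemma~\ref{l.skew2joinnonpath}; if it is a path $2$-join it contradicts the third clause of Theorem~\ref{th.case}, which says the contraction of a proper path $2$-join of a non-basic Berge graph with no \even\ skew partition again has no \even\ skew partition. Hence $G$ and the family of leaves give the same answer.

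Finally, each leaf is tested in $O(n^5)$: leaves marked ``basic'' by Theorem~\ref{th.basicalgo}; leaves marked ``small'' trivially; leaves marked ``no decomposition'' by answering YES in $O(1)$, which is correct by Theorem~\ref{th.case} since a non-basic Berge graph with no substantial $2$-join in itself or its complement must have \an\ \even\ skew partition; and leaves marked ``degenerate'' by answering YES in $O(1)$, correct by Lemma~\ref{l.degenerate}. Processing the $O(n)$ leaves thus costs $O(n^6)$, and the total running time is dominated by the $O(n^9)$ construction of $T_G$, giving the stated bound. I expect the only genuinely nontrivial ingredient here to be the size bound on $T_G$ via Lemma~\ref{l.counting}; everything else is bookkeeping on top of Theorems~\ref{th.case} and~\ref{th.basicalgo} and Lemmas~\ref{l.skew2joind}, \ref{l.skew2joinnonpath}, \ref{l.degenerate}.
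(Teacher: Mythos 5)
Your proposal is correct and follows the paper's own argument essentially step for step: the same tree $T_G$, the same size bound via $\phi$ and Lemma~\ref{l.counting}, the same leaf-to-root correctness argument using Lemmas~\ref{l.skew2joind}, \ref{l.skew2joinnonpath} and Theorem~\ref{th.case}, and the same $O(n)\cdot O(n^8)$ accounting for the construction plus $O(n^5)$ per leaf. Nothing is missing and nothing differs in substance from the paper's proof.
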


\section{NP-hardness}
\label{NPC}

We recall here a construction due to
Bienstock~\cite{bienstock:evenpair}.  Let us call \emph{Bienstock
  graph} any graph $G$ that can be constructed as follows. Let $n \geq
3$, $m \geq 1$ be two integers. For every $1 \leq i \leq n$ let
$\alpha_i$ be the graph represented in Fig.~\ref{fig.bienstock1}, with
vertex-set $\{t_{i,1}$, $t_{i,2}$, $t_{i,3}$, $t_{i,4}$, $f_{i,1}$,
$f_{i,2}$, $f_{i,3}$, $f_{i,4}$, $c_{i,1}$, $c_{i,2}$, $c_{i,3}$,
$c_{i,4}\}$ and with edge-set $\{c_{i,1}t_{i,1}$, $t_{i,1}c_{i,3}$,
$c_{i,1}f_{i,1}$, $f_{i,1}c_{i,3}$, $c_{i,2}t_{i,2}$,
$t_{i,2}t_{i,3}$, $t_{i,3}t_{i,4}$, $t_{i,4}c_{i,4}$,
$c_{i,2}f_{i,2}$, $f_{i,2}f_{i,3}$, $f_{i,3}f_{i,4}$,
$f_{i,4}c_{i,4}$, $t_{i,1}f_{i,2}$, $t_{i,1}f_{i,3}$,
$f_{i,1}t_{i,2}$, $f_{i,1}t_{i,3}$, $t_{i,3}f_{i,3} \}$.  For every $1
\leq j \leq m$, let $\beta_j$ be the graph represented in
Fig.~\ref{fig.bienstock2}, with vertex-set $\{d_{j,1}$, $d_{j,2}$,
$d_{j,3}$, $d_{j,4}$, $r_j$, $z_{j,1}$, $z_{j,2}$, $z_{j,3}\}$ and
edge-set $\{d_{j,1}r_j$, $r_j d_{j,3}$, $d_{j,2}z_{j,1}$,
$z_{j,1}d_{j,4}$, $d_{j,2}z_{j,2}$, $z_{j,2}d_{j,4}$,
$d_{j,2}z_{j,3}$, $z_{j,3}d_{j,4}\}$.

All the graphs $\alpha_i, \beta_j$ are pairwise vertex-disjoint
subgraphs of $G$ that are assembled by adding the following edges:
$c_{i, 3} c_{i+1, 1}$ and $c_{i, 4} c_{i+1, 2}$ for $1 \leq i <n$,
$d_{j, 3} d_{j+1, 1}$ and $d_{j, 4} d_{j+1, 2}$ for $1 \leq j <m$. Add
a vertex $u$ adjacent to $c_{1,2}$, a vertex $w$ adjacent to
$c_{1,1}$, a vertex $s$ adjacent to $w$ and a vertex $v$ adjacent to
$d_{m,3}$, $d_{m,4}$.  See Fig.~\ref{fig.bienstock4}.  For every
$1\leq j \leq m$ and every $k \in \{1, 2, 3\}$ we add exactly 2 edges
incident to $z_{j,k}$. These edges are either $z_{j, k} f_{i,1}, z_{j,
  k} f_{i,3}$ for some $i$, or $z_{j, k} t_{i,1}, z_{j, k} t_{i,3}$
for some $i$. See a possibility in
Fig.~\ref{fig.bienstock3}. Moreover, for every $1 \leq k < k' \leq 3$
and every $1 \leq j \leq m$, $z_{j,k}$ and $z_{j,k'}$ are required to
have their neighbors in different $\alpha_i$'s.

  \begin{figure}[p]
    \begin{center}
      \includegraphics{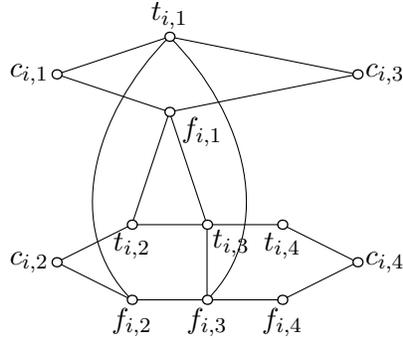}
      \caption{Graph $\alpha_i$\label{fig.bienstock1}}
    \end{center} 
  \end{figure}

  \begin{figure}[p]
    \begin{center}
      \includegraphics{bienstock.2}
      \caption{Graph $\beta_j$\label{fig.bienstock2}}
    \end{center} 
  \end{figure}

  \begin{figure}[p]
    \begin{center}
      \includegraphics{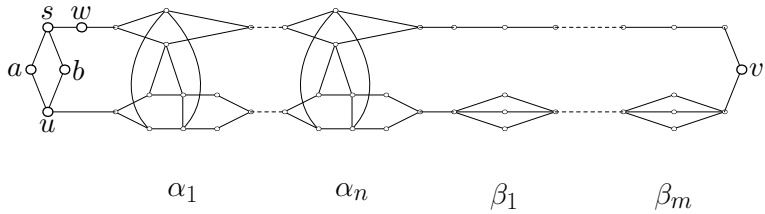}
      \caption{$G'$, that is the whole graph $G$ plus two vertices $a,b$\label{fig.bienstock4}}
    \end{center} 
  \end{figure}

  \begin{figure}[p]
    \begin{center}
      \includegraphics{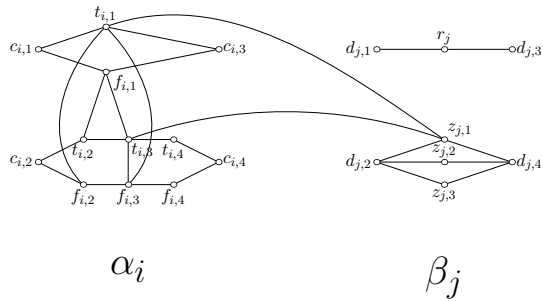}
      \caption{The two edges out from $z_{j,1}$, a possibility\label{fig.bienstock3}}
    \end{center} 
  \end{figure}

By 3-SAT' we mean the usual 3-SAT problem
(see~\cite{garey.johnson:np}) restricted to the sets of clauses on 3
variables such that every clause is on three pairwise distinct
variables. Bienstock proved an NP-completeness reduction from 3-SAT
that when restricted to 3-SAT' yields:

\begin{theorem}[Bienstock~\cite{bienstock:evenpair}]
  \label{th.bienstock}
  For every instance $\cal I$ of size $x$ of the NP-complete problem
  3-SAT', there is a Bienstock graph $G_{\cal I}$ of size $O(x)$,
  obtained from $\cal I$ by a linear time algorithm and such that the
  answer to $\cal I$ is YES if and only if there is a path of $G_{\cal
    I}$ of odd length joining $u$ and $s$.
\end{theorem}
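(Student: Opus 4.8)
The plan is to verify that the construction above is Bienstock's reduction from 3-SAT' and to prove its correctness; the size and linear-time claims will be immediate, so the work is the logical equivalence. Fix an instance $\mathcal{I}$ with variables $x_1,\dots,x_n$ and clauses $C_1,\dots,C_m$, and let $G_{\mathcal{I}}$ be the Bienstock graph it produces. Since $G_{\mathcal{I}}$ consists of one gadget $\alpha_i$ per variable, one gadget $\beta_j$ per clause, and a bounded number of extra vertices, $|V(G_{\mathcal{I}})| = O(n+m) = O(x)$; and since every edge is fixed by a purely local rule (which variable, which clause, which literal occurrence), the whole graph can be written down in one pass over $\mathcal{I}$, hence in linear time. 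What remains is to show: $\mathcal{I}$ is satisfiable if and only if $G_{\mathcal{I}}$ has an induced path of odd length joining $u$ and $s$.

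First I would pin down the \emph{local behaviour} of each gadget. For the variable gadget $\alpha_i$ I would prove that an induced path of $G_{\mathcal{I}}$ which enters $\alpha_i$ at one of $c_{i,1},c_{i,3}$ and leaves at the other must use either only the $t$-vertices of $\alpha_i$ or only the $f$-vertices of $\alpha_i$: the cross-edges $t_{i,1}f_{i,2}$, $t_{i,1}f_{i,3}$, $f_{i,1}t_{i,2}$, $f_{i,1}t_{i,3}$, $t_{i,3}f_{i,3}$ are exactly what makes any ``mixed'' traversal carry a chord, and I would also record the possible lengths of a one-sided traversal. Declaring the side actually used to be the truth value of $x_i$ then reads off a truth assignment from every induced $u$-$s$ path. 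For the clause gadget $\beta_j$ I would prove the dual fact: a traversal of $G_{\mathcal{I}}$ can pass through $\beta_j$ along the branch $z_{j,k}$ precisely when, in the variable gadget attached to $z_{j,k}$, the path used the side that makes the $k$-th literal of $C_j$ true, because otherwise at least one of the two neighbours of $z_{j,k}$ inside that $\alpha_i$ already lies on the path and an induced path cannot then contain $z_{j,k}$; passing through $\beta_j$ via $r_j$ is the alternative, with the opposite parity contribution.

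For the forward implication I would build the path explicitly. Given a satisfying assignment, start at $u$, run along the chain of $c$-vertices traversing each $\alpha_i$ on the side dictated by the assignment, then along the chain of $d$-vertices routing each $\beta_j$ around through a branch $z_{j,k}$ whose literal is satisfied (at least one exists, since the assignment satisfies $C_j$), and close up at $s$ through $w$. Verifying that this subgraph is an induced path is routine, and summing the per-gadget length contributions recorded above shows the total length is odd. For the converse I would take an odd induced $u$-$s$ path $P$, apply the structure lemmas to see that $P$ threads the gadget chains in the controlled manner described, read off a truth assignment from the sides $P$ uses, and argue by contradiction: if some clause $C_j$ were falsified then $P$ could use no branch $z_{j,k}$ of $\beta_j$ and would be forced through $r_j$, and then tallying parities across all gadgets would make $|E(P)|$ even. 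Hence $P$ odd forces every clause satisfied.

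The hard part is exactly this parity accounting and the structure lemmas that support it: one has to check, uniformly over every way an induced path can enter, leave and possibly double back within each $\alpha_i$ and $\beta_j$, and over every admissible pattern of $z_{j,k}$-attachments, that ``$P$ has odd length'' is equivalent to ``every clause gadget is bypassed along a satisfied literal''. This is a long, delicate case analysis on the fixed gadgets rather than a single clever step --- it is the technical heart of Bienstock's argument --- whereas the bound on $|V(G_{\mathcal{I}})|$, the linear-time construction, and the forward direction are essentially immediate.
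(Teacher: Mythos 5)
The first thing to say is that the paper does not prove this statement at all: Theorem~\ref{th.bienstock} is imported as a black box from Bienstock's paper \cite{bienstock:evenpair} (the paper only states it and uses it in Section~\ref{NPC}), so there is no internal proof to compare yours against. Your proposal must therefore stand on its own as a reproof of Bienstock's reduction, and as written it does not: you explicitly defer the entire technical core (``the hard part is exactly this parity accounting and the structure lemmas that support it \dots\ a long, delicate case analysis'') without carrying any of it out. A plan that names the lemmas one would need and then declares them to be the hard part is an outline, not a proof; the outline is in the right spirit (variable gadgets force a consistent truth value, clause gadgets are traversable only along a satisfied literal, parity encodes satisfiability), but the content of the theorem is precisely the verification you skip.

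Moreover, several of the claims you do commit to are imprecise or wrong. Each $\alpha_i$ has two disjoint channels: a short one from $c_{i,1}$ to $c_{i,3}$ (through $t_{i,1}$ or $f_{i,1}$) and a long one from $c_{i,2}$ to $c_{i,4}$ (through $t_{i,2},t_{i,3},t_{i,4}$ or $f_{i,2},f_{i,3},f_{i,4}$). Your structure lemma only discusses entering at one of $c_{i,1},c_{i,3}$ and leaving at the other, so it misses that a $u$--$s$ path must traverse \emph{both} channels (the only access to $s$ is via $w$ and $c_{1,1}$, so the path must come back up the other chain of $c$-vertices --- your forward-direction itinerary never says how it returns) and that the cross-edges $t_{i,1}f_{i,2}$, $t_{i,1}f_{i,3}$, $f_{i,1}t_{i,2}$, $f_{i,1}t_{i,3}$ exist exactly to force the two traversals to choose the same letter, which is what makes the truth assignment well defined. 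The parity claim for $\beta_j$ is also unsupported: $d_{j,1}\tp r_j\tp d_{j,3}$ and $d_{j,2}\tp z_{j,k}\tp d_{j,4}$ both have length~2 but join \emph{different} pairs of $d$-vertices, so ``the opposite parity contribution'' does not follow from anything you wrote; the parity difference has to be traced through the global routing among the two $c$-chains, the $z$-bridges and the $d$-chain. Either carry out this case analysis in full, or do what the paper does and cite Bienstock.
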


Here is why Bienstock's construction is related to the \Even\ Skew
Partition Problem:

\begin{lemma}
  \label{l.skewbienstock}
  Let $G$ be a Bienstock graph. Let $G'$ be the graph obtained by
  adding two vertices: a vertex $a$ seeing both $u$, $s$ and a vertex
  $b$ also seeing both $u$, $s$.  Then $G'$ has \an\  \even\  skew partition
  if and only if there is no path of odd length in $G$ joining $u$ and
  $s$.
\end{lemma}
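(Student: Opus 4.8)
The plan is to build the skew partition of $G'$ directly from the vertex set $\{u,s,a,b\}$, which is the only natural candidate: $a$ and $b$ are non-adjacent with the same two neighbors $u,s$, while $u,s$ are non-adjacent in $G$ (in the Bienstock graph $u$ has only the neighbor $c_{1,2}$ and $s$ only the neighbor $w$), so $B=\{u,a,s,b\}$ induces a four-cycle $u\tp a\tp s\tp b\tp u$ in $G'$. Thus $B$ is not anticonnected, and one checks that $A=V(G)=V(G')\setminus B$ is not connected: in the Bienstock graph, deleting $u$ and $s$ disconnects $w$ (whose only neighbors are $s$ and $c_{1,1}$) — more precisely $(\{u,s\},A)$ already looks like a skew partition once we know $A$ splits into the component containing $w$ and the rest. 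So $(A,B)$ is a skew partition of $G'$ with split $(A_1,A_2,B_1,B_2)$ where, say, $B_1=\{u,s\}$, $B_2=\{a,b\}$, $A_1$ the component of $w$, $A_2$ the rest of $G$. The whole question is whether this skew partition, and in fact any skew cutset of $G'$, is \emph{\even}.

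The first main step is to reduce ``$G'$ has \an\ \even\ skew partition'' to a statement purely about $B=\{u,a,s,b\}$. For the ``if'' direction I would argue: since $a,b$ each have degree $2$ with $N(a)=N(b)=\{u,s\}$, any skew cutset of $G'$ that is not contained in $\{u,a,s,b\}$ would have to be a star cutset or would separate $a$ (or $b$) with the help of $u,s$; a short case analysis (mimicking the proof of Lemma~\ref{l.starcutset}) shows that either we find a star cutset of size at least $2$, hence \an\ \even\ skew partition by Lemma~\ref{l.starcutset} once we check $G'$ is Berge, or the skew cutset can be taken to be exactly $\{u,a,s,b\}$. The point is that $a$ and $b$, being twins of degree $2$, force any ``interesting'' skew cutset to use $u$ and $s$; and the only anticonnected-obstruction set available around them is the square $\{u,a,s,b\}$. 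I would then check whether $G'$ is Berge (it is: the Bienstock graph is built from the small gadgets $\alpha_i,\beta_j$ plus the attachments, and adding the twins $a,b$ of degree $2$ joined to the pair $u,s$ creates no odd hole or odd antihole — any hole through $a$ or $b$ passes through both $u,s$, and any such hole corresponds to a $u$--$s$ path in $G$ together with one of $a,b$, so its parity is controlled by the parity of $u$--$s$ paths; an odd antihole would need a vertex of large degree near $a,b$, impossible).

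The second and decisive step is the parity computation for the cutset $B=\{u,a,s,b\}$. An outgoing path from $B$ to $B$ that uses an interior edge $ua$, $as$, $sb$ or $bu$ has length $2$ and is harmless; the remaining outgoing paths from $B$ to $B$ with interior in $A=V(G)$ are exactly: paths between $u$ and $s$ inside $G$ (length at least $2$), paths from $u$ to $u$ and from $s$ to $s$ inside $G$, and paths that start and end at $a$ or $b$ which must immediately pass to $u$ or $s$ and hence reduce to one of the previous types with two extra edges. A path from $u$ to $u$ in $G$ with interior in $A$: since $\deg_G(u)=1$ such a path does not exist, and likewise for $s$ (with $\deg_G(s)=1$, its unique neighbor $w$). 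So the only constraint is: \emph{every induced $u$--$s$ path of $G$ has even length}, i.e.\ there is no odd $u$--$s$ path — and ``induced path of $G'\setminus B$'' from $u$ to $s$ is the same as ``induced path of $G$'' from $u$ to $s$ since $B\setminus\{u,s\}=\{a,b\}$ has no other neighbors. Dually, an antipath of length $\ge 2$ with ends in $A$ and interior in $B$: such an antipath of length $2$ is just a non-edge through a common non-neighbor, harmless; of length $3$ it is an outgoing path from $B$ to $B$ read in the complement and is covered by the previous paragraph; of length $\ge 4$ its interior would contain three pairwise suitably-adjacent vertices of the square $\{u,a,s,b\}$, which is impossible since that set induces $C_4$ whose complement is $2K_2$. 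Hence $B$ is \even\ \emph{iff} there is no odd $u$--$s$ path in $G$.

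Putting the two steps together: $G'$ has \an\ \even\ skew partition iff either $G'$ has a star cutset of size $\ge 2$ (which, given $G'$ Berge and not the complement of $C_4$, always yields one by Lemma~\ref{l.starcutset}, so I must be careful to rule this out or absorb it) or $B=\{u,a,s,b\}$ is \an\ \even\ skew cutset, i.e.\ iff there is no odd $u$--$s$ path in $G$. The cleanest route, and the one I would write up, is to show $G'$ has \emph{no} star cutset (each of $u,s$ has a unique neighbor in $G$, and no vertex of the Bienstock gadget is a cut-vertex once we restore $u,s$; the twins $a,b$ have degree $2$ so their deletion changes nothing), so that ``$G'$ has \an\ \even\ skew partition'' collapses exactly to ``$B$ is \even'' collapses exactly to ``no odd $u$--$s$ path in $G$'', which is the claim. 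The main obstacle I anticipate is the bookkeeping in the first step — proving rigorously that \emph{every} skew cutset of $G'$ is either a star cutset or equal to $\{u,a,s,b\}$; this needs a careful but routine argument exploiting that $a,b$ are degree-$2$ twins and that $u,s$ are degree-$1$ in $G$, very much in the spirit of the proof of Lemma~\ref{l.starcutset} already in the paper, and combined with Theorem~\ref{th.bienstock} this will give the NP-hardness of \ESPD.
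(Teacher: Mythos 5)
There is a genuine gap, and it sits at the center of your argument: the set $B=\{u,a,s,b\}$ is \emph{not} a cutset of $G'$. In the Bienstock graph $w$ has the two neighbors $s$ and $c_{1,1}$, so after deleting $u$ and $s$ the vertex $w$ is still attached to the rest of the graph through $c_{1,1}$; the gadgets $\alpha_i,\beta_j$, their connecting edges and the vertices $w,v$ remain connected, so $G'\setminus\{u,a,s,b\}=G\setminus\{u,s\}$ is connected. The skew partition you propose to exhibit therefore does not exist, and the ``decisive'' parity computation of your second step is carried out on a set that is not a skew cutset at all. A second error compounds the first: $G'$ \emph{does} have star cutsets, namely $\{a,u,s\}$ (centered at $a$, it isolates $b$, whose only neighbors are $u$ and $s$) and symmetrically $\{b,u,s\}$; these two stars are precisely the skew cutsets the paper works with, so your ``cleanest route'' --- proving $G'$ has no star cutset so that everything collapses onto the square --- is an attempt to prove a false statement. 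Finally, your appeal to Lemma~\ref{l.starcutset} requires $G'$ to be Berge, and $G'$ fails to be Berge exactly when there is an odd $u$--$s$ path (such a path together with $a$ is an odd hole of $G'$); in the direction of the lemma where you must show that \emph{no} \even\ skew partition exists you cannot assume Bergeness, and the implication ``star cutset $\Rightarrow$ \even\ skew partition'' is exactly what has to fail there.

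The correct skeleton, which is what the paper does, is: observe that $\{a,u,s\}$ and $\{b,u,s\}$ are skew cutsets; check that each is \even\ if and only if every $u$--$s$ path of $G$ is even (your parity analysis of $u$--$s$ paths, and the remark that a bad antipath in a cutset of size~3 has length~3 and can be reread as a path, both transfer to this setting essentially unchanged); and then show by an exhaustive local analysis --- using that $G'$ has no diamond and no $K_4$, so every skew cutset is a star or a square, and then ruling out every possible star center and every square, including $\{a,b,s,u\}$ itself --- that these two stars are the \emph{only} skew cutsets of $G'$. Your instinct that the degree-2 twins $a,b$ and the pendant-like vertices $u,s$ force any skew cutset into the neighborhood of $\{u,s\}$ is the right one, but the object it forces you toward is the star $\{a,u,s\}$, not the square.
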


\begin{proof}
  The graph $G'$ is represented in Fig.~\ref{fig.bienstock4}.  The sets
  $\{a, u, s\}$ and $\{b, u, s\}$ are clearly skew cutsets.  If there
  is a path of odd length in $G$ between $u$ and $s$ then these two
  skew cutsets are non-\even. Else they are clearly both \even. To
  check the condition on antipaths, note that since the cutset has
  size~3, a bad antipath would have length~3, and so could be seen as
  a path. Hence if suffices to prove that $G'$ has no other skew
  cutset.  Note that $G'$ has no diamonds and no $K_4$. Hence, every
  skew cutset of $G'$ is either a star cutset or is a complete
  bipartite graph. Let us check every star and every square in $G'$.

  We observe that $G'$ has no star cutset centered on: $s$, $u$, $w$,
  $v$; $c_{i,k}$, $t_{i,4}$, $f_{i,4}$, $t_{i,2}$, $f_{i,2}$ for
  $1\leq i \leq n$; $d_{j,k}$ for $1\leq j \leq m$, $k\in \{1, 2,
  3\}$. Also $G'$ has no star cutset centered on $z_{j,k}$ since
  $z_{j,k}$ has degree~4 and since for $k'\in \{1, 2, 3\} \setminus
  k$, $z_{j,k'}$ does not have its neighbors in the same $\alpha_i$
  than $z_{j,k}$. A star centered on a vertex $x$ among $t_{i,1}$,
  $f_{i,1}$, $t_{i,3}$, $f_{i,3}$ is dangerous since $x$ may have
  large arbitrarily large degree. But this is not enough to disconnect
  $G'$ since $x$ has at most one neighbor in every $\beta_j$.

  The square $G'[a,b,s,u]$ is not a skew cutset of $G'$. Moreover,
  since $s,u$ (resp. a,b) have no common neighbors in $G'$, no skew
  cutset can contain $\{a,b,s,u\}$. Similarly, for $1\leq i \leq n$,
  no skew cutset of $G'$ can contain $\{c_{i,1}, t_{i,1}, c_{i,3},
  f_{i,1}\}$. No skew cutset of $G'$ can contain $\{d_{1,2}, z_{1,1},
  d_{1,4}, z_{1,2}\}$ since $z_{1,3}$ is the only possible vertex to
  be added to the potential skew cutset, and since $z_{1,3}$ has a
  neighbor in some $\alpha_i$. By the same way, no skew cutset can be
  contained in $\beta_j$, $1\leq j \leq m$. The last squares to be
  checked are those contained in sets consisting of some $t_{i,1},
  t_{i,3}$ (resp. $f_{i,1}, f_{i,3}$) plus a collection of $z_{j,k}$'s
  complete to $\{t_{i,1}, t_{i,3}\}$ (resp. $f_{i,1}, f_{i,3}$). Note
  that the $z_{j,k}$'s are all in different $\beta_j$'s. Hence such a
  set is not a skew cutset.  
\end{proof}

\begin{theorem}
  \label{th.skewnpc}
  The decision problem whose instance is any graph $G$ and whose
  question is ``does $G$ have \an\  \even\  skew partition ?'' is
  NP-hard.
\end{theorem}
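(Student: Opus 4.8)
The plan is to give a polynomial-time reduction from the NP-complete problem 3-SAT' to the complement of \ESPD. This is enough: a polynomial-time decision procedure for \ESPD\ would then, by simply negating its output, decide 3-SAT' in polynomial time, and hence \ESPD\ is NP-hard.

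First I would take an arbitrary instance $\cal I$ of 3-SAT', of size $x$. By Theorem~\ref{th.bienstock} there is a linear-time algorithm producing the Bienstock graph $G_{\cal I}$, of size $O(x)$, together with its distinguished vertices $u$ and $s$, such that $\cal I$ is a YES-instance if and only if $G_{\cal I}$ has a path of odd length joining $u$ and $s$. Next I would build $G'_{\cal I}$ from $G_{\cal I}$ exactly as in Lemma~\ref{l.skewbienstock}, namely by adding two new vertices $a$ and $b$, each of which sees exactly the two vertices $u$ and $s$. This is again done in linear time from $\cal I$, and $|V(G'_{\cal I})| = O(x)$.

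Then I would concatenate the two equivalences: $\cal I$ is a YES-instance of 3-SAT' $\iff$ $G_{\cal I}$ has a $u$--$s$ path of odd length (Theorem~\ref{th.bienstock}) $\iff$ $G'_{\cal I}$ has no \even\ skew partition (Lemma~\ref{l.skewbienstock}). So ${\cal I}\mapsto G'_{\cal I}$ is a polynomial-time reduction from 3-SAT' to the problem ``$G$ has no \even\ skew partition''; equivalently, negating the answer of any algorithm that decides \ESPD\ on the input $G'_{\cal I}$ decides $\cal I$. Since 3-SAT' is NP-complete, \ESPD\ is NP-hard, as claimed.

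There is essentially no real obstacle here beyond reassembling the earlier ingredients; the one point worth flagging is that the natural reduction produces an instance of the \emph{complement} of \ESPD\ rather than of \ESPD\ itself. This is harmless, since NP-hardness only requires that a polynomial algorithm for the problem would collapse an NP-complete problem into P, and negating a single oracle answer achieves exactly that.
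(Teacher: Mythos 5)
Your proof is correct and follows exactly the paper's own argument: apply Theorem~\ref{th.bienstock} to get $G_{\cal I}$, then Lemma~\ref{l.skewbienstock} to get $G'_{\cal I}$, and observe that the composed reduction sends YES-instances of 3-SAT' to graphs with no \even\ skew partition. Your remark that the reduction targets the complement of \ESPD\ is the same observation the paper makes in its conclusion, and it is indeed harmless for NP-hardness.
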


\begin{proof}
  Let $\cal I$ be an instance of 3-SAT'.  By
  Theorem~\ref{th.bienstock}, we construct a graph $G_{\cal I}$.  By
  Lemma~\ref{l.skewbienstock} we construct a graph $G'_{\cal I}$. By
  these two results $G'_{\cal I}$ has \an\ \even\ skew partition if
  and only if the answer to $\cal I$ is NO.
\end{proof}

\section{Conclusion}
\label{conclu}

As a conclusion we would like to give two conjectures suggested by
this work. The first one is motivated by a remark of an anonymous
referee who noticed that our NP-hardness reduction is more coNP than
NP.  For NP-hardness reductions, this makes no difference, but an
NP-completeness result would be:

\begin{conjecture}
  \ESPD\ is coNP-complete. 
\end{conjecture}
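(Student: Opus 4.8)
The plan is to establish coNP-membership of \ESPD\ in its complementary form: I want to show that ``$G$ has no \even\ skew partition'' is certifiable in polynomial time, or equivalently that ``$G$ has \an\ \even\ skew partition'' is in coNP. Wait — that is the wrong direction; \ESPD\ is already conjectured coNP-\emph{complete}, so one must prove (a) \ESPD\ $\in$ coNP and (b) \ESPD\ is coNP-hard. Part (b) is essentially already in hand: the NP-hardness reduction of Section~\ref{NPC} (Theorems~\ref{th.bienstock} and~\ref{th.skewnpc}, via Lemma~\ref{l.skewbienstock}) reduces 3-SAT$'$ to \ESPD\ in such a way that the answer to $\cal I$ is YES iff $G'_{\cal I}$ has \emph{no} \even\ skew partition; reading the same reduction as a reduction from the complement of 3-SAT$'$ (i.e.\ UNSAT, which is coNP-complete) to \ESPD\ shows \ESPD\ is coNP-hard. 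So the whole content of the conjecture is part~(a): \ESPD\ $\in$ coNP, i.e.\ a \textbf{polynomially checkable certificate that a graph has no \even\ skew partition}.

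For part~(a) the approach I would take is structural rather than via a short combinatorial witness. The natural idea is to exploit the decomposition Theorem~\ref{th.case}: a Berge graph has no \even\ skew partition precisely when it decomposes by $2$-joins down to basic graphs, and basic graphs with no \even\ skew partition are recognizable in polynomial time (Theorem~\ref{th.basicalgo}, Lemmas~\ref{l.dsg} and~\ref{butterequiv}). But \ESPD\ is posed for \emph{arbitrary} graphs, not just Berge graphs, so one first needs a certificate that either exhibits an \even\ skew partition-producing structure or, failing that, pins down enough structure in the non-Berge case. Concretely, for a general input $G$: a certificate for ``$G$ has no \even\ skew partition'' could consist of (i) for the Berge case, the decomposition tree $T_G$ of Section~\ref{algos} together with, at each leaf, either the label identifying it as a basic graph with no \even\ skew partition (checkable via the algorithms of Theorem~\ref{th.basicalgo}) or a proof that no $2$-join-free non-basic leaf arises — combined with the soundness of Lemmas~\ref{l.skew2joind},~\ref{l.skew2joinnonpath} and Theorem~\ref{th.case}; (ii) for the non-Berge case, the subtlety that Theorem~\ref{th.case} no longer applies, so one must handle odd holes/antiholes directly. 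The key enabling fact is that the Berge graph recognition algorithm of~\cite{chudnovsky.c.l.s.v:reco} runs in $O(n^9)$, so ``$G$ is Berge'' is decidable in P, and hence a coNP-certificate may simply first verify (deterministically) whether $G$ is Berge and branch.

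The genuinely hard part — and the reason this is stated only as a conjecture — is the \textbf{non-Berge case}. If $G$ is not Berge, the clean theory collapses: $G$ may still have or fail to have an \even\ skew partition, and there is no decomposition theorem to lean on. One would need to argue that checking the parity/skew conditions relative to all relevant cutsets can be delegated to a polynomial-size collection of local checks (as in the analysis inside the proof of Lemma~\ref{l.skewbienstock}, where diamond-free, $K_4$-free graphs force every skew cutset to be a star or a square), but for general graphs no such bound is available, and a skew cutset can be large and the associated forbidden path arbitrarily long. A plausible route is to show that if $G$ has no \even\ skew partition then $G$ has a bounded-``width'' certificate of this — e.g.\ that the family of \emph{minimal} skew cutsets, or of minimal cutsets that are stars, admits a polynomial description witnessing the parity failure — but making this precise and complete is exactly the open problem. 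Thus my proposal reduces the conjecture to: \emph{exhibit a polynomially verifiable obstruction to the existence of an \even\ skew partition for arbitrary graphs}, with the Berge case already essentially resolved by Theorem~\ref{th.case} and the recognition algorithm, and the non-Berge case being the main obstacle.

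Finally, I would remark that even a weaker partial result is within reach and worth recording: by Theorem~\ref{th.case} and the recognition algorithm, \ESPD\ \emph{restricted to Berge graphs} lies in P (hence trivially in coNP), so the conjecture is really about how wild non-Berge instances can be; and the referee's observation that the reduction of Section~\ref{NPC} is ``more coNP than NP'' is precisely the hardness half of the conjecture, so only the membership half remains genuinely open.
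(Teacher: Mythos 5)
This statement is a \emph{conjecture} in the paper; no proof is given, and the paper explicitly records (in Section~\ref{algomotiv}) that it cannot place \ESPD\ in NP or in coNP for general graphs. Your proposal is therefore not being measured against an existing argument, and, to your credit, you do not pretend to close the question: you correctly split coNP-completeness into hardness plus membership, correctly observe that the reduction of Section~\ref{NPC} already yields coNP-hardness (since via Lemma~\ref{l.skewbienstock} satisfiable instances of 3-SAT$'$ map to graphs \emph{without} \an\ \even\ skew partition, the construction is a many-one reduction from the complement of 3-SAT$'$ to \ESPD, which is exactly the referee's ``more coNP than NP'' remark), and correctly identify that the only missing piece is a polynomially verifiable certificate for the \emph{absence} of \an\ \even\ skew partition in an arbitrary graph.

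The genuine gap is that this missing piece is the entire content of the conjecture, and your sketch does not supply it. Your proposed certificate for the Berge case (the decomposition tree $T_G$ plus basic-leaf checks) is redundant --- the paper already shows \ESPD\ restricted to Berge graphs is in P, so no certificate is needed there --- and for the non-Berge case you offer only the hope that minimal skew cutsets admit a ``bounded-width'' description, with no argument. The difficulty is real: certifying that a given partition is \even\ already seems to require controlling the parity of arbitrarily long induced paths (the trick of adding a dominating vertex and invoking Berge recognition is only sound when the augmented graph is supposed to be Berge), and certifying that \emph{no} partition works is harder still. So the proposal should be read as a correct framing of the open problem, reducing the conjecture to its membership half, rather than as a proof or proof sketch of the statement.
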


The following would be quite natural:

\begin{conjecture}
  There is a polynomial-time algorithm which, given a Berge graph $G$,
  outputs \an\ \even\ skew partition of $G$ if any, and otherwise
  certifies that $G$ has none.
\end{conjecture}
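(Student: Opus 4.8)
The plan is to upgrade the polynomial \emph{decision} procedure underlying our main theorem into a \emph{constructive} one by working on the decomposition tree $T_G$ of Section~\ref{algos}: at each node either exhibit an explicit \even\ skew cutset or certify that the node has none, and then \emph{lift} any exhibited cutset back up the tree to the root $G$. First I would make the leaf analysis constructive. For basic leaves this is essentially already done: Reed's algorithm produces an actual star cutset in the line-graph case, bipartite graphs reduce to displaying a genuine skew partition, and Lemma~\ref{l.dsg} certifies ``none'' for (path-)double split graphs. For ``small'' leaves brute force suffices, and for ``degenerate'' leaves the proof of Lemma~\ref{l.degenerate} already writes down a concrete star or \even\ skew cutset. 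So the leaf base case is unproblematic except at the one leaf type discussed below.

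Second, I would make the lifting direction algorithmic, and here the paper has already done most of the work. The proofs of Lemmas~\ref{l.overlap}, \ref{l.skew2joind} and~\ref{l.skew2join} are written as explicit transformations: given \an\ \even\ skew cutset $F'$ of a block $G_i$, they build \an\ \even\ skew cutset $F$ of the parent by a bounded editing of $F'$ (replacing the representative path endpoints $a_1,b_1$ by genuine vertices of $A_1,B_1$, or intersecting with $X_2$ and adjoining $A_1$). Read as procedures, these give, for every non-path 2-join and every \emph{non-cutting} path 2-join, a polynomial-time map that lifts \an\ \even\ skew cutset from a child to its parent. Composing these maps along the root-to-leaf path in $T_G$ yields \an\ \even\ skew cutset of $G$ whenever a reachable leaf carries one that lifts.

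The hard part will be the places where the present algorithm answers ``YES'' \emph{blindly}. The first is a ``no decomposition'' leaf $H$: by Theorem~\ref{th.1} (in the complement if necessary) such an $H$ \emph{has} \an\ \even\ skew partition, but the proof of Theorem~\ref{th.1} is non-constructive, and turning it into an algorithm that actually outputs the partition is exactly the difficulty the authors flag. I would attack this either by trying to make the trigraph decomposition proof of Theorem~\ref{th.1} track an explicit \even\ skew cutset through its inductive steps, or by a self-reducibility scheme: use the polynomial decision oracle to fix the side of one vertex at a time. The obstacle is that the oracle answers only the \emph{unconstrained} question, so one must design Berge-preserving gadgets that force a chosen vertex into, or out of, the skew cutset while leaving the remaining structure intact, and it is not clear such gadgets exist.

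The second obstacle, which I expect to be the genuine bottleneck, is the cutting path 2-joins. When the contraction of a cutting path 2-join \emph{creates} \an\ \even\ skew partition in a block, that partition need not lift, and Theorem~\ref{th.case} only guarantees that $G$ then has \an\ \even\ skew partition \emph{somewhere}. Locating it amounts to recovering the sets $A_3,B_3$ of a cutting 2-join of type~2, which the authors observe is itself as hard as the original detection problem. Any successful proof will therefore have to either avoid cutting contractions in the constructive tree---perhaps by splitting along the auxiliary graphs $G'$ of Figures~\ref{fig:cut1} and~\ref{fig:cut2} rather than the naive contracted block, so that a destroyed path 2-join is never silently converted into \an\ \even\ skew partition---or supply a polynomial routine that, knowing a block acquired a spurious \even\ skew partition, reconstructs a genuine one in the parent together with the witnessing $A_3,B_3$. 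Combining a constructive base case with the already-constructive lifting lemmas would then complete the algorithm.
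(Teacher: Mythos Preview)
This statement is a \emph{conjecture} in the paper, not a theorem: the paper does not prove it and explicitly lists it as open in Section~\ref{conclu}. There is therefore no paper proof to compare against.

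Your proposal is not a proof either, and you say as much. It is a research plan that correctly isolates the two places where the paper's decision algorithm is non-constructive: (i) leaves marked ``no decomposition'', where the algorithm outputs YES by appealing to Theorem~\ref{th.1} without exhibiting a partition, and (ii) the possibility that a cutting path 2-join creates \an\ \even\ skew partition in a block that does not lift to $G$, where Theorem~\ref{th.case} again only guarantees existence. Your observation that the lifting direction (Lemmas~\ref{l.overlap}, \ref{l.skew2joind}, \ref{l.skew2join}) is already constructive is correct and useful.

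But the two obstacles you name are genuine and your proposal does not resolve them. For (i), the self-reducibility idea requires Berge-preserving gadgets that pin a vertex to one side of the partition while keeping the instance Berge; you note yourself that it is unclear such gadgets exist, and indeed the NP-hardness of \ESPD\ for general graphs (Section~\ref{NPC}) suggests that any such gadget must exploit Bergeness in a nontrivial way. The alternative---making the proof of Theorem~\ref{th.1} constructive---is a substantial project in its own right. For (ii), replacing the contracted block by the auxiliary graphs $G'$ of Subsections~\ref{mainproofCase1}--\ref{mainproofCase2} is exactly what the paper considered and rejected: constructing the $G'$ of Case~2 requires knowing $A_3, B_3$ in advance, and the paper explicitly states (end of Section~\ref{decth}) that recognising cutting 2-joins of type~2 ``seems to be exactly the problem of detecting \even\ skew partitions''. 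So your plan, as it stands, is circular at this step. In short: your outline is the natural one and aligns with the paper's own discussion of what a constructive algorithm would need, but the gaps you flag are precisely why the statement remains a conjecture.
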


\section*{Acknowledgement}

I am grateful to Fr\'ed\'eric Maffray, Guyslain Naves, B\'eatrice
Trotignon, \'Elise Vidal and two anonymous referees for several
suggestions. Thanks to Maria Chudnovsky for useful discussions, in
particular for pointing out Theorem~\ref{th.2}.


\end{document}